\documentclass[reqno]{amsart}
\usepackage{amsmath,amsthm,amstext,amssymb}
\usepackage{graphicx}

\theoremstyle{plain}
\newtheorem{theorem}{Theorem}[section]
\newtheorem{lemma}[theorem]{Lemma}
\newtheorem{question}[theorem]{Question}
\newtheorem{remark}[theorem]{Remark}
\newtheorem{corollary}[theorem]{Corollary}
\newtheorem{proposition}[theorem]{Proposition}

\newtheorem*{claim*}{Claim}

\newtheorem{example}[theorem]{Example}
\theoremstyle{definition}
\newtheorem{definition}[theorem]{Definition}

\newcommand{\betrag}[1]{\vert{#1}\vert}

\newcommand{\lub}{{\rm{lub}}}
\newcommand{\dom}[1]{{{\rm{dom}}(#1)}}

\newcommand{\cof}[1]{{{\rm{cof}}(#1)}}
\newcommand{\otp}[1]{{{\rm{otp}}\left(#1\right)}}
\newcommand{\ran}[1]{{{\rm{ran}}(#1)}}
\newcommand{\supp}[1]{{{\rm{supp}}(#1)}}
\newcommand{\rank}[2]{{\rm{rnk}}_{#2}({#1})}

\newcommand{\length}[1]{{\rm{lh}}({#1})}

\newcommand{\map}[3]{{#1}:{#2}\longrightarrow{#3}}
\newcommand{\Map}[5]{{#1}:{#2}\longrightarrow{#3};~{#4}\longmapsto{#5}}

\newcommand{\Set}[2]{\{{#1}~\vert~{#2}\}}
\newcommand{\seq}[2]{\langle{#1}~\vert~{#2}\rangle}

\newcommand{\goedel}[2]{{\prec}{#1},{#2}{\succ}}

\newcommand{\anf}[1]{{\text{``}\hspace{0.3ex}{#1}\hspace{0.3ex}\text{''}}}

\newcommand{\eins}{ {1{\rm\hspace{-0.5ex}l}} }

\newcommand{\HH}[1]{{\rm{H}}(#1)}

\newcommand{\Add}[2]{{\rm{Add}}({#1},{#2})}
\newcommand{\Coll}[2]{{\rm{Col}}({#1},{#2})}

\newcommand{\id}{{\rm{id}}}
\newcommand{\Lim}{{\rm{Lim}}}
\newcommand{\On}{{\rm{On}}}
\newcommand{\LL}{{\rm{L}}}
\newcommand{\ZFC}{{\rm{ZFC}}}
\newcommand{\GCH}{{\rm{GCH}}}

\newcommand{\MA}{{\rm{MA}}}

\newcommand{\PPP}{{\mathbb{P}}}
\newcommand{\QQQ}{{\mathbb{Q}}}
\newcommand{\RRR}{{\mathbb{R}}}

\newcommand{\VV}{{\rm{V}}}

\newcommand{\calA}{\mathcal{A}}

\newcommand{\calD}{\mathcal{D}}

\newcommand{\calS}{\mathcal{S}}
\newcommand{\calT}{\mathcal{T}}

 \newcommand{\CH}{({\rm{CH})}}

\usepackage[textsize=footnotesize,color=green!40, bordercolor=white]{todonotes}
\usepackage{hyperref} 

\usepackage[arrow, matrix, curve]{xy}

\DeclareFontFamily{OT1}{pzc}{}
\DeclareFontShape{OT1}{pzc}{m}{it}{<-> s * [1.100] pzcmi7t}{}
\DeclareMathAlphabet{\mathscr}{OT1}{pzc}{m}{it}

\title{Continuous Images of Closed Sets in Generalized Baire Spaces}

\thanks{The authors would like to thank the referee for the careful reading of the manuscript.} 

\author{Philipp L\"ucke}
\address{Philipp L\"ucke, Mathematisches Institut, Universit\"at Bonn,
Endenicher Allee 60, 53115 Bonn, Germany} 
\email{pluecke@math.uni-bonn.de}
\thanks{}

\author{Philipp Schlicht}
\address{Philipp Schlicht, Mathematisches Institut, Universit\"at Bonn,
Endenicher Allee 60, 53115 Bonn, Germany}
\email{schlicht@math.uni-bonn.de}

\subjclass[2010]{03E05, 03E15, 03E35, 03E47}
\keywords{Generalized Baire spaces, $\mathbf{\Sigma}^1_1$-definability, Trees, Continuous images}

\begin{document}

\begin{abstract} 
Let $\kappa$ be an uncountable cardinal with $\kappa=\kappa^{{<}\kappa}$. Given a cardinal $\mu$, we equip the set ${}^\kappa\mu$ consisting of all functions from $\kappa$ to $\mu$ with the topology 
whose basic open sets consist of all extensions of partial functions of cardinality less than $\kappa$. 
We prove results that allow us to separate several classes of subsets of ${}^\kappa\kappa$ that consist of continuous images of closed subsets of spaces of the form ${}^\kappa\mu$.  
Important examples of such results are the following: 
(i) there is a closed subset of ${}^\kappa\kappa$ that is not a continuous image of ${}^\kappa\kappa$; (ii) there is an injective continuous image of ${}^\kappa\kappa$ that is not $\kappa$-Borel 
(i.e. that is not contained in the smallest algebra of sets on ${}^\kappa\kappa$ that contains all open subsets and is closed under $\kappa$-unions);  
(iii) the statement \anf{\emph{every continuous image of ${}^\kappa\kappa$ is an injective continuous image of a closed subset of ${}^\kappa\kappa$}} is independent of the axioms of $\ZFC$; and 
(iv) the axioms of $\ZFC$ do not prove that the assumption ${\anf{2^\kappa>\kappa^+}}$ implies the statement \anf{\emph{every closed subset of ${}^\kappa\kappa$ is a continuous image of 
${}^\kappa(\kappa^+)$}} or its negation. 
\end{abstract}

\maketitle

\setcounter{tocdepth}{1}
\tableofcontents

\section{Introduction}\label{section:Intro}

Let $\kappa$ be an infinite regular cardinal. Given a cardinal $\mu$, we equip the set ${}^\kappa\mu$ consisting of all functions $\map{x}{\kappa}{\mu}$ with the topology 
whose basic open sets are of the form $$ N_s ~ = ~ \Set{x\in {}^\kappa\mu}{s\subseteq x},$$ where $s$ is an element of the set ${}^{{<}\kappa}\mu$ of all functions $\map{t}{\alpha}{\mu}$ with $\alpha<\kappa$. 
We let $\mathbf{\Sigma}^0_1(\kappa)$ denote the class of all open subsets of ${}^\kappa\kappa$ and we use $B(\kappa)$ to denote the class of all $\kappa$-Borel subsets of ${}^\kappa\kappa$, 
i.e. the class of all subsets that are contained in the smallest algebra of sets on ${}^\kappa\kappa$ that contains all open subsets and is closed under $\kappa$-unions. 
A subset of ${}^\kappa\kappa$ is a $\mathbf{\Sigma}^1_1$-subset if it is equal to the projection of a closed subset of ${}^\kappa\kappa\times{}^\kappa\kappa$ 
and it is a $\mathbf{\Delta}^1_1$-subset if both the set itself and its complement are $\mathbf{\Sigma}^1_1$-subsets.  
Since the graph of a continuous function $\map{f}{C}{{}^\kappa\kappa}$ with $C\subseteq{}^\kappa\kappa$ closed is again a closed subset of ${}^\kappa\kappa\times{}^\kappa\kappa$ 
and the spaces ${}^\kappa\kappa$ and ${}^\kappa\kappa\times{}^\kappa\kappa$ are homeomorphic, it follows that a subset $A$ of ${}^\kappa\kappa$ is a $\mathbf{\Sigma}^1_1$-subset 
if and only if it is equal to the continuous image of a closed subset of ${}^\kappa\kappa$ 
(in the sense that there is a closed subset $C$ of ${}^\kappa\kappa$ and a function $\map{f}{C}{{}^\kappa\kappa}$ such that $A=\ran{f}$ and $f$ is continuous with respect to the subspace topology on $C$).

If $\kappa$ is an uncountable cardinal with $\kappa=\kappa^{{<}\kappa}$, then it is well-known (see, for example, {\cite[Section 2]{MR2987148}}) that the class of $\mathbf{\Sigma}^1_1$-subsets 
of ${}^\kappa\kappa$ is equal to the class of subsets of ${}^\kappa\kappa$ that are definable over the structure $(\HH{\kappa^+},\in)$ by a $\Sigma_1$-formula with parameters. 
This shows that many interesting and important subsets of ${}^\kappa\kappa$ are equal to continuous images of closed subsets of spaces of the form ${}^\kappa\mu$. 
We present two examples of prominent $\mathbf{\Sigma}^1_1$-subsets that are contained in smaller classes of continuous images.

\begin{example} 
 The club filter 
 \begin{equation*} 
  \mathrm{Club}_\kappa ~ = ~ \Set{x\in{}^\kappa\kappa}{\textit{$\exists C\subseteq\kappa$ club} ~ \forall\alpha\in C ~ x(\alpha)=0} 
 \end{equation*} 
 is a continuous image of the space ${}^\kappa\kappa$. Let $T$ denote the set of all pairs $(s,t)$ in ${}^{\gamma}\kappa\times{}^{\gamma}2$ such that $\gamma<\kappa$ is a limit ordinal, $t(\alpha)=0$ implies $s(\alpha)=0$ 
 for all $\alpha<\gamma$, and the set $\Set{\alpha<\gamma}{t(\alpha)=0}$ is a closed unbounded subset of $\gamma$. 
 We order $T$ by componentwise inclusion. Then $T$ is a tree of height $\kappa$ that is closed under increasing sequences of length less than $\kappa$ 
 and every node in $T$ has $\kappa$-many direct successors, because every limit ordinal of countable cofinality in the interval $(\length{s},\kappa)$ gives rise to a distinct direct successor of a node $(s,t)$ in $T$. 
 This shows that $T$ is isomorphic to the tree ${}^{<\kappa}\kappa$.  
 If we equip the set $$[T] ~ = ~ \Set{(x,y)\in{}^\kappa\kappa\times{}^\kappa 2}{\forall\alpha<\kappa ~ \exists\alpha<\beta<\kappa ~ (x\restriction\beta,y\restriction\beta)\in T}$$ 
 with the topology whose basic open sets consist of all extensions of elements of $T$, then we obtain a topological space homeomorphic to ${}^\kappa\kappa$. 
 Since the projection $\map{p}{[T]}{{}^\kappa\kappa}$ onto the first coordinate is continuous and $\ran{p}=\mathrm{Club}_\kappa$, we can conclude that the set $\mathrm{Club}_\kappa$ is equal 
 to a continuous image of ${}^\kappa\kappa$. 
\end{example}

Given an infinite regular cardinal $\kappa$ and cardinals $\mu_0,\ldots,\mu_n$, we call a subset $T$ of the product ${}^{{<}\kappa}\mu_0\times\ldots\times{}^{{<}\kappa}\mu_n$ a \emph{subtree} if $\length{t_0}=\ldots=\length{t_n}$ and 
the tuple $(t_0\restriction\alpha,\ldots,t_n\restriction\alpha)$ is an element of $T$ whenever $(t_0,\ldots,t_n)\in T$ and $\alpha<\length{t_0}$. 
We use $\leq$ to denote the natural tree-ordering on such a subtree $T$, i.e. if $s=(s_0,\ldots,s_n)$ and $t=(t_0,\ldots,t_n)$ are nodes in $T$, then we write $s\leq t$ to denote that $s_i\subseteq t_i$ holds for all $i\leq n$.

Given a subtree $T$ of ${}^{{<}\kappa}\mu_0\times\ldots\times{}^{{<}\kappa}\mu_n$, we say that an element $(x_0,\ldots,x_n)$ of ${}^\kappa\mu_0\times\ldots\times{}^\kappa\mu_n$ is a \emph{cofinal branch through $T$} 
if $(x_0\restriction\alpha,\ldots,x_n\restriction\alpha)\in T$ for every $\alpha<\kappa$. It is easy to see that a subset $A$ of ${}^\kappa\mu_0\times\ldots\times{}^\kappa\mu_n$ is closed with respect to the topology 
introduced above if and only if it is equal to the set $[T]$ of all cofinal branches though some subtree $T$ of ${}^{{<}\kappa}\mu_0\times\ldots\times{}^{{<}\kappa}\mu_n$.

\begin{example}\label{example:trees}
 Suppose that $\map{f}{{}^{{<}\kappa}\kappa}{\kappa}$ is a bijection. Let $T_{\kappa}$ denote the set of all $x\in {}^{\kappa}2$ such that $x\circ f$ is the characteristic function of a subtree $T$ of ${}^{{<}\kappa}\kappa$ 
 with $[T]\neq\emptyset$.  
 The results of {\cite[Section 2]{MR1242054}} show that $T_\kappa$ is a  
 $\mathbf{\Sigma}^1_1$-complete subset of ${}^\kappa\kappa$, i.e. if $A$ is a nonempty $\mathbf{\Sigma}^1_1$-subset of ${}^\kappa\kappa$, then there is a continuous function $\map{f}{{}^\kappa\kappa}{{}^\kappa\kappa}$ with $A=f^{{-}1}[T_\kappa]$.

 The set $T_\kappa$ is also equal to a continuous image of ${}^{\kappa}\kappa$. 
 To see this, let $S$ denote the set of all pairs $(t,u)$ in ${}^{\gamma}2\times{}^{\gamma}\kappa$ with $\gamma<\kappa$ such that the set $$T(t) ~ = ~ \Set{s\in{}^{{<}\kappa}\kappa}{f(s)<\gamma, ~ (t\circ f)(s)=1}$$ is 
 a subtree of ${}^{<\kappa}\kappa$ with $u\restriction\bar{\gamma}\in T(t)$ for every $\bar{\gamma}<\gamma$. If we order the set $S$ by componentwise inclusion, then the resulting tree is isomorphic to ${}^{<\kappa}\kappa$. 
 As above, we equip the set $$[S] ~ = ~ \Set{(x,y)\in{}^\kappa 2\times{}^\kappa\kappa}{\forall\alpha<\kappa ~ \exists\alpha<\beta<\kappa ~ (x\restriction\beta,y\restriction\beta)\in S}$$ with the topology induced by $S$ 
 and obtain a space  homeomorphic to ${}^\kappa\kappa$. Since $T_\kappa$ is equal to the projection of  $[S]$, this set is equal to a continuous image of ${}^\kappa\kappa$.  
\end{example}

In this paper, we study the provable and relative consistent statements about the relationships between different classes of such continuous images in the case where $\kappa$ is an uncountable cardinal with $\kappa=\kappa^{{<}\kappa}$.  
We will consider the following classes of subsets.

\begin{itemize}
 \item The class $C^\kappa$ of continuous images of ${}^\kappa\kappa$. 

 \item The class $\mathbf{\Sigma}^1_1(\kappa)$ of continuous images of closed subsets of ${}^\kappa\kappa$. 

 \item The class $I^\kappa$ of continuous injective images of ${}^\kappa\kappa$. 

 \item The class $I^\kappa_{cl}$ of continuous injective images of closed subsets of ${}^\kappa\kappa$. 
\end{itemize}
It will turn out that the following class is also important for this analysis. 
\begin{itemize}
 \item Let $M$ be an inner model of set theory and $n<\omega$. We define $S_n^{M,\kappa}$ to be the class of all subsets $A$ of ${}^\kappa\kappa$ such that 
  \begin{equation*}
   A ~ = ~ \Set{x\in{}^\kappa\kappa}{M[x,y]\models\varphi(x,y)}
  \end{equation*}
  for some $y\in{}^\kappa\kappa$ and a $\Sigma_n$-formula $\varphi(u,v)$, where $M[x,y]=\bigcup_{z\in M}\LL[x,y,z]$ is the smallest transitive model of set theory containing $M\cup\{x,y\}$. 
\end{itemize}


To motivate our work, we start by briefly reviewing the relations between these classes in the classical setting $\anf{\kappa=\omega}$. 
\begin{itemize}
 \item The classes $B(\omega)$ and $I^\omega_{cl}$ coincide (see {\cite[15.3]{MR1321597}}). 

 \item Since every set in $I^\omega$ has no isolated points, the class $I^\omega$ is a proper subclass of $B(x)$ that contains every non-empty open set. Moreover, every Borel subset of ${}^\omega\omega$ is equal 
   to the union of an element of $I^\omega$ and a countable set (this follows from {\cite[6.4]{MR1321597}} and {\cite[13.1]{MR1321597}}). 

 \item A nonempty subset of ${}^{\kappa}\kappa$ is in $\mathbf{\Sigma}^1_1(\omega)$ if and only if it is in $C^\omega$ (see {\cite[2.8]{MR1321597}}). 

 \item Given an inner model $M$ of $\ZFC$, the class $S_1^{M,\omega}$ coincides with the class of all $\mathbf{\Sigma}^1_2$-subsets of ${}^\omega\omega$, i.e. with the class of projections of complements of $\mathbf{\Sigma}^1_1$-subsets of 
  ${}^\kappa\kappa\times{}^\kappa\kappa$ (this follows from {\cite[Lemma 25.20]{MR1940513}} and {\cite[Lemma 25.25]{MR1940513}}). 
\end{itemize}


\begin{figure}[ht]
 \begin{equation*}
\begin{xy}
\xymatrix{
 \mathbf{\Sigma}^0_1(\omega)   \ar[r] &  I^\omega  \ar[r] &  B(\omega)  \ar@{-}@<0.2ex>[r] \ar@{-}@<-0.2ex>[r] &  I^\omega_{cl}  \ar[r] &  \mathbf{\Sigma}^1_1(\omega)  \ar@{-}@<0.2ex>[r] \ar@{-}@<-0.2ex>[r] &   C^\omega  \ar[r] &  S_1^{M,\omega}  \\ 
}
\end{xy}
\end{equation*} 
\caption{The provable and consistent relations between the considered classes in the case $\anf{\kappa=\omega}$.}
\label{figure:Classical}
\end{figure}


We summarize the above relationships in Figure \ref{figure:Classical} with the help of a \emph{complete diagram of provable and consistent relations}. 
Given two classes $C$ and $D$ of subsets, we use a solid arrow $A\longrightarrow B$ from $A$ to $B$ to indicate that it is provable from the axioms of $\ZFC$ that 
every non-empty subset in $A$ is an element of $B$ and a dashed arrow $A\dasharrow B$ to indicate that this inclusion is relatively consistent with these axioms. 
Such a diagram is \emph{complete} if the transitive hull of the displayed implications contains all possible implications.


\begin{figure}[ht]
 \begin{equation*}
\begin{xy}
\xymatrix{
 S_1^{\LL,\kappa} \ar[rr] & &  I^\kappa_{cl} \ar[rr]    & & \mathbf{\Sigma}^1_1(\kappa) \ar@{-->}@/_ 4ex/[llll]_{\LL[x]} \\ 
 B(\kappa)   \ar[u]  & &    & & \\
 \mathbf{\Sigma}^0_1(\kappa) \ar[u]   \ar[rr]  & &  I^\kappa  \ar[uu]  \ar[rr]   & & C^\kappa \ar[uu]    \\
}
\end{xy}
\end{equation*} 
\caption{The provable and consistent relations in the case where $\kappa$ is an uncountable cardinal with $\kappa=\kappa^{{<}\kappa}$.}
\label{figure:Results}
\end{figure}
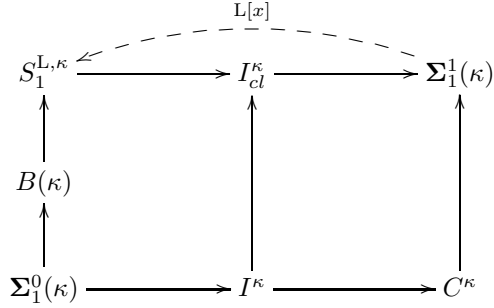


The results of this paper will show that the above classes relate in a fundamentally different way if $\kappa$ is an uncountable cardinal with $\kappa=\kappa^{{<}\kappa}$. 
These results are summarized by Figure \ref{figure:Results} and they will show that this diagram is also complete. 


An important result in the above analysis is the observation that the class of continuous images of ${}^\kappa\kappa$ does not contain every nonempty closed subset of ${}^\kappa\kappa$. 
We will show that the closed subset constructed in the proof of this result is equal to a continuous image of the space ${}^\kappa(\kappa^+)$. 
Motivated by this fact, we also investigate the following classes in the case where $\mu$ is a cardinal with $\kappa<\mu<2^\kappa$. 
\begin{itemize}
 \item The class $C^{\kappa,\mu}$ of continuous images of ${}^\kappa\mu$. 

 \item The class $C^{\kappa,\mu}_{cl}$ of continuous images of closed subsets of ${}^\kappa\mu$. 
\end{itemize}

As above, we first discuss known results about the relationships between these classes in the countable case.

\begin{itemize}
 \item A nonempty subset of ${}^{\kappa}\kappa$ is in $C^{\omega,\mu}$ if and only if it is in $C^{\omega,\mu}_{cl}$ (see {\cite[2.8]{MR1321597}}). 

 \item Every $\mathbf{\Sigma}^1_2$-subset of ${}^\omega\omega$ is equal to the projection of a subtree of ${}^\omega\omega\times{}^\omega\omega_1$ (see {\cite[38.9]{MR1321597}}). 
       In particular, $\mathbf{\Sigma}^1_1(\omega)$ is a proper subclass of $C^{\omega,\omega_1}$.

 \item If every uncountable $\mathbf{\Sigma}^1_2$-subset of ${}^\omega\omega$ contains a perfect subset, then there is a set in $C^{\omega,\omega_1}$ that is not a $\mathbf{\Sigma}^1_2$-subset. 

 \item If every subset of ${}^\omega\omega$ of cardinality $\omega_1$ is a $\mathbf{\Sigma}^1_2$-subset, then every set in the class $C^{\omega,\omega_1}$ is a $\mathbf{\Sigma}^1_2$-subset 
  (see the proof of Proposition \ref{proposition:BigTreesDefinable} for a similar argument).  By using \emph{almost disjoint coding} (see \cite{MR0289291} and {\cite[Section 1]{MR0465866}}), 
  the above assumption can be seen to hold in every model of $\MA_{\omega_1}+\neg\CH+\anf{\omega_1=\omega_1^\LL}$ (see {\cite[Section 3.2]{MR0270904}}). 
\end{itemize}


\begin{figure}[ht]
 \begin{equation*}
\begin{xy}
\xymatrix{
 C^{\omega,\mu} \ar@{-}@<0.2ex>[rrr] \ar@{-}@<-0.2ex>[rrr] & & & C^{\omega,\mu}_{cl} ~ \ar@{-->}@<0.5ex>[dd]^{\VV\models\MA_{\omega_1}+\anf{\omega_1=\omega_1^\LL}} \\
 & & &  \\
 \mathbf{\Sigma}^1_1(\omega) ~ \ar[uu]        \ar[rrr]  & & &  S_1^{\LL,\omega} ~ \ar@<0.5ex>[uu] \\
}
\end{xy}
\end{equation*} 
\caption{The provable and consistent relations between the considered classes in the case where $\anf{\kappa=\omega}$  and $\mu$ is a cardinal with $\omega<\mu<2^\omega$.}
\label{figure:ClassicalHigher}
\end{figure}
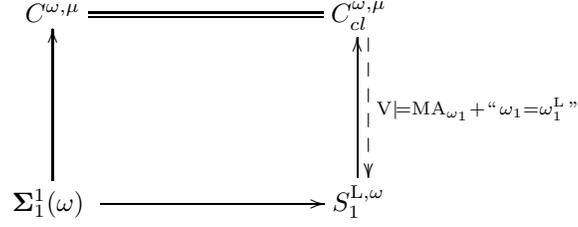


The complete diagram shown in Figure \ref{figure:ClassicalHigher} summarizes these results.


Analogous to the above results, these classes behave quite differently in the case where $\kappa$ is an uncountable cardinal with $\kappa=\kappa^{{<}\kappa}$ and $\mu$ is a cardinal with $\kappa<\mu<2^\kappa$.
The results of this paper will show that the diagram shown in Figure \ref{figure:ResultsMu} is complete. 


\begin{figure}[ht]
 \begin{equation*}
\begin{xy}
\xymatrix{
  C^{\kappa,\mu} \ar@<-0.5ex>[rrr]   & & & C^{\kappa,\mu}_{cl} \ar@{-->}@<0.5ex>[dd]^{\VV\models\rm{BA}(\kappa)+\anf{\kappa^+=(\kappa^+)^\LL}}  \ar@{-->}@<-0.5ex>[lll]_{\VV^{\Add{\kappa}{\theta}}} \\ 
    & & & \\
  C^\kappa  \ar[uu]  \ar[rrr]  & & & \mathbf{\Sigma}^1_1(\kappa) \ar@<0.5ex>[uu]    \\
}
\end{xy}
\end{equation*} 
\caption{The provable and consistent relations between the considered classes in the case where $\kappa$ is an uncountable cardinal with $\kappa=\kappa^{{<}\kappa}$ and $\mu$ is a cardinal with $\kappa<\mu<2^\kappa$.}
\label{figure:ResultsMu}
\end{figure}
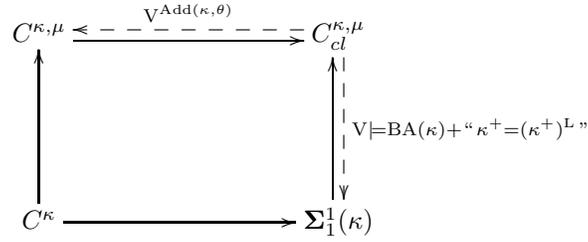


In the remainder of this section, we present the results that imply that the above diagrams contain all provable and consistent implications.


\subsection{\texorpdfstring{The class $C^\kappa$ of continuous images of ${}^\kappa\kappa$}{The class C-kappa}}\label{subsection:CKappa}

An important topological property of spaces of the form ${}^\omega\mu$ is the fact that non-empty closed subsets are retracts of the whole space (see  {\cite[2.8]{MR1321597}}), i.e. given a closed subset $A$ of ${}^\omega\mu$ 
there is a continuous (even Lipschitz) surjection $\map{f}{{}^\omega\mu}{A}$ such that $f\restriction A=\id_A$. The proof of this statement generalizes to higher cardinalities for a certain class of closed subsets.

 Given an infinite regular cardinal $\kappa$ and cardinals $\mu_0,\ldots,\mu_n,\nu$,  we say that a subtree $T$ of    ${}^{{<}\kappa}\mu_0\times\ldots\times{}^{{<}\kappa}\mu_n$ is \emph{${<}\nu$-closed} if for every $\lambda<\nu$ and every  $\leq$-increasing sequence $\seq{(t_0^\alpha,\ldots,t^\alpha_n)}{\alpha<\lambda}$ in $T$, the tuple   $(\bigcup_{\alpha<\lambda}t^\alpha_0,\ldots,\bigcup_{\alpha<\lambda} t^\alpha_n)$ is an element of $T$.  
We call a node $t$ of such a tree $T$ an \emph{end node} if it is maximal in $T$ with respect to the componentwise ordering.

\begin{proposition}\label{proposition:ClosedRetract}
 Let $\kappa$ be an infinite regular cardinal, $\mu_0,\ldots,\mu_n$ be cardinals and $T$ be a subtree of ${}^{{<}\kappa}\mu_0\times\ldots\times{}^{{<}\kappa}\mu_n$. 
 If $T$ is a ${<}\kappa$-closed tree without end nodes, then the closed set $[T]$ is a retract of ${}^\kappa\mu_0\times\ldots\times{}^\kappa\mu_n$. 
\end{proposition}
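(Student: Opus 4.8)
The plan is to mimic the classical retraction from {\cite[2.8]{MR1321597}}, building a map $\map{r}{{}^\kappa\mu_0\times\dots\times{}^\kappa\mu_n}{[T]}$ that follows its input through the tree $T$ as long as possible and reverts to a fixed canonical extension whenever the input leaves $T$. Since $T$ has no end nodes, every node $s\in T$ of length $\gamma$ has a proper extension $s'$ in $T$, and then the componentwise restriction $s'\restriction(\gamma+1)$ is an immediate successor of $s$ lying in $T$ by the subtree property; using the axiom of choice I would fix once and for all a function $c$ assigning to each node $s\in T$ such an immediate successor $c(s)$. We may assume $T\neq\emptyset$, since otherwise $[T]=\emptyset$ and there is nothing to do.

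First I would define $r(x)$ for $x=(x_0,\dots,x_n)$ by recursion on the length $\alpha\leq\kappa$ of an initial segment $y\restriction\alpha$, maintaining the invariant $y\restriction\alpha\in T$. Start with $y\restriction 0$ equal to the root. At a successor stage $\alpha=\beta+1$: if the componentwise one-step extension of $y\restriction\beta$ by the values $x(\beta)=(x_0(\beta),\dots,x_n(\beta))$ lies in $T$, put $y(\beta)=x(\beta)$; otherwise set $y\restriction(\beta+1)=c(y\restriction\beta)$. At a limit $\lambda<\kappa$, let $y\restriction\lambda$ be the componentwise union $\bigl(\bigcup_{\beta<\lambda}y_0\restriction\beta,\dots,\bigcup_{\beta<\lambda}y_n\restriction\beta\bigr)$.

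The verification then splits into three points. First, the recursion never breaks down and $r(x)\in[T]$: at successor steps the invariant $y\restriction\alpha\in T$ is preserved either because the chosen extension lies in $T$ by construction or because $c(y\restriction\beta)\in T$; the crucial case is the limit stage, where the invariant is preserved precisely because $\seq{y\restriction\beta}{\beta<\lambda}$ is a $\leq$-increasing sequence of length $\lambda<\kappa$ and $T$ is ${<}\kappa$-closed. This is exactly where the hypotheses on $T$ enter, and it is the step I expect to be the main obstacle in the sense that it is where the uncountable case genuinely differs from $\anf{\kappa=\omega}$ (there the limit stages are vacuous, and no closure hypothesis is needed). Second, $r\restriction[T]=\id_{[T]}$: if $x\in[T]$, then an induction shows that the \emph{follow} alternative always applies, so $c$ is never invoked and $y=x$. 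Third, continuity: an induction on $\alpha$ shows that $y\restriction\alpha$ is completely determined by $x\restriction\alpha$, since $y(\beta)$ depends only on $y\restriction\beta$ and on $x(\beta)$. Hence $r(x)\restriction\alpha=r(x')\restriction\alpha$ whenever $x\restriction\alpha=x'\restriction\alpha$, so the preimage of the basic open set $N_{r(x)\restriction\alpha}$ contains the basic open neighbourhood $N_{x\restriction\alpha}$ of $x$; this non-expanding property yields continuity (and in fact shows that $r$ is Lipschitz). Together these three points show that $r$ is a retraction of the product onto $[T]$.
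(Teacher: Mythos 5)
Your proof is correct. The recursion never leaves $T$ (successor steps by the follow alternative or the choice function $c$, limit steps by ${<}\kappa$-closedness), it fixes $[T]$ pointwise, and the invariance of $r(x)\restriction\alpha$ under changes of $x$ above $\alpha$ gives the Lipschitz bound and hence continuity; you also correctly locate the limit stage as the only point where the uncountable case differs from $\anf{\kappa=\omega}$.

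The paper realizes the same idea by a one-shot construction rather than a stepwise recursion. It forms the boundary $\partial T$ of minimal nodes outside $T$, observes that every $s\in\partial T$ is a direct successor of a node $t_s\in T$ (here ${<}\kappa$-closedness enters: a boundary node of limit length would be the union of an increasing sequence in $T$ and hence lie in $T$), fixes in advance a cofinal branch $x_s\supseteq t_s$ (such branches exist through every node because $T$ is ${<}\kappa$-closed without end nodes), and sends each $y\notin[T]$ to $x_{s_y}$, where $s_y$ is the unique initial segment of $y$ lying in $\partial T$. Consequently the paper's retraction is constant on each basic open set $N_{s}$ with $s\in\partial T$, so it is locally constant off $[T]$, whereas your map keeps consulting the input after the first deviation and may rejoin it; both maps are $1$-Lipschitz, and both proofs spend the hypotheses in matching places (closedness at limits of your recursion versus successor-length of boundary nodes and existence of cofinal branches in the paper; no end nodes for your successor function $c$ versus prolongability of branches there). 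Your approach is marginally longer but arguably more robust, since it is the direct transfinite generalization of the classical recursion and needs no analysis of $\partial T$. One pedantic point touching both arguments: if $T=\emptyset$ while the ambient product is nonempty, then $[T]=\emptyset$ is not a retract of anything nonempty, so your \anf{nothing to do} should really read \anf{the statement is tacitly restricted to nonempty $T$}; note that the paper itself inserts \anf{non-empty} when quoting the classical fact in Subsection \ref{subsection:CKappa}, and its own proof makes the same silent assumption.
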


\begin{proof}
 Note that our assumptions imply that there is a cofinal branch through every node of $T$. We define $\partial T$ to be the set 
 \begin{equation}\label{equation:DefBorderT}
  \Set{(s_0,\dots,s_n)\in({}^\gamma\mu_0\times\dots\times{}^\gamma\mu_n)\setminus T}{\gamma<\kappa,  \forall\alpha<\gamma (s_0\restriction\alpha,\dots,s_n\restriction\alpha)\in T}.   
 \end{equation}
 Then our assumptions imply that every element $s$ of $\partial T$ is the direct successor of an element $t_s$ of $T$ and 
 there is a cofinal branch $x_s$ through $T$ with $t_s\subseteq x_s$. If $y=(y_0,\dots,y_n)\in({}^\kappa\mu_0\times\dots\times{}^\kappa\mu_n)\setminus[T]$, then 
 there is a unique $\alpha<\kappa$ such that $(y_0\restriction\alpha,\dots,y_n\restriction\alpha)\in\partial T$ and we let $s_y$ denote this tuple.  
 This allows us to define a surjection $\map{f}{{}^\kappa\mu_0\times\dots\times{}^\kappa\mu_n}{[T]}$ by setting $f(x)=x$ if $x\in[T]$ and $f(y)=x_{s_y}$ otherwise. It is easy to see that $f$ is continuous.
\end{proof}

The following observation shows that the above topological property does not hold for all closed subsets of ${}^\kappa\mu$ if $\kappa$ is regular and uncountable.

\begin{proposition} \label{NoRetraction} 
 Suppose that $\kappa$ is an uncountable regular cardinal and $\mu>1$ is a cardinal. Let $A$ denote the set of all $x$ in ${}^\kappa\mu$ such that $x(\alpha)=0$ for only finitely many $\alpha<\kappa$.
 Then $A$ is a closed subset of ${}^\kappa\mu$ that is not a retract of ${}^\kappa\mu$. 
\end{proposition}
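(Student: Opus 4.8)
The plan is to verify the two assertions separately, the substantive one being that $A$ is not a retract.

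For closedness, I would show that the complement $B={}^\kappa\mu\setminus A=\Set{x\in{}^\kappa\mu}{\Set{\alpha<\kappa}{x(\alpha)=0}\text{ is infinite}}$ is open. Given $x\in B$, I pick an increasing sequence $\delta_0<\delta_1<\cdots$ below $\kappa$ with $x(\delta_n)=0$ for all $n$; since $\kappa$ is regular and uncountable, $\gamma=\sup_n\delta_n$ is below $\kappa$, and every $y\in N_{x\restriction(\gamma+1)}$ again has zeros at each $\delta_n$ and hence lies in $B$. Thus $B$ is open. (Equivalently, $A=[T]$ for the subtree $T$ of all $s\in{}^{{<}\kappa}\mu$ with finitely many zeros, the same countable-cofinality observation giving $[T]\subseteq A$.) This is exactly the step that fails for $\kappa=\omega$, where the corresponding set is not even closed.

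For the main claim, I would suppose toward a contradiction that $\map{r}{{}^\kappa\mu}{A}$ is a continuous retraction, i.e. $r$ is continuous and $r\restriction A=\id_A$, so that $r(z)\in A$ for every $z$. The goal is to produce a single $y\in B$ with $r(y)\in B$. The naive idea, namely to find a point $y\notin A$ all of whose proper initial segments extend into $A$ and then use continuity together with $r\restriction A=\id_A$ to force $r(y)=y$, cannot work: since $A$ is closed, any such $y$ would itself lie in $A$. The fix, and the heart of the argument, is to force infinitely many \emph{individual} zeros into $r(y)$, locking each one in by continuity before the next is planted. Concretely, I build by recursion on $n<\omega$ an increasing sequence $\delta_0<\delta_1<\cdots<\kappa$ and nodes $s_n=y\restriction\delta_n$, each with only finitely many zeros, starting from $s_0=\emptyset$ and $\delta_0=0$. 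Given $s_n$, let $a_n\in A$ be the extension of $s_n$ having value $0$ at $\delta_n$ and value $1$ at every ordinal above $\delta_n$ (this uses $\mu>1$); then $a_n$ has finitely many zeros, so $r(a_n)=a_n$ and in particular $r(a_n)(\delta_n)=0$. Since $\Set{w\in{}^\kappa\mu}{w(\delta_n)=0}$ is open and $r$ is continuous, there is $\delta_{n+1}>\delta_n$ such that $r(z)(\delta_n)=0$ for every $z$ with $z\restriction\delta_{n+1}=a_n\restriction\delta_{n+1}$; I set $s_{n+1}=a_n\restriction\delta_{n+1}$. Finally I let $y$ be the extension of $\bigcup_n s_n$ by the constant value $1$ above $\gamma=\sup_n\delta_n$, noting $\gamma<\kappa$ by regularity and uncountability of $\kappa$.

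It then remains to read off the contradiction. By construction the zero set of $y$ is exactly $\Set{\delta_n}{n<\omega}$, which is infinite, so $y\in B$; on the other hand $y\restriction\delta_{n+1}=s_{n+1}=a_n\restriction\delta_{n+1}$ for every $n$, so the modulus chosen at stage $n$ yields $r(y)(\delta_n)=0$, whence $r(y)$ has infinitely many zeros and also lies in $B$, contradicting $r(y)\in A$. I expect the main obstacle to be the realization, described above, that the direct boundary-point approach is blocked by the closedness of $A$; overcoming it requires the $\omega$-step fusion that plants a fresh zero at each stage and freezes it into $r(y)$ via continuity, with the zeros accumulating at the countable supremum $\gamma<\kappa$. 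This reliance on uncountable regularity — used both for closedness and for keeping $\gamma$ below $\kappa$ — is precisely what distinguishes the situation from $\kappa=\omega$ and makes Proposition~\ref{proposition:ClosedRetract} inapplicable here, since the tree $T$ above is not ${<}\kappa$-closed at $\lambda=\omega$.
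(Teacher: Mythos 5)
Your proof is correct and takes essentially the same route as the paper's: an $\omega$-step recursion that plants a zero at each stage (your $a_n(\delta_n)=0$, the paper's $x_n(\gamma_n)=0$) and uses continuity of the retraction to freeze that zero into the image before proceeding, then passes to a limit point over the countable supremum $\gamma<\kappa$ to conclude $r(y)\notin A$, contradicting that $r$ maps into $A$. The only cosmetic difference is that you lock in the single coordinate $\delta_n$ at each step, whereas the paper constrains the whole initial segment via $f[N_{x_n\restriction\gamma_{n+1}}]\subseteq N_{x_n\restriction(\gamma_n+1)}$; both versions of the continuity step work identically.
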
 

\begin{proof}
 Let $T$ denote the subtree of ${}^{{<}\kappa}\mu$ consisting of all $t\in {}^{{<}\kappa}\mu$ such that  $t(\alpha)=0$ for only finitely many $\alpha\in dom(t)$. 
 Since $\cof{\kappa}>\omega$, we have $A=[T]$ and this shows that $A$ is a closed subset of ${}^\kappa\mu$.

 Suppose that $\map{f}{{}^{\kappa}\mu}{A}$ is a retraction onto $A$. 
 We inductively construct sequences $\seq{x_n\in A}{n<\omega}$ and $\seq{\gamma_n<\kappa}{n<\omega}$ such that $x_n(\gamma_n)=0$, $\gamma_n<\gamma_{n+1}$ and $x_{n}\restriction\gamma_{n+1}=x_{n+1}\restriction\gamma_{n+1}$ for all $n<\omega$. 
 Let $\gamma_0=0$ and $x_0$ be an arbitrary element of $A$ with $x_0(0)=0$. Now assume that $x_n$ and $\gamma_n$ are already constructed. Then we find $\gamma_n<\gamma_{n+1}<\kappa$ with 
 $f[N_{x_n\restriction \gamma_{n+1}}]\subseteq N_{x_n\restriction (\gamma_n+1)}$. Pick $x_{n+1}\in A$ with $x_{n+1}\restriction\gamma_{n+1}=x_n\restriction\gamma_{n+1}$ and $x_{n+1}(\gamma_{n+1})=0$.

 Now pick $x_\omega\in{}^\kappa\mu$ with $x_n\restriction\gamma_{n+1}\subseteq x_\omega$ for all $n<\omega$ and set $\gamma_\omega=\sup_{n<\omega}\gamma_n$. 
 Given $n<\omega$, we have $f(x_\omega)\in N_{x_n\restriction\gamma_n+1}$ and therefore $f(x_\omega)(\gamma_n)=x_n(\gamma_n)=0$. Hence $f(x_{\omega})\notin A$, contradicting our assumption on $f$. 
\end{proof}

Since every non-empty closed subset of the Baire space ${}^\omega\omega$ is a retract of ${}^\omega\omega$, it follows that every closed subset of ${}^\omega\omega$ is a continuous image of ${}^\omega\omega$ and this 
implies that the class $C^\omega$ is equal to the class of all $\mathbf{\Sigma}^1_1$-subsets of ${}^\omega\omega$.
It is natural to ask whether these classes are also identical if $\kappa$ is an uncountable regular cardinal. In Section \ref{Section:thin}, we will answer this question in the negative by proving the following result.

\begin{theorem}\label{theorem:Main1}
 Let $\kappa$ be an uncountable cardinal with $\kappa=\kappa^{{<}\kappa}$. Then there is a closed non-empty subset of ${}^\kappa\kappa$ that is not the continuous image of ${}^\kappa\kappa$.
\end{theorem}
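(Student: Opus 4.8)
The plan is to isolate the \emph{perfect set property} as the feature distinguishing $C^\kappa$ from the class of all closed sets, and then to exhibit a closed set violating it. Concretely, I would prove two things: first, a dichotomy for $C^\kappa$ stating that every $A\in C^\kappa$ is either of cardinality at most $\kappa$ or contains a perfect set (a closed subset homeomorphic to ${}^\kappa 2$, hence of size $2^\kappa$); and second, the existence in $\ZFC$ (using only $\kappa=\kappa^{{<}\kappa}$) of a non-empty closed subset $A\subseteq{}^\kappa\kappa$ with $\kappa<|A|$ that contains no perfect set. Since $|A|>\kappa$, the dichotomy would force such an $A$ to contain a perfect set, a contradiction; hence $A\notin C^\kappa$, which is exactly the theorem. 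The conceptual reason this can work for $\kappa>\omega$ while it fails for $\kappa=\omega$ is that for uncountable regular $\kappa$ the tree of a closed set need not be closed under increasing unions of length ${<}\kappa$: a node may have all its proper initial segments in the tree yet lie in a ``gap'', reflecting the failure of ${<}\kappa$-compactness of ${}^\kappa\kappa$. This is precisely what allows thin closed sets of intermediate size to exist, and it is the phenomenon already visible in the proof of Proposition \ref{NoRetraction}.

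For the construction of the thin closed set I would build a subtree $T\subseteq{}^{{<}\kappa}\kappa$ of height $\kappa$ by a recursion of length $\kappa^+$, adding branches $\seq{b_\xi}{\xi<\kappa^+}$ one at a time, while arranging the behaviour at limit levels so that ``gaps'' accumulate densely enough to prevent any ${<}\kappa$-closed perfect subtree from being embedded into $T$. The counting hypothesis $\kappa=\kappa^{{<}\kappa}$ keeps every level of size $\le\kappa$ and makes the bookkeeping (each branch and each potential splitting node lives in a set of size $\kappa$) manageable. The resulting set $A=[T]$ has $\kappa^+$ cofinal branches but no perfect subtree; note that $T$ is deliberately \emph{not} ${<}\kappa$-closed, so that Proposition \ref{proposition:ClosedRetract} does not apply and $A$ is not a retract. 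When $\kappa^+<2^\kappa$ thinness is automatic, since a perfect set has size $2^\kappa>\kappa^+=|A|$; the genuine work is to guarantee thinness also when $2^\kappa=\kappa^+$, which is what the limit-level gap construction is designed to handle.

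The heart of the argument is the dichotomy for $C^\kappa$, and here the crucial asset is that the \emph{domain} ${}^\kappa\kappa$ \emph{is} ${<}\kappa$-closed: the full tree ${}^{{<}\kappa}\kappa$ has no gaps, every node has a non-empty basic open neighbourhood, and the union of an increasing ${<}\kappa$-sequence of nodes is again a node. Given a continuous surjection $\map{f}{{}^\kappa\kappa}{A}$ with $|A|>\kappa$, I would first pass to the subtree $D=\Set{s\in{}^{{<}\kappa}\kappa}{\kappa<|f[N_s]|}$ of ``big'' nodes; a cellularity argument (antichains in ${}^{{<}\kappa}\kappa$ have size $\le\kappa$) shows that the part of $A$ coming from non-big nodes has size $\le\kappa$, so the big nodes still carry all but $\le\kappa$ of $A$. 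The key local step is a condensation argument: if $\kappa<|f[N_s]|$, then $f[N_s]$ has more than $\kappa$ condensation points, so one can choose two incompatible extensions $s',s''\supseteq s$ whose images $f[N_{s'}]$ and $f[N_{s''}]$ lie in disjoint basic open sets and each again satisfies $\kappa<|f[N_{s'}]|$ and $\kappa<|f[N_{s''}]|$. Iterating this along ${}^{{<}\kappa}2$ and taking unions of the chosen nodes at limit stages (legitimate because the domain tree is ${<}\kappa$-closed) produces a perfect tree of nodes with pairwise separated images, and the induced map from ${}^\kappa 2$ into $A$ is a continuous injection with closed range, i.e.\ a perfect subset of $A$.

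I expect the main obstacle to be exactly the limit stages of this last construction: while the successor step preserves ``bigness'', a decreasing ${<}\kappa$-sequence of sets of size $>\kappa$ may have intersection of size $\le\kappa$, so the image of the limit neighbourhood could collapse even though every proper initial neighbourhood is big. Overcoming this requires a genuine fusion argument that plans the successor choices ahead so that bigness is maintained through all limit levels along every branch; equivalently, one must show that the tree of big nodes contains a perfect subtree. This is the step where the ${<}\kappa$-closedness of the domain is indispensable and where the contrast with the target set $A$ lives: for $A$ the analogous limit nodes genuinely die, which is why $A$ is thin, whereas for the domain the limit nodes always survive, which is why a continuous image can always be refined into a perfect set. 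I would organize this fusion via a rank function measuring how long splitting with big images can be sustained below a node, and argue that on a large subtree this rank is ``limit-continuous'', so that the construction never stalls.
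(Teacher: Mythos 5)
There is a genuine gap, and it is fatal to the overall strategy rather than to a repairable step. Your second pillar---a $\ZFC$ construction (from $\kappa=\kappa^{{<}\kappa}$ alone) of a closed set $A\subseteq{}^\kappa\kappa$ with $|A|>\kappa$ and no perfect subset---is not a theorem of $\ZFC$: it is consistent, relative to an inaccessible, that \emph{every} closed subset of ${}^\kappa\kappa$ of cardinality greater than $\kappa$ contains a perfect subset. Indeed, after forcing with $\Coll{\kappa}{{<}\lambda}$ for $\lambda>\kappa$ inaccessible (the very extensions this paper works in for Theorems \ref{Theorem:ClubFilterNotInKappaSolovay} and \ref{theorem:S1LKappaBaireInSolovay}), the $\kappa$-perfect set property holds for all $\mathbf{\Sigma}^1_1$-subsets, in particular for all closed sets; this is a known generalization of Solovay's theorem. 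So in those models your candidate set $A$ simply does not exist, while Theorem \ref{theorem:Main1} still holds there because it is a $\ZFC$ theorem---which shows your route cannot capture the actual phenomenon. (Conversely, the failure of the perfect set property for closed sets that your construction would establish outright carries large-cardinal strength: a $\kappa$-Kurepa tree yields such a set, and making all such examples disappear requires $\kappa^+$ to be inaccessible in $\LL$; a $\ZFC$ construction would contradict the consistency result above.) Your first pillar is also unsecured: the dichotomy \anf{every $A\in C^\kappa$ has size $\le\kappa$ or contains a perfect set} is true in the Levy-collapse models just mentioned but has no known $\ZFC$ proof, and the obstacle you yourself flag is exactly where your sketch stops being a proof. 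The tree of big nodes $\Set{s\in{}^{{<}\kappa}\kappa}{|f[N_s]|>\kappa}$ splits cofinally but is \emph{not} ${<}\kappa$-closed---bigness can die at limits---and the proposed repair (\anf{a rank function that is limit-continuous on a large subtree}) is precisely the unproven heart of the matter, not a routine fusion.

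It is worth seeing why the paper's proof is immune to all of this: its counterexample is \emph{fat}, not thin. The set $W$ of codes of well-orderings of $\kappa$ has cardinality $2^\kappa$ and contains perfect subsets (map $x\in{}^\kappa 2$ to the code of the order that arranges $\kappa$ in blocks $\{2\alpha,2\alpha+1\}$ and swaps the $\alpha$-th block according to $x(\alpha)$; this is a homeomorphism onto a closed subset of $W$), so no cardinality or perfect-set dichotomy can ever witness $W\notin C^\kappa$. Instead the paper first proves (Lemmas \ref{continuous image to projection} and \ref{projection to projection of tree}, the latter being the nontrivial conversion of a ${<}\kappa$-closed \emph{subset} into a ${<}\kappa$-closed \emph{tree} with the same projection) that every set in $C^\kappa$ is $p[T]$ for a ${<}\kappa$-closed tree $T$ without end nodes, and then exploits well-foundedness: assuming $W=p[T]$, the rank suprema $r(s,t,\alpha)$ together with a pigeonhole over $\kappa^+$ produce nodes where these suprema stay maximal, hence an $\omega$-descending sequence in the coded orders; the union node survives in $T$ by mere $\omega$-closure, lies on a cofinal branch since $T$ has branches through all nodes, and that branch codes an ill-founded order---a contradiction (Lemma \ref{lemma:WisNotProjectionClosedTree}). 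The obstruction separating closed sets from $C^\kappa$ is thus definability-theoretic (ill-foundedness appearing at limit levels), not smallness, and this is why the paper's argument is uniform in $\ZFC$ while any thinness-based strategy is necessarily model-dependent.
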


This result has the following corollary that shows that the class $C^\kappa$ lacks important closure properties.

\begin{corollary} 
 The class $C^\kappa$ is not closed under finite intersections. 
\end{corollary}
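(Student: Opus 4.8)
The plan is to derive the corollary from Theorem~\ref{theorem:Main1} by exhibiting two sets in $C^\kappa$ whose intersection is a closed set that cannot be a continuous image of ${}^\kappa\kappa$. The key observation is that every closed subset of ${}^\kappa\kappa$ trivially lies in $\mathbf{\Sigma}^1_1(\kappa)$, but Theorem~\ref{theorem:Main1} produces a closed set $C$ that is \emph{not} in $C^\kappa$; so if $C$ can be written as the intersection of two members of $C^\kappa$, then $C^\kappa$ fails to be closed under finite intersections.

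First I would fix the closed non-empty set $C\subseteq{}^\kappa\kappa$ supplied by Theorem~\ref{theorem:Main1}, which is not a continuous image of ${}^\kappa\kappa$. The task reduces to writing $C=A\cap B$ with $A,B\in C^\kappa$. A clean way to do this is to embed $C$ into the first coordinate of a product and use that the whole space and suitable closed ``graph-like'' sets are continuous images. Concretely, identify ${}^\kappa\kappa$ with ${}^\kappa\kappa\times{}^\kappa\kappa$ via a fixed homeomorphism, and realize $C$ as the intersection of the two sets
\begin{equation*}
 A ~ = ~ \Set{(x,y)\in{}^\kappa\kappa\times{}^\kappa\kappa}{x\in C}, \qquad B ~ = ~ \Set{(x,y)\in{}^\kappa\kappa\times{}^\kappa\kappa}{x=y}.
\end{equation*}
Here $A\cap B$ is homeomorphic to $C$ (it is the graph of $\id_C$), so $A\cap B\notin C^\kappa$, while I must verify that both $A$ and $B$ individually \emph{are} continuous images of ${}^\kappa\kappa$.

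The set $B$ is the diagonal, which is a closed set homeomorphic to ${}^\kappa\kappa$, hence clearly in $C^\kappa$ via the map $x\mapsto(x,x)$. The real content is showing $A\in C^\kappa$: although $A$ contains the non-$C^\kappa$ set $C$ as a ``slice,'' $A$ itself is a full-measure product-like set whose second coordinate ranges freely over ${}^\kappa\kappa$. I would argue that $A$ is a continuous image of ${}^\kappa\kappa$ by noting that $A=C\times{}^\kappa\kappa$ and that a product of the form $D\times{}^\kappa\kappa$, for $D$ any non-empty closed subset of ${}^\kappa\kappa$, is always a continuous image of ${}^\kappa\kappa$. This follows because the subtree $T_D\times{}^{{<}\kappa}\kappa$ presenting $D\times{}^\kappa\kappa$ is ${<}\kappa$-closed without end nodes once one takes the canonical pruned tree for $D$ and pairs it with the full tree ${}^{{<}\kappa}\kappa$; Proposition~\ref{proposition:ClosedRetract} then yields that $D\times{}^\kappa\kappa$ is a retract of ${}^\kappa\kappa\times{}^\kappa\kappa\cong{}^\kappa\kappa$, and a retract of a continuous image is a continuous image.

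The main obstacle I anticipate is precisely this last verification that $A\in C^\kappa$: one must check that the relevant tree is genuinely ${<}\kappa$-closed and has no end nodes, so that Proposition~\ref{proposition:ClosedRetract} applies. The delicate point is that $C$ need not itself be a retract of ${}^\kappa\kappa$ (indeed it cannot be, or it would lie in $C^\kappa$), yet the \emph{product} $C\times{}^\kappa\kappa$ gains enough ``room'' in the second coordinate to become a retract; the free factor ${}^{{<}\kappa}\kappa$ guarantees that every node has $\kappa$-many successors and that increasing unions stay in the tree, which is exactly what Proposition~\ref{proposition:ClosedRetract} requires. Once this is in place, the conclusion $C=A\cap B\notin C^\kappa$ with $A,B\in C^\kappa$ completes the proof that $C^\kappa$ is not closed under finite intersections.
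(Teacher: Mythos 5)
There is a fatal gap at exactly the step you flagged as the main obstacle: the set $A=C\times{}^\kappa\kappa$ is \emph{not} in $C^\kappa$, and no verification can make it so. If $\map{f}{{}^\kappa\kappa}{{}^\kappa\kappa\times{}^\kappa\kappa}$ were a continuous surjection onto $C\times{}^\kappa\kappa$, then composing $f$ with the continuous projection onto the first coordinate would exhibit $C$ itself as a continuous image of ${}^\kappa\kappa$, contradicting the choice of $C$ via Theorem \ref{theorem:Main1}. So the extra \anf{room} in the second coordinate buys nothing: for any non-empty $D$, the product $D\times{}^\kappa\kappa$ lies in $C^\kappa$ if and only if $D$ does. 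The concrete error in your tree argument is the claim that pairing with the full tree ${}^{{<}\kappa}\kappa$ yields a ${<}\kappa$-closed tree: an increasing union of nodes $(s_\alpha,t_\alpha)$ lies in $T_C\times{}^{{<}\kappa}\kappa$ only if $\bigcup_\alpha s_\alpha$ lies in $T_C$, and the second coordinate is irrelevant to this. For the canonical tree of the set $W$ from Theorem \ref{theorem:Main1} this closure fails badly: one can choose an increasing $\omega$-sequence of restrictions of codes of well-orders whose union codes a relation with an infinite descending chain, which is precisely the mechanism driving the proof of Lemma \ref{lemma:WisNotProjectionClosedTree}. Indeed, if the canonical tree of $C$ were ${<}\kappa$-closed without end nodes, then Proposition \ref{proposition:ClosedRetract} would already put $C$ itself in $C^\kappa$, so your hypothesis refutes itself.

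The repair requires modifying the tree of $C$ itself rather than passing to a product, and this is what the paper does. Writing $C=[T]$ and letting $\partial T$ be the boundary defined in (\ref{equation:DefBorderT}), one defines for each $i<2$ the tree $T_i$ obtained from $T$ by appending to every boundary node all constant strings with value $i$. Each $T_i$ is ${<}\kappa$-closed without end nodes: an increasing union of nodes of $T$ that leaves $T$ lands in $\partial T\subseteq T_i$, and unions within the appended parts stay there; hence $[T_i]\in C^\kappa$ by Proposition \ref{proposition:ClosedRetract}. On the other hand, a branch in $[T_0]\cap[T_1]\setminus[T]$ would pass through a unique boundary node and then be eventually constant with value $0$ and with value $1$ simultaneously, which is impossible; thus $[T_0]\cap[T_1]=[T]=C\notin C^\kappa$. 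Your diagonal set $B$ is harmless, but the decomposition $C=A\cap B$ cannot be salvaged because $A$ was never in $C^\kappa$.
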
 

\begin{proof} 
 Let $A=[T]$ be the closed subset provided by Theorem \ref{theorem:Main1}. Define $\partial T$ as in (\ref{equation:DefBorderT}) in the proof of Proposition \ref{proposition:ClosedRetract}. 
 Given $i<2$, define $T_i$ to be $T$ together with the set of all extensions of elements of $\partial T$ by strings of $i$'s of length less than $\kappa$, i.e. we define 
 \begin{equation*}
  T_i ~ = ~ T ~ \cup ~ \Set{t\in{}^{{<}\kappa}\kappa}{\exists\alpha\leq\length{t} ~ [t\restriction\alpha\in\partial T\wedge \forall\alpha\leq\beta<\length{t} ~ t(\beta)=i]}. 
 \end{equation*}
 Then $T_i$ is a ${<}\kappa$-closed subtree of ${}^\kappa\kappa$ without end nodes. 
 By Proposition \ref{proposition:ClosedRetract}, the subset $[T_i]$ is an element of $C^\kappa$. Our construction ensures that the intersection $[T_0]\cap[T_1]$ is equal to the set $A$ 
 and therefore is not an element of $C^\kappa$. 
\end{proof}


\subsection{\texorpdfstring{The class $I^\kappa$ of continuous injective images of ${}^\kappa\kappa$}{The class I-kappa}}

We discuss results separating the class $I^\kappa$ from the other classes introduced above. 
Since every image of ${}^{\kappa}\kappa$ under a continuous injective map has no isolated points, it follows that the classes  $C^{\kappa}$ and $I_{cl}^{\kappa}$ are both not contained in $I^{\kappa}$.

\begin{proposition} 
 If $\kappa$ is an uncountable cardinal with $\kappa=\kappa^{{<}\kappa}$, then every non-empty open subset of ${}^\kappa\kappa$ is equal to an injective continuous image of ${}^\kappa\kappa$.
\end{proposition}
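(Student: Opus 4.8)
The plan is to prove the stronger statement that every non-empty open subset $U$ of ${}^\kappa\kappa$ is in fact \emph{homeomorphic} to ${}^\kappa\kappa$; since a homeomorphism is a continuous injective surjection, this at once gives the proposition (the homeomorphism, viewed as a map into ${}^\kappa\kappa$, is a continuous injection with range $U$).

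First I would decompose $U$ into pairwise disjoint basic clopen pieces. Let $A$ be the set of all $s\in{}^{{<}\kappa}\kappa$ that are $\subseteq$-minimal subject to $N_s\subseteq U$. Since the proper initial segments of a fixed node form a chain well-ordered by their lengths, every $t$ with $N_t\subseteq U$ has a unique initial segment lying in $A$, and minimality forces $A$ to be an antichain. Consequently $U=\bigsqcup_{s\in A}N_s$ is a partition of $U$ into clopen sets, and the subspace topology on $U$ coincides with the topology of this disjoint sum. As $U\neq\emptyset$ we have $A\neq\emptyset$, and as $\kappa=\kappa^{{<}\kappa}$ we have $|{}^{{<}\kappa}\kappa|=\kappa$, whence $1\le|A|\le\kappa$.

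The technical heart of the argument is the claim that, for every cardinal $\mu$ with $1\le\mu\le\kappa$, the topological disjoint sum of $\mu$ copies of ${}^\kappa\kappa$ is homeomorphic to ${}^\kappa\kappa$. For $\mu=\kappa$ this holds because the clopen sets $N_{\langle i\rangle}$ ($i<\kappa$) partition ${}^\kappa\kappa$ into $\kappa$ pieces, each homeomorphic to ${}^\kappa\kappa$ via the concatenation $x\mapsto\langle i\rangle^\frown x$. For general $\mu$, I would fix a partition $\seq{B_\alpha}{\alpha<\mu}$ of $\kappa$ into sets of cardinality $\kappa$ (possible since $\mu\cdot\kappa=\kappa$); then each clopen set $\bigcup_{i\in B_\alpha}N_{\langle i\rangle}$ is a disjoint sum of $\kappa$ copies of ${}^\kappa\kappa$ and hence, by the case just treated, homeomorphic to ${}^\kappa\kappa$, so ${}^\kappa\kappa$ splits into $\mu$ clopen copies of itself.

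Finally I would combine these ingredients: each $N_s$ with $s\in A$ is homeomorphic to ${}^\kappa\kappa$ via $x\mapsto s^\frown x$, so $U=\bigsqcup_{s\in A}N_s$ is a disjoint sum of $|A|$ copies of ${}^\kappa\kappa$ with $1\le|A|\le\kappa$, and the claim gives $U\cong{}^\kappa\kappa$. The main obstacle, and the only place where the hypothesis $\kappa=\kappa^{{<}\kappa}$ is genuinely used, is keeping the number of pieces at most $\kappa$: for $\mu>\kappa$ the disjoint sum of $\mu$ copies of ${}^\kappa\kappa$ has weight $\mu$ and is therefore not homeomorphic to ${}^\kappa\kappa$, so the bound $|A|\le\kappa$ is essential. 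The remaining points (that $A$ is an antichain covering $U$, that the subspace and disjoint-sum topologies agree, and that the concatenation maps are homeomorphisms) are routine verifications.
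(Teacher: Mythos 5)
Your proof is correct and takes essentially the same route as the paper's: decompose the open set into at most $\kappa$-many pairwise disjoint basic clopen sets, observe that each $N_s$ is homeomorphic to ${}^\kappa\kappa$ via concatenation, and glue the homeomorphisms together (the paper arranges exactly $\kappa$-many pieces, whereas you handle the case of fewer pieces through your disjoint-sum lemma for $1\le\mu\le\kappa$). Like the paper's argument, yours in fact establishes the stronger conclusion that every non-empty open $U\subseteq{}^\kappa\kappa$ is homeomorphic to ${}^\kappa\kappa$.
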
 

\begin{proof} We can write every open subset $A$ as a disjoint union $A=\bigcup_{\alpha<\kappa}N_{s_{\alpha}}$ of basic open sets, where $s_{\alpha}\in {}^{<\kappa}\kappa$ for  $\alpha<\kappa$. Next, we choose homeomorphisms $\map{f_{\alpha}}{ N_{s_{\alpha}}}{{}^{\kappa}\kappa}$ and combine them to a homeomorphism $\map{f}{{}^{\kappa}\kappa}{A}$. 
\end{proof}

To separate $I^\kappa$ from the class of all $\kappa$-Borel subsets of ${}^\kappa\kappa$, we need to introduce an important regularity property of subsets of ${}^\kappa\kappa$.  
We say that a subset $A$ of ${}^\kappa\kappa$ \emph{has the $\kappa$-Baire property} if there is an open subset $U$ of ${}^\kappa\kappa$ and a sequence $\seq{N_\alpha}{\alpha<\kappa}$ 
of nowhere dense subsets of ${}^\kappa\kappa$ such that the symmetric difference $A_\Delta U$ is a subset of $\bigcup_{\alpha<\kappa}N_\alpha$. Every $\kappa$-Borel subset of ${}^\kappa\kappa$ 
has the $\kappa$-Baire property (see {\cite{MR1880900}}). It is consistent that every $\mathbf{\Delta}^1_1$-subset has this property and 
it is also consistent that there is a $\mathbf{\Delta}^1_1$-subset of ${}^{\kappa}\kappa$ which does not have the $\kappa$-Baire property (see {\cite[Theorem 49]{FHK}}). We will prove the following result in Section \ref{section:I kappa}.

\begin{theorem} \label{non-separable I-kappa sets} 
 If $\kappa$ is an uncountable cardinal with $\kappa=\kappa^{{<}\kappa}$, then there is a sequence $\seq{A_\gamma\subseteq{}^\kappa\kappa}{\gamma<2^\kappa}$ of pairwise disjoint injective continuous images of ${}^\kappa\kappa$ 
 such that $A_\gamma$ and $A_{\delta}$ cannot be separated by sets with the $\kappa$-Baire property for all $\gamma<\delta<2^\kappa$. 
\end{theorem}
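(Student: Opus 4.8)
The plan is to isolate a category-theoretic obstruction to separation and then realize it $2^\kappa$ many times. Call a set $A\subseteq{}^\kappa\kappa$ \emph{everywhere nonmeager} if $A\cap N_s$ is not $\kappa$-meager (i.e.\ not contained in a union of $\kappa$ many nowhere dense sets) for every $s\in{}^{{<}\kappa}\kappa$. My first step is the separation lemma: \emph{if $A,A'\subseteq{}^\kappa\kappa$ are disjoint and both everywhere nonmeager, then no set with the $\kappa$-Baire property separates them.} Indeed, suppose $B$ has the $\kappa$-Baire property with $A\subseteq B$ and $A'\cap B=\emptyset$, and write $B\,\triangle\,U\subseteq M$ with $U$ open and $M$ $\kappa$-meager. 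From $U\setminus M\subseteq B$ and $A'\cap B=\emptyset$ we get $A'\cap U\subseteq M$; if $U\neq\emptyset$ then $U\supseteq N_s$ for some $s$, so $A'\cap N_s$ is nonmeager yet contained in the $\kappa$-meager set $M$, a contradiction. Hence $U=\emptyset$, so $B\subseteq M$ is $\kappa$-meager, contradicting $A\subseteq B$ and the nonmeagerness of $A$. Thus it suffices to construct $2^\kappa$ pairwise disjoint everywhere nonmeager sets, each an injective continuous image of ${}^\kappa\kappa$.

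I would obtain these as the fibers of a single continuous injection $\map{g}{{}^\kappa 2\times{}^\kappa\kappa}{{}^\kappa\kappa}$, using that ${}^\kappa 2\times{}^\kappa\kappa$ is homeomorphic to ${}^\kappa\kappa$ while ${}^\kappa 2$ has $2^\kappa$ points. For $z\in{}^\kappa 2$ put $\map{h_z}{{}^\kappa\kappa}{{}^\kappa\kappa}$, $h_z(y)=g(z,y)$, and $A_z=\ran{h_z}$. Since $g$ is injective, the $A_z$ are pairwise disjoint and each $h_z$ is an injective continuous map, so each $A_z$ is an injective continuous image of ${}^\kappa\kappa$. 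It then remains to force each $A_z$ to be everywhere nonmeager, and for this I would arrange, for every $z$, that (a) $\ran{h_z}$ is dense and (b) $h_z[N_u]$ is somewhere dense for every nonempty basic open $N_u$. Property (b) makes each $h_z^{-1}[D]$ dense open for dense open $D$ (a somewhere dense set meets every dense open set), hence $h_z^{-1}[G]$ is comeager for every comeager $G$; combined with (a), which makes $h_z^{-1}[N_s]$ a nonempty open set for every $s$, this yields a point $y$ with $h_z(y)\in N_s\cap G$, proving that $A_z\cap N_s$ is nonmeager for all $s$.

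To build $g$ I would run a recursion of length $\kappa$, specifying a monotone, length-increasing assignment $(r,u)\mapsto\sigma(r,u)\in{}^{{<}\kappa}\kappa$ for $(r,u)\in{}^\beta 2\times{}^\beta\kappa$ that takes unions at limits, and set $g(z,y)=\bigcup_{\beta<\kappa}\sigma(z{\restriction}\beta,y{\restriction}\beta)$; this makes $g$ continuous automatically, and $\kappa=\kappa^{{<}\kappa}$ lets me enumerate in $\kappa$ steps all the density, injectivity, and coding requirements. I would split the target coordinates into three disjoint blocks, each of size $\kappa$: \emph{steering} coordinates, used to drive $\sigma$ above any prescribed target node and thereby secure (a) and (b); \emph{echo} coordinates, into which the current value of $y$ is copied, securing injectivity of $g$ in the $y$-variable; and \emph{coding} coordinates attached to a fixed enumeration $\seq{\lambda_\alpha}{\alpha<\kappa}$ of the ordinals below $\kappa$ of cofinality $\omega$, together with chosen cofinal $\omega$-sequences $\seq{\lambda_{\alpha,n}}{n<\omega}$. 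The coding coordinates record $z(\alpha)$ as a \emph{tail invariant} of the coding values along $\seq{\lambda_{\alpha,n}}{n<\omega}$ (for instance: $z(\alpha)=0$ exactly when these values are eventually $0$), so that distinct $z$ force distinct branches and the fibers become pairwise disjoint.

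The main obstacle, and the point where the uncountability of $\kappa$ is indispensable, is reconciling (a)+(b) with disjointness of the fibers. Everywhere nonmeagerness forbids \emph{fixing} any single target coordinate across a fiber, since fixing values on an unbounded set of coordinates would make a fiber nowhere dense; yet the fibers must be separated according to the value of $z$, which is constant along each fiber. The resolution is that $z$ may be recovered from a branch only \emph{non-continuously}, as a tail invariant at the cofinality-$\omega$ points $\lambda_\alpha$: at any bounded stage only finitely many of the $\lambda_{\alpha,n}$ have been passed, so the invariant is never decided by an initial segment and the steering coordinates retain full freedom to realize any $N_t$, keeping every fiber dense; nevertheless the completed branch of length $\kappa$ determines the invariant and hence $z$. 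This is exactly why the statement must fail for $\kappa=\omega$, where the Lusin separation theorem forces disjoint analytic sets to be Borel separable and so rules out two disjoint everywhere nonmeager $\mathbf{\Sigma}^1_1$-sets. Verifying that the three blocks of requirements interleave without conflict, and that the resulting $g$ is genuinely injective with everywhere nonmeager fibers, is the technical heart of the argument.
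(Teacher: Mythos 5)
Your first half is sound and matches the paper: your ``everywhere nonmeager'' separation lemma is exactly the paper's observation about \emph{super-dense} sets (the two notions are equivalent here), and the reduction to producing $2^\kappa$ pairwise disjoint sets in $I^\kappa$ of this kind is precisely the paper's strategy. The gap is in the coding mechanism, and it is fatal as stated. Your central claim --- that a tail invariant along a fixed cofinal $\omega$-sequence $\seq{\lambda_{\alpha,n}}{n<\omega}$ below $\lambda_\alpha$ ``is never decided by an initial segment'' --- is false: since $\lambda_\alpha<\kappa$, \emph{every} initial segment of length greater than $\lambda_\alpha$ has passed all of the coding sites $\lambda_{\alpha,n}$ and decides the invariant completely. (Your statement is only true for initial segments of length below $\lambda_\alpha$.) Consequently, if every branch in the fiber $A_z$ must decode to $z(\alpha)$ at block $\lambda_\alpha$ --- which is what your disjointness argument requires --- then any basic open set $N_t$ with $\length{t}>\lambda_\alpha$ pins down the decoded bit at that block, and $A_z\cap N_t=\emptyset$ for every $z$ whose bit disagrees. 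So each fiber misses a wealth of basic open sets, density and everywhere-nonmeagerness fail, and your own separation lemma no longer applies. The problem is structural: \emph{any} coding of the constant value $z$ into a fixed, bounded set of output coordinates is decided by long enough initial segments and hence incompatible with super-density of the fibers; the coding sites must be unbounded in $\kappa$ and, because of your steering demands, cannot even be fixed in advance.

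The paper resolves exactly this tension by making the coding sites dynamic and club-like rather than fixed: for a stationary set $S\subseteq\kappa$, it takes pairs $(x,y)$ such that the agreement set $C=\Set{\alpha<\kappa}{x(\alpha)=y(\alpha)}$ is a club and $x(\alpha)=f(x\restriction\alpha,y\restriction\alpha)+\chi_S(\alpha)$ for $\alpha\in C$, where $f$ takes only even values. No initial segment ever decides the coding, because the club $C$ is built along the branch and can be postponed past any bounded constraint (this is how super-density is proved), yet the completed branch still carries the parity information $\chi_S$ at club-many points. Note also that the paper does \emph{not} obtain disjointness from injectivity of a single continuous map on ${}^\kappa 2\times{}^\kappa\kappa$ as you propose; instead, $A_S\cap A_{S'}=\emptyset$ is witnessed combinatorially: two pairs with the same first coordinate have agreement clubs $C,C'$, and if $S_\Delta S'$ is stationary one picks $\alpha\in C\cap C'\cap(S_\Delta S')$ and derives a parity contradiction. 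The family of $2^\kappa$ sets then comes from $S_X=\bigcup_{\alpha\in X}S_\alpha$ for a partition of $\kappa$ into $\kappa$ stationary sets, since $S_{X}\,\Delta\, S_Y$ is stationary for $X\neq Y$. To repair your proposal you would need to replace the fixed blocks $\lambda_\alpha$ by some such self-identifying, unbounded coding set, at which point you are essentially reconstructing the paper's argument.
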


\begin{corollary} \label{non-Borel I-kappa sets} 
 If $\kappa$ is an uncountable cardinal with $\kappa=\kappa^{{<}\kappa}$, then there is a continuous injective image of ${}^\kappa\kappa$ that is not a $\kappa$-Borel subset of ${}^\kappa\kappa$. \qed
\end{corollary}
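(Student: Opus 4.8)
The plan is to derive this directly from Theorem~\ref{non-separable I-kappa sets} together with the fact, recalled in the discussion above, that every $\kappa$-Borel subset of ${}^\kappa\kappa$ has the $\kappa$-Baire property (see \cite{MR1880900}). The bridge between the two classes is exactly this regularity property: membership in $B(\kappa)$ is a stronger condition than merely having the $\kappa$-Baire property, and the theorem manufactures a large family of sets whose failure to be separated is already witnessed at the weaker level of the $\kappa$-Baire property. So the corollary should fall out of the theorem by a short contrapositive argument.

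First I would argue by contradiction, assuming that every element of $I^\kappa$ is a $\kappa$-Borel subset of ${}^\kappa\kappa$. Applying Theorem~\ref{non-separable I-kappa sets}, I fix the sequence $\seq{A_\gamma}{\gamma<2^\kappa}$ of pairwise disjoint injective continuous images of ${}^\kappa\kappa$ with the stated non-separation property, and I single out any two members, say $A_0$ and $A_1$. By the contradiction hypothesis, both of these sets lie in $B(\kappa)$, and hence both have the $\kappa$-Baire property.

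The key step is then to observe that $A_0$ itself separates $A_0$ from $A_1$: we trivially have $A_0\subseteq A_0$, and $A_1\cap A_0=\emptyset$ by the pairwise disjointness of the sequence. Since $A_0$ has the $\kappa$-Baire property, this exhibits a separation of $A_0$ and $A_1$ by a set with the $\kappa$-Baire property, contradicting the conclusion of Theorem~\ref{non-separable I-kappa sets}. Consequently at least one $A_\gamma$ fails to be $\kappa$-Borel, and since each $A_\gamma$ is by construction a continuous injective image of ${}^\kappa\kappa$, this $A_\gamma$ is precisely the desired set. I expect no genuine obstacle in this step: the only thing to get right is lining up the definition of \emph{separation} with the pairwise disjointness, and all the real difficulty is absorbed into Theorem~\ref{non-separable I-kappa sets}, whose strong non-separability statement makes the corollary immediate.
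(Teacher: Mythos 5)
Your argument is correct and is exactly the reasoning the paper intends by marking the corollary with \qed: take any two sets $A_0,A_1$ from Theorem~\ref{non-separable I-kappa sets}, note that $A_0$ itself would separate $A_0$ from $A_1$ if it were $\kappa$-Borel (hence had the $\kappa$-Baire property by \cite{MR1880900}), and conclude that some $A_\gamma$ is not $\kappa$-Borel. No gaps; the only inessential difference is that the full $2^\kappa$-sized family is not needed, two members suffice, as your proof already exploits.
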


In Section \ref{section:I kappa}, we will present a variation of the proof of the above theorem that will allow us to prove the following surprising implication.

\begin{theorem} \label{special trees Baire} 
 Let $\nu$ be an infinite cardinal with $\nu=\nu^{{<}\nu}$ and $\kappa=\nu^+=2^\nu$. Assume that every $\kappa$-Aronszajn tree $T$ that does not contain a $\kappa$-Souslin subtree is 
 special.\footnote{Let $\kappa$ be an uncountable regular cardinal. A \emph{$\kappa$-Aronszajn tree} is a tree of height $\kappa$ without $\kappa$-branches and the 
 property that every level has cardinality less than $\kappa$. A $\kappa$-Aronszajn tree is a \emph{$\kappa$-Souslin tree} if every antichain in this tree has cardinality less than $\kappa$. If $\nu$ is an infinite cardinal, 
 then a tree $T$ of height $\nu^+$ is \emph{special} if there is a function $\map{s}{T}{\nu}$ that is injective on chains in $T$.}
 Then there is a $\mathbf{\Delta}^1_1$-subset of ${}^\kappa\kappa$ that does not have the $\kappa$-Baire property.
\end{theorem}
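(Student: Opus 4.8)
My plan rests on the characterisation of $\mathbf{\Delta}^1_1$ recorded in the introduction: a set is $\mathbf{\Delta}^1_1$ exactly when both it and its complement are $\mathbf{\Sigma}^1_1$, i.e. continuous images of closed subsets of ${}^\kappa\kappa$. Since a set with the $\kappa$-Baire property separating $A$ from ${}^\kappa\kappa\setminus A$ would have to equal $A$ itself, it suffices to produce a $\mathbf{\Sigma}^1_1$-subset $A$ of ${}^\kappa\kappa$ whose complement is also $\mathbf{\Sigma}^1_1$ and such that $A$ and its complement cannot be separated by a set with the $\kappa$-Baire property. I would obtain such an $A$ by running the construction behind Theorem \ref{non-separable I-kappa sets}, but tuned so that it yields two complementary spread-out pieces rather than a $2^\kappa$-sequence, and so that the tree governing the complement contains no $\kappa$-Souslin subtree.

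Concretely, I would represent $A$ as a projection $A=\Set{x\in{}^\kappa\kappa}{\exists y\ (x,y)\in[\calT]}$ of the cofinal branches of a subtree $\calT$ of ${}^{{<}\kappa}\kappa\times{}^{{<}\kappa}\kappa$, so that $A$ is automatically $\mathbf{\Sigma}^1_1$ (indeed an injective continuous image of ${}^\kappa\kappa$, as in Theorem \ref{non-separable I-kappa sets}). For $x\in{}^\kappa\kappa$ write $\calT_x=\Set{s}{(x\restriction\length{s},s)\in\calT}$ for the section tree, so that $x\in A$ if and only if $\calT_x$ has a cofinal branch. The crucial design requirement is to build $\calT$ so that every section $\calT_x$ has all levels of size less than $\kappa$ and contains no $\kappa$-Souslin subtree; then whenever $x\notin A$ the tree $\calT_x$ is a branchless $\kappa$-tree with levels of size $<\kappa$ and no $\kappa$-Souslin subtree, hence special (trivially if its height is $<\kappa$, and by the hypothesis of the theorem in the $\kappa$-Aronszajn case). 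Consequently
\begin{equation*}
 {}^\kappa\kappa\setminus A ~ = ~ \Set{x\in{}^\kappa\kappa}{\exists\map{g}{\calT_x}{\nu}\ g\text{ is injective on chains of }\calT_x},
\end{equation*}
where the right-hand side is $\mathbf{\Sigma}^1_1$ because such a specializing function $g$ can be coded by an element of ${}^\kappa\kappa$ and ``$g$ is injective on chains'' is verifiable over $(\HH{\kappa^+},\in)$; note that for $x\in A$ the section $\calT_x$ has a cofinal branch and hence admits no such $g$, so the two sides really do agree. This is exactly the point where the hypothesis is used: it upgrades the a priori merely $\mathbf{\Pi}^1_1$ complement to a $\mathbf{\Sigma}^1_1$-set, so that $A$ becomes $\mathbf{\Delta}^1_1$.

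It remains to arrange that $A$ and its complement cannot be separated by a set with the $\kappa$-Baire property, and here I would import the mechanism of Theorem \ref{non-separable I-kappa sets}. Assuming toward a contradiction that $A$ has the $\kappa$-Baire property, I would write $A=U\triangle M$ with $U$ open and $M$ contained in a union of $\kappa$-many nowhere dense sets, and then use the branching built into $\calT$ to construct, inside any prescribed basic open set, a $\kappa$-length fusion that avoids all these nowhere dense sets and along which the truth value of ``$x\in A$'' can be toggled. This produces points of both $A$ and ${}^\kappa\kappa\setminus A$ in every non-meager piece, contradicting that $A$ agrees with the open set $U$ off a set that is a union of $\kappa$-many nowhere dense sets.

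The main obstacle is the simultaneous fulfilment of the two opposing structural demands on $\calT$: the sections $\calT_x$ must be rich enough (splitting everywhere, with the toggling freedom) that the fusion argument defeats the $\kappa$-Baire property, yet every section $\calT_x$ must be thin enough (levels of size $<\kappa$) and must \emph{provably} contain no $\kappa$-Souslin subtree, so that the hypothesis applies uniformly and delivers the specializing functions witnessing $\mathbf{\Sigma}^1_1$-ness of the complement. Reconciling these — in particular verifying that the ``wrong'' sections $\calT_x$ for $x\notin A$ never accidentally hide a $\kappa$-Souslin subtree — is where I expect the real work to lie, and it is the feature that forces the cardinal arithmetic $\kappa=\nu^+=2^\nu$ together with $\nu=\nu^{{<}\nu}$ (which also guarantees $\kappa=\kappa^{{<}\kappa}$ and the existence of the special $\kappa$-Aronszajn trees underlying the construction) into the hypotheses.
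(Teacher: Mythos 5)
Your overall architecture does match the paper's proof in outline: exhibit a $\mathbf{\Sigma}^1_1$-set without the $\kappa$-Baire property whose non-membership is witnessed by a branchless tree, and use the hypothesis to specialize that tree, so that coding a function $\map{g}{T}{\nu}$ injective on chains into ${}^\kappa\kappa$ upgrades the a priori $\mathbf{\Pi}^1_1$ complement to $\mathbf{\Sigma}^1_1$. But there is a genuine gap at exactly the point you defer as ``the real work'': you never construct $\calT$, and the two structural demands you place on it --- every section $\calT_x$ has levels of size ${<}\kappa$ and \emph{provably} contains no $\kappa$-Souslin subtree, while still being rich enough for a toggling fusion --- have no evident direct realization; excluding Souslin subtrees of arbitrary sections is not something naive tree surgery can certify. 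The paper solves this with a specific combinatorial object rather than an abstract tree of pairs: the space $\mathscr{Coh}(\kappa,\nu)$ of $\nu$-coherent $C$-sequences, with $A=\mathscr{Triv}(\kappa,\nu)$; triviality of $\vec{C}$ is equivalent to a $\kappa$-branch through the derived tree $(\kappa,<_\nu^{\vec{C}})$. Note that this tree may have levels of size $\kappa$, so the paper does \emph{not} apply the hypothesis to the section tree itself (contrary to what your plan requires); instead it runs Todor{\v{c}}evi{\'c}'s minimal walks along a non-trivial $\vec{C}$ to obtain $T(\rho_0)$, shows $T(\rho_0)$ is $\kappa$-Aronszajn (no branch since $\vec{C}$ is non-trivial; levels of size at most $\nu$ using $\nu$-coherence and $\nu=\nu^{{<}\nu}$), invokes the theorem that trees of the form $T(\rho_0)$ never contain $\kappa$-Souslin subtrees to apply the hypothesis, and then pulls specialness back to $(\kappa,<_\nu^{\vec{C}})$ via a strictly increasing embedding. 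That minimal-walks detour is the essential missing idea: it is the known mechanism for producing $\kappa$-Aronszajn trees that provably avoid Souslin subtrees, and your sketch offers no substitute for it.

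A second, smaller gap: your fusion argument against the $\kappa$-Baire property is not justified as stated. For a projection $A=p[\calT]$, placing the limit of a fusion \emph{outside} $A$ is not a local condition --- one must kill every cofinal branch of $\calT_x$, which cannot be read off from bounded initial segments of the construction. The paper handles the analogous difficulty with a concrete invariant: along one fusion it arranges $\otp{C_{\gamma_\alpha}}\leq\nu$ on a club, which by Proposition \ref{proposition:TrivialMuCoherentSequences} rules out any thread (so the limit lies outside $\mathscr{Triv}(\kappa,\nu)$), and along the other it explicitly threads the sequence via the clause $\text{(iii)}^\prime$, placing the limit inside. Without a comparable invariant for your $\calT$, the claim that both $A$ and its complement meet every comeager-in-an-open set remains unproven.
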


The assumptions of the above theorem are known to be consistent in the case {\anf{$\nu=\omega$}} (see {\cite[Theorem 4.1]{MR788070}}). 
If $\nu$ is uncountable, then this assumption implies that $\square(\kappa)$ fails and $\kappa$ is weakly compact in $\LL$ (see {\cite[Corollary 6.3.3]{MR2355670}} and {\cite[Theorem 3]{MR929410}}).


\subsection{\texorpdfstring{The class $I^\kappa_{cl}$ of continuous injective images of closed subsets of ${}^\kappa\kappa$}{The class I-kappa-cl}}

By the above results, the class $I^\kappa_{cl}$ is neither contained in the class $B(\kappa)$ nor in the class of nonempty sets in $C^\kappa$. We present results that imply the remaining implications shown in the diagram of Figure \ref{figure:Results}.

\begin{lemma}\label{lemma:BorelIsInIKappaClosed}
 Every $\kappa$-Borel subset of ${}^{\kappa}\kappa$ is  a continuous injective image of a closed subset of ${}^\kappa\kappa$. 
\end{lemma}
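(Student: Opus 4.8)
The plan is to reduce the statement to a generalized \emph{change of topology} argument, exactly as in the classical case (cf. {\cite[13.1--13.5]{MR1321597}}), where the only real work lies in showing that the relevant class of refined topologies is closed under joins of $\le\kappa$ members.

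Call a topology $\sigma$ on the set ${}^\kappa\kappa$ \emph{admissible} if it refines the standard topology $\tau$ and the space $({}^\kappa\kappa,\sigma)$ is homeomorphic to a closed subspace of ${}^\kappa\kappa$; note that admissibility makes the identity map $({}^\kappa\kappa,\sigma)\longrightarrow({}^\kappa\kappa,\tau)$ continuous. The key reduction is that if $A\subseteq{}^\kappa\kappa$ is clopen in some admissible topology $\sigma$, then $A\in I^\kappa_{cl}$. Indeed, fix a homeomorphism $\map{h}{({}^\kappa\kappa,\sigma)}{D}$ onto a closed set $D\subseteq{}^\kappa\kappa$ and set $C=h[A]$. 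Since $A$ is $\sigma$-closed, $C$ is closed in $D$ and hence in ${}^\kappa\kappa$. The map $h^{-1}\restriction C$ is an injection onto $A$, and it is continuous as a map into $({}^\kappa\kappa,\tau)$, being the composition of the homeomorphism $h^{-1}$ with the continuous identity $({}^\kappa\kappa,\sigma)\to({}^\kappa\kappa,\tau)$. Thus $A$ is an injective continuous image of the closed set $C$.

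Next I would prove, by induction on the construction of $\kappa$-Borel sets, that every $A\in B(\kappa)$ is clopen in some admissible topology. The standard topology is admissible, and every basic set $N_s$ is already $\tau$-clopen, handling the generators. \emph{Complementation is free}: if $A$ is clopen in an admissible $\sigma$, then so is ${}^\kappa\kappa\setminus A$. For $\kappa$-unions $A=\bigcup_{\gamma<\kappa}A_\gamma$, where each $A_\gamma$ is (by induction) clopen in an admissible $\sigma_\gamma$, I would pass to the \emph{${<}\kappa$-join} $\sigma_\ast=\bigvee_{\gamma<\kappa}\sigma_\gamma$, whose basic open sets are intersections of fewer than $\kappa$ sets drawn from $\bigcup_{\gamma}\sigma_\gamma$. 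If $\sigma_\ast$ is admissible then $A$ is $\sigma_\ast$-open, and one final step converts a $\sigma$-open set $U$ into a clopen one: the disjoint-union topology $\sigma'$ making $U$ clopen is admissible. This uses that any open subset of ${}^\kappa\kappa$ is a disjoint union of $\le\kappa$ basic clopen sets (as in the proof of the proposition on open images), that an open subspace of a closed subspace of ${}^\kappa\kappa$ is again homeomorphic to a closed subspace (disjointify into the cylinders $N_{\langle i\rangle}$, using $\kappa=\kappa^{{<}\kappa}$), and that a disjoint union of $\le\kappa$ closed subspaces embeds as a closed subspace of ${}^\kappa\kappa$.

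The main obstacle is the \emph{join lemma}: the ${<}\kappa$-join of $\le\kappa$ admissible topologies is admissible. I would prove this by a diagonal embedding. Fix homeomorphisms $\map{h_\gamma}{({}^\kappa\kappa,\sigma_\gamma)}{D_\gamma}$ onto closed sets $D_\gamma\subseteq{}^\kappa\kappa$ and consider the ${<}\kappa$-supported box product $P=\prod^{{<}\kappa}_{\gamma<\kappa}({}^\kappa\kappa,\sigma_\gamma)$. Using $\kappa=\kappa^{{<}\kappa}$ and $\kappa\cdot\kappa=\kappa$, the product $\prod^{{<}\kappa}_{\gamma<\kappa}{}^\kappa\kappa$ is homeomorphic to ${}^\kappa\kappa$, and $\prod_\gamma h_\gamma$ identifies $P$ with the closed subspace $\prod_\gamma D_\gamma$, so $P$ is homeomorphic to a closed subspace of ${}^\kappa\kappa$. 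The diagonal $\delta=\Set{(x)_{\gamma<\kappa}}{x\in{}^\kappa\kappa}$ carries exactly the topology $\sigma_\ast$ (a basic $\sigma_\ast$-set $\bigcap_{\gamma\in F}O_\gamma$ with $|F|<\kappa$ is the trace of the box-open set $\bigcap_{\gamma\in F}\pi_\gamma^{-1}(O_\gamma)$), so it remains to see that $\delta$ is closed in $P$. Here the common coarse topology is essential: if $(x_\gamma)_{\gamma}\notin\delta$, pick $\gamma,\gamma'$ with $x_\gamma\neq x_{\gamma'}$ and separate them by disjoint \emph{standard} basic clopen sets $U,V$; since $\tau\subseteq\sigma_\gamma,\sigma_{\gamma'}$, the set $\pi_\gamma^{-1}(U)\cap\pi_{\gamma'}^{-1}(V)$ is a box-open neighbourhood of $(x_\gamma)_{\gamma}$ disjoint from $\delta$. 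Hence $\delta\cong({}^\kappa\kappa,\sigma_\ast)$ is closed in $P$, and therefore in ${}^\kappa\kappa$, so $\sigma_\ast$ is admissible. I expect the bookkeeping around the correct product topology, and the verifications that $\kappa=\kappa^{{<}\kappa}$ makes the relevant spaces homeomorphic to ${}^\kappa\kappa$, to be the most delicate points, while the closedness of the diagonal is the conceptual heart of the argument.
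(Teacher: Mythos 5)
Your proof is correct, but it takes a genuinely different route from the paper's. The paper argues by a direct coding: it represents a $\kappa$-Borel set $B$ by a well-founded labelled tree $T\subseteq{}^{{<}\omega}\kappa$ (labels $c$ and $u$, basic open sets attached to end nodes) and pairs each point $x$ with the truth-value function $\map{y}{T}{2}$ along the tree and a canonical witness selector $\map{z}{T}{T}$ choosing the $\prec$-least successor verifying each union node; the set of such triples $(x,y,z)$ is closed, and the first-coordinate projection is injective onto $B$ because $(y,z)$ is uniquely determined by $x$. You instead generalize the classical change-of-topology method of {\cite[13.1--13.5]{MR1321597}}. I checked your key steps and they are sound: the reduction (clopen in an admissible topology implies membership in $I^\kappa_{cl}$), the complementation and $\kappa$-union steps, and above all the join lemma — the closedness of the diagonal in the ${<}\kappa$-box product really does hinge, as you say, on all $\sigma_\gamma$ refining the common topology $\tau$, which has a clopen basis separating points; and the auxiliary facts you invoke hold under $\kappa=\kappa^{{<}\kappa}$ (every open set splits into at most $\kappa$ pairwise disjoint basic clopen sets, sums of at most $\kappa$ closed subspaces embed as closed subspaces via the cylinders $N_{\langle\alpha\rangle}$, and the ${<}\kappa$-box product of $\kappa$ copies of ${}^\kappa\kappa$ is ${}^{\kappa\times\kappa}\kappa\cong{}^\kappa\kappa$ by regularity of $\kappa$). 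Comparing the two approaches: the paper's coding is shorter and self-contained, with the witness function $z$ doing exactly the work your topology refinements do — turning the existential quantifier at union nodes into a closed condition via canonical witnesses; your argument is longer but isolates a reusable structural fact (admissible topologies are closed under ${<}\kappa$-joins, a $\kappa$-analogue of the classical lemmas), from which one gets more than the lemma itself, e.g.\ that every $\kappa$-Borel set is clopen, and every $\kappa$-Borel function continuous, in a finer admissible topology. Two points you should make explicit in a full write-up: first, the generators of $B(\kappa)$ are arbitrary open sets, but these reduce to the basic sets $N_s$ since there are only $\kappa^{{<}\kappa}=\kappa$ of them, so every open set is a $\kappa$-union of basic clopen sets; second, unlike the Polish case, admissibility cannot be strengthened to \anf{homeomorphic to ${}^\kappa\kappa$ itself} — by Proposition \ref{NoRetraction} closed subsets of ${}^\kappa\kappa$ need not be retracts, so your bookkeeping with closed subspaces rather than copies of the whole space is not merely delicate but necessary.
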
 

\begin{proof} 
 We can code a Borel set $B\subseteq {}^{\kappa}\kappa$ by a well-founded tree $T\subseteq {}^{{<}\omega}\kappa$ which has basic open sets $N_{s_t}$ attached to the end nodes $t$, 
 and labels $l_t\in\{c,u\}$ (for \emph{complement} and \emph{union}) at each non-end node $t$ such that every node with label $c$ has a unique successor in $T$.

Fix some well-ordering $\prec$ of $T$. We call a pair $(y,z)$  \emph{correct} if the following statements hold. 
 \begin{enumerate}
  \item $\map{y}{T}{2}$ is a function with $y(\emptyset)=1$, $l_{t_0}=c$ implies $y(t_0)=1-y(t_1)$ whenever $t_1$ is the unique successor of $t_0$ in $T$, and $l_{t_0}=u$ implies that $y(t_0)=1$ holds if and only if there is a direct successor $t_1$ of $t_0$ in $T$ with $y(t_1)=1$. 

  \item $\map{z}{T}{T}$ is a function such that $z(t)$ is the $\prec$-minimal successor $t'$ of $t$ with $y(t')=1$ whenever $l_t=u$ with $y(t)=1$ and $z(t)=t$ otherwise. 
 \end{enumerate}

 Let $\mathrm{End}_T$ denote the set of end nodes of $T$. 
 Define $C$ to be the set  $$ \Set{(x,y,z)\in {}^{\kappa}\kappa\times {}^{T}2\times{}^T T}{\textit{$(y,z)$ is correct} ~ \wedge ~ \forall t\in \mathrm{End}_T\ [y(t)=1\Longleftrightarrow x\in N_{s_t}]}.$$ 

 If we equip the set $C$ with the initial segment topology, then the resulting topological space is homeomorphic to a closed subset of the space ${}^\kappa\kappa$ and the projection $\map{p}{C}{{}^\kappa\kappa}$ onto the first coordinate is continuous. 
 Moreover, $B=p[C]$ and this projection is injective, because the pair $(y,z)$ is uniquely determined by $x$ for every triple $(x,y,z)\in C$. 
\end{proof}

With the help of the above lemma, we will prove the following result that will allow us to show that it is consistent with the axioms of $\ZFC$ that every $\mathbf{\Sigma}^1_1$-set is equal 
to a continuous injective image of a closed subset of ${}^\kappa\kappa$.

\begin{theorem}\label{theorem:SKappa1LinIKappaClosed}
 Let $\kappa$ be an uncountable cardinal with $\kappa=\kappa^{{<}\kappa}$. Then every subset in the class $S_1^{\LL,\kappa}$ is equal to an injective continuous image of a closed subset of ${}^\kappa\kappa$. 
\end{theorem}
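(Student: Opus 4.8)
The plan is to represent each $A\in S_1^{\LL,\kappa}$, say $A=\Set{x\in{}^\kappa\kappa}{\LL[x,y]\models\varphi(x,y)}$ with $\varphi$ a $\Sigma_1$-formula and $y\in{}^\kappa\kappa$ fixed, as a continuous injective image of a closed set by coding, for each $x\in A$, a \emph{canonical} size-$\kappa$ initial segment of $\LL[x,y]$ that witnesses $\varphi$. First I would use condensation to pass from $\LL[x,y]$ to witnesses of size $\kappa$. Since $\varphi$ is $\Sigma_1$, upward absoluteness shows that $\LL[x,y]\models\varphi(x,y)$ if and only if the transitive collapse $N_x$ of the $\Sigma_1$-Skolem hull $\mathrm{Hull}_1^{\LL[x,y]}((\kappa+1)\cup\{x,y\})$ satisfies $\varphi(x,y)$. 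Because $\kappa=\kappa^{{<}\kappa}$, this hull has cardinality $\kappa$ and contains $\kappa$ as a subset, so the collapse fixes $x$ and $y$, and by condensation $N_x=\LL_{\bar\beta_x}[x,y]$ for some $\bar\beta_x<\kappa^+$ depending only on $x$ (and the fixed $y$). The reason for using exactly this hull is that $N_x$ is \emph{canonically} determined by $x$, which is what will ultimately make the map injective.

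The heart of the argument is to recognise the objects $N_x$ by a closed (or at least $\kappa$-Borel) condition, and here the main obstacle appears: being a well-founded model of $V=\LL[\dot x,\dot y]$ is a priori only a $\mathbf{\Pi}^1_1$-condition. I would circumvent this by never coding $N_x$ as a single membership relation on $\kappa$, but instead as a branch recording its canonical continuous filtration into genuine transitive levels of size less than $\kappa$. Concretely, let $X_\gamma=\mathrm{Hull}_1^{N_x}((\gamma+1)\cup\{x,y\})$ for $\gamma<\kappa$; this is a $\leq$-increasing, continuous chain of $\Sigma_1$-elementary substructures of $N_x$ with union $N_x$, each of size less than $\kappa$, whose transitive collapses $\bar X_\gamma$ are, again by condensation, genuine levels $\LL_{\eta_\gamma}[x\restriction\delta_\gamma,y\restriction\delta_\gamma]$ with $\eta_\gamma,\delta_\gamma<\kappa$. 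Coding the sequence $\seq{\bar X_\gamma}{\gamma<\kappa}$ together with the collapse embeddings and with $x,y$ as a single element of a space homeomorphic to ${}^\kappa\kappa$, I would let $D$ be the set of all codes arising in this way. Well-foundedness is now free: each coordinate $\bar X_\gamma$ is a genuine transitive set, a property verifiable from bounded information, and the direct limit of the $\bar X_\gamma$ along the collapse maps is automatically well-founded and hence, by condensation, equal to some $\LL_{\bar\beta}[x,y]$.

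The conditions defining $D$ are local in the length of the filtration: \anf{$\bar X_{\gamma+1}$ is the transitive collapse of the $\Sigma_1$-hull generated by the image of $\bar X_\gamma$ together with $\gamma$ and $x(\gamma),y(\gamma)$}, \anf{each $\bar X_\gamma$ is a genuine level of the constructible hierarchy relative to the displayed reals}, continuity at limits, and \anf{the limit model satisfies $\varphi(x,y)$}. Each is a closed requirement on the code or a conjunction of $\kappa$-many basic requirements, so $D$ is a $\kappa$-Borel subset of its ambient space. Since the filtration rule is deterministic, a code in $D$ is uniquely determined by its displayed real $x$; hence the continuous projection $\map{\pi}{D}{{}^\kappa\kappa}$ onto the $x$-coordinate is \emph{injective}, and by the first paragraph its range is exactly $A$. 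Finally, to land inside $I^\kappa_{cl}$ rather than merely exhibiting $A$ as a continuous injective image of a $\kappa$-Borel set, I would invoke Lemma \ref{lemma:BorelIsInIKappaClosed}: after identifying the ambient space with ${}^\kappa\kappa$, it yields a closed set $C$ and a continuous injection $\map{g}{C}{D}$ with $\ran{g}=D$, so that $\pi\circ g$ is a continuous injection of the closed set $C$ onto $A$.

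The main obstacle is thus the tension between the two requirements: closedness of the domain forces us to avoid the $\mathbf{\Pi}^1_1$-predicate \anf{well-founded model}, while injectivity forces a canonical, $x$-determined witness. Resolving both at once is what motivates coding $N_x$ by its canonical $\Sigma_1$-hull filtration, since the filtration replaces the global well-foundedness check by $\kappa$-many local checks against genuine small levels, and its deterministic generation rule guarantees the bijective correspondence between branches and points of $A$. The remaining work, which I expect to be routine, is to fix the pairing and coding apparatus, to verify that the enumerated local conditions are genuinely closed or $\kappa$-Borel, and to carry out the size and condensation bookkeeping that keeps every $\bar X_\gamma$ below $\kappa$ and every witnessing model $N_x$ below $\kappa^+$.
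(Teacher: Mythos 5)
Your overall strategy is viable and genuinely different from the paper's: the paper sidesteps the $\mathbf{\Pi}^1_1$ predicate \anf{well-founded model} by coding each point of $A$ together with an $\omega$-tower of \emph{minimal} models of $\mathsf{KP}+\anf{V=\LL[x,z]}$, each certifying that the previous one's code is the $<_{\LL[x,z]}$-least such code, so that the defining condition becomes arithmetic in the codes and hence $\kappa$-Borel; you instead propose the hull-filtration method, replacing one global well-foundedness check on a size-$\kappa$ structure by $\kappa$-many bounded checks against genuine transitive levels of size less than $\kappa$. Your key observation there is correct: a continuous chain of length $\kappa$ of genuinely well-founded structures has well-founded direct limit, since $\cof{\kappa}>\omega$ traps any descending $\in$-sequence at a stage below $\kappa$. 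Both arguments then finish identically via Lemma \ref{lemma:BorelIsInIKappaClosed} composed with an injective continuous projection.

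However, there is a genuine gap at exactly the junction where injectivity and $\kappa$-Borelness must coexist. You introduce $D$ as \anf{the set of all codes arising in this way}, i.e.\ codes of the canonical filtration of $N_x$; but $N_x$ is defined by reference to the proper class $\LL[x,y]$ (as the collapse of a hull that is $\Sigma_1$-correct in $\LL[x,y]$), and that is not a property your local conditions can detect. What the conditions you list (genuine small levels, deterministic successor hull rule, continuity, limit satisfies $\varphi$) actually define is the set of codes of canonical filtrations of \emph{any} self-hulled level $\LL_{\bar\beta}[x,y]\models\varphi(x,y)$, and for each $x\in A$ there is more than one such level: if $\bar\beta_0$ is least with $\LL_{\bar\beta_0}[x,y]\models\varphi(x,y)$, then the least $\bar\beta_1$ such that $\LL_{\bar\beta_1}[x,y]$ models the $\Sigma_1$ sentence \anf{some level $\LL_\beta[x,y]$ satisfies $\varphi(x,y)$} is strictly larger, satisfies $\varphi(x,y)$ by upward $\Sigma_1$-absoluteness, and is again self-hulled, so its filtration yields a second code over the same $x$. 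Hence either $D$ is the set of canonical codes (injective with range $A$, but not shown to be $\kappa$-Borel, since your local conditions do not define it) or the local-condition set ($\kappa$-Borel with range still $A$, but with non-injective projection). The repair is standard but must be made explicit: add a \emph{local minimality} requirement, namely that each $\bar X_\gamma$ internally satisfies \anf{no level of mine models $\varphi$ of the restricted parameters}; being $\Pi_1$, this transfers both ways along the $\Sigma_1$-embeddings and pins the limit down as the least level of $\LL[x,y]$ modelling $\varphi(x,y)$. You must also fix the filtration canonically along the club of closure points, noting that $\mathrm{Hull}_1^{N_x}((\gamma+1)\cup\{x,y\})\cap\kappa$ is in general not an ordinal, so the collapse sends $x$ to $x\restriction\delta_\gamma$ only at such closure points, contrary to what your successor rule tacitly assumes at every $\gamma$. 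With these additions your argument goes through.
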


The above statement now follows from the following observation.

\begin{proposition} \label{proposition: Sigma-1-1 and S-1 are equal in L} 
 Assume $V=\LL[x]$ with $x\subseteq \kappa$ for some uncountable cardinal $\kappa$ with $\kappa=\kappa^{{<}\kappa}$. Then the classes $S_1^{\LL,\kappa}$ and $\mathbf{\Sigma}^1_1(\kappa)$ coincide. 
\end{proposition}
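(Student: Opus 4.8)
The plan is to prove the two inclusions separately, and it will turn out that only one of them uses the hypothesis. Throughout, write the hypothesis as $V=\LLOf{a}$ with $a\subseteq\kappa$, reserving $x$ for the variable ranging over ${}^\kappa\kappa$ in the definition of $S_1^{\LL,\kappa}$. Since $M=\LL$, for all $x,y\in{}^\kappa\kappa$ the model $M[x,y]=\bigcup_{z\in\LL}\LLOf{x,y,z}$ is simply $\LLOf{x,y}$. I will freely use the two equivalent descriptions of $\mathbf{\Sigma}^1_1(\kappa)$ recalled in Section~\ref{section:Intro}: as the class of subsets of ${}^\kappa\kappa$ that are $\Sigma_1$-definable over $(\HH{\kappa^+},\in)$ with parameters, and as the class of projections $p[[T]]$ of closed sets $[T]$ for subtrees $T\subseteq{}^{{<}\kappa}\kappa\times{}^{{<}\kappa}\kappa$.

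For the inclusion $\mathbf{\Sigma}^1_1(\kappa)\subseteq S_1^{\LL,\kappa}$, which is where the hypothesis enters, I would start from $A=\Set{x\in{}^\kappa\kappa}{\exists z\in{}^\kappa\kappa\ \forall\alpha<\kappa\ (x\restriction\alpha,z\restriction\alpha)\in T}$ for a suitable subtree $T$. Using $\kappa=\kappa^{{<}\kappa}$, fix a bijection ${}^{{<}\kappa}\kappa\to\kappa$, let $c\subseteq\kappa$ code $T$ through it, and let $y\in{}^\kappa\kappa$ code the pair $(a,c)$. The point is that $a$ is decodable from $y$ inside $\LLOf{x,y}$, so $\LLOf{a}\subseteq\LLOf{x,y}\subseteq V=\LLOf{a}$ and hence $\LLOf{x,y}=V$ for \emph{every} $x$. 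Consequently the formula $\varphi(x,y)$ asserting that there is $z\in{}^\kappa\kappa$ with $(x\restriction\alpha,z\restriction\alpha)\in T$ for all $\alpha<\kappa$ (where $T$ and $\kappa$ are recovered from $y$, so that $x,y$ are the only free variables) satisfies $\LLOf{x,y}\models\varphi(x,y)$ iff $V\models\varphi(x,y)$ iff $x\in A$. Its only unbounded quantifier is the witness $\exists z$, so $\varphi$ is $\Sigma_1$ and $A\in S_1^{\LL,\kappa}$.

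For the inclusion $S_1^{\LL,\kappa}\subseteq\mathbf{\Sigma}^1_1(\kappa)$ I would argue in plain $\ZFC$, not using the hypothesis. Let $A=\Set{x}{\LLOf{x,y}\models\varphi(x,y)}$ with $\varphi=\exists u\,\chi(u,x,y)$ and $\chi$ a $\Delta_0$-formula. The heart of the matter is the reflection equivalence
\[
\LLOf{x,y}\models\varphi(x,y)\quad\Longleftrightarrow\quad\exists\beta<\kappa^+\ \big(\LL_\beta[x,y]\models\varphi(x,y)\big).
\]
The reverse direction is upward $\Sigma_1$-absoluteness. For the forward direction, given a witness $u_0\in\LLOf{x,y}$, take an elementary submodel $H\prec\LL_\delta[x,y]$ of size $\kappa$ for large enough $\delta$ with $\{x,y,u_0\}\cup(\kappa+1)\subseteq H$; since $x,y$ are functions on $\kappa$ and $\kappa+1\subseteq H$, the transitive collapse of $H$ fixes $x$ and $y$ and has the form $\LL_\beta[x,y]$ with $\beta<\kappa^+$, and by $\Delta_0$-absoluteness the collapse of $u_0$ witnesses $\varphi(x,y)$ there. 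Since every $\LL_\beta[x,y]$ with $\beta<\kappa^+$ lies in $\HH{\kappa^+}$ and the relativised hierarchy $\beta\mapsto\LL_\beta[x,y]$ is uniformly $\Delta_1$-definable, the right-hand side is a $\Sigma_1$-condition over $(\HH{\kappa^+},\in)$ in the parameters $x,y$. Hence $A$ is $\Sigma_1$-definable over $\HH{\kappa^+}$, i.e. $A\in\mathbf{\Sigma}^1_1(\kappa)$.

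The hard part will be the reflection equivalence of the previous paragraph: I must ensure that a $\Sigma_1$-witness existing anywhere in the proper-class model $\LLOf{x,y}$ already appears in some initial segment of height below $\kappa^+$, and that the collapse leaves $x$ and $y$ unchanged so that the reflected level is genuinely of the form $\LL_\beta[x,y]$. This is exactly where the situation diverges from the classical case $\kappa=\omega$: there the same argument only yields membership in $\Sigma_1(\HH{\omega_1})=\mathbf{\Sigma}^1_2$, and it is the fact that $\mathbf{\Sigma}^1_1(\kappa)=\Sigma_1(\HH{\kappa^+})$ for uncountable $\kappa=\kappa^{{<}\kappa}$ that makes the reflected definition land back in $\mathbf{\Sigma}^1_1(\kappa)$. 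A minor point to verify in the first inclusion is that coding $(a,c)$ into a single $y$ and decoding $T$ and $\kappa$ from it is carried out by formulas simple enough to keep $\varphi$ genuinely $\Sigma_1$.
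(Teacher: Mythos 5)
Your proposal is correct and takes essentially the same route as the paper, which compresses both of your inclusions into the single observation that $\mathbf{\Sigma}^1_1(\kappa)$ coincides with $\Sigma_1$-definability over $(\HH{\kappa^+},\in)$ and that, since $\HH{\kappa^+}=\LL_{\kappa^+}[x]$, such $\Sigma_1$-formulas are absolute between $\VV$ and $\HH{\kappa^+}$ (L\'evy absoluteness) --- your coding of $a$ into the parameter $y$ so that $\LL[x,y]=\VV$, and your Skolem-hull-plus-condensation reflection below $\kappa^+$, are exactly the two directions of that absoluteness spelled out. Your side remark that the inclusion $S_1^{\LL,\kappa}\subseteq\mathbf{\Sigma}^1_1(\kappa)$ needs no hypothesis beyond $\kappa=\kappa^{{<}\kappa}$ also matches the paper, which records this fact separately in its section on classes defined from inner models.
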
 

\begin{proof}
 By the above remarks, a subset of ${}^\kappa\kappa$ is $\mathbf{\Sigma}^1_1$-definable if and only if it is definable over the structure $(\HH{\kappa^+},\in)$ by a $\Sigma_1$-formula with parameters. 
 Since $\HH{\kappa^+}=\LL_{\kappa^+}[x]$, these formulas are absolute between $\VV$ and $\HH{\kappa^+}$.
\end{proof}

\begin{corollary}
 Let $\kappa$ be an uncountable cardinal with $\kappa=\kappa^{{<}\kappa}$. Assume that $\VV=\LL[x]$ for some $x\subseteq\kappa$. Then every $\mathbf{\Sigma}^1_1$-subset of ${}^\kappa\kappa$ is equal 
 to an injective continuous image of a closed subset of ${}^\kappa\kappa$. \qed
\end{corollary}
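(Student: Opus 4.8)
The plan is to derive this statement as an immediate consequence of the two results that immediately precede it, since under the stated hypotheses the class $\mathbf{\Sigma}^1_1(\kappa)$ has already been identified with a class for which the desired representation is known to hold.

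First I would invoke Proposition \ref{proposition: Sigma-1-1 and S-1 are equal in L}. The hypotheses of the corollary---that $\kappa$ is an uncountable cardinal with $\kappa=\kappa^{{<}\kappa}$ and that $\VV=\LL[x]$ for some $x\subseteq\kappa$---are exactly those of that proposition, so it applies verbatim and yields that the classes $S_1^{\LL,\kappa}$ and $\mathbf{\Sigma}^1_1(\kappa)$ coincide. In particular, every $\mathbf{\Sigma}^1_1$-subset of ${}^\kappa\kappa$ belongs to $S_1^{\LL,\kappa}$.

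Second, I would apply Theorem \ref{theorem:SKappa1LinIKappaClosed}, whose only assumption is again that $\kappa$ is uncountable with $\kappa=\kappa^{{<}\kappa}$. This theorem asserts that every member of $S_1^{\LL,\kappa}$ is equal to an injective continuous image of a closed subset of ${}^\kappa\kappa$. Chaining the two steps, every $\mathbf{\Sigma}^1_1$-subset of ${}^\kappa\kappa$ admits such a representation, which is precisely the conclusion of the corollary.

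Since both ingredients are already established, there is no genuine mathematical obstacle here; the statement is a pure corollary, which is why it is marked with \emph{qed} rather than a worked proof. The only point deserving attention is the bookkeeping of hypotheses: one must check that the assumptions of the corollary supply exactly what each of the two cited results demands---in particular that the cardinal-arithmetic assumption $\kappa=\kappa^{{<}\kappa}$ and the model-theoretic assumption $\VV=\LL[x]$ are used in the right places---so that no additional hypothesis is silently required when the two results are combined.
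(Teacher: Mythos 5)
Your proposal is correct and is exactly the paper's intended argument: the corollary is marked \qed precisely because it follows by combining Proposition \ref{proposition: Sigma-1-1 and S-1 are equal in L} (giving $\mathbf{\Sigma}^1_1(\kappa)=S_1^{\LL,\kappa}$ under $\VV=\LL[x]$) with Theorem \ref{theorem:SKappa1LinIKappaClosed} (placing $S_1^{\LL,\kappa}$ inside $I^\kappa_{cl}$). Your hypothesis bookkeeping is also right, so there is nothing to add.
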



In the other direction, we will show that it is also consistent that $C^{\kappa}$ is not a subclass of $I^{\kappa}_{cl}$.

\begin{theorem}\label{Theorem:ClubFilterNotInKappaSolovay}
 Let $\kappa$ be an uncountable cardinal with $\kappa=\kappa^{{<}\kappa}$ and let $G$ be either $\Add{\kappa}{\kappa^+}$-generic over $\VV$ or $\Coll{\kappa}{{<}\lambda}$-generic over $\VV$ 
 for some inaccessible cardinal $\lambda>\kappa$. In $\VV[G]$, the \emph{club filter} $\mathrm{Club}_\kappa$ is not equal to an injective continuous image of a closed subset of ${}^\kappa\kappa$.
\end{theorem}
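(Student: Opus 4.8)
The plan is to argue by contradiction, so I assume that in $\VV[G]$ there are a closed set $C=[S]\subseteq{}^\kappa\kappa$, with $S$ a pruned subtree of ${}^{{<}\kappa}\kappa$, and a continuous bijection $\map{f}{C}{\mathrm{Club}_\kappa}$. Continuity lets me attach to each node $s\in S$ a string $f_*(s)\in{}^{{<}\kappa}\kappa$ so that $f$ maps every branch through $s$ into $N_{f_*(s)}$, with $\length{f_*(s)}$ tending to $\kappa$ along branches. Writing $A_s=f[[S_s]]$ for the image of the set $[S_s]$ of branches extending $s$, injectivity together with $S$ being pruned gives the partition $A_s=\bigsqcup_{s'}A_{s'}$ over the direct successors $s'$ of $s$, and injectivity at limits yields $\bigcap_{\alpha<\lambda}A_{c\restriction\alpha}=A_{c\restriction\lambda}$. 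Thus $f^{-1}$ is computed by following the unique branch of this $M$-definable partition scheme, while $x=f(c)$ is computed continuously from $c$.

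The first step is to pass to an intermediate model containing the code of this scheme. Both forcings are ${<}\kappa$-closed and, by $\kappa=\kappa^{{<}\kappa}$, satisfy the $\kappa^+$-chain condition, respectively the $\lambda$-chain condition. Since $S$ and $\seq{f_*(s)}{s\in S}$ are coded by a single subset of $\kappa$, there is an intermediate extension $M=\VV[G\restriction\delta]$, with $\delta<\kappa^+$ (resp. $\delta<\lambda$), containing this code, and $\VV[G]=M[H]$ where $H$ is generic for a tail $\mathbb{P}$ of the forcing. The forcing $\mathbb{P}$ is again ${<}\kappa$-closed, weakly homogeneous, and preserves stationary subsets of $\kappa$, so that $\mathrm{Club}_\kappa$ is computed coherently; moreover $\mathbb{P}$ absorbs $\Add{\kappa}{1}$, and in the collapse case the tail of $\Coll{\kappa}{{<}\lambda}$ above $\delta$ is again a L\'evy collapse of an inaccessible cardinal. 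All of $S$, $f$ and the partition scheme now lie in $M$.

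Next I extract a generic point of the club filter. The tree constructed in the first example of the introduction, which adds a branch $(x,y)$ with $x\in\mathrm{Club}_\kappa$ witnessed by the club $\Set{\alpha<\kappa}{y(\alpha)=0}$, is isomorphic to ${}^{{<}\kappa}\kappa$, i.e. to $\Add{\kappa}{1}$. Hence $H$ yields a point $x^*\in\mathrm{Club}_\kappa$ together with a witnessing club $E^*$, both lying in $M[x^*]$, where $x^*$ is $\Add{\kappa}{1}$-generic over $M$. In particular $M[x^*]\models x^*\in\mathrm{Club}_\kappa$, with the membership witnessed already in $M[x^*]$ by $E^*$. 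On the other hand $x^*\in\mathrm{Club}_\kappa=f[C]$ in $\VV[G]$, so there is a unique branch $c^*=f^{-1}(x^*)\in[S]$, and by continuity $x^*=f(c^*)$ is computed from $c^*$ inside $M$.

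The heart of the argument, and the step I expect to be the main obstacle, is to turn the asymmetry between these two witnesses into a contradiction. The membership $x^*\in\mathrm{Club}_\kappa$ is witnessed in $M[x^*]$ by $E^*$, whereas the membership $x^*\in f[C]$ is a $\mathbf{\Sigma}^1_1$-statement over $M$ whose only witness is the branch $c^*$. I will show that $c^*\notin M[x^*]$: by the chain condition the branch $c^*$ is captured in a small subextension $M[H_0]$, and continuity forces $x^*=f(c^*)\in M[H_0]$; playing the $\kappa^+$-many (resp. $\lambda$-many) mutually generic club points contained in $H$ against each other, a counting and pressing-down argument shows that for a suitably chosen $x^*$ the branch $c^*$ cannot be read off from $x^*$ alone, so that $M[x^*]\models x^*\notin f[C]$. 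Thus $\mathrm{Club}_\kappa\neq f[C]$ holds in $M[x^*]$, while equality holds in $\VV[G]$. The contradiction is then completed by a reflection argument special to the two forcings: the weak homogeneity of $\Add{\kappa}{\kappa^+}$, respectively the universality of the L\'evy collapse of the inaccessible cardinal $\lambda$, forces the equation $f[C]=\mathrm{Club}_\kappa$ to reflect down to $M[x^*]$. This last ingredient is precisely what fails for models of the form $\LL[x]$, where by Proposition \ref{proposition: Sigma-1-1 and S-1 are equal in L} the club filter does belong to $I^\kappa_{cl}$, and it is where the specific structure of the two forcings is indispensable.
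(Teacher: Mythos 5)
Your setup (reducing to an injective projection of a tree $T$, absorbing the code of $T$ into an intermediate model so that without loss of generality $T\in\VV$, and extracting a generic point of $\mathrm{Club}_\kappa$ from a tree equivalent to $\Add{\kappa}{1}$ -- this is exactly the paper's move of observing that $\Add{\kappa}{1}\ast\dot{\QQQ}$ has a ${<}\kappa$-closed dense subset and is therefore equivalent to $\Add{\kappa}{1}$) matches the paper's proof. But the heart of your argument is a genuine gap, and you flag it yourself: the claim that \emph{``a counting and pressing-down argument shows that for a suitably chosen $x^*$ the branch $c^*$ cannot be read off from $x^*$ alone''} is a placeholder, not a proof. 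The chain condition only places $c^*$ in some $M[H\restriction a]$ with $\betrag{a}\leq\kappa$, and nothing in a counting argument rules out that the coordinates in $a$ generating $x^*$ also compute $c^*$; to separate them you need an actual mutual-genericity argument, which you do not supply. Worse, your target is in tension with what the hypothesis actually forces: the paper shows that, under the assumption, the unique witness \emph{does} land in the small model, and derives the contradiction from that, rather than showing it cannot. Concretely, the paper factors the generic over $\VV$ as a Cohen set $g$ on $\Add{\kappa}{1}$ followed by \emph{two} mutually generic club-shooting filters $h_0\times h_1$ for $(\dot{\QQQ}\times\dot{\QQQ})^g$ (again absorbed into $\Add{\kappa}{1}$). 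Silver's $\Sigma_1$-elementarity $(\HH{\kappa^+}^{\VV[g,h_i]},\in)\prec_{\Sigma_1}(\HH{\kappa^+}^{\VV[G]},\in)$ plus homogeneity yields witnesses $y_i\in\VV[g,h_i]$ with $(\dot{x}^g,y_i)\in[T]$; injectivity of the projection gives $y_0=y_1$, so by mutual genericity the witness lies in $\VV[g]$. Hence $\eins_{\Add{\kappa}{1}}\Vdash\exists y~(\dot{x},y)\in[\check{T}]$, so the plain Cohen subset of $\kappa$ is in $\mathrm{Club}_\kappa$ in $\VV[G]$, and one more application of $\Sigma_1$-elementarity pulls the witnessing club down into $\VV[g]$ -- contradicting genericity of $\dot{x}^g$ over $\VV[g]$. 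This two-club trick is precisely the missing mechanism in your sketch.

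A second, more technical flaw: your concluding step asks the equation $f[C]=\mathrm{Club}_\kappa$ to \emph{reflect down} to $M[x^*]$ by homogeneity or ``universality of the L\'evy collapse.'' No such reflection of the full equality (a $\Pi_2$-statement over $\HH{\kappa^+}$) is available; Silver's theorem gives only $\Sigma_1$-elementarity. Your plan is repairable at this point -- upward absoluteness of $x^*\in\mathrm{Club}_\kappa$ together with downward $\Sigma_1$-reflection of $\anf{\exists c~(x^*,c)\in[T]}$ would already force the unique witness into $M[x^*]$, contradicting your claimed $c^*\notin M[x^*]$ -- but that only shifts all the weight back onto the unproven claim above. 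Incidentally, your assertion that the witnessing club $E^*$ lies in $M[x^*]$ also needs justification, since the projection of the Example-1 tree onto its first coordinate is not injective; the paper sidesteps this entirely by working with the forced statement rather than with a particular pair $(x^*,E^*)$.
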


We will also show that the classes $I^\kappa_{cl}$ and $S_1^{\LL,\kappa}$ do not coincide in any $Col(\kappa,{<}\lambda)$-generic extension, where $\lambda>\kappa$ is inaccessible.

\begin{theorem}\label{theorem:S1LKappaBaireInSolovay}
 Let $\kappa$ be an uncountable cardinal with $\kappa=\kappa^{{<}\kappa}$, $G$ be $\Coll{\kappa}{{<}\lambda}$-generic over $\VV$ for some inaccessible cardinal $\lambda>\kappa$ and 
 $M$ be an inner model of $\VV[G]$ with $M\subseteq\VV$. In $\VV[G]$, every set contained in the class $S_n^{M,\kappa}$ for some $n<\omega$ has the $\kappa$-Baire property. 
\end{theorem}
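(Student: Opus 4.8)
The plan is to exploit the homogeneity of the Levy collapse $\Coll{\kappa}{{<}\lambda}$ together with the standard fact that, after collapsing an inaccessible $\lambda$ to $\kappa^+$, every set of reals definable from a ground-model parameter and a single real is $\kappa$-Baire-measurable --- the higher-cardinal analogue of the Solovay argument. Fix $A\in S_n^{M,\kappa}$ in $\VV[G]$, so that $A=\Set{x\in{}^\kappa\kappa}{M[x,y]\models\varphi(x,y)}$ for some $y\in{}^\kappa\kappa\cap\VV[G]$ and some $\Sigma_n$-formula $\varphi$. By the $\kappa$-closure and chain-condition factoring of the collapse, I would first find an intermediate model $\VV[G_0]$ (a small piece of the collapse, of size ${<}\lambda$) containing $y$, such that over $\VV[G_0]$ the remaining forcing is again a full Levy collapse $\Coll{\kappa}{{<}\lambda}$. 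The key point is that $M[x,y]$ depends only on $x$ (and the fixed $y$), so $A$ is definable in $\VV[G]$ from the parameter $y$ using the forcing relation for a homogeneous poset.

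The main engine is a \emph{$\kappa$-Baire category} argument. I would show that for a fixed condition and a fixed basic open set $N_s$, the forcing relation \anf{$\check{N_s}$ meets $A$ in a comeager set} or \anf{$\check{N_s}$ meets the complement in a comeager set} is decided by the homogeneity of the tail collapse. Concretely, I would prove a \emph{localization lemma}: for a comeager (in the $\kappa$-Baire sense) set of $x$, membership $x\in A$ is equivalent to \anf{$\eins_{\mathbb{Q}}\Vdash \dot{x}^{\GG}\in\dot A$} where $\mathbb{Q}$ is the tail collapse computed over $\VV[G_0][x]$ and $\dot x$ names a $\Coll{\kappa}{{<}\lambda}$-generic real coding $x$. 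Because $x$ can be taken $\Coll{\kappa}{{<}\lambda}$-generic over $\VV[G_0]$ for comeagerly many $x$ (this is exactly where $\kappa=\kappa^{{<}\kappa}$ and the inaccessibility of $\lambda$ are used, to guarantee that the collapse adds a dominating family of generics and that Cohen-generic-over-a-small-model reals form a $\kappa$-comeager set), the set $A$ differs from the open set determined by the forcing relation only on a non-$\kappa$-Baire-comeager remainder, which is covered by $\kappa$-many nowhere dense sets.

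The step I expect to be the main obstacle is establishing that, for $\kappa$-many conditions, the relevant remainder --- the set of $x$ that are \emph{not} sufficiently generic over the small model $\VV[G_0]$, unioned with the set where the forcing relation fails to reflect actual membership --- is a union of $\kappa$-many nowhere dense sets rather than merely \anf{small} in some weaker sense. In the classical Solovay setup one uses that non-Cohen-generic reals over a countable model form a meager set; here I need the generalized analogue, that the $x\in{}^\kappa\kappa$ failing to be $\mathbb{P}$-generic over $\VV[G_0]$ (for $\mathbb{P}$ a suitable small piece of the collapse, and working with the initial-segment topology) are covered by $\kappa$-many nowhere dense sets. This requires that the forcing be \emph{$\kappa$-closed} and satisfy an appropriate chain condition so that dense sets of $\mathbb{P}$ in $\VV[G_0]$ give rise to open dense subsets of ${}^\kappa\kappa$, and that there are only $\kappa$-many maximal antichains to diagonalize against --- precisely where $\kappa=\kappa^{{<}\kappa}$ and the size bookkeeping on $\VV[G_0]$ enter. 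Once this \anf{genericity is $\kappa$-comeager} lemma is in place, the conclusion for arbitrary $\Sigma_n$ (rather than just $\Sigma_1$) follows by an induction on $n$, absorbing each quantifier with another application of the forcing-relation homogeneity, so the bulk of the remaining work is organizing this induction cleanly.
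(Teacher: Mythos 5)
Your proposal is correct in outline, but it takes the heavier, classical Solovay route where the paper uses a noticeably leaner argument, and the comparison is instructive. The shared core is your \anf{main obstacle} lemma, which is exactly the paper's central step: in $\VV[G]$ we have $\lambda=\kappa^+$, every relevant parameter lives in a ${<}\lambda$-sized piece of the collapse, so the dense subsets of $\Add{\kappa}{1}$ lying in the small model number only $\kappa$-many, and hence the $\Add{\kappa}{1}$-generic elements of ${}^\kappa\kappa$ over that model form a $\kappa$-comeager set (here $\kappa=\kappa^{{<}\kappa}$ and ${<}\kappa$-closure make dense sets of the poset correspond to dense open subsets of the space); the forcing relation below initial segments of $x$ then carves out an open $U$ with $A\,\Delta\,U$ contained in the non-generics. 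Where you diverge is everything around this: you factor $G$ through an intermediate $\VV[G_0]$, decide \anf{$x\in A$} via the tail collapse $\QQQ$ over $\VV[G_0][x]$ using homogeneity and an absorption lemma, and close with an induction on $n$. The paper needs none of this, because the defining statement \anf{$M[x,z]\models\varphi(x,z)$} is a statement about the inner model $M[x,z]$ alone and is therefore outright absolute to every outer model containing $M\cup\{x,z\}$ — your own remark that \anf{$M[x,y]$ depends only on $x$ and the fixed $y$} is precisely this shortcut, which you notice but do not exploit. The paper simply fixes $z_0\in({}^\kappa\kappa)^\VV$ coding a bijection of $\kappa$ with ${}^{{<}\kappa}\kappa$, works with the small model $M[z,z_0]$, sets $U=\bigcup\Set{N_s}{s\Vdash^{M[z,z_0]}_{\Add{\kappa}{1}}\varphi^{M[\dot{x},\check{z}]}(\dot{x},\check{z})}$, and checks $A\,\Delta\,U\subseteq{}^\kappa\kappa\setminus C$ by the forcing theorem — no homogeneity, no tail, no quotient analysis, and the argument is uniform in the fixed formula $\varphi$ whatever its complexity.

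Two loose statements in your sketch deserve correction if you write it up. First, no $x\in({}^\kappa\kappa)^{\VV[G]}$ is $\Coll{\kappa}{{<}\lambda}$-generic over $\VV[G_0]$: such a generic collapses $\lambda$ to $\kappa$, which a single function $\kappa\to\kappa$ cannot code; what is $\kappa$-comeager is the set of $x$ generic for $\Add{\kappa}{1}$ (equivalently, a small piece of the collapse), and your localization lemma then additionally needs the standard absorption fact that $\Add{\kappa}{1}\times\Coll{\kappa}{{<}\lambda}$ is forcing equivalent to $\Coll{\kappa}{{<}\lambda}$ — a step the paper's version avoids entirely. Second, the concluding induction on $n$ is not just unnecessary but the wrong frame: the $\Sigma_n$-complexity sits inside the inner model $M[x,z]$, where it is invisible to the ambient forcing, so there is no quantifier to absorb; the forcing theorem for the single relativized formula $\varphi^{M[\dot{x},\check{z}]}$ handles all $n$ at once.
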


The following corollary is a direct consequence of the combination of the above result and Theorem \ref{non-separable I-kappa sets}.

\begin{corollary} \label{corollary: separating I-kappa from S-1} 
 Let $\kappa$ be an uncountable cardinal with $\kappa=\kappa^{{<}\kappa}$ and let $G$ be $\Coll{\kappa}{{<}\lambda}$-generic over $\VV$  for some inaccessible cardinal $\lambda>\kappa$.
 In $\VV[G]$, there is an element of the class $I^{\kappa}$ that is not contained in the class $S_n^{\LL,\kappa}$ for any $n<\omega$. \qed
\end{corollary}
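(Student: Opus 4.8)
The plan is to read off the corollary as a direct combination of Theorem \ref{non-separable I-kappa sets} and Theorem \ref{theorem:S1LKappaBaireInSolovay}, the bridge between them being the elementary observation that two disjoint sets that both possess the $\kappa$-Baire property can always be separated by sets with that property. The only set I need to produce is a single member of $I^\kappa$ lacking the $\kappa$-Baire property, since Theorem \ref{theorem:S1LKappaBaireInSolovay} will then forbid it from belonging to any $S_n^{\LL,\kappa}$.

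First I would check that the standing hypothesis $\kappa=\kappa^{{<}\kappa}$ survives the forcing. Since $\Coll{\kappa}{{<}\lambda}$ is ${<}\kappa$-closed, it adds no new sequences of length less than $\kappa$, so $({}^{{<}\kappa}\kappa)^{\VV[G]}=({}^{{<}\kappa}\kappa)^{\VV}$, a set of cardinality $\kappa$; hence $\kappa=\kappa^{{<}\kappa}$ continues to hold in $\VV[G]$. Working in $\VV[G]$, I would then apply Theorem \ref{non-separable I-kappa sets} to obtain a sequence $\seq{A_\gamma}{\gamma<2^\kappa}$ of pairwise disjoint elements of the class $I^\kappa$ with the property that $A_\gamma$ and $A_\delta$ cannot be separated by sets with the $\kappa$-Baire property whenever $\gamma<\delta$. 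In particular this holds for the two members $A_0$ and $A_1$.

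The crux is then the claim that $A_0$ and $A_1$ cannot both have the $\kappa$-Baire property. Indeed, the sets with the $\kappa$-Baire property form an algebra closed under complements, so if both $A_0$ and $A_1$ had this property, then, using $A_0\cap A_1=\emptyset$, the inclusions $A_0\subseteq A_0$ and $A_1\subseteq{}^\kappa\kappa\setminus A_0$ would exhibit a separation of $A_0$ from $A_1$ by sets with the $\kappa$-Baire property, contradicting Theorem \ref{non-separable I-kappa sets}. Hence at least one of $A_0$, $A_1$ fails to have the $\kappa$-Baire property; fix such a set $A$, which is a member of $I^\kappa$ by construction.

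Finally I would invoke Theorem \ref{theorem:S1LKappaBaireInSolovay} with $M=\LL$, which is an inner model of $\VV[G]$ satisfying $\LL\subseteq\VV$, so its hypotheses are met. That theorem states that in $\VV[G]$ every set lying in $S_n^{\LL,\kappa}$ for some $n<\omega$ has the $\kappa$-Baire property; contrapositively, the set $A$, which lacks this property, cannot belong to $S_n^{\LL,\kappa}$ for any $n<\omega$. Thus $A$ is the required element of $I^\kappa$. There is no serious obstacle here: the argument is a straightforward assembly of the two cited theorems, and the single point requiring care is the self-separation observation of the previous paragraph, together with the routine verification that $\kappa=\kappa^{{<}\kappa}$ is preserved by the ${<}\kappa$-closed collapse.
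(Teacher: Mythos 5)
Your proposal is correct and is exactly the argument the paper intends: the corollary is stated there as a direct consequence of Theorem \ref{non-separable I-kappa sets} and Theorem \ref{theorem:S1LKappaBaireInSolovay}, with the bridge being precisely your observation that a pair of disjoint sets, one of which has the $\kappa$-Baire property, is separated by that very set. Your added verifications (preservation of $\kappa=\kappa^{{<}\kappa}$ under the ${<}\kappa$-closed collapse, and taking $M=\LL$ in Theorem \ref{theorem:S1LKappaBaireInSolovay}) are the routine details the paper leaves implicit.
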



\subsection{\texorpdfstring{The class $C^{\kappa,\mu}$ of continuous images of ${}^\kappa\mu$}{The class C-kappa-mu}}\label{subsection:ContImagesHigherCard}

We will consider images of continuous functions $\map{f}{{}^{\kappa}\mu}{{}^{\kappa}\kappa}$ for an arbitrary cardinal $\mu$.

Let us first consider the case where $\mu$ is a cardinal with $1<\mu<\kappa$. 
If $\kappa$ is not weakly compact, then the spaces ${}^{\kappa}\mu$ and ${}^{\kappa}\kappa$ are homeomorphic by the results of \cite{MR0367930}. Hence we may assume that $\kappa$ is weakly compact. 
Then the class $C^{\kappa,\mu}_{cl}$ of images of closed subsets of ${}^{\kappa}\mu$ under continuous functions $\map{f}{{}^{\kappa}\mu}{{}^{\kappa}\kappa}$ 
consists of closed subsets and its elements are exactly the sets of the form $[T]$, 
where $T$ is a subtree of ${}^{{<}\kappa}\kappa$ with the property that the $\alpha$-th level $T(\alpha)$ has cardinality less than $\kappa$ for every $\alpha<\kappa$. 
Theorem \ref{theorem:Main1} shows that the class $C^{\kappa,\mu}$ of images of ${}^{\kappa}\mu$ under continuous functions $\map{f}{{}^{\kappa}\mu}{{}^{\kappa}\kappa}$ is a proper subclass of $C^{\kappa,\mu}_{cl}$ in this case. 
The images of ${}^{\kappa}\mu$ under injective continuous functions $\map{f}{{}^{\kappa}\mu}{{}^{\kappa}\kappa}$ are exactly the sets of the form $[T]$, 
where $T$ is a perfect\footnote{A subtree $T$ of ${}^{{<}\kappa}\kappa\times{}^{{<}\kappa}\kappa$ is \emph{perfect} if it is ${<}\kappa$-closed and its splitting nodes are cofinal.}  subtree of ${}^{{<}\kappa}\kappa\times{}^{{<}\kappa}\kappa$ with the property that the level $T(\alpha)$ has size less than $\kappa$ for every   
$\alpha<\kappa$.

Next, we consider cardinals $\mu>\kappa$. We will later show (see Lemma \ref{lemma:WIsKappaPlusProjection}) that the closed set constructed in the proof of Theorem \ref{theorem:Main1} is equal to a continuous image of ${}^\kappa(\kappa^+)$. 
Since every subset of ${}^\kappa\kappa$ is obviously equal to a continuous image of ${}^\kappa(2^\kappa)$, this implies that 
\begin{equation*}
 c(\kappa) ~ = ~ \min\Set{\mu\in\On}{\textit{Every nonempty closed subset of ${}^\kappa\kappa$ is an element of $C^{\kappa,\mu}$}}
\end{equation*}
is a well-defined cardinal characteristic of every uncountable cardinal $\kappa$ satisfying $\kappa=\kappa^{{<}\kappa}$. 
By Theorem \ref{theorem:Main1} and the above remark, we have $\kappa<c(\kappa)\leq 2^\kappa$.  
In Section \ref{section:HigherCardTrees} and \ref{section:PerfectEmb} we will prove the following result that shows that we can manipulate the value of this cardinal characteristic by forcing.

\begin{theorem}\label{theorem:ManipulateCardinalCharC}
 Let $\kappa$ be an uncountable cardinal with $\kappa=\kappa^{{<}\kappa}$, $\mu\geq 2^\kappa$ be a cardinal with $\mu=\mu^\kappa$ and $\theta\geq\mu$ be a cardinal with $\theta=\theta^\kappa$. 
 Then the following statements hold in a cofinality preserving forcing extension $\VV[G]$ of the ground model $\VV$. 
 \begin{enumerate}
  \item $2^\kappa=\theta$. 

  \item Every closed subset of ${}^\kappa\mu$ is equal to a continuous image of ${}^\kappa\mu$. 

  \item There is a closed subset $A$ of ${}^\kappa\kappa$ that is not equal to a continuous image of ${}^\kappa\bar{\mu}$ for any $\bar{\mu}<\mu$ with $\bar{\mu}^{{<}\kappa}<\mu$.
 \end{enumerate}
\end{theorem}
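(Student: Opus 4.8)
The plan is to realize all three conclusions in a single $\Add{\kappa}{\theta}$-generic extension $\VV[G]$; the label $\VV^{\Add{\kappa}{\theta}}$ in Figure \ref{figure:ResultsMu} already points to this forcing. Since $\kappa=\kappa^{{<}\kappa}$, the poset $\Add{\kappa}{\theta}$ is ${<}\kappa$-closed and $\kappa^+$-cc, so it preserves cofinalities (this is the asserted cofinality preservation), and a standard nice-name count together with $\theta=\theta^\kappa$ gives $2^\kappa=\theta$, which is (1). The auxiliary fact I would isolate first is that ${<}\kappa$-closure forces $({}^{{<}\kappa}\mu)^{\VV[G]}=({}^{{<}\kappa}\mu)^{\VV}$; hence $\mu^{{<}\kappa}$ is computed from the same tree, and since $\mu=\mu^\kappa$ in $\VV$ gives $\mu^{{<}\kappa}=\mu$ there, we retain $\mu^{{<}\kappa}=\mu$ in $\VV[G]$ even though $\mu^\kappa$ has grown to $2^\kappa=\theta$. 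Thus in $\VV[G]$ the tree ${}^{{<}\kappa}\mu$ still has exactly $\mu$ nodes while ${}^\kappa\mu$ has $\theta$ branches, and this asymmetry is exactly what the remaining parts exploit.

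For (2) I would reduce the statement \emph{$C=[T]$ is a continuous image of ${}^\kappa\mu$} (with $T$ pruned) to building a monotone, length-increasing, limit-continuous tree map $\map{\varphi}{{}^{{<}\kappa}\mu}{T}$ whose induced map on branches is onto $[T]$. When $T$ is ${<}\kappa$-closed this is essentially Proposition \ref{proposition:ClosedRetract}. The single genuine obstruction is the phenomenon isolated in Proposition \ref{NoRetraction}: if $T$ is not ${<}\kappa$-closed, then naively following nodes can produce, at a limit level, a union that has fallen out of $T$. The heart of the argument is therefore to use the $\mu$-fold branching available at every node of ${}^{{<}\kappa}\mu$, together with $\mu^{{<}\kappa}=\mu$ (so that at each node one can simultaneously index all the needed data), to arrange that along every branch of ${}^\kappa\mu$ the images lie cofinally on a single \emph{preplanned} cofinal branch of $T$, with the plan revised only in a way that keeps every limit image inside $T$. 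This is where I would invoke the perfect-embedding machinery of Section \ref{section:PerfectEmb}, whose role is precisely to organise such branch-plans, and I expect this construction to be the main obstacle of the whole theorem.

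For (3) I would produce the witnessing closed set $A\subseteq{}^\kappa\kappa$ by generalizing the construction behind Theorem \ref{theorem:Main1} (which handles $\bar\mu=\kappa$, and whose image under a larger alphabet is analysed in Lemma \ref{lemma:WIsKappaPlusProjection}). The obstruction must be structural, not cardinality-theoretic: since $A\subseteq{}^\kappa\kappa$ its defining tree has only $\leq\kappa$ nodes, so the issue is never that $A$ is too large, but that its branches cannot be \emph{continuously parametrised} by a space that is too narrow. The controlling quantity is $\bar\mu^{{<}\kappa}$, because a continuous $\map{f}{{}^\kappa\bar\mu}{{}^\kappa\kappa}$ is determined by its action on the $\bar\mu^{{<}\kappa}$ nodes of ${}^{{<}\kappa}\bar\mu$. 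Using the higher-tree combinatorics of Section \ref{section:HigherCardTrees}, I would design $A$ so that any continuous surjection onto it requires at least $\mu$ mutually independent local choices, which is impossible once $\bar\mu^{{<}\kappa}<\mu$; this lifts the thinness/diagonal argument of Theorem \ref{theorem:Main1}. Finally I would verify that this negative property survives the forcing: by ${<}\kappa$-closure any putative witnessing $f$ in $\VV[G]$ is coded by a ${<}\kappa$-approximated object, the $\kappa^+$-cc captures it on a sub-product of size $<\theta$, and a reflection argument reduces a failure in $\VV[G]$ to one in $\VV$, contradicting the construction.

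The two real difficulties, then, are the limit-level navigation in (2) and the preservation argument in (3), and they pull in opposite directions: (2) demands that $\VV[G]$ contain \emph{enough} continuous surjections from the wide space ${}^\kappa\mu$, while (3) demands that it contain \emph{no} continuous surjection onto $A$ from any narrow space ${}^\kappa\bar\mu$ with $\bar\mu^{{<}\kappa}<\mu$. The homogeneity and ${<}\kappa$-closure of $\Add{\kappa}{\theta}$ are what I would rely on to reconcile these, since they guarantee that the forcing adds the broad surjections required for (2) without manufacturing any of the forbidden narrow ones targeted in (3).
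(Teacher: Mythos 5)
Your plan to realize all three statements in a single $\Add{\kappa}{\theta}$-generic extension fails at part (3), and in general it fails provably, not just for lack of an argument. The paper's extension is the three-step iteration $\Add{\kappa}{\mu}$, then $\Add{\omega}{1}$, then $\Add{\kappa}{\theta}$ (the label $\VV^{\Add{\kappa}{\theta}}$ in Figure \ref{figure:ResultsMu} refers only to the last step); the first step makes $2^\kappa=\mu$, and the middle Cohen-real step is indispensable, because part (3) comes from Theorem \ref{theorem:TreesFromCoverReals}, whose witnessing closed set is the branch-space of the tree $({}^{{<}\kappa}2)^{M}$ for an inner model $M$ (namely $\VV[G]$ after the first step) with $(2^\kappa)^{M}=\mu$, $\RRR\nsubseteq M$ and the $\omega_1$-cover property, both of which survive the final ${<}\kappa$-closed step. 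A single $\Add{\kappa}{\theta}$ adds no new reals and no new ${<}\kappa$-sequences, so over, say, $\VV=\LL$ no such inner model exists in the extension, and you supply no substitute. Worse, whenever $(2^\kappa)^{\VV}<\mu$ --- which the hypotheses permit, e.g.\ $\GCH$ in $\VV$ and $\mu=\kappa^{++}$ --- your extension provably \emph{refutes} (3): applying Lemma \ref{lemma:AddForcesMuKappa} with $\bar{\mu}=(2^\kappa)^{\VV}$ shows that in $\VV^{\Add{\kappa}{\theta}}$ every closed subset of ${}^\kappa\bar{\mu}$, hence every closed subset of ${}^\kappa\kappa$, is a continuous image of ${}^\kappa\bar{\mu}$, while $\bar{\mu}^{{<}\kappa}=\bar{\mu}<\mu$ by ${<}\kappa$-closure. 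So the paper's first step is a necessity, not an optimization. Your fallback for (3), \anf{lifting the thinness/diagonal argument of Theorem \ref{theorem:Main1}}, also cannot work: Lemma \ref{lemma:WIsKappaPlusProjection} shows that the thin set $W$ is already a continuous image of ${}^\kappa(\kappa^+)$, so the well-foundedness obstruction dies at $\bar{\mu}=\kappa^+$; and since (3)-type behaviour consistently fails (as just shown), no outright $\ZFC$ construction can replace the inner-model hypothesis.

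A secondary problem is that you use the machinery of Section \ref{section:PerfectEmb} in the wrong direction. Theorem \ref{theorem:SurjImageWideLevel} is a \emph{negative} tool: from a hypothetical continuous surjection $\map{c}{{}^\kappa\mu}{[T]}$ it extracts an injection of a countable product into a single level of $T$, which Theorem \ref{theorem:TreesFromCoverReals} then plays off against the $\omega_1$-cover property and the Veli{\v{c}}kovi{\'c}--Woodin theorem on superperfect trees. It does not build surjections and cannot support the \anf{preplanned branch} construction you sketch for (2). In the paper, (2) is exactly Lemma \ref{lemma:AddForcesMuKappa}: by the $\kappa^+$-chain condition one may assume the tree $T$ lies in the ground model of the last step (where $2^\kappa=\mu$, another place the first step is used), each nice name $\sigma$ for a branch yields a ${<}\kappa$-closed tree $T_\sigma$ without end nodes projecting into $[T]$, automorphisms of $\Add{\kappa}{\theta}$ move supports into $\kappa$, the resulting $\mu$-many projections union to $[T]$, and Proposition \ref{proposition:ClosedRetract} finishes. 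Your bookkeeping (cofinality preservation, $2^\kappa=\theta$ via nice names and $\theta^\kappa=\theta$, and $({}^{{<}\kappa}\mu)^{\VV[G]}=({}^{{<}\kappa}\mu)^{\VV}$) is correct, but both substantive parts rest on a forcing that cannot deliver (3) and, for (2), on a tool that runs the opposite way.
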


In particular, this result shows that the class of nonempty sets in $C^{\kappa,\mu}$ and the class $C^{\kappa,\mu}_{cl}$ can coincide for some $\kappa<\mu<2^\kappa$ (see Lemma \ref{lemma:AddForcesMuKappa}), 
but this statement does not follow from the axioms of $\ZFC$. The following result will be one of the key ingredients in the proof of Theorem \ref{theorem:ManipulateCardinalCharC}.

\begin{theorem}\label{theorem:TreesFromCohenReal}
 Let $c$ be $\Add{\omega}{1}$-generic over $\VV$. In $\VV[c]$, if $\kappa$ is an uncountable regular cardinal, then there is a closed subset $A$ of ${}^\kappa\kappa$  
 such that $A$ is not a continuous image of ${}^\kappa\mu$ for every cardinal $\mu$ with $\mu^{{<}\kappa}<2^\kappa$. 
\end{theorem}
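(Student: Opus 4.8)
The plan is to build the closed set $A$ directly from the Cohen real $c$, as the set $[T]$ of cofinal branches of a tree $T\subseteq{}^{{<}\kappa}\kappa$ that is definable from $c$, and to rule out \emph{all} small continuous surjections at once by a genericity argument phrased inside the forcing $\Add{\omega}{1}$. It is worth stressing at the outset why a soft argument cannot suffice, since this dictates the shape of the proof: no cardinality or network-weight bound can work, because for $\mu=\kappa^+<2^\kappa$ a continuous image of ${}^\kappa\mu$ may have size $2^\kappa$ (and indeed, by Lemma~\ref{lemma:WIsKappaPlusProjection}, the closed set produced by Theorem~\ref{theorem:Main1} is exactly such an image), and because every subset of ${}^\kappa\kappa$ has a network of size at most $\kappa^{{<}\kappa}$, which may be strictly below $2^\kappa$. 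Thus the obstruction must be genuinely generic rather than definable or cardinal-theoretic, and this is precisely where the Cohen real enters.

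First I would record the routine preservation facts. Since $\Add{\omega}{1}$ is countable, it preserves all cardinals and cofinalities, it is weakly homogeneous, and (by counting nice names) it preserves $2^\lambda$ for every infinite cardinal $\lambda$ as well as the values $\lambda^{{<}\kappa}$ for $\lambda\geq\kappa$. Consequently $\kappa$ remains an uncountable regular cardinal in $\VV[c]$, the condition ``$\mu^{{<}\kappa}<2^\kappa$'' has the same meaning in $\VV$ and in $\VV[c]$, and computations may be carried out against a canonical $\Add{\omega}{1}$-name $\dot T$ for the tree while working over the ground model. I would then fix such a name $\dot T$ so that $T=T_c$ is built by recursion of length $\kappa$, threading the bits of $c$ into the branching pattern along a cofinal set of levels, while keeping the construction close enough to the thin-tree construction behind Theorem~\ref{theorem:Main1} that $[T]$ is a nonempty closed subset of ${}^\kappa\kappa$.

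The heart of the proof is the non-representability lemma, and I would attack it by a density argument in $\Add{\omega}{1}$. Suppose towards a contradiction that in $\VV[c]$ we have $A=\ran{f}$ for a continuous $\map{f}{{}^\kappa\mu}{{}^\kappa\kappa}$ with $\mu^{{<}\kappa}<2^\kappa$; code $f$ by its monotone approximation $\map{F}{{}^{{<}\kappa}\mu}{{}^{{<}\kappa}\kappa}$, an object whose domain has size $\mu^{{<}\kappa}<2^\kappa$, and fix a name $\dot F$ for it. The point is that $\ran{f}$ is exactly the set of branches $\bigcup_{\alpha<\kappa}F(y\restriction\alpha)$ obtained from $y\in{}^\kappa\mu$, so it is covered by the $\mu^{{<}\kappa}$-sized scheme $\seq{f[N_s]}{s\in{}^{{<}\kappa}\mu}$. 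I would show that for every such name $\dot F$ the set of conditions $p$ forcing ``\emph{some cofinal branch of $\dot T$ is not of the form $\bigcup_{\alpha}\dot F(y\restriction\alpha)$ for any $y\in{}^\kappa\mu$}'' is dense in $\Add{\omega}{1}$; since this dense set lies in $\VV$ and the generic meets all ground-model dense sets, the generic branch it produces witnesses $A\not\subseteq\ran{f}$, contradicting surjectivity. The genericity of $c$ is used to guarantee that a branch of $T_c$ lying ``above'' the finite information in $p$ is generically undetermined, whereas the putative scheme $\seq{f[N_s]}{s}$ has width only $\mu^{{<}\kappa}<2^\kappa$ and therefore cannot anticipate it; weak homogeneity lets me assume, after passing below a single condition, that any such representation is forced by $\mathbf{1}$.

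The main obstacle is exactly the density of these avoidance sets. The delicate feature is that $F\in\VV[c]$ may depend on all of $c$, so its name $\dot F$ interacts with the very bits of $c$ that determine the branching of $T_c$; a given condition $p$ decides only finitely much of $c$ and hence only finitely much of both $\dot F$ and $\dot T$, and the work is to show that this finite information never constrains the generically-chosen branch of $T_c$ enough to force it into the range of $\dot f$. The crux is therefore a careful bookkeeping argument balancing two ``widths'': the $2^\kappa$-many branching choices that the construction of $T_c$ makes available against the at most $\mu^{{<}\kappa}<2^\kappa$ many pieces $f[N_s]$ available to the scheme, arranged so that below every condition one can extend to force a branch of $\dot T$ outside every possible limit $\bigcup_\alpha\dot F(y\restriction\alpha)$. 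Once this density is established for all names $\dot F$ simultaneously, the result follows uniformly for every $\mu$ with $\mu^{{<}\kappa}<2^\kappa$, as required.
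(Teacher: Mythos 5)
There is a genuine gap, and it sits exactly where you placed your ``main obstacle'': the density of the avoidance sets is the entire mathematical content of your plan, and the heuristic you offer for it --- $2^\kappa$-many branching choices in $T_c$ against a scheme $\seq{f[N_s]}{s\in{}^{{<}\kappa}\mu}$ of width $\mu^{{<}\kappa}<2^\kappa$ --- cannot be completed, by your own opening observation. A covering scheme of small width does not need to ``anticipate'' branches one at a time: it covers $[T]$ through a fine filtration, and a set with $2^\kappa$-many branches can perfectly well be such an image. Concretely, $[{}^{{<}\kappa}2]$ has $2^\kappa$-many cofinal branches yet is a retract of ${}^\kappa\kappa$ by Proposition \ref{proposition:ClosedRetract}, so any obstruction must be structural rather than a comparison of cardinalities, and nothing in your proposal identifies what structural feature of $T_c$ defeats the scheme. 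The forcing-side mechanics are also underspecified: $c$ contributes only countably many bits while $\kappa$ is regular uncountable, so ``threading the bits of $c$ along a cofinal set of levels'' of a length-$\kappa$ recursion does not make the branching of $\dot T$ generically undetermined at more than countably much information; a condition $p$ decides finitely much of $c$, and the statement a condition must force --- that $\VV[c]$ contains a full $\kappa$-branch of $\dot T$ avoiding every limit $\bigcup_\alpha\dot F(y\restriction\alpha)$ --- is an existential assertion about a $\kappa$-sized object that no finite amount of decided information plausibly controls. Finally, the homogeneity reduction (``assume the representation is forced by $\mathbf{1}$'') needs the automorphisms to respect the names $\dot T$ and $\dot F$, which is not automatic for an arbitrary name $\dot F$ depending on all of $c$.

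The paper's proof takes a completely different, non-generic route, and it is instructive to see why. It sets $A=[T]$ with $T=({}^{{<}\kappa}2)^{\VV}$, the \emph{ground model} binary tree, and the work is a ZFC combinatorial theorem (Theorem \ref{theorem:SurjImageWideLevel}): if $\map{c}{{}^\kappa\mu}{[T]}$ is a continuous surjection with $\mu^{{<}\kappa}<\betrag{[T]}$, then there are $\lambda_n\nearrow\lambda$ and an injection $\map{i}{\prod_{n<\omega}\lambda_n}{T(\lambda)}$ coherent with restrictions, i.e.\ $x\restriction n=y\restriction n$ iff $i(x)\restriction\lambda_n=i(y)\restriction\lambda_n$. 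This is the structural consequence of a small surjection that mere counting misses. Applied in $\VV[c]$ with $M=\VV$ as the inner model of Theorem \ref{theorem:TreesFromCoverReals}, the level $T(\lambda)$ lies in $\VV$; the $\omega_1$-cover property of $\VV$ (which holds because $\Add{\omega}{1}$ is ccc) covers the relevant restriction points by a set countable in $\VV$, and transporting $i$ through this cover produces a superperfect subtree of ${}^{{<}\omega}\omega$ all of whose branches lie in $\VV$. By the Veli{\v{c}}kovi{\'c}--Woodin theorem (Theorem \ref{theorem:VelickovicWoodin}) this yields $\RRR^{\VV[c]}\subseteq\VV$, contradicting that $c$ is a new real. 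Genericity of $c$ is used only for three soft facts: $c\notin\VV$, the cover property, and $(2^\kappa)^{\VV}=(2^\kappa)^{\VV[c]}$ (so that $\mu^{{<}\kappa}<2^\kappa$ gives $\mu^{{<}\kappa}<\betrag{[T]}$). No density argument inside $\Add{\omega}{1}$ occurs, and to salvage your plan you would essentially have to re-prove this perfect-embedding dichotomy inside your density lemma.
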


Given a cardinal $\mu$, we next consider the class $C^{\kappa,\mu}_{cl}$ consisting of all continuous images of closed subsets of ${}^\kappa\mu$. This class contains all $\mathbf{\Sigma}^1_1$-subsets. 
We will discuss results showing that these classes can consistently be equal.  
Let $\goedel{\alpha}{\beta}$ denote the G\"odel pair of $\alpha,\beta\in\On$.  
Given $x\in{}^\kappa\mu$ and $\alpha<\kappa$, we let $(x)_\alpha$ denote the element of ${}^\kappa\mu$ defined by $(x)_\alpha(\beta)=x(\goedel{\alpha}{\beta})$ for all $\beta<\kappa$.

\begin{proposition}\label{proposition:BigTreesDefinable}
 Let $\kappa$ be an uncountable regular cardinal and $\mu$ be a cardinal with $\mu=\mu^{{<}\kappa}<2^\kappa$. Assume that every subset of ${}^\kappa\kappa$ of cardinality $\mu$ is a $\mathbf{\Sigma}^1_1$-subset. 
 Then every set in $C^{\kappa,\mu}_{cl}$ is a $\mathbf{\Sigma}^1_1$-subset. 
\end{proposition}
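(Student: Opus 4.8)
The plan is to reduce the quantifier over the ``large'' space ${}^\kappa\mu$ to a quantifier over ${}^\kappa\kappa$ by coding the nodes of the relevant tree as points of ${}^\kappa\kappa$ and invoking the hypothesis to turn the resulting (size-$\mu$) set of codes into a $\mathbf{\Sigma}^1_1$-subset. So fix a set $A\in C^{\kappa,\mu}_{cl}$, say $A=f[C]$ for a closed set $C\subseteq{}^\kappa\mu$ and a continuous map $\map{f}{C}{{}^\kappa\kappa}$. Since $C$ is closed and $f$ is continuous with closed domain, the graph of $f$ is a closed subset of ${}^\kappa\mu\times{}^\kappa\kappa$, hence of the form $[U]$ for some subtree $U$ of ${}^{{<}\kappa}\mu\times{}^{{<}\kappa}\kappa$, and $A$ is the projection of $[U]$ onto the second coordinate. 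The assumption $\mu=\mu^{{<}\kappa}$ ensures that the $\alpha$-th level of $U$ has at most $\mu^{|\alpha|}\le\mu^{{<}\kappa}=\mu$ nodes, so $U$ has at most $\kappa\cdot\mu=\mu$ nodes in total.

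First I would fix an injection $\map{d}{\mu}{{}^\kappa 2}$ (which exists as $\mu<2^\kappa$) and use it together with the G\"odel pairing $\goedel{\cdot}{\cdot}$ to define an injective coding $\map{c}{U}{{}^\kappa\kappa}$ of the nodes of $U$: a node $(s,t)$ of length $\alpha$ is sent to a point $c(s,t)$ from which one can continuously read off $\alpha$, the sequence $\seq{d(s(\beta))}{\beta<\alpha}$ and the string $t\in{}^\alpha\kappa$. The point of routing the $\mu$-valued coordinate through $d$ is that the comparison $s(\beta)=s'(\beta)$ becomes the \emph{closed} condition $d(s(\beta))=d(s'(\beta))$ on the codes; consequently the length function, the second coordinate $t$, and the tree relations ``$c^{-1}(p)\le c^{-1}(p')$'' and ``$c^{-1}(p')$ is an immediate successor of $c^{-1}(p)$'' are all continuous, respectively closed, conditions on the codes. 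Let $P=c[U]$, a set of size at most $\mu$. Since a set of size less than $\mu$ can be written as the intersection of two sets of size $\mu$ (using $\mu<2^\kappa$) and $\mathbf{\Sigma}^1_1(\kappa)$ is closed under intersections, the hypothesis yields that $P$ is a $\mathbf{\Sigma}^1_1$-subset of ${}^\kappa\kappa$.

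Next I would code a cofinal branch $(w,y)$ through $U$ by the point $q\in{}^\kappa\kappa$ with $(q)_\alpha=c(w\restriction\alpha,y\restriction\alpha)$ for all $\alpha<\kappa$, where $(q)_\alpha(\beta)=q(\goedel{\alpha}{\beta})$. Then $y\in A$ holds if and only if there is $q\in{}^\kappa\kappa$ such that: (i) $(q)_\alpha\in P$ for every $\alpha<\kappa$; (ii) for all $\alpha<\kappa$ the node coded by $(q)_\alpha$ has length $\alpha$ and the nodes coded by the $(q)_\alpha$ form a $\le$-increasing sequence; and (iii) the second coordinate of the node coded by $(q)_\alpha$ equals $y\restriction\alpha$ for all $\alpha<\kappa$. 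Conditions (ii) and (iii) are closed in $(q,y)$ by the previous paragraph, while condition (i) defines the $\kappa$-intersection $\bigcap_{\alpha<\kappa}\Set{q}{(q)_\alpha\in P}$ of continuous preimages of the $\mathbf{\Sigma}^1_1$-set $P$. Since $\mathbf{\Sigma}^1_1(\kappa)$ is closed under $\kappa$-sized intersections---most transparently via the characterization of $\mathbf{\Sigma}^1_1$-subsets as those definable over $(\HH{\kappa^+},\in)$ by a $\Sigma_1$-formula, where a $\kappa$-sequence of witnesses is again an element of $\HH{\kappa^+}$---the conjunction of (i)--(iii) defines a $\mathbf{\Sigma}^1_1$-subset $R$ of ${}^\kappa\kappa\times{}^\kappa\kappa$. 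As $A$ is the projection of $R$ onto the second coordinate and $\mathbf{\Sigma}^1_1(\kappa)$ is closed under projections, it follows that $A$ is a $\mathbf{\Sigma}^1_1$-subset.

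The main obstacle, and the point where the hypothesis is essential, is precisely the treatment of condition (i): the existential branch quantifier ranges over ${}^\kappa\kappa$, but each coordinate $(q)_\alpha$ is only required to code a genuine node of $U$, and there is no bound below $2^\kappa$ on the number of such nodes that would let us absorb $U$ into $\HH{\kappa^+}$ as a single parameter. The assumption that size-$\mu$ subsets of ${}^\kappa\kappa$ are $\mathbf{\Sigma}^1_1$ is exactly what upgrades the ``oracle'' $P$ to a $\mathbf{\Sigma}^1_1$-set; the remaining work is the routine but careful verification that the structured coding $c$ keeps every other part of the definition closed, so that no projection beyond the single branch quantifier is introduced and the whole definition stays in $\mathbf{\Sigma}^1_1(\kappa)$.
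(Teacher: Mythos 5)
Your proof is correct and follows essentially the same route as the paper: both represent $A$ as the projection of a subtree of ${}^{{<}\kappa}\mu\times{}^{{<}\kappa}\kappa$ whose node set has size at most $\mu=\mu^{{<}\kappa}$, code the nodes by points of ${}^\kappa\kappa$, apply the hypothesis to turn the resulting size-$\mu$ set(s) of codes into $\mathbf{\Sigma}^1_1$-sets, and then express the existence of a branch by a single existential quantifier over G\"odel-slice sequences $(y)_\alpha$, concluding via $\Sigma_1$-definability over $(\HH{\kappa^+},\in)$. The differences are only cosmetic: the paper works with two unstructured oracle sets $B$ and $C$ (arbitrary pairwise distinct codes $y_t$, with the tree membership and the extension order looked up in $B$ and $C$ rather than computed from the codes) where you use one structured code set $P$ with closed tree relations, and your intersection trick for the case $\betrag{P}<\mu$ makes explicit a point the paper leaves implicit.
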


\begin{proof}
 Let $A$ be a closed subset of ${}^\kappa\mu$ and $\map{f}{A}{{}^\kappa\kappa}$ be a continuous function. Then the graph of $f$ is a closed subset of ${}^\kappa\mu\times{}^\kappa\kappa$ 
 and $\ran{f}=p[T]$ for some subtree $T$ of ${}^{{<}\kappa}\kappa\times{}^{{<}\kappa}\mu$. Fix a family $\seq{y_t}{t\in{}^{{<}\kappa}\mu}$ of pairwise distinct elements of ${}^\kappa\kappa$. 
 Our assumption implies that the sets $$B ~ = ~ \Set{(x,y_t)\in{}^\kappa\kappa\times{}^\kappa\kappa}{t\in{}^{{<}\kappa}\mu, ~ (x\restriction\length{t},t)\in T, ~ \supp{x}\subseteq\length{t}}$$ and 
 $C=\Set{(y_s,y_t)}{s,t\in{}^{{<}\kappa}\mu, ~ s\subsetneq t}$ are $\mathbf{\Sigma}^1_1$-subsets of ${}^\kappa\kappa$. Then 
 \begin{equation*}
  \begin{split}
   x\in\ran{f} ~ \Longleftrightarrow ~ \exists y ~ [ & \forall\alpha<\beta<\kappa ~ ((y)_\alpha,(y)_\beta)\in C \\
                                                    &  ~ \wedge ~ \forall \alpha<\kappa ~  \exists(\bar{x},\bar{y})\in B ~ [x\restriction\alpha=\bar{x}\restriction\alpha\wedge(y)_\alpha=\bar{y}] ]
  \end{split}  
 \end{equation*}
 and this equivalence shows that $\ran{f}$ is definable in $(\HH{\kappa^+},\in)$ by a $\Sigma_1$-formula with parameters. By the above remarks, this shows that $\ran{f}$ is a $\mathbf{\Sigma}^1_1$-subset of ${}^\kappa\kappa$.
\end{proof}

We show that the above assumption holds in the canonical forcing extension of $\LL$ that is a model of \emph{Baumgartner's axiom $\rm{BA}(\kappa)$},  i.e. that is a model of the statement that for every 
$\kappa$-linked, ${<}\kappa$-closed, well-met partial order $\PPP$ and every collection $\calD$ of $\kappa^+$-many dense subsets of $\PPP$, there is a $\calD$-generic filter for $\PPP$ (see {\cite[Section 4]{MR823775}} and \cite{MR1278025}).  
Note that the canonical partial order that forces $\rm{BA}(\kappa)$ for some uncountable cardinal $\kappa$ with $\kappa=\kappa^{{<}\kappa}$ is ${<}\kappa$-closed and satisfies the ${\kappa}^+$-chain condition (see {\cite[Theorem 4.2]{MR823775}} and {\cite[Theorem 0.3]{MR1278025}}). 
The following proof is a direct generalization of the arguments of {{\cite[Section 3.2]{MR0270904}}} to higher cardinalities.

\begin{lemma}
 Let $\kappa$ be an uncountable cardinal with $\kappa=\kappa^{{<}\kappa}$ and $\kappa^+=(\kappa^+)^\LL$. Assume that  $\rm{BA}(\kappa)$ holds. 
 Then every subset of ${}^\kappa\kappa$ of cardinality $\kappa^+$ is contained in the class $S_1^{\LL,\kappa}$. In particular, every such set is a $\mathbf{\Sigma}^1_1$-subset. 
\end{lemma}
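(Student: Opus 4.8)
The plan is to use higher \emph{almost disjoint coding} to absorb the entire set $A$ into a single real parameter $y\in{}^\kappa 2\subseteq{}^\kappa\kappa$, and then to decide membership in $A$ by a $\Sigma_1$-computation inside $\LL[x,y]$. First I would fix, in $\VV$, an enumeration $A=\Set{x_\alpha}{\alpha<\kappa^+}$ and, in $\LL$, a canonical almost disjoint family $\seq{a_\xi}{\xi<\kappa^+}$ of subsets of $\kappa$, each of size $\kappa$ and with pairwise intersections of size ${<}\kappa$; such a family exists in $\LL$ since $\kappa^+=(\kappa^+)^\LL$ and $\LL\models\GCH$. Using a fixed $\Sigma_1$-definable bijection $\pi\colon\kappa^+\to\kappa^+\times\kappa\times\kappa$ in $\LL$, I encode the whole enumeration as a single set $S\subseteq\kappa^+$ by declaring $\pi(\alpha,\beta,\gamma)\in S$ exactly when $x_\alpha(\beta)=\gamma$, so that $S$ codes $A$ together with its enumeration.

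The heart of the argument is to produce, in $\VV$, a set $y\subseteq\kappa$ with the coding property that $\xi\in S$ iff $a_\xi\subseteq^* y$ (almost inclusion) for every $\xi<\kappa^+$. For this I would force with the poset $\PPP$ whose conditions are pairs $(s,F)$ with $s\in{}^{{<}\kappa}2$ and $F\in[S]^{{<}\kappa}$, ordered by $(s',F')\leq(s,F)$ iff $s\subseteq s'$, $F\subseteq F'$, and $s'(\delta)=1$ for every $\xi\in F$ and every $\delta\in a_\xi\cap(\dom{s'}\setminus\dom{s})$. The key verification is that $\PPP$ falls within the scope of $\rm{BA}(\kappa)$: it is ${<}\kappa$-closed (unions of decreasing ${<}\kappa$-sequences are again conditions and lower bounds, using regularity of $\kappa$), it is $\kappa$-linked (conditions sharing the same $s$ are pairwise compatible, with common extension $(s,F_1\cup F_2)$, and there are only $2^{{<}\kappa}=\kappa$ many possibilities for $s$), and it is well-met (for compatible $(s_1,F_1),(s_2,F_2)$ with $s_1\subseteq s_2$, the pair $(s_2,F_1\cup F_2)$ is the greatest lower bound). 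I then assemble the $\kappa^+$-many dense sets asserting (a) $\dom{s}\supseteq\alpha$ for $\alpha<\kappa$, (b) $\xi\in F$ for each $\xi\in S$, and (c) for each $\xi\notin S$ and each $\gamma^*<\kappa$, the existence of some $\delta\in a_\xi$ with $\delta\geq\gamma^*$ and $s(\delta)=0$; density of (c) uses that $\bigcup_{\eta\in F}(a_\xi\cap a_\eta)$ has size less than $\kappa$ by almost disjointness and regularity, so a suitable $\delta$ free of all promises can be found. Applying $\rm{BA}(\kappa)$ to $\PPP$ and these dense sets yields a generic filter whose union gives the desired $y$, with $\xi\in S\Leftrightarrow a_\xi\subseteq^* y$.

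Finally I would check that $A\in S_1^{\LL,\kappa}$ via $y$ and the formula $\varphi(x,y)$ expressing ``$\exists\alpha<\kappa^+\ \forall\beta<\kappa\ a_{\pi(\alpha,\beta,x(\beta))}\subseteq^* y$''. The crucial absoluteness input is $(\kappa^+)^{\LL[x,y]}=\kappa^+$: since $\LL\subseteq\LL[x,y]\subseteq\VV$ and $(\kappa^+)^\LL=(\kappa^+)^\VV$, the successor cardinal is computed correctly in $\LL[x,y]$, so both the range of the quantifier $\exists\alpha<\kappa^+$ and the relation $a_\xi\subseteq^* y$ (a statement about the sets $a_\xi,y$ and the cardinal $\kappa$) are absolute between $\LL[x,y]$ and $\VV$. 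Since for each pair $(\alpha,\beta)$ exactly one $\gamma$ satisfies $\pi(\alpha,\beta,\gamma)\in S$, namely $\gamma=x_\alpha(\beta)$, this yields $\LL[x,y]\models\varphi(x,y)$ iff $x=x_\alpha$ for some $\alpha<\kappa^+$, i.e. iff $x\in A$; the bounded quantifier $\forall\beta<\kappa$ over the $\Sigma_1$-definable matrix (the family $\seq{a_\xi}{\xi<\kappa^+}$ is $\Sigma_1$-definable over $\LL[x,y]$) can be absorbed into a single existential set quantifier witnessing the whole computation, keeping $\varphi$ genuinely $\Sigma_1$. The ``in particular'' clause then follows, since every set in $S_1^{\LL,\kappa}$ is a $\mathbf{\Sigma}^1_1$-subset by Theorem \ref{theorem:SKappa1LinIKappaClosed} and the remarks in the introduction. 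I expect the main obstacle to be the forcing-theoretic bookkeeping establishing that $\PPP$ is simultaneously ${<}\kappa$-closed, $\kappa$-linked and well-met, so that $\rm{BA}(\kappa)$ genuinely applies, together with confirming that exactly $\kappa^+$ dense sets suffice.
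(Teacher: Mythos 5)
Your proof is correct and follows essentially the same route as the paper: both encode the enumeration of $A$ into a single parameter by applying $\rm{BA}(\kappa)$ to the $\kappa$-generalization of almost disjoint coding (a ${<}\kappa$-closed, $\kappa$-linked, well-met partial order with $\kappa^+$-many relevant dense sets) and then recover membership in $A$ via a $\Sigma_1$-formula over $\LL[x,y]$, with $\kappa^+=(\kappa^+)^\LL$ supplying both the constructible almost disjoint family and the needed absoluteness of $(\kappa^+)^{\LL[x,y]}$. The only cosmetic difference is that the paper codes a set $B\subseteq({}^\kappa\kappa)^\LL$ of sections of a constructible family, using the branch sets $\Set{\beta<\kappa}{s_\beta\subseteq z}$ as the almost disjoint family and citing the coding forcing $\QQQ(B)$, whereas you code a subset of $\kappa^+$ via an ordinal-indexed constructible almost disjoint family and verify the forcing properties explicitly.
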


\begin{proof}
 Fix a subset $A=\Set{y_\gamma}{\gamma<\kappa^+}$ of ${}^\kappa\kappa$ of cardinality $\kappa^+$. Then there is a set $W=\Set{x_\gamma}{\gamma<\kappa^+}\in\LL$ such that 
 $$(x_\gamma)_\alpha = (x_\delta)_\beta ~ \Longleftrightarrow ~ \alpha=\beta ~ \wedge ~ \gamma=\delta$$ holds for all $\alpha,\beta<\kappa$ and $\gamma,\delta<\kappa^+$.  
 Define $$B ~ = ~ \Set{(x_\gamma)_{\goedel{\alpha}{y_\gamma(\alpha)}}}{\alpha<\kappa,~\gamma<\kappa^+} ~ \subseteq ~ \LL.$$

 Let $\QQQ(B)$ denote the corresponding generalization of the \emph{almost disjoint coding forcing} to cardinality $\kappa$ (see, for example, {\cite[Section 4]{MR2987148}}). Since $\QQQ(B)$ is  $\kappa$-linked, ${<}\kappa$-closed 
 and well-met, the axiom $\rm{BA}(\kappa)$ implies that there is an enumeration $\seq{s_\alpha}{\alpha<\kappa}$ and a function $c\in{}^\kappa 2$ such that the 
 equation $$z\in B ~ \Longleftrightarrow ~ \exists\alpha<\kappa ~ \forall \alpha\leq\beta<\kappa ~ [s_\beta\subseteq z \longrightarrow c(\beta)=1]$$ holds for every $z\in({}^\kappa\kappa)^\LL$.  
 Since  $$y\in A ~ \Longleftrightarrow ~ \exists x\in B ~ \forall\alpha,\beta<\kappa ~ [y(\alpha)=\beta\longleftrightarrow (x)_{\goedel{\alpha}{\beta}}\in B]$$
 holds for every $y\in{}^\kappa\kappa$ and 
$B$ is a subset of $({}^\kappa\kappa)^\LL$, the parameter $c$ witnesses that the set $A$ is contained in the class $S_1^{\LL,\kappa}$.   
\end{proof}

\begin{corollary}
 Let $\kappa$ be an uncountable cardinal with $\kappa=\kappa^{{<}\kappa}$ and $\kappa^+=(\kappa^+)^\LL$. If $\rm{BA}(\kappa)$ holds, 
 then the classes $C^{\kappa,\kappa^+}_{cl}$ and $\mathbf{\Sigma}^1_1(\kappa)$ are equal. \qed
\end{corollary}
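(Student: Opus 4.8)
The plan is to prove the equality by establishing the two inclusions separately and reducing each to a result already in hand. The inclusion $\mathbf{\Sigma}^1_1(\kappa)\subseteq C^{\kappa,\kappa^+}_{cl}$ holds without any hypotheses and is exactly the remark (made just before Proposition~\ref{proposition:BigTreesDefinable}) that the class $C^{\kappa,\mu}_{cl}$ contains all $\mathbf{\Sigma}^1_1$-subsets. Concretely, the space ${}^\kappa\kappa$ is a closed subspace of ${}^\kappa(\kappa^+)$, since its complement $\Set{x\in{}^\kappa(\kappa^+)}{\exists\alpha<\kappa\ x(\alpha)\geq\kappa}$ is open; hence any closed subset $C$ of ${}^\kappa\kappa$ is also closed in ${}^\kappa(\kappa^+)$, the subspace topology on $C$ agrees whichever ambient space we use, and so every continuous image of such a $C$ lies in $C^{\kappa,\kappa^+}_{cl}$. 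The entire content of the corollary therefore sits in the reverse inclusion $C^{\kappa,\kappa^+}_{cl}\subseteq\mathbf{\Sigma}^1_1(\kappa)$.

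For the reverse inclusion I would apply Proposition~\ref{proposition:BigTreesDefinable} with $\mu=\kappa^+$, which reduces matters to verifying its three hypotheses: that $(\kappa^+)^{{<}\kappa}=\kappa^+$, that $\kappa^+<2^\kappa$, and that every subset of ${}^\kappa\kappa$ of cardinality $\kappa^+$ is a $\mathbf{\Sigma}^1_1$-subset. The last hypothesis is precisely the conclusion of the lemma immediately preceding this corollary, which applies because we are assuming both $\kappa^+=(\kappa^+)^\LL$ and $\rm{BA}(\kappa)$. The first hypothesis is a routine cardinal-arithmetic computation: since $\cof{\kappa^+}=\kappa^+$ and every $\lambda<\kappa$ satisfies $\lambda<\kappa^+$, each function $\lambda\to\kappa^+$ is bounded, so $(\kappa^+)^\lambda\leq\kappa^+\cdot\sup_{\beta<\kappa^+}|\beta|^\lambda\leq\kappa^+\cdot\kappa^{{<}\kappa}=\kappa^+$, and taking the supremum over $\lambda<\kappa$ yields $(\kappa^+)^{{<}\kappa}=\kappa^+$.

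The step I expect to require the most care is the verification $\kappa^+<2^\kappa$; this is where $\rm{BA}(\kappa)$ is genuinely used beyond the preceding lemma, and it cannot be omitted, since if $2^\kappa=\kappa^+$ then \emph{every} subset of ${}^\kappa\kappa$ would be a continuous image of ${}^\kappa(2^\kappa)={}^\kappa(\kappa^+)$, and the two classes could not coincide. I would establish $\kappa^+<2^\kappa$ by the standard generalized Martin's-axiom argument: the poset $\Add{\kappa}{1}$ is ${<}\kappa$-closed, well-met, and $\kappa$-linked (the last because $\kappa=\kappa^{{<}\kappa}$ forces $2^{{<}\kappa}=\kappa$, so $\Add{\kappa}{1}$ has only $\kappa$ conditions). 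Assuming toward a contradiction that $2^\kappa=\kappa^+$ and fixing an enumeration $\POT{\kappa}=\Set{x_\gamma}{\gamma<\kappa^+}$, the collection consisting of the $\kappa$ dense sets $E_\alpha=\Set{p}{\alpha\in\dom{p}}$ for $\alpha<\kappa$ together with the $\kappa^+$ dense sets $D_\gamma=\Set{p}{\exists\alpha\in\dom{p}\ p(\alpha)\neq x_\gamma(\alpha)}$ for $\gamma<\kappa^+$ is a family of $\kappa^+$ dense subsets to which $\rm{BA}(\kappa)$ applies; a generic filter meeting all of them yields a total function $\kappa\to 2$ distinct from every $x_\gamma$, contradicting the enumeration. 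With all three hypotheses of Proposition~\ref{proposition:BigTreesDefinable} established, the reverse inclusion follows, and combining it with the first paragraph gives the asserted equality of classes.
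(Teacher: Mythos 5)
Your proposal is correct and takes essentially the same route the paper intends: the corollary carries no proof precisely because it is the immediate combination of the preceding lemma (which, under $\rm{BA}(\kappa)$ and $\kappa^+=(\kappa^+)^\LL$, makes every subset of ${}^\kappa\kappa$ of cardinality $\kappa^+$ a $\mathbf{\Sigma}^1_1$-subset) with Proposition~\ref{proposition:BigTreesDefinable} applied to $\mu=\kappa^+$, together with the standing remark that $C^{\kappa,\mu}_{cl}$ contains all $\mathbf{\Sigma}^1_1$-subsets. Your explicit verifications of the side hypotheses that the paper leaves implicit --- that $(\kappa^+)^{{<}\kappa}=\kappa^+$ follows from $\kappa=\kappa^{{<}\kappa}$, and that $\rm{BA}(\kappa)$ yields $\kappa^+<2^\kappa$ via the diagonalization over $\Add{\kappa}{1}$ --- are all sound.
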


Conversely, it is also consistent that the $\GCH$ fails at $\kappa$ and $C^{\kappa,\kappa^+}_{cl}$ is not a subclass of $\mathbf{\Sigma}^1_1(\kappa)$.

\begin{proposition}
 Let $\kappa$ be an uncountable regular cardinal, $\theta>\kappa$ be an inaccessible cardinal and $G*H$ be $(\Coll{\kappa}{{<}\theta}*\Add{\check{\kappa}}{\check{\kappa}^{++})}$-generic over $\VV$. 
 In $\VV[G,H]$, there is a continuous image of ${}^\kappa(\kappa^+)$ that is not a $\mathbf{\Sigma}^1_1$-subset of ${}^\kappa\kappa$.
\end{proposition}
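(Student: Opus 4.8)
The plan is to produce, in $\VV[G,H]$, a subset of ${}^\kappa\kappa$ of cardinality exactly $\kappa^+$, to observe that any such set is a continuous image of ${}^\kappa(\kappa^+)$, and then to rule out membership in $\mathbf{\Sigma}^1_1(\kappa)$ by proving that in $\VV[G,H]$ every $\mathbf{\Sigma}^1_1$-subset of ${}^\kappa\kappa$ satisfies the $\kappa$-perfect set property.

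First I would settle the cardinal arithmetic in $\VV[G,H]$. Both $\Coll{\kappa}{{<}\theta}$ and $\Add{\kappa}{\kappa^{++}}$ are ${<}\kappa$-closed, so $\kappa$ remains regular with $\kappa=\kappa^{{<}\kappa}$; moreover $\Coll{\kappa}{{<}\theta}$ is $\theta$-c.c.\ (as $\theta$ is inaccessible), collapses $\theta$ to $\kappa^+$ and forces $2^\kappa=\kappa^+$, while $\Add{\kappa}{\kappa^{++}}$ is $\kappa^+$-c.c.\ and forces $2^\kappa=\kappa^{++}$; hence $\kappa^+<2^\kappa=\kappa^{++}$. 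Next I record the (trivial) continuous-image fact: if $A=\{x_\gamma\mid\gamma<\kappa^+\}\subseteq{}^\kappa\kappa$, then $\map{f}{{}^\kappa(\kappa^+)}{{}^\kappa\kappa}$, $f(y)=x_{y(0)}$, is continuous, because $f^{-1}[N_s]=\bigcup\{N_{\langle\gamma\rangle}\mid s\subseteq x_\gamma\}$ is open for every $s\in{}^{{<}\kappa}\kappa$, and $\ran{f}=A$. Thus every set of cardinality at most $\kappa^+$ lies in $C^{\kappa,\kappa^+}$, and since $2^\kappa=\kappa^{++}\geq\kappa^+$ there are sets of size exactly $\kappa^+$, so it remains only to find one that is not $\mathbf{\Sigma}^1_1$.

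To this end I would prove that in $\VV[G,H]$ every $\mathbf{\Sigma}^1_1$-subset $A$ of ${}^\kappa\kappa$ either has cardinality at most $\kappa$ or contains a $\kappa$-perfect subset, and so has cardinality at most $\kappa$ or equal to $2^\kappa=\kappa^{++}$; granting this, no $\mathbf{\Sigma}^1_1$-set has cardinality $\kappa^+$, and any fixed $A$ with $|A|=\kappa^+$ witnesses the proposition. The argument for the perfect set property is the generalized Solovay argument. Write $A$ as a projection $p[T]$ of a tree $T$ definable from a parameter $p\subseteq\kappa$. Since $\Coll{\kappa}{{<}\theta}$ is ${<}\kappa$-closed it adds no new ${<}\kappa$-sequences, so the Cohen factor is computed identically in $\VV$ and $\VV[G]$ and the whole forcing is (equivalent to) a product. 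Each single condition uses collapse-coordinates bounded below $\theta$ and fewer than $\kappa$ Cohen-coordinates, so by the regularity of $\theta$ the $\kappa$-many conditions of $G\times H$ deciding the statements $\xi\in p$ for $\xi<\kappa$ together use collapse-coordinates below some $\eta<\theta$ and a set $S$ of at most $\kappa$ Cohen-coordinates. Hence $p\in N:=\VV[G\restriction\eta][H\restriction S]$, and inaccessibility of $\theta$ yields $(2^\kappa)^N<\theta=(\kappa^+)^{\VV[G,H]}$, so $|{}^\kappa\kappa\cap N|\leq\kappa$ in $\VV[G,H]$. If $|A|>\kappa$, this forces $A\not\subseteq N$, and I may fix $x\in A\setminus N$ together with a cofinal branch of $T$ witnessing $x\in A$.

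The residual forcing from $N$ to $\VV[G,H]$ is the product of the Levy collapse $\Coll{\kappa}{[\eta,\theta)}$ with a Cohen forcing, both of which are homogeneous and carry a $\kappa$-perfect tree of mutually generic elements of ${}^\kappa\kappa$. Fixing a condition that forces a name for $x$ into the $\mathbf{\Sigma}^1_1$-set defined over $N$, I would thread this perfect tree of generic filters through $T$ to obtain $2^\kappa$-many pairwise distinct cofinal branches whose first coordinates lie in $A$, producing a $\kappa$-perfect subset of $A$. The main obstacle is exactly this final construction: carrying out the perfect-set-of-mutually-generic-filters argument at the uncountable cardinal $\kappa$ and verifying that, once the parameter has been absorbed into $N$, the residual forcing retains enough homogeneity and genericity to hold all of the resulting branches inside the $\Sigma_1$-definable set $A$. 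This is where the inaccessibility of $\theta$ and the ${<}\kappa$-closed, $\kappa^+$-c.c.\ nature of the Cohen factor do the real work.
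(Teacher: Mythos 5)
Your overall route is exactly the paper's: the paper's proof consists of two sentences, namely that every subset of ${}^\kappa\kappa$ of cardinality $\kappa^+$ is a continuous image of ${}^\kappa(\kappa^+)$, and that by {\cite[Proposition 9.9]{MR2987148}} every $\mathbf{\Sigma}^1_1$-subset of ${}^\kappa\kappa$ in $\VV[G,H]$ of cardinality greater than $\kappa$ has cardinality at least $\kappa^{++}$. So the dichotomy you set out to prove from scratch is precisely what the paper imports as a black box. Everything in your proposal up to that point is correct and matches: the cardinal arithmetic $\kappa^+<2^\kappa=\kappa^{++}$, the continuous map $f(y)=x_{y(0)}$, the absorption of the parameter into $N=\VV[G\restriction\eta][H\restriction S]$ with $\betrag{({}^\kappa\kappa)^N}\leq\kappa$ computed in $\VV[G,H]$, and (a good instinct) fixing a witnessing branch $(x,y)\in[T]$ rather than $x$ alone.

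However, the step you yourself flag as the main obstacle is a genuine gap, and as literally described it would fail. You propose a $\kappa$-perfect tree of filters mutually generic over $N$ for the \emph{full} residual forcing $\Coll{\kappa}{[\eta,\theta)}\times(\text{Cohen part})$. No fusion of length $\kappa$ can produce even a single such filter: a filter arising from a descending $\kappa$-chain of conditions has union of size at most $\kappa<\theta$, so it misses almost all of the $\theta$-many dense sets $D_\alpha=\Set{p}{(\alpha,0)\in\dom{p}}$ for $\alpha\in[\eta,\theta)$, all of which lie in $N$; since $\theta=(\kappa^+)^{\VV[G,H]}$, no bookkeeping of length $\kappa$ can meet them. (Filters generic over $N$ for the residual collapse do exist in $\VV[G,H]$ --- tails of $G$ --- but they cannot be constructed by hand, let alone $2^\kappa$-many threaded through a perfect tree.) The missing idea is a \emph{second} absorption: by the same support computation you performed for $p$, the pair $(x,y)\in[T]$ itself lies in $N[g]$, where $g$ is generic over $N$ for a complete subforcing $\PPP_0$ of size less than $\theta$. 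Since $\theta$ remains inaccessible in $N$, the model $N$ has fewer than $\theta$-many, hence in $\VV[G,H]$ at most $\kappa$-many, dense subsets of $\PPP_0$, and there the construction does go through: fix $q\in g$ forcing both $\dot{x}\in p[\check{T}]$ (possible because the witness $y$ was absorbed, so $N[g]\models x\in p[T]$) and $\dot{x}\neq\check{z}$ for every $z\in({}^\kappa\kappa)^N$; note that consequently no extension of $q$ decides $\dot{x}$ completely, so every condition below $q$ has two extensions forcing incompatible values of $\dot{x}$; build a tree of conditions indexed by ${}^{{<}\kappa}2$, splitting on $\dot{x}$, meeting the $\kappa$-many dense sets along every branch, and using the ${<}\kappa$-closure of $\PPP_0$ at limits; finally, use upward absoluteness of $\mathbf{\Sigma}^1_1$-statements to transfer $\dot{x}^{g_b}\in p[T]$ from each $N[g_b]$ to $\VV[G,H]$. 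This yields $\kappa^{++}$-many elements of $A$ and closes the argument; without the reduction to the small subforcing $\PPP_0$, the sketch cannot be completed.
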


\begin{proof}
 By {\cite[Proposition 9.9]{MR2987148}}, every $\mathbf{\Sigma}^1_1$-subset of ${}^\kappa\kappa$ in $\VV[G,H]$ of cardinality greater than $\kappa$ has cardinality at least $\kappa^{++}$. 
 Since every subset of ${}^\kappa\kappa$ of cardinality $\kappa^+$ is a continuous image of ${}^\kappa(\kappa^+)$, the above statement follows directly. 
\end{proof}


 \section{Thin sets}\label{Section:thin}

From now on, unless otherwise noted, we let $\kappa$ denote an uncountable cardinal with $\kappa=\kappa^{{<}\kappa}$. 
The goal of this section is to show that there is a closed subset of ${}^{\kappa}\kappa$ that is not contained in the class $C^{\kappa}$.

\begin{definition} 
 Let $\mu\leq\kappa$ be a cardinal. A set $A\subseteq {}^{\kappa}\kappa$ is \emph{$\mu$-thin} if $A\neq p[T]$ for every ${<}\mu$-closed subtree $T$ of ${}^{{<}\kappa}\kappa\times{}^{{<}\kappa}\kappa$ without end nodes. 
\end{definition} 
 
We will construct a $\kappa$-thin closed subset of ${}^{\kappa}\kappa$. In the following, we call a subset $D$ of ${}^{{<}\kappa}\mu_0\times{}^{{<}\kappa}\mu_1$ a \emph{${<}\kappa$-closed subset of the tree ${}^{{<}\kappa}\mu_0\times{}^{{<}\kappa}\mu_1$} 
if $D$ consists of pairs of functions of equal length, and $D$ is closed under increasing unions of length $\gamma$ for all $\gamma<\kappa$. 
Given such a subset $D$, we let $[D]$ denote the set of all 
$(x,y)\in {}^\kappa\mu_0\times{}^\kappa\mu_1$ such that the set 
$\Set{\alpha<\kappa}{(x\restriction\alpha, ~ y\restriction\alpha)\in D}$ is unbounded in $\kappa$ and 
define $p[D]$ to be the projection of $[D]$ onto the first coordinate.

\begin{lemma}\label{continuous image to projection}
 Let $\mu$ and $\lambda$ be cardinals with $\mu=\mu^{{<}\kappa}$ and $\lambda=\lambda^{{<}\kappa}$. Suppose that $\map{f}{{}^{\kappa}\lambda}{{}^{\kappa}\mu}$ is continuous. 
 Then there is a ${<}\kappa$-closed subset $D$ of the tree ${}^{{<}\kappa}\mu\times{}^{{<}\kappa}\lambda$ without end nodes such that $p[D]=\ran{f}$. 
\end{lemma}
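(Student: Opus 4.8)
The plan is to take
\[
 D ~ = ~ \Set{(s,t)\in{}^{{<}\kappa}\mu\times{}^{{<}\kappa}\lambda}{\length{s}=\length{t}, ~ f[N_t]\subseteq N_s}
\]
and to verify directly that this $D$ witnesses the lemma, i.e. that it is a ${<}\kappa$-closed subset of ${}^{{<}\kappa}\mu\times{}^{{<}\kappa}\lambda$ without end nodes with $p[D]=\ran{f}$. Note that $(\emptyset,\emptyset)\in D$, since $f[{}^\kappa\lambda]\subseteq{}^\kappa\mu$, so $D$ is nonempty.

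The heart of the argument, and the step where the regularity of $\kappa$ enters, is the following claim: for every $v\in{}^\kappa\lambda$, writing $u=f(v)$, the set
\[
 C_v ~ = ~ \Set{\gamma<\kappa}{f[N_{v\restriction\gamma}]\subseteq N_{u\restriction\gamma}}
\]
contains a club in $\kappa$. To prove it I would use continuity of $f$ at $v$: since the sets $N_{v\restriction\beta}$ with $\beta<\kappa$ form a neighborhood base at $v$, for each $\alpha<\kappa$ there is a least $g(\alpha)<\kappa$ with $f[N_{v\restriction g(\alpha)}]\subseteq N_{u\restriction\alpha}$. As $\kappa$ is regular, the limit closure points of the map $\map{g}{\kappa}{\kappa}$ form a club, and for any such $\gamma$ one has $N_{v\restriction\gamma}\subseteq N_{v\restriction g(\alpha)}$ for every $\alpha<\gamma$, hence $f[N_{v\restriction\gamma}]\subseteq\bigcap_{\alpha<\gamma}N_{u\restriction\alpha}=N_{u\restriction\gamma}$, so $\gamma\in C_v$. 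I expect this synchronization --- passing from the non-uniform modulus of continuity $g$ to levels $\gamma$ at which inputs \emph{and} outputs are controlled to the same height --- to be the main obstacle, since the naive choice of $D$ fails precisely because a small extension of the input need not refine the output by the same amount.

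Granting the claim, the three required properties follow. For $p[D]=\ran{f}$: if $(u,v)\in[D]$, then $f[N_{v\restriction\gamma}]\subseteq N_{u\restriction\gamma}$ for unboundedly many $\gamma$, and since $v\in N_{v\restriction\gamma}$ this forces $f(v)\restriction\gamma=u\restriction\gamma$ cofinally often, so $u=f(v)\in\ran{f}$; conversely, for $u=f(v)$ the claim gives $C_v$ unbounded, i.e. $(u,v)\in[D]$. For ${<}\kappa$-closedness, given a $\leq$-increasing sequence $\seq{(s_\xi,t_\xi)}{\xi<\delta}$ in $D$ with union $(s,t)$ of limit length $\gamma<\kappa$, one has $N_t=\bigcap_\xi N_{t_\xi}$ and $N_s=\bigcap_\xi N_{s_\xi}$, whence $f[N_t]\subseteq\bigcap_\xi f[N_{t_\xi}]\subseteq\bigcap_\xi N_{s_\xi}=N_s$, giving $(s,t)\in D$. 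Finally, for the absence of end nodes, given $(s,t)\in D$ of length $\gamma_0$, I would choose $x\in N_t$; then $x\restriction\gamma_0=t$ and, since $f[N_t]\subseteq N_s$, also $f(x)\restriction\gamma_0=s$. Applying the claim to $x$ yields $\gamma\in C_x$ with $\gamma>\gamma_0$, and then $(f(x)\restriction\gamma,\, x\restriction\gamma)\in D$ is a proper extension of $(s,t)$.

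Throughout, the real work is done by continuity of $f$ together with the regularity of $\kappa$; the cardinal arithmetic hypotheses $\mu=\mu^{{<}\kappa}$ and $\lambda=\lambda^{{<}\kappa}$ serve only to keep $D$ inside the intended framework (ensuring that the trees ${}^{{<}\kappa}\mu$ and ${}^{{<}\kappa}\lambda$ have the expected size and that every node extends to a cofinal branch), and I would invoke them only where such bookkeeping is needed.
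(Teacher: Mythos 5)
Your proof is correct, and it uses the same witness as the paper: the set $D$ of equal-length pairs $(s,t)$ with $f[N_t]\subseteq N_s$ (your coordinate order actually matches the statement of the lemma, whereas the paper's proof writes the pairs with the domain side first and projects accordingly). The genuine divergence is in how the synchronization of input and output heights is established. The paper proves its claim by an $\omega$-step interleaving: given $x$ and initial segments $s_0\subsetneq x$, $t_0\subsetneq f(x)$, it uses continuity to build increasing sequences $s_n\subseteq x$, $t_n\subseteq f(x)$ with $f[N_{s_n}]\subseteq N_{t_n}$ and $\length{t_{n+1}}\geq\length{s_{n+1}}\geq\length{t_n}$, and takes unions after $\omega$ steps, which stay below $\kappa$ because $\cof{\kappa}>\omega$. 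You instead take limit closure points of the modulus-of-continuity function $g$, which form a club by regularity of $\kappa$; this yields the slightly stronger conclusion that $C_v$ contains a \emph{club} of synchronized levels, where the paper only extracts an unbounded set (which is all that membership in $[D]$ requires, since $[D]$ is defined via unboundedly many levels in $D$). Your formulation also streamlines the inclusion $p[D]\subseteq\ran{f}$: from $v\in N_{v\restriction\gamma}$ you read off $f(v)\restriction\gamma=u\restriction\gamma$ cofinally and conclude $u=f(v)$ directly, whereas the paper argues by contradiction using a basic open set separating $y$ from $f(x)$. Both routes spend the uncountability of $\kappa$ in essentially equivalent ways (uncountable cofinality for the $\omega$-chains versus regularity for the club of closure points), and your closing remark is accurate: the hypotheses $\mu=\mu^{{<}\kappa}$ and $\lambda=\lambda^{{<}\kappa}$ are not used in this lemma --- the paper's own proof does not invoke them either; they are needed in the companion Lemma \ref{projection to projection of tree}, where $D$ must be enumerated in order type $\mu$.
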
 
 
\begin{proof}
 We define $$D ~ = ~ \Set{(s,t)\in {}^{\gamma}\lambda\times {}^{\gamma}\mu}{\gamma<\kappa,\ f[N_s]\subseteq N_t}.$$ Note that $D$ is ${<}\kappa$-closed. 
 
 \begin{claim*} 
  For all $x\in {}^{\kappa}\lambda$, $s_0\subsetneq x$, and $t_0\subsetneq f(x)$, there is a pair $(s,t)$ in $D$ with $s_0\subseteq s\subseteq x$ and $t_0\subseteq  t\subseteq f(x)$. 
 \end{claim*} 
 
 \begin{proof}[Proof of the Claim]
  We construct strictly increasing sequences $\seq{s_n\in{}^{<\kappa}\lambda}{n<\omega}$ and $\seq{t_n\in {}^{<\kappa}\mu}{n<\omega}$ with $s_n\subseteq x$, $t_n\subseteq f(x)$, $f[N_{s_n}]\subseteq N_{t_n}$ and  $$\length{t_{n+1}} ~\geq ~ \length{s_{n+1}} ~ \geq ~ \length{t_n}$$ for all $n<\omega$,  
  using the continuity of $f$. Let $(s,t)=(\bigcup_{n\in\omega} s_n, \bigcup_{n\in\omega} t_n)\in D$. 
 \end{proof}

 It follows that $D$ has no end nodes and that $(x,f(x))\in [D]$ for all $x\in {}^{\kappa}\mu$.

 \begin{claim*} 
  $[D]=\{(x,f(x))\mid x\in {}^{\kappa}\lambda\}$. 
 \end{claim*} 
 
 \begin{proof}[Proof of the Claim]
  Suppose that $(x,y)\in [D]$ and that $f(x)\neq y$. Let $t\in {}^{<\kappa}\lambda$ with $y\in N_t$ and $f(x)\notin N_t$. Find $(u,v)\in D$ with $u\subseteq x$, $v\subseteq y$, and $\length{v}\geq\length{t}$. 
  Then $t\subseteq  v$ and $f(x)\in f[N_u]\subseteq N_v\subseteq N_t$, contradicting the assumption that $f(x)\notin N_t$.  
 \end{proof}

 This shows $p[D]=\ran{f}$, completing the proof of the lemma. 
 \end{proof}

 Given a set $D$ as above, we want to find a ${<}\kappa$-closed tree $T$ without end nodes such that $p[T]=p[D]$. Note that the downwards-closure of a ${<}\kappa$-closed subset $D$ does not necessarily have the same projection as the  subset itself. For example, if we consider the set $$D ~ = ~ \Set{(s,t)\in{}^\gamma 2\times{}^\gamma 2}{\gamma<\kappa, ~ \exists\alpha<\gamma ~ s(\alpha)=1},$$ then the function with domain $\kappa$ and constant value $0$ is not an element of $p[D]$, but it is contained in the projection of the downwards-closure of $D$.

 \begin{lemma}\label{projection to projection of tree} 
  Suppose that $\lambda\leq \mu$ are cardinals with  $\mu=\mu^{{<}\kappa}$. Suppose that $D$ is a ${<}\kappa$-closed subset of the tree ${}^{{<}\kappa}\lambda\times{}^{{<}\kappa}\mu$ without end nodes. 
  Then there is a ${<}\kappa$-closed subtree $T$ of ${}^{{<}\kappa}\lambda\times{}^{{<}\kappa}\mu$ without end nodes such that $p[D]=p[T]$. 
 \end{lemma}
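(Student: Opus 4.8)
The plan is to realize $p[D]$ as the first projection of the set $[T]$ of cofinal branches of a downward closed tree whose second coordinate carries, besides $y$, a continuous enumeration of a club of ``$D$-levels'' \emph{together with} an explicit witnessing chain inside $D$.

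First I would record the following consequence of ${<}\kappa$-closedness. For $(x,y)\in{}^\kappa\lambda\times{}^\kappa\mu$ put
\[
 E_{x,y} ~ = ~ \Set{\alpha<\kappa}{(x\restriction\alpha,y\restriction\alpha)\in D}.
\]
Since $D$ is ${<}\kappa$-closed, $E_{x,y}$ is closed in $\kappa$, so $(x,y)\in[D]$ (i.e. $E_{x,y}$ is unbounded) holds if and only if $E_{x,y}$ is a club; hence $x\in p[D]$ if and only if there are $y$ and a strictly increasing continuous $\map{c}{\kappa}{\kappa}$ with $(x\restriction c(\delta),y\restriction c(\delta))\in D$ for all $\delta<\kappa$. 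A naive attempt lets $T$ consist of all $(s,\langle t,c\rangle)$ of length $\gamma$ with $c\restriction\gamma$ strictly increasing and continuous and $(s\restriction c(\delta),t\restriction c(\delta))\in D$ whenever $c(\delta)<\gamma$. This $T$ is a downward closed, ${<}\kappa$-closed subtree, and since a cofinal branch produces a total strictly increasing $c$ with unbounded range, one checks $p[T]=p[D]$ just as above.

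The difficulty here is exactly the step I expect to be the main obstacle: \emph{this tree has end nodes}. Through a value $c(\delta)\geq\gamma$ a node may commit to hitting $D$ at a prescribed level that the current $(s,t)$ cannot reach, and then no extension exists (recall $D$ need not be downward closed). To repair this I would enlarge the second coordinate to carry a \emph{plan}. At length $\gamma$ a node is a pair $(s,w)$ where $w$ codes a triple $\langle t,\seq{m_\delta}{\delta<\gamma},\seq{(a^\delta,b^\delta)}{\delta<\gamma}\rangle$ such that $\seq{m_\delta}{\delta<\gamma}$ is strictly increasing and continuous, $\seq{(a^\delta,b^\delta)}{\delta<\gamma}$ is a $\subseteq$-increasing chain in $D$ with $\length{(a^\delta,b^\delta)}=m_\delta$, the pair $(s,t)$ is compatible with the plan (it coincides with $\bigcup_{\delta<\gamma}(a^\delta,b^\delta)$ as far as both are defined), and $(s,t)$ extends to an element of $D$. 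Such a coding is available precisely because $\lambda\leq\mu=\mu^{{<}\kappa}$: each node of ${}^{{<}\kappa}\lambda\times{}^{{<}\kappa}\mu$, each ordinal below $\kappa$, and each ${<}\kappa$-sequence of these can be encoded by a single function in ${}^{{<}\kappa}\mu$.

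Finally I would verify the four required properties. Downward closure is immediate. For ${<}\kappa$-closedness one takes a union at a limit length and checks the clauses coordinatewise, the point being that $D$ is ${<}\kappa$-closed, so the union of the plan again lies in $D$; moreover at a limit length $\gamma$ one has $\sup_{\delta<\gamma}m_\delta\geq\gamma$, so the ``extends into $D$'' clause holds automatically there. Absence of end nodes is where the plan does its work: given any node, its plan exhibits $(s,t)$ as an initial segment of an element of $D$, and since $D$ has no end nodes one can choose the value at the next coordinate along such a $D$-extension while simultaneously lengthening the plan, so every node has a proper successor. For $p[T]=p[D]$, the inclusion $\supseteq$ is obtained by feeding in the club $E_{x,y}$ as $\seq{m_\delta}{\delta<\kappa}$ and the chain $\seq{(x\restriction m_\delta,y\restriction m_\delta)}{\delta<\kappa}$ as the plan; for $\subseteq$ a cofinal branch yields a strictly increasing (hence cofinal) $\seq{m_\delta}{\delta<\kappa}$ and a chain in $D$ whose union is exactly the branch $(x,y)$, so $E_{x,y}$ is unbounded and $x\in p[D]$. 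The hard part throughout is the coherence of the plan at limit stages and the verification that committed levels remain realizable, which is exactly what the combination of ${<}\kappa$-closedness and the absence of end nodes in $D$ secures.
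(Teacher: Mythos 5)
Your proposal is correct and takes essentially the same route as the paper: the paper's proof likewise enlarges the tree by an auxiliary coordinate $u$ that records a weakly increasing chain $\seq{d_{u(\alpha)}}{\alpha<\gamma}$ of nodes of $D$ (via a fixed enumeration of $D$) through which the branch must thread, verifies ${<}\kappa$-closedness and the absence of end nodes just as you do, and then absorbs the extra coordinate into the second one using $\mu=\mu^{{<}\kappa}$. Your explicit ``plan'' with the lengths $m_\delta$, the compatibility clause, and the clause that $(s,t)$ extends into $D$ is simply a more detailed bookkeeping of the same device.
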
 
 
\begin{proof} 
 Let $\seq{d_{\alpha}}{\alpha<\mu}$ be an (not necessarily injective) enumeration of $D$. Let $\bar{T}$ denote the subtree of ${}^{{<}\kappa}\lambda\times{}^{{<}\kappa}\mu\times{}^{{<}\kappa}\mu$ 
 consisting of all $(s,t,u)\in {}^\gamma\lambda\times {}^\gamma\mu\times{}^\gamma\mu$  such that $\gamma<\kappa$ and the following statements hold. 
 \begin{itemize} 
 \item $\seq{d_{u(\alpha)}}{\alpha<\gamma}$ is a weakly increasing sequence in $D$. 
 \item $(s\restriction\alpha, ~ t\restriction\alpha)$ is a node equal to or below $u(\alpha)$ in ${}^{{<}\kappa}\lambda\times{}^{{<}\kappa}\mu$ for all $\alpha<\gamma$. 
 \end{itemize}

 \begin{claim*} 
  $\bar{T}$ is ${<}\kappa$-closed. \qed
 \end{claim*}

 \begin{claim*} $\bar{T}$ has no end nodes. 
 \end{claim*} 
 
 \begin{proof}[Proof of the Claim]
  Suppose that $(s,t,u)\in\bar{T}$ has length $\gamma<\kappa$.  
  Since $D$ is ${<}\kappa$-closed and has no end nodes, there is a $\beta<\mu$ such that $d_\beta$ has length greater than $\gamma$ and $d_{u(\alpha)}$ is below $d_\beta$ 
  in ${}^{{<}\kappa}\lambda\times{}^{{<}\kappa}\mu$ for every $\alpha<\gamma$. 
  Let $d_\beta=(s_*,t_*)$ and define $\map{u_*}{\gamma+1}{\kappa}$ by $u_*\restriction\gamma=u\restriction\gamma$ and $u_*(\gamma)=\beta$. 
  Then the tuple $$(s_*\restriction(\gamma+1), ~ t_*\restriction(\gamma+1), ~ u_*)$$ is a direct successor of $(s,t,u)$ in $\bar{T}$.  
 \end{proof}

Since $p[D]=p(p[\bar{T}])$, there is a ${<}\kappa$-closed subtree $T$ of ${}^{{<}\kappa}\lambda\times{}^{{<}\kappa}\mu$ without end nodes such that $p[T]=p(p[\bar{T}])=p[D]$.
 \end{proof}

In the following, we construct a $\kappa$-thin closed subset of ${}^{\kappa}\kappa$. 
Given $\gamma\leq\kappa$ closed under G\"odel pairing and $\map{s}{\gamma}{2}$, we define 
\begin{equation*}
 R_s ~ = ~ \Set{(\alpha,\beta)\in\gamma\times\gamma}{s(\goedel{\alpha}{\beta})=1}.
\end{equation*} 
 Set  
\begin{equation}\label{equation:DefinitionOfW}
 W ~ = ~ \Set{x\in {}^{\kappa}2}{\textit{$(\kappa,R_x)$ is a well-order}}
\end{equation}
Then $W$ is a closed subset of ${}^\kappa\kappa$ with $W=[S]$, where $S=\Set{x\restriction\alpha}{x\in W,~\alpha<\kappa}$. 
Given $x\in W$ and $\alpha<\kappa$, let $\rank{\alpha}{x}$ denote the rank of $\alpha$ in $(\kappa,R_x)$. In the following, we write $\alpha<_x\beta$ instead of $(\alpha,\beta)\in R_x$.

\begin{lemma}\label{lemma:WisNotProjectionClosedTree}
 There is no $\omega$-closed tree subtree $T$ of ${}^{<\kappa}\kappa\times {}^{<\kappa}\kappa$ with cofinal branches through all its nodes and $W=p[T]$. 
\end{lemma}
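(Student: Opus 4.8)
The plan is to run a boundedness argument, in the spirit of the classical $\mathbf{\Sigma}^1_1$-boundedness theorem for codes of well-orders, exploiting that the order types $\otp{(\kappa,R_x)}$ of elements $x\in W$ range exactly over the interval $[\kappa,\kappa^+)$ and are therefore cofinal in $\kappa^+$. Suppose toward a contradiction that $T$ is an $\omega$-closed subtree of ${}^{{<}\kappa}\kappa\times{}^{{<}\kappa}\kappa$ admitting a cofinal branch through each of its nodes and satisfying $W=p[T]$. I would consider the set $P$ of all pairs $((s,t),\alpha)$ with $(s,t)\in T$ and $\alpha<\length{s}$, equipped with the relation $\prec$ defined by declaring $((s',t'),\alpha')\prec((s,t),\alpha)$ if and only if $(s,t)\leq(s',t')$, $\goedel{\alpha'}{\alpha}<\length{s'}$ and $s'(\goedel{\alpha'}{\alpha})=1$. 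The last two clauses say that the node $(s',t')$ already forces $\alpha'<_x\alpha$ for every branch $x$ extending it.

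The key step, and the point at which both hypotheses on $T$ are needed, is to show that $\prec$ is well-founded. Given a hypothetical infinite descending sequence $\seq{((s_n,t_n),\alpha_n)}{n<\omega}$, the nodes $(s_n,t_n)$ form a $\leq$-increasing sequence in $T$ whose lengths stay bounded below $\kappa$ because $\cof{\kappa}>\omega$; hence $\omega$-closedness puts the union $(s_\omega,t_\omega)=(\bigcup_{n<\omega}s_n,\bigcup_{n<\omega}t_n)$ back into $T$, and the assumption that every node carries a cofinal branch lets me extend it to some $(x,y)\in[T]$. Then $x\in p[T]=W$, so $(\kappa,R_x)$ is a well-order; but each forcing condition $s_{n+1}(\goedel{\alpha_{n+1}}{\alpha_n})=1$ is inherited by $x\supseteq s_\omega$, so $\cdots<_x\alpha_2<_x\alpha_1<_x\alpha_0$ is an infinite $R_x$-descending sequence, which is absurd. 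Thus $\prec$ is well-founded and carries a rank function $\map{\varrho}{P}{\On}$. I expect this well-foundedness to be the main obstacle, since it is exactly what forces the simultaneous use of $\omega$-closedness and of cofinal branches.

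Finally I would convert this rank into a bound on order types. Because $\kappa=\kappa^{{<}\kappa}$, the tree $T$ and hence $P$ have cardinality at most $\kappa$, so the range of $\varrho$ is an ordinal $\delta$ of cardinality at most $\kappa$, giving $\delta<\kappa^+$. A routine induction on $\rank{\alpha}{x}$ then shows $\rank{\alpha}{x}\leq\varrho(((x\restriction\gamma,y\restriction\gamma),\alpha))$ for every $(x,y)\in[T]$ and all $\alpha<\gamma<\kappa$: each predecessor $\beta<_x\alpha$ is decided at some later level $\gamma'$, and $((x\restriction\gamma',y\restriction\gamma'),\beta)$ is then a $\prec$-predecessor of $((x\restriction\gamma,y\restriction\gamma),\alpha)$, so the inductive hypothesis applies. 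Consequently $\otp{(\kappa,R_x)}=\sup\Set{\rank{\alpha}{x}+1}{\alpha<\kappa}\leq\delta$ for every $x\in W$. This contradicts the fact that every ordinal in $[\kappa,\kappa^+)$ is the order type of some well-order of $\kappa$, and hence of some element of $W$, so the order types occurring in $W$ cannot all be bounded by the single ordinal $\delta<\kappa^+$.
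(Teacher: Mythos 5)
Your proof is correct, and it reaches the conclusion by a genuinely different organization of the boundedness argument than the paper's. The two proofs share the same kernel, used at the same point and with the same hypotheses: an increasing $\omega$-chain of nodes of $T$, each forcing a further descent in the coded relation, is glued together by $\omega$-closedness and then extended, via the cofinal-branch hypothesis, to some $x\in W$ carrying an infinite $R_x$-descending sequence --- a contradiction. But you and the paper run the argument in opposite directions around this kernel. The paper defines $r(s,t,\alpha)$ as the supremum of $\rank{\alpha}{x}$ over branches $x$ through $(s,t)$, shows by a pigeonhole argument over $\kappa^+$ (using $\betrag{T}\leq\kappa^{{<}\kappa}=\kappa$) that the condition $r(s,t,\alpha)=\kappa^+$ can always be propagated to an extension forcing a new descent $\beta<_u\alpha$, and iterates $\omega$-many times to \emph{manufacture} the bad chain directly; the contradiction is then localized at the limit node, whose length is arranged to be closed under G\"odel pairing so that $R_s$ is visibly ill-founded. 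You instead introduce the Kunen--Martin-style relation $\prec$ on pairs $((s,t),\alpha)$, use the kernel to prove $\prec$ is \emph{well-founded}, and convert its rank function (bounded below $\kappa^+$ because $\betrag{P}\leq\kappa$, which is where you use $\kappa=\kappa^{{<}\kappa}$, playing the role of the paper's pigeonhole) into the uniform bound $\otp{\kappa,R_x}\leq\delta<\kappa^+$ for all $x\in W$, contradicting the fact that these order types are cofinal in $\kappa^+$; the paper's analogous use of cofinality is the observation $r(\emptyset,\emptyset,\alpha)=\kappa^+$. Your route is the classical boundedness proof that the set of codes of well-orders is not a projection of such a tree, and it buys a quantitative statement --- a single ordinal $\delta<\kappa^+$ bounding the order types of all branches of any such tree --- while also dispensing with the G\"odel-pairing-closure bookkeeping, since your contradiction lives in the full branch $x$ rather than in an initial segment $s$. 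The paper's route avoids the auxiliary well-founded relation and its rank analysis altogether, at the cost of the $\kappa^+$-pigeonhole and the recursive construction. All steps of your argument check out: the $\prec$-descending chain does yield $\leq$-increasing nodes of bounded length (as $\kappa$ is regular uncountable), membership of the restrictions $(x\restriction\gamma',y\restriction\gamma')$ in $T$ is guaranteed because $(x,y)$ is a cofinal branch, and the induction on $\rank{\alpha}{x}$ correctly passes through $\prec$-predecessors chosen at sufficiently high levels $\gamma'$.
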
 
 
\begin{proof} 
 Assume, towards a contradiction, that $T$ is such a tree.  
 Given $(s,t)\in T$ and $\alpha<\kappa^+$, we define 
 \begin{equation*}
  T_{s,t} ~ = ~ \Set{(u,v)\in T}{(s\subseteq u \vee u\subseteq s)\wedge (t\subseteq v\vee v\subseteq t)}
 \end{equation*}
 to be subtree of $T$ induced by the node $(s,t)$ and
 \begin{equation*}
  r(s,t,\alpha) ~ = ~ \sup\Set{\rank{\alpha}{x}}{x\in p[T_{s,t}]} ~ \leq ~ \kappa^+  
 \end{equation*}
 to be the supremum of the ranks of $\alpha$ in $(\kappa,R_x)$ with $x\in p[T_{s,t}]$. 
 Then we have $r(\emptyset,\emptyset,\alpha)=\kappa^+$ for all $\alpha<\kappa$.

 \begin{claim*} 
  Let $(s,t)\in T$ and $\alpha<\kappa$ with $r(s,t,\alpha)=\kappa^+$. 
  If $\gamma<\kappa^+$, then there is $(u,v)\in T$ extending $(s,t)$ and $\beta<\kappa$ such that $\dom{u}$ is closed under G\"odel pairing, $\alpha<\beta<\length{u}$, $\beta<_u\alpha$, and $r(u,v,\beta)\geq\gamma$. 
 \end{claim*} 
 
 \begin{proof}[Proof of the Claim]
  The assumption $r(s,t,\alpha)=\kappa^+$ allows us to find $(x,y)\in [T]$ with $s\subseteq x$, $t\subseteq y$ and $\rank{\alpha}{x}\geq\gamma+\kappa$. 
  This implies that there is a $\beta$ with $\alpha<\beta<\kappa$ and $\gamma\leq\rank{\beta}{x}<\gamma+\kappa$. Pick $\delta>\max\{\alpha,\beta,\length{s}\}$ closed under G\"odel pairing and define 
  $(u,v)$ to be the node $(x\restriction\delta, y\restriction\delta)$ extending $(s,t)$. Since $R_u$ is a well-ordering of $\length{u}$, we have $\beta<_u\alpha$. Finally, $(x,y)$ witnesses that $r(u,v,\beta)\geq\gamma$. 
 \end{proof}

 \begin{claim*} 
  If $(s,t)\in T$ and $\alpha<\kappa$ with $r(s,t,\alpha)=\kappa^+$, then there is a node $(u,v)$ in $T$ extending $(s,t)$ and $\alpha<\beta<\length{u}$ such that $\length{u}$ is closed under G\"odel pairing, $\beta<_u \alpha$ and $r(u,v,\beta)=\kappa^+$. 
 \end{claim*} 
 
 \begin{proof}[Proof of the Claim]
  Given $\gamma<\kappa^+$, let $(u_\gamma,v_\gamma)\in T$ and $\beta_\gamma<\kappa$ denote the objects obtained by an application of the previous claim.
  Then we can find $(u,v)\in T$, $\beta<\kappa$ and $X\in[\kappa^+]^{\kappa^+}$ such that $(u,v)=(u_\gamma,v_\gamma)$ and $\beta=\beta_\gamma$ for every $\gamma\in X$. This implies that $r(u,v,\beta)=\kappa^+$. 
 \end{proof}

 These claims imply that there are strictly increasing sequences 
 $\seq{(s_n, t_n)}{n<\omega}$ of nodes in $T$ and $\seq{\beta_n}{n<\omega}$ of elements of $\kappa$ such that 
 $\length{s_n}$ is closed under G\"odel pairing and $\beta_{n+1} <_{s_{n+1}} \beta_n$ for all $n<\omega$.  Let $s=\bigcup_{n<\omega} s_n$ and $t=\bigcup_{n<\omega} t_n$. 
 Then $\length{s}$ is closed under G\"odel pairing and $R_s$ is ill-founded, hence $s\notin S$. But $(s,t)\in T$, since $T$ is $\omega$-closed. 
 By our assumptions on $T$, there is a cofinal branch $(x,y)$ in $[T]$ through $(s,t)$ and this implies that $s=x\restriction\length{s}\in S$, a contradiction. 
 \end{proof}

 In particular, the closed subset $W$ is not $\kappa$-thin.

 \begin{proof}[Proof of Theorem \ref{theorem:Main1}]
  Let $\kappa$ be an uncountable cardinal with $\kappa=\kappa^{{<}\kappa}$ and let $W$ be the closed subset of ${}^\kappa\kappa$ defined in (\ref{equation:DefinitionOfW}). 
  Assume, towards a contradiction, that there is a continuous function $\map{f}{{}^\kappa\kappa}{{}^\kappa\kappa}$ with $W=\ran{f}$. 
  By Lemma \ref{continuous image to projection} and \ref{projection to projection of tree}, there is a ${<}\kappa$-closed subtree $T$ of ${}^{{<}\kappa}\kappa\times{}^{{<}\kappa}\kappa$ without end nodes such that $W=p[T]$. 
  This contradicts Lemma \ref{lemma:WisNotProjectionClosedTree}. 
 \end{proof}

We cannot omit the assumption on the existence of cofinal branches through $T$ in Lemma \ref{lemma:WisNotProjectionClosedTree} by the following remark.

\begin{proposition} 
 Suppose that $\mu<\kappa$. Then every closed subset of ${}^{\kappa}\kappa$ is the projection of a ${<}\mu$-closed tree without end nodes. 
\end{proposition}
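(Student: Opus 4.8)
The plan is to represent the given closed set as the projection of a tree in which attempted branches are forced to die at limits of large cofinality, so that, in contrast with the situation of Lemma \ref{lemma:WisNotProjectionClosedTree}, the tree has nodes admitting no cofinal branch while still having no end nodes. Write the closed set as $A=[S]$, where $S=\Set{x\restriction\alpha}{x\in A,\ \alpha<\kappa}$ is the tree of initial segments of its elements; this tree is downwards closed and satisfies that $x\in A$ holds if and only if $x\restriction\beta\in S$ for cofinally many (equivalently, all) $\beta<\kappa$. Since the issue concerns only closure under short increasing unions, we may assume that $\mu$ is regular, for otherwise we replace the requirement by ${<}\rho$-closure for a regular cardinal $\rho$ with $\mu\leq\rho<\kappa$, which is a stronger demand.

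First I would define a subtree $T$ of ${}^{{<}\kappa}\kappa\times{}^{{<}\kappa}2$ by declaring a pair $(s,t)$ of length $\gamma$ to be a node of $T$ if, writing $C_t=\Set{\beta<\gamma}{t(\beta)=0}$ for the set of \emph{checkpoints} coded by $t$, the following hold: (a) for every limit ordinal $\delta\leq\gamma$ with $\cof{\delta}\geq\mu$, the set $C_t\cap\delta$ is unbounded in $\delta$; and (b) $s\restriction\beta\in S$ for every $\beta\in C_t$. It is immediate from (a) and (b) that $T$ is downwards closed, hence a subtree. For ${<}\mu$-closure, consider an increasing sequence of length $\lambda<\mu$ with union $(s,t)$ of length $\gamma$; we may assume $\lambda$ is a limit, so $\cof{\gamma}=\cof{\lambda}<\mu$ and condition (a) is vacuous at $\delta=\gamma$, while at every $\delta<\gamma$ it is inherited from a node of the sequence, and (b) is inherited because each checkpoint of $(s,t)$ already appears in some node of the sequence. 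To see that $T$ has no end nodes, extend any node $(s,t)$ by setting the new last value of the second coordinate equal to $1$, introducing no new checkpoint; conditions (a) and (b) for this one-step extension are then inherited from $(s,t)$.

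The heart of the argument is the verification that $p[T]=A$. For the inclusion $A\subseteq p[T]$, given $x\in A$ I would take $y\in{}^\kappa 2$ constantly equal to $0$, so that every instance of (b) holds because $x\restriction\beta\in S$ for all $\beta$, and (a) holds trivially; thus $(x,y)$ is a cofinal branch of $T$. For the reverse inclusion, let $(x,y)$ be a cofinal branch. Because $\kappa$ is regular and $\mu<\kappa$, the ordinals below $\kappa$ of cofinality at least $\mu$ are unbounded in $\kappa$, so condition (a) applied at such ordinals forces the checkpoint set $\Set{\beta<\kappa}{y(\beta)=0}$ to be unbounded in $\kappa$; condition (b) then yields $x\restriction\beta\in S$ for cofinally many $\beta$, whence $x\in A$. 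I expect the main obstacle to be precisely this last point: one must arrange that the tree, while admitting extensions past every node and being closed under unions of length ${<}\mu$, nonetheless forces genuine cofinal branches to pass through unboundedly many checkpoints. This is exactly what fails to propagate along chains of cofinality at least $\mu$, which is why such branches may die there and why the construction does not contradict Lemma \ref{lemma:WisNotProjectionClosedTree}: the nodes $(s,t)$ with $s\notin S$ admit no cofinal branch, yet none of them is an end node.
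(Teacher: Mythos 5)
Your proof is correct, but it takes a genuinely different route from the paper's. The paper's argument is a short, local, one-dimensional graft: writing the closed set as $[S]$, it adjoins to $S$ a fully branching tree of height $\mu$ above each node of the boundary $\partial S$, so that spurious partial branches are \emph{time-limited} --- they survive for fewer than $\mu$ further steps, after which no union of length ${<}\mu$ can carry them onward, while one-step extensions always exist; this yields $[T]=[S]$ outright, with no auxiliary coordinate. Your construction is instead global and two-dimensional: the second coordinate certifies checkpoints at which the first coordinate is verified to lie in $S$, and spurious branches are \emph{cofinality-limited} --- they may coast past any bounded stretch but must die at limit ordinals of cofinality at least $\mu$. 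What your approach buys is a tree literally in the projection format of Lemma \ref{lemma:WisNotProjectionClosedTree} (a subtree of ${}^{{<}\kappa}\kappa\times{}^{{<}\kappa}2$ with $p[T]=A$), together with an explicit picture of which nodes admit no cofinal branch; what the paper's buys is brevity and the stronger conclusion that the closed set is itself the branch set of the new tree. Your individual verifications (downward closure, ${<}\mu$-closure via $\cof{\gamma}\leq\lambda<\mu$ at unions, one-step extensions with second-coordinate value $1$, and both inclusions of $p[T]=A$) are all sound.

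One caveat, which you partially flagged and which the paper's proof shares: the singular case. Your reduction to a regular $\rho$ with $\mu\leq\rho<\kappa$ works unless $\mu$ is singular and $\mu^+=\kappa$, a configuration consistent with $\kappa=\kappa^{{<}\kappa}$ (e.g.\ $\mu=\aleph_\omega$, $\kappa=\aleph_{\omega+1}$ under $\GCH$); there no such $\rho$ exists, and your condition (a) becomes vacuous, since every limit $\delta<\kappa$ then has $\cof{\delta}<\mu$. But in that configuration the proposition itself fails: every ${<}\mu$-closed tree without end nodes then has cofinal branches through all of its nodes (extend at successors, take unions at limits, all of which have cofinality ${<}\mu$), so Lemma \ref{lemma:WisNotProjectionClosedTree} shows that the set $W$ is not such a projection. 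The paper's stub construction breaks at exactly the same point (a chain of length $\cof{\mu}$ exhausts a stub of height $\mu$), so this is a limitation of the statement's generality rather than of your argument relative to the paper's; for regular $\mu$ --- the intended reading --- both proofs are complete.
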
 

\begin{proof} 
 Suppose that $S\subseteq{}^{<\kappa}\kappa$ is a tree. We add a fully branching tree of height $\mu$ at each node $t$ in the boundary $\partial S$ of $S$ (see (\ref{equation:DefBorderT})), 
 i.e. we consider the tree $$T ~ = ~ S ~ \cup ~ \Set{t\in{}^{{<}\kappa}\kappa}{\exists \alpha\leq\length{t} ~[t\restriction\alpha\in\partial S ~ \wedge ~ \length{t}<\alpha+\mu]}.$$  
 Then $T$ is a ${<}\mu$-closed tree without end nodes and $[S]=[T]$. 
\end{proof}


\section{\texorpdfstring{Injective images of ${}^{\kappa}\kappa$}{Injective images of kappa-kappa}}\label{section:I kappa}

We now turn to the class $I^{\kappa}$ of images of ${}^{\kappa}\kappa$ under continuous injective maps. Note that there are $\kappa$-Borel subset of ${}^{\kappa}\kappa$ that are not contained in $I^{\kappa}$, 
because every set in $I^{\kappa}$ has no isolated points.

Before we prove the more general results about this class mentioned in the Introduction (Theorem \ref{non-separable I-kappa sets} and Corollary \ref{non-Borel I-kappa sets}), 
we motivate their proofs by showing that, under certain cardinal arithmetic assumptions on $\kappa$, well-known combinatorial objects provide examples of sets in $I^\kappa$  
that do not have the $\kappa$-Baire property and therefore are not $\kappa$-Borel subsets of ${}^\kappa\kappa$. This set will consists of the \emph{trivial $\nu$-coherent sequences $C$-sequences} in the case where $\kappa=\nu^+=2^\nu$ with $\nu$ regular.

\begin{definition} 
 Let $\nu$ be an infinite regular cardinal. 
 \begin{enumerate} 
  \item Given $\gamma\in\On$, a sequence $\seq{C_\alpha}{\alpha<\gamma}$ is a \emph{$C$-sequence} if the following statements hold for all $\alpha<\gamma$. 
   \begin{itemize}
    \item If $\alpha$ is a limit ordinal, then $C_\alpha$ is a closed unbounded subset of $\alpha$. 

    \item If $\alpha=\bar{\alpha}+1$, then $C_\alpha=\{\bar{\alpha}\}$. 
   \end{itemize}

 \item A $C$-sequence $\seq{C_\alpha}{\alpha<\gamma}$ with $\gamma\in\Lim$ is \emph{trivial} if there is a closed unbounded subset $C$ of $\gamma$ such that for every $\alpha<\gamma$ 
  there is a $\alpha\leq\beta<\gamma$ with $C\cap\alpha\subseteq C_\beta$. 

 \item A $C$-sequence $\seq{C_\alpha}{\alpha<\gamma}$ is \emph{$\nu$-coherent} if $C_\alpha=C_\beta\cap\alpha$ for all $\beta<\gamma$ and $\alpha\in\Lim(C_\beta)$ with $\cof{\alpha}=\nu$. 

 \item Given a $\nu$-coherent $C$-sequence $\vec{C}=\seq{C_\alpha}{\alpha<\gamma}$ with $\gamma\in\Lim$, we say that a closed unbounded subset $C_\gamma$ of $\gamma$ is a \emph{$\nu$-thread through $\vec{C}$} if  
  the sequence $\seq{C_\alpha}{\alpha\leq\gamma}$ is also a $\nu$-coherent $C$-sequence. 
 \end{enumerate} 
\end{definition}

It is easy to see that $\nu$-coherent $C$-sequences are also coherent with respect to every regular cardinal greater than $\nu$. 
This shows that $\omega$-coherent $C$-sequences 
are \emph{square sequences} (in the sense of {\cite[Definition 7.1.1]{MR2355670}}).

\begin{proposition}\label{proposition:TrivialMuCoherentSequences}
 Let $\nu$ is an infinite regular cardinal, $\lambda$ be a limit ordinal with $\cof{\lambda}>\nu$ and $\vec{C}=\seq{C_\alpha}{\alpha<\lambda}$ be a $\nu$-coherent $C$-sequence.
 Then $\vec{C}$ is trivial if and only if there is a $\nu$-thread through $\vec{C}$. 
\end{proposition}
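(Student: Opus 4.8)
The plan is to prove both directions by gluing the clubs $C_\alpha$ together along limit points of cofinality $\nu$, using $\nu$-coherence as the compatibility mechanism.

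For the direction from thread to triviality, suppose $C_\lambda$ is a $\nu$-thread. I claim that $C_\lambda$ itself witnesses triviality. Fix $\alpha<\lambda$. Since $\cof{\lambda}>\nu$, the limit points of $C_\lambda$ of cofinality $\nu$ are cofinal in $\lambda$ (in the increasing enumeration of $C_\lambda$, whose order type has cofinality $\cof{\lambda}>\nu$, the entries indexed by ordinals of cofinality $\nu$ occur cofinally). Choose such a limit point $\beta$ with $\alpha<\beta<\lambda$. The thread property gives $C_\beta=C_\lambda\cap\beta$, so $C_\lambda\cap\alpha=C_\beta\cap\alpha\subseteq C_\beta$, which is exactly the tail-containment required for triviality.

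For the converse, assume $\vec{C}$ is trivial as witnessed by a club $C$, and set $E=\Set{\alpha\in\Lim(C)}{\cof{\alpha}=\nu}$; since $\cof{\lambda}>\nu$ and $\Lim(C)$ is club, $E$ is stationary, hence unbounded, in $\lambda$. First I would show $C\cap\alpha\subseteq C_\alpha$ for each $\alpha\in E$: triviality provides $\beta\geq\alpha$ with $C\cap\alpha\subseteq C_\beta$, and if $\beta>\alpha$ then $C\cap\alpha$ is cofinal in $\alpha$, so $\alpha\in\Lim(C_\beta)$ and $\nu$-coherence yields $C_\alpha=C_\beta\cap\alpha\supseteq C\cap\alpha$ (the case $\beta=\alpha$ being immediate). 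Using this I would prove that the clubs $\Set{C_\alpha}{\alpha\in E}$ are mutually coherent: for $\alpha<\alpha'$ in $E$ one has $C\cap\alpha\subseteq C\cap\alpha'\subseteq C_{\alpha'}$ with $C\cap\alpha$ cofinal in $\alpha$, whence $\alpha\in\Lim(C_{\alpha'})$ and $C_\alpha=C_{\alpha'}\cap\alpha$. Consequently the union $C_\lambda=\bigcup_{\alpha\in E}C_\alpha$ satisfies $C_\lambda\cap\alpha=C_\alpha$ for all $\alpha\in E$, and from this I would check that $C_\lambda$ is a $\nu$-thread: it is unbounded because $E$ is unbounded, it is closed because every limit point below $\lambda$ lies below some $\alpha\in E$ where $C_\lambda$ coincides with the closed set $C_\alpha$, and for $\delta\in\Lim(C_\lambda)$ with $\cof{\delta}=\nu$, choosing $\alpha\in E$ above $\delta$ gives $\delta\in\Lim(C_\alpha)$ and hence $C_\delta=C_\alpha\cap\delta=C_\lambda\cap\delta$ by $\nu$-coherence.

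The main obstacle I expect is the forward direction: one has to be sure that the $C_\alpha$ for $\alpha\in E$ genuinely agree on their overlaps (so that the union is well behaved) and that this union is closed and threads $\vec{C}$ at every $\nu$-cofinal limit point, rather than merely at the points of $E$. The hypothesis $\cof{\lambda}>\nu$ is essential at two places: it makes $E$ unbounded so that the glued club reaches $\lambda$, and dually it supplies $\nu$-cofinal limit points of $C_\lambda$ above any given ordinal in the backward direction.
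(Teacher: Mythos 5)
Your proof is correct and takes essentially the same route as the paper's: in the trivial-to-thread direction you glue the clubs $C_\alpha$ over the set of limit points of $C$ of cofinality $\nu$ (your $E$ is the paper's $D$), and in the converse direction you use $C_\lambda$ itself as the triviality witness via its $\nu$-cofinal limit points, exactly as in the paper. Your write-up is in fact slightly more detailed than the paper's in verifying that the glued union is closed, unbounded and coherent at every $\nu$-cofinal limit point.
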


\begin{proof}
 Assume that $C\subseteq\gamma$ witnesses that $\vec{C}$ is trivial. Let $D$ denote the set of all $\alpha\in\Lim(C)$ with $\cof{\alpha}=\nu$. 
 Fix $\alpha,\bar{\alpha}\in D$ with $\bar{\alpha}<\alpha$. Then there is a $\alpha\leq \beta<\gamma$ such that $C\cap\alpha\subseteq C_\beta$. 
 This shows that $\alpha$ is a limit point of $C_\beta$ and $\nu$-coherency implies $C\cap\alpha\subseteq C_\beta\cap\alpha=C_\alpha$. 
 We can conclude that $\bar{\alpha}\in\Lim(C_\alpha)$ and $C_{\bar{\alpha}}=C_\alpha\cap\bar{\alpha}$.  
 Define $C_\lambda=\bigcup\Set{C_\alpha}{\alpha\in D}$. By the above computations, we have $C_\lambda\cap\alpha=C_\alpha$ for all $\alpha\in D$ and $\cof{\lambda}>\nu$ implies that $C_\lambda$ is a $\nu$-thread through $\vec{C}$.

 Now, let $C_\lambda$ be a $\nu$-thread through $\vec{C}$. Given $\alpha<\lambda$, the assumption $\cof{\lambda}>\nu$ shows that there is a $\beta\in\Lim(C_\lambda)\setminus\alpha$ with $\cof{\beta}=\nu$ 
 and in this situation, $\nu$-coherency implies $C_\lambda\cap\alpha=C_\beta\cap\alpha\subseteq C_\beta$. This shows that $C_\lambda$ witnesses that $\vec{C}$ is trivial. 
\end{proof}

In the following, we assume that $\kappa=\nu^+=2^\nu$ for some regular cardinal $\nu$. 
Let $\mathscr{Coh}(\kappa,\nu)$ denote the set of all $\nu$-coherent $C$-sequences of length $\kappa$. 
We equip $\mathscr{Coh}(\kappa,\nu)$ with the topology whose basic open sets consist of all extensions of 
$\nu$-coherent $C$-sequences of length less than $\kappa$.

\begin{proposition}
 The space $\mathscr{Coh}(\kappa,\nu)$ is homeomorphic to ${}^{\kappa}\kappa$. 
\end{proposition}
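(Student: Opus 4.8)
The plan is to realize $\mathscr{Coh}(\kappa,\nu)$ as the branch space $[T]$ of a tree $T$ and then to exhibit an isomorphism between a suitable reindexing of $T$ and ${}^{{<}\kappa}\kappa$, exactly in the spirit of the club-filter and $T_\kappa$ examples. First I would let $T$ be the tree of all $\nu$-coherent $C$-sequences of length ${<}\kappa$, ordered by end-extension, with the topology induced by $T$. Since being a $C$-sequence, being $\nu$-coherent, and the club condition on each $C_\alpha$ are all determined by bounded initial segments, $T$ is ${<}\kappa$-closed (the union of a ${<}\kappa$-chain of $\nu$-coherent $C$-sequences is again one), and $[T]=\mathscr{Coh}(\kappa,\nu)$; a short count using $\kappa=2^\nu$ shows $\betrag{T}\leq\kappa$. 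I would then verify that $T$ has no end nodes: given a node of length $\gamma<\kappa=\nu^+$ we have $\cof{\gamma}\leq\nu$, so we may extend it by a club $C_\gamma$ of order type $\cof{\gamma}$ (or the forced singleton when $\gamma$ is a successor); such a club has no proper limit point of cofinality $\nu$, so the coherence requirement at the new index is vacuous and the extension is legitimate. Combined with ${<}\kappa$-closure, every node then lies on a cofinal branch.

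The difficulty — and the reason one cannot simply assert ``$T\cong{}^{{<}\kappa}\kappa$'' as in the examples — is that $T$ is \emph{not} $\kappa$-branching at every node: a successor length has a unique successor, and at a limit length $\gamma$ with $\cof{\gamma}<\nu$ the number of admissible clubs $C_\gamma$ can be strictly below $\kappa$. To repair this I would pass to a reindexed tree $\hat T$ whose levels sit above a well-chosen club of lengths. Let $S=\Set{\gamma<\kappa}{\cof{\gamma}=\nu}$, let $E$ be the club of limit points of $S$, and let $\hat T$ consist of the empty sequence together with all nodes of $T$ whose length lies in $E$, again ordered by end-extension. Because $E$ is closed, $\hat T$ inherits ${<}\kappa$-closure and has no end nodes, and its branch space $[\hat T]$ is canonically homeomorphic to $[T]=\mathscr{Coh}(\kappa,\nu)$: the lengths in $E$ are cofinal in $\kappa$, so the two families of basic open sets mutually refine, and an $E$-indexed chain of initial segments determines a unique full $\nu$-coherent $C$-sequence.

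The key computation is that every node of $\hat T$ has exactly $\kappa$ immediate successors. Fix consecutive points $\gamma<\gamma'$ of $E$; the immediate successors of a node of length $\gamma$ correspond to the $\nu$-coherent ways of filling in the indices in $[\gamma,\gamma')$. On the one hand their number is at most $\prod_{\delta<\gamma'}2^{\betrag{\delta}}\leq\kappa^{\nu}=\kappa$, using $\betrag{\gamma'}\leq\nu$ and $2^\nu=\kappa$. On the other hand, by the choice of $E$ there is some $s\in S$ with $\gamma<s<\gamma'$, and at such an $s$ the admissible choices of $C_s$ include all clubs of order type $\nu$; the proper limit points of such a club have cofinality $<\nu$, so every coherence constraint involving $s$ is vacuous. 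There are $2^\nu=\kappa$ such clubs, they are pairwise incompatible, and each extends to a filling of $[\gamma,\gamma')$ by the no-end-node property, yielding $\kappa$ distinct successors. Hence the count is exactly $\kappa$ (the same argument applied to $[0,\min E)$ handles the root). With this in hand $\hat T$ is a ${<}\kappa$-closed tree of height $\kappa$ without end nodes in which every node has exactly $\kappa$ immediate successors, so $\hat T\cong{}^{{<}\kappa}\kappa$ by the argument already used for the two introductory examples, and therefore $\mathscr{Coh}(\kappa,\nu)\cong[\hat T]\cong{}^\kappa\kappa$.

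I expect the main obstacle to be precisely this exact-branching count, that is, certifying that the ``${\leq}\kappa$'' and ``${\geq}\kappa$'' bounds meet at every node of $\hat T$. This is where both the cardinal arithmetic $\kappa=\nu^+=2^\nu$ (for the upper bound, and for producing $\kappa$-many order-type-$\nu$ threads at a point of $S$) and the club behaviour of $S$ (guaranteeing a cofinality-$\nu$ splitting point inside every chunk $[\gamma,\gamma')$) are essential; the naive tree $T$ fails exactly because low-cofinality limit nodes and successor nodes split too little. A secondary point needing care is that the reindexing $T\mapsto\hat T$ is a genuine homeomorphism of branch spaces and not merely a bijection, which rests on $E$ being cofinal in $\kappa$.
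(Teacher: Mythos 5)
Your proposal is correct and follows essentially the same route as the paper: there, too, $\mathscr{Coh}(\kappa,\nu)$ is realized as the branch space of a tree of short $\nu$-coherent $C$-sequences whose lengths are restricted to a cofinal set built from the cofinality-$\nu$ ordinals (limits of such ordinals, together with successors of isolated ones, rather than your club $E$ of limit points), and exact $\kappa$-branching is obtained by counting the $2^\nu=\kappa$ clubs of order type at most $\nu$ in each gap, for which the coherence constraints are vacuous, before ${<}\kappa$-closure yields an isomorphism with ${}^{{<}\kappa}\kappa$. The points you make explicit --- the upper bound $\kappa^\nu=\kappa$ on the number of successors and the homeomorphism between the reindexed branch space and the original space --- are left implicit in the paper but amount to the same argument.
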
 

\begin{proof} 
We will represent $\mathscr{Coh}(\kappa,\nu)$ as the set of branches of a tree $T$ isomorphic to ${}^{<\kappa}\kappa$. 
Define $T$ to be the tree consisting of all $\nu$-coherent $C$-sequences $\seq{C_\alpha}{\alpha<\gamma}$ such that 
either $\gamma<\kappa$ is a limit of ordinals of cofinality $\nu$ or $\gamma=\bar{\gamma}+1<\kappa$ 
and $\bar{\gamma}$ is a limit ordinal of cofinality $\nu$ that is not a limit of ordinals of cofinality $\nu$. 
We first show that $T$ is $\kappa$-branching.

 \begin{claim*}
  Suppose that $\vec{C}$ is a $\nu$-coherent $C$-sequence of length $\gamma<\kappa$ and that $\delta$ is the least ordinal of cofinality $\nu$ greater than $\gamma$. 
  Then there are $\kappa$-many sequences in $T$ of length $\delta+1$ that extend $\vec{C}$. 
 \end{claim*}

 \begin{proof}[Proof of the Claim]
  Since there are $\kappa$-many closed unbounded subsets of $\delta$ of order-type $\nu$, there are also $\kappa$-many sequences $\seq{C_\alpha}{\alpha\in\Lim\cap[\gamma,\delta]}$ 
  with the property that $C_\alpha$ is a closed unbounded subset of $\alpha$ of order-type at most $\nu$ for every limit ordinal in the interval $[\gamma,\delta]$. 
  Since every such sequence determines a unique direct successor of $\vec{C}$ in $T$, this completes the proof of the proposition. 
 \end{proof}

  Since the tree $T$ is closed under increasing unions of size ${<}\kappa$, the above claim implies that 
  the trees $T$ and ${}^{{<}\kappa}\kappa$ are isomorphic.  
  This yields the statement of the proposition, because the collection of all subsets of $\mathscr{Coh}(\kappa,\nu)$ consisting of sets of all extensions of a given element of $T$ forms a basis of the topology on this space. 
\end{proof}

In the following, we will always identify the space $\mathscr{Coh}(\kappa,\nu)$ with ${}^\kappa\kappa$.

\begin{theorem}\label{theorem:THREADSEQNOTBaire}
 Suppose that $\nu$ is an infinite regular cardinal and $\kappa=\nu^+=2^\nu$. Let $\mathscr{Triv}(\kappa,\nu)$ denote the subset of $\mathscr{Coh}(\kappa,\nu)$ consisting of all trivial $\nu$-coherent $C$-sequences.
 \begin{enumerate}
  \item $\mathscr{Triv}(\kappa,\nu)$ is a continuous injective image of ${}^\kappa\kappa$. 

  \item $\mathscr{Triv}(\kappa,\nu)$ does not have the $\kappa$-Baire property. 
 \end{enumerate}
\end{theorem}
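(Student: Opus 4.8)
The plan is to handle the two parts by different mechanisms: part (1) follows from the \emph{uniqueness} of threads, while part (2) follows from a localization criterion for the failure of the $\kappa$-Baire property, whose hard half is building abundantly many non-threadable sequences.

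For part (1), I would first record that a trivial $\nu$-coherent $C$-sequence $\vec{C}$ of length $\kappa$ has a \emph{unique} $\nu$-thread. Indeed, if $C_\kappa$ and $C'_\kappa$ both thread $\vec{C}$, then $\Lim(C_\kappa)\cap\Lim(C'_\kappa)$ is club in $\kappa$ and, since $\kappa=\nu^+$, it meets the stationary set of points of cofinality $\nu$; at each such $\alpha$, $\nu$-coherency gives $C_\kappa\cap\alpha=C_\alpha=C'_\kappa\cap\alpha$, so agreement below cofinally many $\alpha$ forces $C_\kappa=C'_\kappa$. Using this, I would define a continuous injection $g\colon{}^\kappa\kappa\to\mathscr{Coh}(\kappa,\nu)$ with range $\mathscr{Triv}(\kappa,\nu)$ by a recursion of length $\kappa$ that, reading $x\in{}^\kappa\kappa$, simultaneously builds a $\nu$-coherent sequence $\vec{C}^x$ and an increasing continuous enumeration of a club $C^x_\kappa$ threading it: the values of $x$ dictate both the next element put into $C^x_\kappa$ and the free choice of $C^x_\alpha$ at levels not pinned down by coherency with $C^x_\kappa$. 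Since $C^x_\kappa\cap\alpha$ is known by stage $\alpha$, the construction is local, so $g$ is continuous and every $\vec{C}^x$ is trivial by Proposition \ref{proposition:TrivialMuCoherentSequences}. Surjectivity onto $\mathscr{Triv}(\kappa,\nu)$ holds because a trivial sequence together with its enumerated thread can be read back off as some $x$, and injectivity is exactly where uniqueness enters: $\vec{C}^x=\vec{C}^{x'}$ forces the same unique thread, hence the same free coordinates, hence $x=x'$.

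For part (2), I would use the localization criterion: if a set $A$ had the $\kappa$-Baire property, then $A=U\,\triangle\,M$ for some open $U$ and some $\kappa$-meager $M$, so it suffices to show that $\mathscr{Triv}(\kappa,\nu)$ is not $\kappa$-meager and that its complement is not $\kappa$-meager below any nonempty basic open set $N_p$ (if $U=\emptyset$ the first statement fails, and if $N_p\subseteq U$ the second fails on $N_p$, since then $N_p\setminus A\subseteq M$). The non-meagerness of $\mathscr{Triv}(\kappa,\nu)$ below any node $p$ is a fusion argument: given nowhere dense sets $\langle M_\xi\mid\xi<\kappa\rangle$, I would build above $p$ a sequence together with a club threading it, extending at stage $\xi$ to avoid $M_\xi$ using the coordinates left free by coherency and pinning the thread down only at controlled cofinality-$\nu$ levels; the result lies in $\mathscr{Triv}(\kappa,\nu)\cap N_p$ and avoids $\bigcup_{\xi<\kappa}M_\xi$.

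The companion statement for the complement is where the real work lies, and I expect it to be the main obstacle. One must build, above any node $p$ and avoiding any prescribed $\kappa$ nowhere dense sets, a $\nu$-coherent sequence that is \emph{non}-trivial, i.e. that no club threads. By uniqueness of threads this is a single global obstruction rather than a diagonalization against $2^\kappa$ candidates: non-triviality says precisely that the coherent family $\langle C_\alpha\mid\cof{\alpha}=\nu\rangle$ does not union up to a cofinal thread, which is a $\square(\kappa)$-type demand. The hypothesis $\kappa=\nu^+=2^\nu$ is what guarantees that non-threadable $\nu$-coherent sequences exist and, more importantly, exist flexibly enough to be produced below every node while dodging nowhere dense sets; concretely I would fix one non-threadable $\nu$-coherent sequence $\vec{D}$ (via a generalized minimal-walks construction) and force the constructed $\vec{C}$ to reproduce the non-threadable pattern of $\vec{D}$ along a club of cofinality-$\nu$ levels, so that a thread of $\vec{C}$ would induce one of $\vec{D}$, while using the low-cofinality levels for the dodging. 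Making the interaction between ``preserve non-threadability'' and ``meet every dense requirement'' precise, and in particular succeeding above an arbitrary stem $p$, is the delicate point; once both non-meagerness statements are in hand, the localization criterion yields that $\mathscr{Triv}(\kappa,\nu)$ lacks the $\kappa$-Baire property.
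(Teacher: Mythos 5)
Your part (1) and the first half of your part (2) follow the same route as the paper: there, $\mathscr{Triv}(\kappa,\nu)$ is realized as the injective continuous image of the space $\mathscr{Thr}(\kappa,\nu)$ of pairs (sequence, thread), injectivity of the projection is proved by exactly your uniqueness-of-threads argument (coherency at cofinally many cofinality-$\nu$ limit points), and $\mathscr{Thr}(\kappa,\nu)$ is shown homeomorphic to ${}^\kappa\kappa$ by checking that the tree of length-${<}\kappa$ pairs is ${<}\kappa$-closed with $\kappa$-many direct successors at each node; your hand-rolled coding $g$ amounts to the same thing, though you still owe the verification that every pair is reachable, which is precisely that tree isomorphism. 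Likewise, your localization of the failure of the $\kappa$-Baire property to two non-comeagerness statements, and your fusion construction of a sequence together with a thread dodging $\kappa$-many dense sets, are the paper's arguments.

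The genuine gap is in the step you yourself flag as the main obstacle: producing, below an arbitrary stem and inside $\bigcap_{\alpha<\kappa}U_\alpha$, a \emph{non}-trivial sequence. Your plan --- fix a non-threadable $\vec{D}$ via a generalized minimal-walks construction and copy its pattern into $\vec{C}$ along a club of cofinality-$\nu$ levels --- is never made precise and faces a real obstruction: the levels available for copying are chosen dynamically in response to the dense sets, so transferring a thread of $\vec{C}$ back to one of $\vec{D}$ would need an order-isomorphism between the club of copied levels and $\kappa$ compatible with coherency, and the compatibility of the copied values with the interpolating values dictated by the $U_\alpha$ is not addressed. The paper shows this machinery is unnecessary. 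Since $\nu$-coherence only binds at limit points of cofinality $\nu$, and a club of order type at most $\nu$ has no such limit points, one can simply demand $\otp{C_{\gamma_\alpha}}\leq\nu$ at the club $C=\Set{\gamma_\alpha}{\alpha<\kappa}$ of construction levels while meeting the $U_\alpha$ in between. If $D$ were a thread of the resulting $\vec{C}$, pick limit points $\gamma<\delta$ of $C\cap D$ of cofinality $\nu$; coherency of the thread (via Proposition \ref{proposition:TrivialMuCoherentSequences}) gives $D\cap\gamma=C_\gamma$ and $D\cap\delta=C_\delta$, whence $\nu=\otp{C_\gamma}=\otp{D\cap\gamma}<\otp{D\cap\delta}=\otp{C_\delta}=\nu$, a contradiction. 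So non-triviality costs nothing beyond a bound on order types at the levels you control: no fixed non-threadable sequence, no walks, and no $\square(\kappa)$-style input is needed, and the delicate interaction you anticipated between preserving non-threadability and meeting dense sets does not arise.
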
 

\begin{proof} 
 Define $\mathscr{Thr}(\kappa,\nu)$ to be the set of all pairs $(\vec{C},C)$ such that $\vec{C}$ is an element of $\mathscr{Coh}(\kappa,\nu)$ and $C$ is a $\nu$-thread through $\vec{C}$. 
 Then $\mathscr{Triv}(\kappa,\nu)=p[\mathscr{Thr}(\kappa,\nu)]$, where $\map{p}{\mathscr{Thr}(\kappa,\nu)}{\mathscr{Coh}(\kappa,\nu)}$ is the projection onto the first coordinate.

 Let $\calS$ be the tree consisting of all pairs $(\vec{D},D)$ such that  $\vec{D}$ is a $\nu$-coherent $C$-sequence of length $\gamma\in\Lim\cap\kappa$ and $D$ is a thread through $\vec{D}$. 
 Given $(\vec{D},D)$ in $\calS$, the set $D$ is unbounded in the length of $\vec{D}$ and this allows us to identify $\mathscr{Thr}(\kappa,\nu)$ with the set of all cofinal branches through $\calS$ 
 and equip $\mathscr{Thr}(\kappa,\nu)$ with the topology whose basic open sets consist of all extensions of elements of $\calS$.

\begin{claim*}
 The space $\mathscr{Thr}(\kappa,\nu)$ is homeomorphic to ${}^{\kappa}\kappa$.
\end{claim*}

\begin{proof}[Proof of the Claim]
 By the above remarks, it again suffices to show that the tree $\calS$ is isomorphic to ${}^{<\kappa}\kappa$. 
 Since $\calS$ is also ${<}\kappa$-closed, we have to show that every node in $\calS$ has $\kappa$-many direct successors. Fix such a pair $(\seq{D_\alpha}{\alpha<\gamma},D_\gamma)\in\calS$. 
 Then the sequence $\vec{D}_*=\seq{D_\alpha}{\alpha\leq\gamma}$ is also a $\nu$-coherent $C$-sequence. Given an ordinal $\gamma<\delta<\kappa$ of countable cofinality, there is a $\nu$-coherent $C$-sequence $\vec{D}_\delta=\seq{D^\delta_\alpha}{\alpha<\delta}$ 
 extending $\vec{D}_*$ such that $\otp{D^\delta_\alpha}\leq\nu$ for every $\gamma<\alpha<\delta$. If $D^\delta$ is a closed unbounded subset of $\delta$ with $\gamma\in D^\delta\subseteq\delta\setminus\gamma$ and $\otp{D^{\delta}}=\omega$, then 
 the pair $(\vec{D}_\delta,D_\gamma\cup D^\delta)$ is a direct successor of $\vec{D}$ in $\calS$. Since there are $\kappa$-many ordinals $\delta$ with these properties, this completes the proof of the claim. 
\end{proof}

We now show that $\mathscr{Triv}(\kappa,\nu)$ is an element of $I^{\kappa}$.

\begin{claim*} 
 The projection $\map{p}{\mathscr{Thr}(\kappa,\nu)}{\mathscr{Coh}(\kappa,\nu)}$ is injective. 
\end{claim*} 

\begin{proof}[Proof of the Claim]
 Suppose there is a sequence $\vec{C}$ in $\mathscr{Coh}(\kappa,\nu)$ and closed unbounded subsets $C$ and $D$ of $\kappa$ such that $(\vec{C},C),(\vec{C},D)\in\mathscr{Thr}(\kappa,\nu)$. 
 Then there are arbitrarily large $\alpha<\kappa$ with $\mathrm{cof}(\alpha)=\nu$ and $C\cap \alpha= C_{\alpha}=D\cap \alpha$. This implies $C=D$. 
\end{proof}

\begin{claim*} 
 The projection $\map{p}{\mathscr{Thr}(\kappa,\nu)}{\mathscr{Coh}(\kappa,\nu)}$ is continuous. 
\end{claim*} 

\begin{proof}[Proof of the Claim]
 Let $(\vec{C},C)\in\mathscr{Thr}(\kappa,\nu)$ and $\alpha<\kappa$. Let $\gamma$ be a limit point of $C$ above $\alpha$. Then the pair $(\vec{C}\restriction\gamma,C\cap\gamma)$ is an element of $\calS$ 
 and the projection of every extension of this pair in $\mathscr{Thr}(\kappa,\nu)$ is an extension of $\vec{C}\restriction\alpha$ in $\mathscr{Coh}(\kappa,\nu)$. 
\end{proof}

It remains to show that the set $\mathscr{Triv}(\kappa,\nu)$ does not have the $\kappa$-Baire property.

\begin{claim*}
 The set $\mathscr{Triv}(\kappa,\nu)$ does not have the $\kappa$-Baire property in $\mathscr{Coh}(\kappa,\nu)$. 
\end{claim*} 

\begin{proof} 
 We first argue that $\mathscr{Triv}(\kappa,\nu)$ is not comeager in any non-empty open set, i.e. if $U$ is an nonempty open subset of $\mathscr{Coh}(\kappa,\nu)$ and $\seq{U_\alpha}{\alpha<\kappa}$ is a sequence of dense open subsets of $U$, then there 
 is a $\vec{C}\in\bigcap_{\alpha<\kappa}U_\alpha$ that is not an element of $\mathscr{Triv}(\kappa,\nu)$. Let $\vec{C}_0$ be a $\nu$-coherent $C$-sequence of length $\gamma_0<\kappa$ and $\seq{U_\alpha}{\alpha<\kappa}$ be a sequence of 
 dense open subsets of the set $N_{\vec{C}_0}$ of all extensions of $\vec{C}_0$. 
 In this situation, we can construct a sequence $\vec{C}=\seq{C_\alpha}{\alpha<\kappa}$ and a strictly increasing continuous sequence $\seq{\gamma_\alpha}{\alpha<\kappa}$ of 
 ordinals less than $\kappa$ such that the following statements hold for every $\alpha<\kappa$. 
 \begin{enumerate}
  \item $\seq{C_\beta}{\beta<\gamma_\alpha}$ is a $\nu$-coherent $C$-sequence extending $\vec{C}_0$. 

  \item $N_{\seq{C_\beta}{\beta<\gamma_{\alpha+1}}}$ is a subset of $U_\alpha$. 

  \item $\otp{C_{\gamma_\alpha}}\leq\nu$.
 \end{enumerate}
 Then $\vec{C}$ is a $\nu$-coherent $C$-sequence that is contained in $\bigcap_{\alpha<\kappa}U_\alpha$. 
 Moreover, our construction ensures that $\otp{C_\gamma}\leq\nu$ holds for every element $\gamma$ of the closed unbounded subset $\Set{\gamma_\alpha}{\alpha<\kappa}$ of $\kappa$. 
 Assume, towards a contradiction, that $\vec{C}$ is trivial and let $D$ be a thread through $\vec{C}$ witnessing this. Then there are limit points $\gamma<\delta$ of $C\cap D$ of cofinality $\nu$. 
 By Proposition \ref{proposition:TrivialMuCoherentSequences}, we have $$\nu ~ = ~ \otp{C_\gamma} ~ = ~ \otp{D\cap\gamma} ~ < ~ \otp{D\cap\delta} ~ = ~ \otp{C_\delta} ~ = ~ \nu,$$ a contradiction. 
 This shows that $\mathscr{Triv}(\kappa,\nu)$ is not comeager in any open set.

 To see that the complement of $\mathscr{Triv}(\kappa,\nu)$ is not comeager in any non-empty open set, pick $\vec{C}_0$ and $\seq{U_\alpha}{\alpha<\kappa}$ as above. 
 Again, we construct a sequence $\vec{C}=\seq{C_\alpha}{\alpha<\kappa}$ and a strictly increasing continuous sequence $\seq{\gamma_\alpha}{\alpha<\kappa}$ of 
 ordinals less than $\kappa$ that satisfy the first two of the above statements and the following statement for every $\alpha\in\Lim\cap\kappa$. 
 \begin{enumerate}  
  \item[$\text{(iii)}^\prime$] $C_{\gamma_\alpha} ~ = ~ \Set{\gamma_{\bar{\alpha}}}{\bar{\alpha}<\alpha}$. 
 \end{enumerate}
 If we define $C=\Set{\gamma_\alpha}{\alpha<\kappa}$, then the pair $(\vec{C},C)$ is an element of $\mathscr{Thr}(\kappa,\nu)$ with $\vec{C}\in\bigcap_{\alpha<\kappa}U_\alpha$. 
 Hence the complement of $\mathscr{Triv}(\kappa,\nu)$ is not comeager in any open set.

 Now assume, towards a contradiction, that there is an open subset $U$ of $\mathscr{Coh}(\kappa,\nu)$ and a sequence $\seq{N_\alpha}{\alpha<\kappa}$ of closed nowhere dense subsets of $\mathscr{Coh}(\kappa,\nu)$ 
 such that $\mathscr{Triv}(\kappa,\nu)_\Delta U\subseteq\bigcup_{\alpha<\kappa}N_\alpha$. Then $U$ contains a basic open set $N_{\vec{C}_0}$, because the complement of $\mathscr{Triv}(\kappa,\nu)$ is not comeager 
 in $\mathscr{Coh}(\kappa,\nu)$. Since $\mathscr{Triv}(\kappa,\nu)$ is not comeager in $N_{\vec{C}_0}$, there is a $\vec{C}\in N_{\vec{C}_0}\setminus\mathscr{Triv}(\kappa,\nu)$ such that such that 
 $\vec{C}\notin N_\alpha$ for all $\alpha<\kappa$, a contradiction. 
\end{proof}

This completes the proof of the theorem. 
\end{proof}

\begin{corollary} 
 Let $G$ be $\Add{\kappa}{\kappa^+}$-generic over $\VV$. In $V[G]$, the set $\mathscr{Triv}(\kappa,\nu)$ of trivial $\nu$-coherent $C$-sequences is not a $\mathbf{\Delta}^1_1$-subset of $\mathscr{Coh}(\kappa,\nu)$. 
\end{corollary}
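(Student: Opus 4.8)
The plan is to deduce the corollary from Theorem \ref{theorem:THREADSEQNOTBaire} together with the fact that the forcing $\Add{\kappa}{\kappa^+}$ makes every $\mathbf{\Delta}^1_1$-subset of ${}^\kappa\kappa$ have the $\kappa$-Baire property. The guiding observation is that $\mathscr{Triv}(\kappa,\nu)$ is a continuous image of ${}^\kappa\kappa$, hence a $\mathbf{\Sigma}^1_1$-subset, while it provably fails the $\kappa$-Baire property; so in a model where the $\kappa$-Baire property is forced upon all $\mathbf{\Delta}^1_1$-sets, $\mathscr{Triv}(\kappa,\nu)$ cannot be $\mathbf{\Delta}^1_1$.

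First I would check that all hypotheses needed to run Theorem \ref{theorem:THREADSEQNOTBaire} inside $\VV[G]$ are preserved by the forcing. Since $\Add{\kappa}{\kappa^+}$ is ${<}\kappa$-closed and, using $\kappa=\kappa^{{<}\kappa}$, satisfies the $\kappa^+$-chain condition, it preserves all cardinals and cofinalities. Being ${<}\kappa$-closed, it adds no new bounded subsets of $\kappa$, so $({}^{{<}\kappa}\kappa)^{\VV[G]}=({}^{{<}\kappa}\kappa)^\VV$; in particular $2^\nu=\kappa$, $\kappa=\nu^+$ and $\kappa=\kappa^{{<}\kappa}$ continue to hold in $\VV[G]$. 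Consequently Theorem \ref{theorem:THREADSEQNOTBaire}, applied in $\VV[G]$, shows that the set $\mathscr{Triv}(\kappa,\nu)$, as recomputed there, is an injective continuous image of ${}^\kappa\kappa$ (hence a $\mathbf{\Sigma}^1_1$-subset, being the continuous image of the closed set ${}^\kappa\kappa$) and does not have the $\kappa$-Baire property.

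The remaining ingredient is the external fact that in $\VV[G]$ every $\mathbf{\Delta}^1_1$-subset of ${}^\kappa\kappa$ has the $\kappa$-Baire property; this is exactly the consistency result recorded in the introduction and established for the model $\VV^{\Add{\kappa}{\kappa^+}}$ in {\cite[Theorem 49]{FHK}}. Granting this, the conclusion is immediate: if $\mathscr{Triv}(\kappa,\nu)$ were a $\mathbf{\Delta}^1_1$-subset of $\mathscr{Coh}(\kappa,\nu)\cong{}^\kappa\kappa$, it would have the $\kappa$-Baire property, contradicting the previous paragraph. Hence $\mathscr{Triv}(\kappa,\nu)$ is a $\mathbf{\Sigma}^1_1$-subset that is not $\mathbf{\Delta}^1_1$.

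I expect the only genuine obstacle to be the precise invocation of the $\kappa$-Baire property result: one must confirm that the cited theorem really applies to the Cohen forcing $\Add{\kappa}{\kappa^+}$ (and not merely to some other model) and to all $\mathbf{\Delta}^1_1$-sets with arbitrary parameters from $\VV[G]$. It is worth emphasising that one cannot strengthen the cited input to \anf{\emph{every $\mathbf{\Sigma}^1_1$-set has the $\kappa$-Baire property}}, since $\mathscr{Triv}(\kappa,\nu)$ is itself a $\mathbf{\Sigma}^1_1$-set without that property; the argument depends essentially on the $\kappa$-Baire property being forced only on the smaller class $\mathbf{\Delta}^1_1$.
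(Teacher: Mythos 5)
Your proposal is correct and follows essentially the same route as the paper: the published proof likewise combines Theorem \ref{theorem:THREADSEQNOTBaire} with the fact from {\cite[Theorem 49]{FHK}} that $\mathbf{\Delta}^1_1$-subsets of ${}^\kappa\kappa$ have the $\kappa$-Baire property in any $\Add{\kappa}{\kappa^+}$-generic extension. Your additional verification that $\Add{\kappa}{\kappa^+}$ preserves the hypotheses $\kappa=\nu^+=2^\nu$ and $\kappa=\kappa^{{<}\kappa}$ (via ${<}\kappa$-closure and the $\kappa^+$-chain condition) is a correct elaboration of a step the paper leaves implicit.
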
 

\begin{proof} 
 This follows from the previous theorem and the fact that $\mathbf{\Delta}^1_1$-subsets of ${}^\kappa\kappa$ have the $\kappa$-Baire property in every $\Add{\kappa}{\kappa^+}$-generic extension of the ground model (see {\cite[Theorem 49]{FHK}}). 
\end{proof}

\begin{proof}[Proof of Theorem \ref{special trees Baire}] 
 Let $\nu$ be an infinite regular cardinal with $\nu = \nu^{{<}\nu}$ and $\kappa=\nu^+=2^\nu$. Assume that every $\kappa$-Aronszajn tree that does not contain a $\kappa$-Souslin subtree is special.
 By Theorem \ref{theorem:THREADSEQNOTBaire}, it suffices to show the set $\mathscr{Triv}(\kappa,\nu)$ is a $\mathbf{\Delta}^1_1$-subset of $\mathscr{Coh}(\kappa,\nu)$ to prove the existence of a $\mathbf{\Delta}^1_1$-subset of ${}^\kappa\kappa$ 
 that does not have the $\kappa$-Baire property.

 We associate to every $C$-sequence $\vec{C}=\seq{C_{\alpha}}{\alpha<\kappa}$ a tree $(\kappa,<_\nu^{\vec{C}})$ defined by $$\alpha<_\nu^{\vec{C}}\beta ~ \Longleftrightarrow ~ \alpha\in\Lim(C_\beta) ~ \wedge ~ \cof{\alpha}=\nu.$$

 \begin{claim*}
  Let $\vec{C}\in\mathscr{Coh}(\kappa,\nu)$. Then $\vec{C}$ is trivial if and only if there is a $\kappa$-branch through the tree $(\kappa,<_\nu^{\vec{C}})$.
 \end{claim*}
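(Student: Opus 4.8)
The plan is to reduce the claim to Proposition~\ref{proposition:TrivialMuCoherentSequences} and then set up an explicit dictionary between $\nu$-threads through $\vec{C}$ and $\kappa$-branches through $(\kappa,<_\nu^{\vec{C}})$. Since $\cof{\kappa}=\kappa>\nu$, that proposition applies with $\lambda=\kappa$ and tells us that $\vec{C}$ is trivial if and only if there is a $\nu$-thread through $\vec{C}$; so it suffices to prove that such a thread exists exactly when the tree has a $\kappa$-branch. Before doing this I would record the basic observation that makes everything work: by the defining clause of $\nu$-coherence, the relation $\alpha<_\nu^{\vec{C}}\beta$ forces $C_\alpha=C_\beta\cap\alpha$, which immediately yields transitivity of $<_\nu^{\vec{C}}$ and shows that on the set of predecessors of a fixed node the tree order agrees with the ordinal order; in particular every node lying $<_\nu^{\vec{C}}$-below another has cofinality $\nu$, so each element of a $\kappa$-branch has cofinality $\nu$.

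For the forward direction I would assume $\vec{C}$ is trivial, invoke Proposition~\ref{proposition:TrivialMuCoherentSequences} to fix a $\nu$-thread $C_\kappa$, and set
\begin{equation*}
 b ~ = ~ \Set{\alpha<\kappa}{\cof{\alpha}=\nu \text{ and } \alpha\in\Lim(C_\kappa)}.
\end{equation*}
The $\nu$-coherence of the extended sequence gives $C_\alpha=C_\kappa\cap\alpha$ for every $\alpha\in b$. From this I would check that $b$ is a downward-closed chain in the tree: if $\alpha<\alpha'$ both lie in $b$, then $C_{\alpha'}=C_\kappa\cap\alpha'$ and hence $\alpha\in\Lim(C_\kappa\cap\alpha')=\Lim(C_{\alpha'})$, so $\alpha<_\nu^{\vec{C}}\alpha'$. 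Because the limit points of the club $C_\kappa$ of cofinality $\nu$ form a stationary, hence unbounded, subset of the regular cardinal $\kappa$, the chain $b$ has order type $\kappa$ and is therefore a $\kappa$-branch.

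For the reverse direction I would start from a $\kappa$-branch $b$ and define $C_\kappa=\bigcup_{\alpha\in b}C_\alpha$. For $\alpha<\alpha'$ in $b$ we have $\alpha<_\nu^{\vec{C}}\alpha'$ and thus $C_\alpha=C_{\alpha'}\cap\alpha$, so the $C_\alpha$ with $\alpha\in b$ cohere as an end-extending tower and $C_\kappa\cap\alpha=C_\alpha$ for all $\alpha\in b$. Using that $b$ is a size-$\kappa$, hence cofinal, subset of the regular cardinal $\kappa$ and that each $C_\alpha$ is closed in $\alpha$, I would verify that $C_\kappa$ is a club. The step I expect to be the main obstacle is confirming the thread condition $C_\alpha=C_\kappa\cap\alpha$ at an arbitrary $\alpha\in\Lim(C_\kappa)$ with $\cof{\alpha}=\nu$, since such an $\alpha$ need not belong to $b$. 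Here I would pick any $\beta\in b$ with $\beta>\alpha$; then $C_\kappa\cap\alpha=C_\beta\cap\alpha$ exhibits $\alpha$ as a limit point of $C_\beta$, so $\alpha<_\nu^{\vec{C}}\beta$, and $\nu$-coherence of $\vec{C}$ delivers $C_\alpha=C_\beta\cap\alpha=C_\kappa\cap\alpha$. This shows that $C_\kappa$ is a $\nu$-thread through $\vec{C}$, and a final appeal to Proposition~\ref{proposition:TrivialMuCoherentSequences} gives that $\vec{C}$ is trivial, completing the equivalence.
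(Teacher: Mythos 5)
Your proof is correct and follows essentially the same route as the paper: both directions use the identical dictionary, sending a branch $b$ to $C_\kappa=\bigcup_{\alpha\in b}C_\alpha$ and a thread $C_\kappa$ to its set of limit points of cofinality $\nu$, with the reduction from triviality to threads via Proposition~\ref{proposition:TrivialMuCoherentSequences}. The only difference is that you spell out the step the paper leaves implicit (verifying the coherence condition $C_\alpha=C_\kappa\cap\alpha$ at limit points $\alpha\notin b$ by choosing $\beta\in b$ above $\alpha$), which is a welcome but not substantively different elaboration.
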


 \begin{proof}[Proof of the Claim]
  Let $B\subseteq\kappa$ be a $\kappa$-branch through the tree $(\kappa,<_\nu^{\vec{C}})$ and set $C=\bigcup\Set{C_\alpha}{\alpha\in B}$. Then $C\cap\alpha=C_\alpha$ for every $\alpha\in B$ and this shows that $C$ is a closed unbounded subset of $\kappa$. 
  Since $B$ is a branch, we can conclude that $B$ is equal to the set of all limit points of $C$ of cofinality $\nu$. This shows that $C$ is a $\nu$-thread through $\vec{C}$. 

  In the other direction, let $C$ be a $\nu$-thread through $\vec{C}$ and define $B$ to be the set of all limit points of $C$ of cofinality $\nu$. It follows directly that $B$ is a $\kappa$-branch through the tree $(\kappa,<_\nu^{\vec{C}})$. 
 \end{proof}

 In the following, we say that a $C$-sequence $\vec{C}$ is $\nu$-special if the corresponding tree $(\kappa,<_\nu^{\vec{C}})$ is special. 
 Since special trees have no cofinal branches, the following statement is a direct consequence of the above claim.

 \begin{claim*}
  Let $\vec{C}\in\mathscr{Coh}(\kappa,\nu)$. If $\vec{C}$ is $\nu$-special, then $\vec{C}$ is not trivial. \qed
 \end{claim*}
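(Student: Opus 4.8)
The plan is to read this claim off directly from the preceding characterisation of triviality together with the defining property of special trees. By definition, $\vec{C}$ being $\nu$-special means that the tree $(\kappa,<_\nu^{\vec{C}})$ is special, so there is a function $\map{s}{\kappa}{\nu}$ that is injective on every chain of this tree.

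The key point I would record is that such a tree can have no $\kappa$-branch. Indeed, a $\kappa$-branch $B$ through $(\kappa,<_\nu^{\vec{C}})$ is a chain of cardinality $\kappa=\nu^+$, so the restriction $s\restriction B$ would be an injection of a set of size $\nu^+$ into $\nu$; since $\betrag{B}=\nu^+>\nu$, no such injection exists. Hence $(\kappa,<_\nu^{\vec{C}})$ has no cofinal branch.

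To conclude, I would invoke the previous claim, which asserts that $\vec{C}$ is trivial if and only if there is a $\kappa$-branch through $(\kappa,<_\nu^{\vec{C}})$. As the existence of such a branch has just been excluded, $\vec{C}$ cannot be trivial, which is exactly the assertion of the claim. There is essentially no obstacle here: the whole argument rests on the elementary cardinality fact that a map into $\nu$ cannot be injective on a chain of length $\nu^+$, and the only detail worth spelling out is that a $\kappa$-branch in this tree is precisely such a chain, so that the specialising function forces its nonexistence.
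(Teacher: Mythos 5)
Your proof is correct and follows exactly the paper's route: the paper disposes of this claim with the one-line remark that special trees have no cofinal branches, combined with the preceding claim characterizing triviality via $\kappa$-branches of $(\kappa,<_\nu^{\vec{C}})$. Your only addition is to spell out the standard cardinality argument (a $\kappa$-branch is a chain of size $\nu^+$, so the specializing function $\map{s}{\kappa}{\nu}$ cannot be injective on it), which the paper takes as known.
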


 We now want to use the above assumption to conclude that the converse of the above implication also holds. Since trees of the form $(\kappa,<_\nu^{\vec{C}})$ might have levels of cardinality $\kappa$, we have to 
 associate a $\kappa$-Aronszajn tree to a given $C$-sequence and apply our assumption to these trees. We will use Todor{\v{c}}evi{\'c}'s method of \emph{minimal walks} to obtain those trees.

 Fix a non-trivial $\nu$-coherent $C$-sequence $\vec{C}=\seq{C_\alpha}{\alpha<\kappa}$. Let $T(\rho_0)$ denote the tree obtained from the \emph{full codes of walks through $\vec{C}$} (see {\cite[Section 1]{MR908147}}). 
 Then $T(\rho_0)$ is a tree of height $\kappa$. By {\cite[1.7]{MR908147}}, the tree  $T(\rho_0)$ has a $\kappa$-branch if and only if there is a closed unbounded subset $C$ of $\kappa$ and $\xi<\kappa$ 
 such that for every $\xi\leq\alpha<\kappa$ there is a $\alpha\leq\beta<\kappa$ with $C\cap\alpha = C_\beta\cap[\xi,\alpha)$. By considering limit points of such a club of cofinality $\nu$, we can conclude that 
 the existence of such a subset would imply that $\vec{C}$ is trivial. Hence there are no $\kappa$-branches through $T(\rho_0)$. Next, {\cite[1.3]{MR908147}} shows that the $\alpha$-th level of $T(\rho_0)$ has cardinality at 
 most $\betrag{\Set{C_\beta\cap\alpha}{\alpha\leq\beta<\kappa}}+\aleph_0$ for every $\alpha<\kappa$. The $\nu$-coherency of $\vec{C}$ and the assumption $\nu=\nu^{{<}\nu}$ allow us to inductively show that 
 this cardinality is at most $\nu$ for all $\alpha<\kappa$. This shows that $T(\rho_0)$ is a $\kappa$-Aronszajn tree. Since {\cite[Corollary 6.3.3]{MR2355670}} says that trees of the form $T(\rho_0)$ do not contain 
 $\kappa$-Souslin subtrees, our assumption implies that the tree $T(\rho_0)$ is special. The obvious adaptation of the argument of the proof of {\cite[Lemma 7.1.7]{MR2355670}} shows that there is a 
 strictly increasing mapping of  $(\kappa,<_\nu^{\vec{C}})$ into $T(\rho_0)$ and we can conclude that $\vec{C}$ is $\nu$-special. We summarize the above computations in the following claim.

 \begin{claim*}
   Let $\vec{C}\in\mathscr{Coh}(\kappa,\nu)$. Then $\vec{C}$ is $\nu$-special if and only if $\vec{C}$ is not trivial. \qed 
 \end{claim*}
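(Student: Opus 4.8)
The plan is to prove the biconditional by handling its two directions separately, since one of them is already essentially in hand. The implication \emph{$\nu$-special $\Rightarrow$ not trivial} is exactly the second claim above: by the branch-characterization of triviality, a trivial sequence yields a $\kappa$-branch through $(\kappa,<_\nu^{\vec{C}})$, whereas a special tree of height $\kappa$ admits no cofinal branch. So the substance lies entirely in the converse, and there the difficulty is that $(\kappa,<_\nu^{\vec{C}})$ need not itself be $\kappa$-Aronszajn---its levels can have cardinality $\kappa$---so the hypothesis of the theorem cannot be applied to it directly.

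First I would therefore attach to a given non-trivial $\nu$-coherent sequence $\vec{C}$ a genuinely $\kappa$-Aronszajn tree extracted by Todor\v{c}evi\'{c}'s minimal walks, namely the tree $T(\rho_0)$ of full codes of walks through $\vec{C}$. The three properties to verify are that $T(\rho_0)$ has height $\kappa$, that it has no $\kappa$-branch, and that each of its levels has cardinality at most $\nu$. For the second I would use the branch-characterization of \cite[1.7]{MR908147}, reducing the existence of a $\kappa$-branch to a club-guessing condition on $\vec{C}$ that, upon passing to limit points of cofinality $\nu$, would force $\vec{C}$ to be trivial against our standing assumption. For the third I would invoke the level estimate of \cite[1.3]{MR908147}, bounding the $\alpha$-th level by $\betrag{\Set{C_\beta\cap\alpha}{\alpha\leq\beta<\kappa}}+\aleph_0$, and then run an induction on $\alpha$ in which $\nu$-coherency together with $\nu=\nu^{{<}\nu}$ keeps this quantity below $\nu$.

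Once $T(\rho_0)$ is known to be $\kappa$-Aronszajn, the next step is to apply the hypothesis. Trees arising from minimal walks contain no $\kappa$-Souslin subtree by \cite[Corollary 6.3.3]{MR2355670}, so the assumption forces $T(\rho_0)$ to be special. It then remains to transfer specialness from $T(\rho_0)$ back to the tree we actually care about: adapting the construction in the proof of \cite[Lemma 7.1.7]{MR2355670}, I would produce a strictly order-preserving map of $(\kappa,<_\nu^{\vec{C}})$ into $T(\rho_0)$, and composing a specializing function for $T(\rho_0)$ with this map specializes $(\kappa,<_\nu^{\vec{C}})$, i.e. witnesses that $\vec{C}$ is $\nu$-special.

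I expect the principal obstacle to be the level-size computation. The naive tree has levels of size up to $\kappa$, so everything hinges on replacing it by $T(\rho_0)$ and then controlling $\betrag{\Set{C_\beta\cap\alpha}{\alpha\leq\beta<\kappa}}$ uniformly by $\nu$; this is precisely the point at which coherency and the arithmetic assumption $\nu=\nu^{{<}\nu}$ must be made to interact, and it is also what justifies detouring through minimal walks rather than working with $(\kappa,<_\nu^{\vec{C}})$ directly.
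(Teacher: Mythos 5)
Your proposal is correct and follows essentially the same route as the paper: one direction via the branch-characterization of triviality (special trees have no cofinal branches), and the converse by passing to the minimal-walks tree $T(\rho_0)$, ruling out $\kappa$-branches via {\cite[1.7]{MR908147}}, bounding level sizes via {\cite[1.3]{MR908147}} together with $\nu$-coherency and $\nu=\nu^{{<}\nu}$, invoking {\cite[Corollary 6.3.3]{MR2355670}} to exclude $\kappa$-Souslin subtrees, and transferring specialness back through a strictly increasing map adapted from {\cite[Lemma 7.1.7]{MR2355670}}. Nothing is missing, and your identification of the level-size computation as the point where coherency and the cardinal arithmetic interact matches the paper's argument exactly.
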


 By considering the tree consisting of all pairs $(\vec{D},s)$ such that $\vec{D}=\seq{D_\alpha}{\alpha<\gamma}$ is a $\nu$-coherent $C$-sequences of length $\gamma<\kappa$ and $\map{s}{\gamma}{\nu}$ is a function 
 with $s(\alpha)\neq s(\beta)$ for all $\beta<\gamma$ and $\alpha\in\Lim(C_\beta)$ with $\cof{\alpha}=\nu$, it is easy to see that the set of all $\nu$-special $\nu$-coherent $C$-sequences is a 
 $\mathbf{\Sigma}^1_1$-subset of $\mathscr{Coh}(\kappa,\nu)$. By the above claim, we can conclude that $\mathscr{Triv}(\kappa,\nu)$ is a $\mathbf{\Delta}^1_1$-subset of $\mathscr{Coh}(\kappa,\nu)$. 
\end{proof}

In the remainder of this section, we let $\kappa$ denote an uncountable cardinal with $\kappa=\kappa^{{<}\kappa}$. In order to prove the more general Theorem \ref{non-separable I-kappa sets}, we will construct subsets of ${}^\kappa\kappa$ with the following property that was already used in the proof of Theorem \ref{theorem:THREADSEQNOTBaire}.

\begin{definition} 
 We call a subset $A$ of ${}^{\kappa}\kappa$ \emph{super-dense} if $A\cap(\bigcap_{\alpha<\kappa}U_\alpha)\neq\emptyset$ whenever $\seq{U_\alpha}{\alpha<\kappa}$ is a sequence of dense open subsets of some non-empty open subset of ${}^\kappa\kappa$. 
\end{definition}

The following statement was essentially shown in the proof of Theorem \ref{theorem:THREADSEQNOTBaire}.

\begin{proposition}
 Assume that $A$ and $B$ are disjoint super-dense subsets of ${}^\kappa\kappa$. If $A\subseteq X\subseteq{}^\kappa\kappa\setminus B$, then $X$ does not have the $\kappa$-Baire property. \qed
\end{proposition}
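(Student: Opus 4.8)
The plan is to argue by contradiction. Suppose $X$ has the $\kappa$-Baire property, so fix an open set $U$ and nowhere dense sets $\seq{N_\alpha}{\alpha<\kappa}$ with $X\mathbin{\triangle} U\subseteq M$, where $M=\bigcup_{\alpha<\kappa}N_\alpha$. Unpacking the symmetric difference gives the two inclusions $X\setminus U\subseteq M$ and $U\setminus X\subseteq M$. I would then play the first of these against the hypothesis $A\subseteq X$ and the second against the hypothesis $X\cap B=\emptyset$ (which holds since $X\subseteq{}^\kappa\kappa\setminus B$). The point of having two super-dense sets is exactly that $A$ and $B$ occupy dual positions with respect to $X$.

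The key intermediate observation I would isolate is that a super-dense set meets $V\setminus M$ for every nonempty open $V$. To see this, note that for each $\alpha$ the set $V\setminus\overline{N_\alpha}$ is open, and it is dense in $V$ because $\overline{N_\alpha}$ has empty interior, so no nonempty open subset of $V$ can be contained in it. Since $\bigcap_{\alpha<\kappa}(V\setminus\overline{N_\alpha})=V\setminus\bigcup_{\alpha<\kappa}\overline{N_\alpha}\subseteq V\setminus M$, applying the definition of super-density to the sequence $\seq{V\setminus\overline{N_\alpha}}{\alpha<\kappa}$ of dense open subsets of $V$ yields a point of the super-dense set inside $V\setminus M$. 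This is essentially the density computation already carried out in the proof of Theorem \ref{theorem:THREADSEQNOTBaire}.

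With this in hand I would split on whether $U$ is empty. If $U=\emptyset$, then $X=X\setminus U\subseteq M$, hence $A\subseteq X\subseteq M$; but super-density of $A$, applied with $V={}^\kappa\kappa$, produces a point of $A$ in ${}^\kappa\kappa\setminus M$, a contradiction. If $U\neq\emptyset$, then from $U\setminus X\subseteq M$ we get $U\setminus M\subseteq X$, so $B\cap(U\setminus M)\subseteq B\cap X=\emptyset$; but super-density of $B$, applied with $V=U$, produces a point of $B$ in $U\setminus M$, again a contradiction. Either way the assumption that $X$ has the $\kappa$-Baire property fails.

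I do not expect a serious obstacle here; the argument is a direct repackaging of the non-Baire argument from Theorem \ref{theorem:THREADSEQNOTBaire}. The one point that needs care is that the definition of super-density only speaks about dense open subsets of a \emph{nonempty} open set, so the degenerate case $U=\emptyset$ cannot be handled by applying super-density to $U$ itself and must instead be dispatched using $A$ and the whole space. Keeping the roles of $A$ and $B$ straight is the only bookkeeping to watch: $A$ rules out the possibility $X\subseteq M$ and hence the empty-$U$ case, while $B$ rules out the inclusion $U\setminus M\subseteq X$ forced in the nonempty-$U$ case.
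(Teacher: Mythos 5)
Your proof is correct and follows essentially the same route as the paper, which leaves this proposition to the reader by noting it was \anf{essentially shown} in the proof of Theorem \ref{theorem:THREADSEQNOTBaire}: super-density of $A$ rules out $U=\emptyset$ (the complement of $X$ cannot be comeager), and super-density of $B$ applied inside the nonempty $U$ kills the inclusion $U\setminus M\subseteq X$, exactly as in the paper's argument there. The only (minor) added care in your write-up is passing to the closures $\overline{N_\alpha}$ to handle arbitrary nowhere dense sets, which the paper's theorem sidesteps by assuming the $N_\alpha$ closed; this is a correct and harmless refinement.
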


\begin{proof}[Proof of Theorem \ref{non-separable I-kappa sets}] 
 Suppose that $\map{f}{{}^{<\kappa}\kappa\times{}^{<\kappa}\kappa}{\Set{2\cdot \alpha}{\alpha<\kappa}}$ is an injection. 
 Fix a stationary subset  $S$ of $\kappa$ and let $\map{\chi_S}{\kappa}{2}$ denote its characteristic function. 
 Let $\calA_S$ denote the set of all pairs $(x,y)\in {}^{\kappa}\kappa\times{}^\kappa\kappa$ such that the set $C=\Set{\alpha<\kappa}{x(\alpha)=y(\alpha)}$ is a club in $\kappa$ and 
 $$x(\alpha) ~ = ~ f(x\restriction\alpha, ~ y\restriction\alpha)~ + ~ \chi_S(\alpha)$$ for all $\alpha\in C$. Define $\calT_S$ be the tree consisting of all pairs $(s,t)\in {}^{\gamma}\kappa\times{}^\gamma\kappa$ such that $\gamma\in\Lim\cap\kappa$, 
 $D=\Set{\alpha<\gamma}{s(\alpha)=t(\alpha)}$ is a club in $\gamma$ and $$s(\alpha) ~ = ~ f(s\restriction\alpha, ~ t\restriction\alpha) ~ + ~ \chi_S(\alpha)$$ for all $\alpha\in D$. We equip $\calA_S$ with the topology whose basic open sets are of the form 
 $\Set{(x,y)\in\calA}{s\subseteq x,~t\subseteq y}$ with $(s,t)\in\calT_S$.

 \begin{claim*}
  The space $\calA_S$ is homeomorphic to ${}^\kappa\kappa$. 
 \end{claim*}

 \begin{proof}[Proof of the Claim]
  Since we can identify $\calA_S$ with the set of all cofinal branches through the tree $\calT_S$, it suffices to show that the trees $\calT_S$ and ${}^{{<}\kappa}\kappa$ are isomorphic. 
  Given $(s,t)\in\calT_S$ and $\gamma\geq\length{s}$, we can inductively construct an element $(s_*,t_*)$ in $\calT_S$ extending $(s,t)$ such that $\length{s_*}\geq\gamma$ and $s_*(\alpha)\neq t_*(\alpha)$ for all $\alpha\in[\length{s},\gamma)$. 
  This shows that every node in $\calT_S$ has $\kappa$-many direct successors. Since $\calT_S$ is ${<}\kappa$-closed, it follows that the trees $\calT_S$ and ${}^{{<}\kappa}\kappa$ are isomorphic. 
 \end{proof}

 Let $\map{p}{\calA_S}{{}^\kappa\kappa}$ be the projection onto the first coordinate. Define $A_S=p[\calA_S]$.

 \begin{claim*}
  The projection $\map{p}{\calA_S}{{}^\kappa\kappa}$ is continuous. \qed
 \end{claim*}

 \begin{claim*} 
  The projection $\map{p}{\calA_S}{{}^\kappa\kappa}$ is injective. 
 \end{claim*} 

 \begin{proof}[Proof of the Claim]
  Suppose that $(x,y),(x,y^\prime)\in\calA_S$. Let $C$ and $C^\prime$ be the corresponding club subsets of $\kappa$. Fix $\alpha\in C\cap C^\prime$. Then 
  $$f(x\restriction\alpha, ~ y\restriction\alpha) ~ + ~ \chi_S(\alpha) ~ = ~ x(\alpha) ~ = ~ f(x\restriction\alpha, ~ y^\prime\restriction\alpha) ~ + ~ \chi_S(\alpha).$$
  Since $f$ is injective, we have $y\restriction\alpha=y^\prime\restriction\alpha$ for arbitrarily large $\alpha<\kappa$. 
 \end{proof}

 The above claims show that all sets of the form $A_S$ are contained in  the class $I^\kappa$. 
 The next claim implies that disjoint sets of the form $A_S$ and $A_{S^\prime}$ cannot be separated by a set that has the $\kappa$-Baire property.

 \begin{claim*}
  The set $A_S$ is super-dense. 
 \end{claim*} 

 \begin{proof}[Proof of the Claim]
  Let $\seq{U_\alpha}{\alpha<\kappa}$ be a sequence of dense open subsets of some basic open subset $N_{s_0}$ of ${}^\kappa\kappa$. Let $\gamma_0=\length{s_0}$ and 
  define $t_0\in{}^{\gamma_0}\kappa$ by setting $t_0(\alpha)=s_0(\alpha)+1$ for all $\alpha<\gamma_0$. 
  Now we can construct a strictly increasing continuous sequence $\seq{\gamma_\alpha}{\alpha<\kappa}$ and a sequence 
  $\seq{(s_\alpha,t_\alpha)\in{}^{\gamma_\alpha}\kappa\times{}^{\gamma_\alpha}\kappa}{\alpha<\kappa}$ such that the following statements hold for all $\alpha<\kappa$. 
  \begin{enumerate}
   \item If $\bar{\alpha}<\alpha$, then $s_{\bar{\alpha}}=s_\alpha\restriction\gamma_{\bar{\alpha}}$ and $t_{\bar{\alpha}}=t_\alpha\restriction\gamma_{\bar{\alpha}}$. 

   \item $s_{\alpha+1}(\gamma_\alpha) ~ = ~ t_{\alpha+1}(\gamma_\alpha) ~ = ~ f(s_\alpha, ~ t_\alpha) ~ + ~ \chi_S(\alpha)$. 

   \item $N_{s_{\alpha+1}}\subseteq U_\alpha$. 

   \item $t_{\alpha+1}(\beta) ~ = ~ s_{\alpha+1}(\beta)+1$ for all $\beta$ with $\gamma_{\alpha}<\beta<\gamma_{\alpha+1}$.
  \end{enumerate}
If $x=\bigcup_{\alpha<\kappa}s_\alpha$, then $x\in A_S\cap(\bigcap_{\alpha<\kappa}U_\alpha)$. 
\end{proof}

It remains to construct $2^{\kappa}$-many pairwise disjoint sets of the form $A_S$. Let $\seq{S_\alpha}{\alpha<\kappa}$ be a partition of $\kappa$ into $\kappa$-many stationary subsets. 
Given $\emptyset\neq X\subseteq\kappa$, define $S_X=\bigcup_{\alpha\in X}S_\alpha$. Then ${S_X}_\Delta S_Y$ is stationary for all $\emptyset\neq X,Y\subseteq\kappa$ with $X\neq Y$ and 
the above statement follows from the following claim.

\begin{claim*}
  Let  $S$ and $S^\prime$ be stationary subsets of $\kappa$. If $S_\Delta S^\prime$ is stationary in $\kappa$, then the subsets $A_S$ and $A_{S^\prime}$ are disjoint. 
\end{claim*} 

\begin{proof}[Proof of the Claim]
 Suppose that $(x,y)\in\calA_S$ and $(x,y')\in\calA_{S^\prime}$. Let $C$ and $C^\prime$ be the corresponding clubs. 
 Pick $\alpha\in (C\cap C^\prime \cap (S_\Delta S^\prime)$. Then 
 $$f(x\restriction\alpha,~y\restriction\alpha) ~ + ~ \chi_S(\alpha) ~ = ~ x(\alpha) ~ = ~ f(x\restriction\alpha, ~ y\restriction\alpha) ~ + ~ \chi_{S^\prime}(\alpha).$$ 
 But this is a contradiction, because the ordinals $f(x\restriction\alpha,~y\restriction\alpha)$ and $f(x\restriction\alpha, ~ y\restriction\alpha)$ are both even and $\chi_S(\alpha)+\chi_{S^\prime}(\alpha)=1$. 
\end{proof}

This completes the proof of the theorem. 
\end{proof}


\section{Continuous injective images of closed sets}

In this section, we consider images of injective continuous functions $\map{f}{A}{{}^\kappa\kappa}$ for some closed subset $A$ of ${}^\kappa\kappa$. We start by proving Theorem \ref{theorem:SKappa1LinIKappaClosed}.

\begin{proof}[Proof of Theorem \ref{theorem:SKappa1LinIKappaClosed}]
 Suppose that $\varphi(u,v)$ is a $\Sigma_1$-formula, $z\in {}^{\kappa}\kappa$ and $A$ is the set of all $x\in{}^\kappa\kappa$ with $L[x,z]\models \varphi(x,z)$. 
 In the following, we will code models of the form $(L_{\gamma}[x],\in)$ by some (not unique) $y\in{}^{\kappa}\kappa$ with the help of  bijections $\map{f}{L_{\gamma}[x]}{\kappa}$ with $f[\kappa]=\Set{2\cdot\alpha}{\alpha<\kappa}$. 
 This allows us to easily calculate from any $v\subseteq\kappa$ and $y$ whether $v$ is an element of $L_{\gamma}[x]$.

 Let $C$ denote the set of triples $(x, y, \seq{y_n}{n\in\omega})\in {}^{\kappa}\kappa\times {}^{\kappa}\kappa\times {}^{\omega}({}^{\kappa}\kappa)$ such that each $y_n$ codes a minimal well-founded model $M_n$ of \emph{Kripke-Platek set theory $\mathsf{KP}$} 
 and $\anf{\VV=\LL[x,z]}$ such that the following statements hold for all $n<\omega$. 
 \begin{itemize} 
  \item If $i<n$, then $x,y,y_i,z\in M_n$. 
  \item $M_0\models\anf{\textit{$y$ is $<_{\LL[x,z]}$-least element of ${}^\kappa\kappa$ with $\varphi(x,y)$}}$. 
  \item If $n=k+1$, then $M_{k+1}\models\anf{\textit{$y_k$ is the $<_{\LL[x,z]}$-least code for $M_k$}}$. 
 \end{itemize}

 The set $C$ is $\kappa$-Borel, since the conjunction of $M_n\models\mathsf{KP}+\anf{\VV=\LL[x,z]}$ and the remaining conditions is equivalent to an arithmetic sentence about the elements $x$, $y$, $\seq{y_n}{n<\omega}$ and $z$. 
 Then $C$ is an element of the class $I^{\kappa}_{cl}$ by the Lemma \ref{lemma:BorelIsInIKappaClosed}. We have $A=p_0[C]$, where 
 $$\Map{p_0}{{}^{\kappa}\kappa\times {}^{\kappa}\kappa\times {}^{\omega}({}^{\kappa}\kappa)}{{}^{\kappa}\kappa}{(x,y,\seq{y_n}{n<\omega})}{x}.$$ 
 Moreover $p_0\restriction C$ is injective, since every $x\in p_0[C]$ uniquely determines an element $(x,y,\seq{y_n}{n<\omega})$ of $C$. 
\end{proof}

The next observation will allow us to simplify the following proofs.

\begin{proposition}
 Let $A$ be an element of the class $I^\kappa_{cl}$. Then there is a subtree $T$ of ${}^{{<}\kappa}\kappa\times{}^{{<}\kappa}\kappa$ such that $A=p[T]$ and the projection $\map{p}{[T]}{A}$ is injective. 
\end{proposition}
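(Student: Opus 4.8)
Let $A$ be an element of $I^\kappa_{cl}$, meaning there is a closed subset $C\subseteq{}^\kappa\kappa$ and an injective continuous function $\map{f}{C}{{}^\kappa\kappa}$ with $A=\ran{f}$. The goal is to produce a subtree $T$ of ${}^{{<}\kappa}\kappa\times{}^{{<}\kappa}\kappa$ such that $A=p[T]$ and $\map{p}{[T]}{A}$ is injective.

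\begin{proof}[Proof proposal]
The plan is to pass from the abstract injective continuous map on a closed set to the graph of that map, realized as the set of branches through a suitable subtree, and then to verify that the projection stays injective. First I would use the characterization of closed sets recorded in the introduction: a subset of a space of the form ${}^\kappa\mu_0\times{}^\kappa\mu_1$ is closed if and only if it is $[S]$ for some subtree $S$. Since the graph $\Set{(x,f(x))}{x\in C}$ of a continuous function on a closed set $C\subseteq{}^\kappa\kappa$ is itself a closed subset of ${}^\kappa\kappa\times{}^\kappa\kappa$ (this is exactly the observation made in the first section of the excerpt, where it is used to identify $\mathbf{\Sigma}^1_1$-subsets with continuous images of closed sets), there is a subtree $T$ of ${}^{{<}\kappa}\kappa\times{}^{{<}\kappa}\kappa$ whose branch set $[T]$ is precisely this graph.

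With this $T$ fixed, I would next identify the projection $\map{p}{[T]}{{}^\kappa\kappa}$ onto the first coordinate. Because $[T]$ is the graph of $f$, we have $p[T]=\Set{x}{x\in C}$ via the first coordinate, but more usefully $\ran{p}=\Set{f(x)}{\ldots}$ depending on which coordinate carries the image; the clean choice is to let $T$ code pairs $(t,s)$ where $t$ approximates an element $x\in C$ and $s$ approximates its image $f(x)$, and to take $p$ to be the projection onto the $s$-coordinate so that $p[T]=\ran{f}=A$. Then a branch $(x,y)\in[T]$ satisfies $y=f(x)$ and $x\in C$, so $p(x,y)=y$. Injectivity of $p$ on $[T]$ then amounts to showing that $y=f(x)$ determines $(x,y)$ uniquely, which follows immediately from the \emph{injectivity of $f$}: if $(x,y),(x',y)\in[T]$ then $f(x)=y=f(x')$, hence $x=x'$, hence the branches coincide.

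The main technical point — and the step I expect to require the most care rather than the most difficulty — is verifying that $[T]$ really is the full graph and not merely a set containing it, and conversely that no spurious branches appear. The subtlety is the one flagged in the discussion preceding Lemma \ref{projection to projection of tree}: the branch set of a subtree built from the graph could in principle pick up extra cofinal branches beyond the graph itself. Here, however, the graph is already closed, so taking $T=\Set{(x\restriction\alpha,f(x)\restriction\alpha)}{x\in C,\ \alpha<\kappa}$ gives $[T]$ equal to the closure of the graph, which equals the graph precisely because it is closed; thus there are no extra branches and $p\restriction[T]$ is a bijection onto $A$. Since the first and third paragraphs already establish that $p$ is injective and surjective onto $A$, combining them yields the claim. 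The argument uses nothing beyond continuity of $f$, closedness of its graph, and the injectivity hypothesis, so no additional cardinal-arithmetic assumption on $\kappa$ is needed.
\end{proof}
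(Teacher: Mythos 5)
Your proof is correct, and it takes a genuinely different route from the paper's. The paper fixes a continuous bijection $\map{f}{[T_0]}{A}$ and an enumeration $\seq{u_\alpha}{\alpha<\kappa}$ of ${}^{{<}\kappa}\kappa$, and builds a tree of pairs $(s,t)$ in which the second coordinate codes a strictly increasing sequence of nodes $u_{t(\alpha)}$ of $T_0$ satisfying the continuity commitment that $f$ maps the branches of $T_0$ through $u_{t(\alpha)}$ into $N_{s\restriction(\alpha+1)}$; a cofinal branch $(y,z)$ then recovers the unique $f$-preimage $x=\bigcup_{\alpha<\kappa}u_{z(\alpha)}$ of $y$. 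This is the injective analogue of the machinery of Lemmas \ref{continuous image to projection} and \ref{projection to projection of tree}, and its virtue is that the same coding works for non-injective $f$ and for domains ${}^\kappa\lambda$ of other cardinalities. You instead invoke the closed-graph observation already recorded in the introduction: the graph of a continuous function on a closed set is closed, hence equals $[T]$ for the tree $T$ of its initial segments, and injectivity of $p\restriction[T]$ is then literally the injectivity of $f$. Your route is shorter, and it is actually more robust on the injectivity point: in the paper's construction as written, the witness $z$ attached to a given $y$ is \emph{not} unique, since the commitments are preserved when each committed node is replaced by a longer node on the same branch, so any strictly faster reparametrization of the sequence $\seq{u_{z(\alpha)}}{\alpha<\kappa}$ yields a second branch over the same $y$; the paper's tree therefore needs an additional canonicity clause (e.g.\ taking $t(\alpha)$ least with the stated property) to make the projection injective, whereas your graph tree gets injectivity for free. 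One cosmetic point: since the paper's $p$ always projects onto the \emph{first} coordinate, your final tree should list the image coordinate first, i.e.\ $T=\Set{(f(x)\restriction\alpha, ~ x\restriction\alpha)}{x\in C, ~ \alpha<\kappa}$; you address the choice of coordinate mid-proof, but the tree displayed in your last paragraph has the order reversed.
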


\begin{proof}
 Let $T_0$ be a subtree of ${}^{{<}\kappa}\kappa$ and $\map{f}{[T_0]}{A}$ be a continuous bijection. 
 Fix an enumeration $\seq{u_\alpha}{\alpha<\kappa}$ of ${}^{{<}\kappa}\kappa$. 
Define $T$ to consist of all pairs $(s,t)$ such that $s\in{}^{{<}\kappa}\kappa$, 
 $\seq{u_{t(\alpha)}}{\alpha<\length{s}}$ is a strictly increasing sequence of nodes in $T_0$ and $f[T_{t(\alpha)}]\subseteq N_{s\restriction(\alpha+1)}$ holds for all $\alpha<\length{s}$.

 Pick $y\in A$ and let $x$ denote the unique element of $[T_0]$ with $f(x)=y$. We can construct a strictly increasing sequence $\seq{\beta_\alpha}{\alpha<\kappa}$ of ordinals less than $\kappa$ 
 such that $f[T_{x\restriction\beta_\alpha}]\subseteq N_{y\restriction(\alpha+1)}$ holds for all $\alpha<\kappa$. 
 Pick $z\in{}^\kappa\kappa$ with $x\restriction\beta_\alpha=u_{z(\alpha)}$ for all $\alpha<\kappa$.  
 Then the pair $(y,z)$ is a $\kappa$-branch through $T$. 
 In the other direction, if $(y,z)$ is an element of $[T]$, then $x=\bigcup_{\alpha<\kappa}u_{z(\alpha)}$ is the unique element of $[T_0]$ with $f(x)=y$.
\end{proof}

\begin{lemma}\label{lemma:ConTPreImages}
  The class $I^{\kappa}_{cl}$ is closed under continuous preimages. 
\end{lemma}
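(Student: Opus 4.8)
The plan is to combine the normal form supplied by the preceding proposition with the elementary fact that continuous preimages of closed sets are closed. So let $A$ be an element of $I^\kappa_{cl}$ and let $\map{g}{{}^\kappa\kappa}{{}^\kappa\kappa}$ be continuous; I want to show that $g^{-1}[A]$ is again an element of $I^\kappa_{cl}$. First I would apply the previous proposition to fix a subtree $T$ of ${}^{{<}\kappa}\kappa\times{}^{{<}\kappa}\kappa$ with $A=p[T]$ such that the projection $\map{p}{[T]}{A}$ onto the first coordinate is injective. The point of this normal form is that it yields, for every $a\in A$, a \emph{unique} $y\in{}^\kappa\kappa$ with $(a,y)\in[T]$.

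Next I would consider the continuous map $\map{h}{{}^\kappa\kappa\times{}^\kappa\kappa}{{}^\kappa\kappa\times{}^\kappa\kappa}$ defined by $h(x,y)=(g(x),y)$ and set $D=h^{-1}[[T]]=\Set{(x,y)}{(g(x),y)\in[T]}$. Since $[T]$ is a closed subset of ${}^\kappa\kappa\times{}^\kappa\kappa$ and $h$ is continuous, the set $D$ is closed, and because $\kappa=\kappa^{{<}\kappa}$ ensures that ${}^\kappa\kappa\times{}^\kappa\kappa$ is homeomorphic to ${}^\kappa\kappa$, the set $D$ is homeomorphic to a closed subset of ${}^\kappa\kappa$. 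I would then let $\map{q}{D}{{}^\kappa\kappa}$ be the projection onto the first coordinate. This map is continuous, and its range is $\Set{x}{\exists y\ (g(x),y)\in[T]}=\Set{x}{g(x)\in p[T]}=g^{-1}[A]$, so it only remains to verify injectivity: if $(x,y),(x,y')\in D$, then $(g(x),y)$ and $(g(x),y')$ are both branches through $T$ over the same first coordinate $g(x)$, so the injectivity of $p$ forces $y=y'$. Thus $q$ is an injective continuous map defined on a closed set with $\ran{q}=g^{-1}[A]$, witnessing $g^{-1}[A]\in I^\kappa_{cl}$.

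I do not expect a genuine obstacle here; the one point that must be handled with care is the passage from injectivity of $p$ on $[T]$ to uniqueness of the second coordinate over a fixed first coordinate, and this is exactly the feature that the preceding proposition was set up to provide. Without that normal form one would instead have to run the same argument through the graph of a witnessing continuous bijection $\map{f}{C}{A}$ defined on a closed set $C$ — replacing $[T]$ by the closed graph $\Set{(w,f(w))}{w\in C}$ and $D$ by $\Set{(x,w)}{w\in C,\ f(w)=g(x)}$ — which is a routine but slightly more cumbersome variant leading to the same conclusion.
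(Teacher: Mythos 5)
Your proof is correct and takes essentially the same approach as the paper: both apply the preceding normal-form proposition and realize the preimage as the injective continuous projection of a closed set combining $[T]$ with the graph of the continuous map. The only cosmetic difference is that the paper works with triples, setting $C=\Set{(x,y,z)}{(x,y)\in \mathrm{Graph}(f),~(y,z)\in[T]}$, whereas you fold the graph in by substitution, using $D=\Set{(x,y)}{(g(x),y)\in[T]}$; these sets are homeomorphic and the injectivity argument is identical.
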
 
 
\begin{proof} 
 Let $T$ be a subtree of ${}^{{<}\kappa}\kappa\times{}^{{<}\kappa}\kappa$ with $p\restriction[T]$ injective and $\map{f}{{}^{\kappa}\kappa}{{}^{\kappa}\kappa}$ be continuous. 
 Set $B=p[T]$ and $A=f^{-1}[B]$. Define  
 $$C ~ = ~ \Set{(x,y,z)}{(x,y)\in \mathrm{Graph}(f), ~ (y,z)\in [T]}.$$ 
 Then $A=p_0[C]$, $C$ is closed and $p_0\restriction C$ is injective. 
\end{proof}

\begin{remark} 
 The class $I^{\kappa}$ of injective continuous images of ${}^{\kappa}\kappa$ contains no singletons, and therefore not every nonempty continuous preimage of a set in $I^{\kappa}$ is in $I^{\kappa}$. 
\end{remark}

Next, we prove Theorem \ref{Theorem:ClubFilterNotInKappaSolovay} that allows us to construct models that separate the class $I^\kappa_{cl}$ from $\mathbf{\Sigma}^1_1(\kappa)$.

\begin{proof}[Proof of Theorem \ref{Theorem:ClubFilterNotInKappaSolovay}]
 Let $\lambda>\kappa$ be an inaccessible and $G$ be $\Coll{\kappa}{{<}\lambda}$-generic over $\VV$.  
 Assume, towards a contradiction, that there is a subtree  $T\in\VV[G]$ of ${}^{{<}\kappa}\kappa\times{}^{{<}\kappa}\kappa$ such that $\mathrm{Club}_\kappa=p[T]$ and $p\restriction [T]$ is injective in $\VV[G]$. 
  We may assume that $T$ is an element of $\VV$ by passing to a suitable intermediate model.

 Work in $\VV$. Let $\dot{\mathbb{Q}}$ denote the canonical $\Add{\kappa}{1}$-name for the partial order that shoots a club through the $\Add{\kappa}{1}$-generic subset of $\kappa$ by initial segments. 
 It is easy to see that both $\Add{\kappa}{1}*\dot{\QQQ}$ and $\Add{\kappa}{1}*(\dot{\QQQ}\times\dot{\QQQ})$ contain ${<}\kappa$-closed dense subsets and hence these partial orders are forcing equivalent to $\Add{\kappa}{1}$.

 Suppose that $g$ is $\Add{\kappa}{1}$-generic over $\VV$ and $h$ is $\dot{\QQQ}^{g}$-generic over $\VV[g]$ such that there is a $\Coll{\kappa}{{<}\lambda}$-generic filter $H$ over $\VV[g,h]$ with $\VV[g,h,H]=\VV[G]$. 
 Let $\dot{x}$ be the canonical $\Add{\kappa}{1}$-name for the characteristic function of the subset of $\kappa$ induced by the generic filter. 
 By a classical theorem of Silver (see, for example, {\cite[Proposition 7.3]{MR2987148}}), we have $(\HH{\kappa^+}^{\VV[g,h]},\in)\prec_{\Sigma_1}(\HH{\kappa^+}^{\VV[G]},\in)$ and this shows that 
 there is some $y\in({}^\kappa\kappa)^{\VV[g,h]}$ with $(\dot{x}^g,y)\in [T]^{\VV[g,h]}$. Since the partial order $\Add{\kappa}{1}$ is homogeneous, we have 
 $$\eins_{\Add{\kappa}{1}*\dot{\QQQ}}\Vdash \exists y ~ (\dot{x},y)\in [\check{T}].$$

 Now suppose that $g$ is $\Add{\kappa}{1}$-generic over $\VV$ and $h_0\times h_1$ is $(\dot{\QQQ}\times \dot{\QQQ})^{g}$-generic over $\VV[g]$ such that there is a $\Coll{\kappa}{{<}\lambda}$-generic filter $H$ 
 over $\VV[g,h_0,h_1]$ with $\VV[G]=\VV[g,h_0,h_1,H]$. By the above computations, there are $y_0\in\VV[g,h_0]$ and $y_1\in\VV[g,h_1]$ such that $(x,y_i)\in[T]^{\VV[g,h_i]}\subseteq[T]^{\VV[G]}$. 
 Then $y_0=y_1$ and hence $y_0\in\VV[g]$ by mutual genericity. Since $\Add{\kappa}{1}$ is homogeneous, we have  $$\eins_{\Add{\kappa}{1}}\Vdash \exists y ~ (\dot{x},y)\in [\check{T}].$$

 Finally, let $g$ be $\Add{\kappa}{1}$-generic over $\VV$ and $H$ be $\Coll{\kappa}{{<}\lambda}$-generic over $\VV[g]$ with $\VV[G]=\VV[g,H]$. In this situation, the above computations show that there is 
 some $y\in\VV[g]$ with $(x,y)\in [T]^{\VV[g]}\subseteq[T]^{\VV[G]}$ and this shows that $\dot{x}^g$ is an element of $\mathrm{Club}_\kappa$ in $\VV[G]$. Since $(\HH{\kappa^+}^{\VV[g]},\in)\prec_{\Sigma_1}(\HH{\kappa^+}^{\VV[G]},\in)$ 
 holds in this situation, this shows that the subset of $\kappa$ induced by $g$ contains a club subset in $\VV[g]$, a contradiction.

 The proof for $\Add{\kappa}{\kappa^+}$ instead of $\Coll{\kappa}{{<}\lambda}$ is analogous. 
\end{proof}

The next statement is a direct consequence of Theorem \ref{Theorem:ClubFilterNotInKappaSolovay} and Lemma \ref{lemma:ConTPreImages}. 

\begin{corollary} 
 No $\mathbf{\Sigma}^1_1$-complete set (see Example \ref{example:trees}) is contained in the class $I^\kappa_{cl}$ after forcing with either $\Add{\kappa}{\kappa^+}$ or $\Coll{\kappa}{{<}\lambda}$ with $\lambda>\kappa$ inaccessible.   
 In particular, if $\nu$ is regular with $\kappa=\nu^+=2^\nu$, then the set $\mathscr{Triv}(\kappa,\nu)$ of trivial $\nu$-coherent sequences is not a $\mathbf{\Sigma}^1_1$-complete subset of $\mathscr{Coh}(\kappa,\nu)$ in the above forcing extensions. \qed
\end{corollary}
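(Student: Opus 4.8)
The plan is to obtain the corollary as a purely formal consequence of the two cited results, using that $\mathrm{Club}_\kappa$ is a nonempty $\mathbf{\Sigma}^1_1$-subset of ${}^\kappa\kappa$. Working in one of the two forcing extensions $\VV[G]$ (with $G$ being $\Add{\kappa}{\kappa^+}$-generic or $\Coll{\kappa}{{<}\lambda}$-generic for an inaccessible $\lambda>\kappa$), I would fix an arbitrary $\mathbf{\Sigma}^1_1$-complete subset $U$ of ${}^\kappa\kappa$. As observed in the introduction, the club filter $\mathrm{Club}_\kappa$ is a continuous image of ${}^\kappa\kappa$ and hence a nonempty $\mathbf{\Sigma}^1_1$-subset. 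By the defining property of $\mathbf{\Sigma}^1_1$-completeness, this supplies a continuous function $\map{f}{{}^\kappa\kappa}{{}^\kappa\kappa}$ with $\mathrm{Club}_\kappa=f^{-1}[U]$.

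The core step is then a short contradiction argument. Assume towards a contradiction that $U$ lies in $I^\kappa_{cl}$ in $\VV[G]$. Since Lemma \ref{lemma:ConTPreImages} shows that $I^\kappa_{cl}$ is closed under continuous preimages, the set $\mathrm{Club}_\kappa=f^{-1}[U]$ would then also be an element of $I^\kappa_{cl}$. This contradicts Theorem \ref{Theorem:ClubFilterNotInKappaSolovay}, which asserts precisely that $\mathrm{Club}_\kappa$ is not an injective continuous image of a closed subset of ${}^\kappa\kappa$ in exactly these extensions. Hence no $\mathbf{\Sigma}^1_1$-complete set belongs to $I^\kappa_{cl}$ in $\VV[G]$, which is the first assertion of the corollary.

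For the concluding statement about $\mathscr{Triv}(\kappa,\nu)$, I would combine the first part with Theorem \ref{theorem:THREADSEQNOTBaire}. Under the hypothesis $\kappa=\nu^+=2^\nu$, that theorem shows $\mathscr{Triv}(\kappa,\nu)$ is a continuous injective image of ${}^\kappa\kappa$, hence an element of $I^\kappa\subseteq I^\kappa_{cl}$; here one notes that the hypothesis persists in $\VV[G]$, since both forcings are ${<}\kappa$-closed and therefore add no new subsets of $\nu$ and keep $\kappa$ the successor of $\nu$, leaving $2^\nu=\kappa$ intact. The first part of the corollary then immediately rules out that $\mathscr{Triv}(\kappa,\nu)$ could be $\mathbf{\Sigma}^1_1$-complete in $\VV[G]$. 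I do not anticipate a genuine obstacle in this argument, as it is merely a chaining of the two quoted results; the only points deserving a moment of care are verifying that $\mathrm{Club}_\kappa$ is nonempty and $\mathbf{\Sigma}^1_1$ so that completeness applies, and the routine observation that the cardinal arithmetic hypothesis survives the forcing.
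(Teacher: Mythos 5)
Your proof is correct and takes essentially the same route the paper intends: it derives the corollary directly from Theorem \ref{Theorem:ClubFilterNotInKappaSolovay} and Lemma \ref{lemma:ConTPreImages} by pulling back a hypothetical $\mathbf{\Sigma}^1_1$-complete set in $I^\kappa_{cl}$ along a continuous map reducing the nonempty $\mathbf{\Sigma}^1_1$-set $\mathrm{Club}_\kappa$, and then uses Theorem \ref{theorem:THREADSEQNOTBaire} together with $I^\kappa\subseteq I^\kappa_{cl}$ for the statement about $\mathscr{Triv}(\kappa,\nu)$. Your explicit verification that the hypothesis $\kappa=\nu^+=2^\nu$ survives these ${<}\kappa$-closed forcings is a reasonable point of care that the paper leaves implicit.
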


\begin{remark} 
 The results of \cite{MR2987148} show that an arbitrary subset $A$ of ${}^\kappa\kappa$ is contained in the class $I^\kappa_{cl}$ in a forcing extension of the ground model by a ${<}\kappa$-closed partial order $\PPP(A)$ 
 that satisfies the $\kappa^+$-chain condition. Given some enumeration $\seq{s_\alpha}{\alpha<\kappa}$ of ${}^{{<}\kappa}\kappa$ with $\length{s_\alpha}\leq\alpha$ for all $\alpha<\kappa$, 
 the forcing $\PPP(A)$ constructed in {\cite[Section 3]{MR2987148}} adds a subtree $T$ of ${}^{{<}\kappa}2$ and a bijection $\map{f}{A}{[T]}$ 
 such that for every $x\in A$, the branch $f(x)$ is the unique element $y$ of $[T]$ with the property that there exists a $\beta<\kappa$ with $$s_\alpha\subseteq x ~ \Longleftrightarrow ~ y(\goedel{\alpha}{\beta})=1$$ 
 for all $\alpha<\kappa$. 
 In a $\PPP(A)$-generic extension of $\VV$, we define $D$ to consist of all tuples $(s,t,u)$ in ${}^\gamma\kappa\times {}^\gamma 2 \times{}^\gamma \kappa$ with $\gamma$ closed under G\"odel pairing, $t\in T$, $u$ constant with value $\beta$ and 
 $t(\goedel{\alpha}{\beta})=1$ if and only if $s_\alpha\subseteq s$ for all $\alpha<\length{s}$. Let $T_*$ be the subtree of ${}^{{<}\kappa}\kappa\times{}^{{<}\kappa}2\times{}^{{<}\kappa}\kappa$ obtained by the downwards closure of $D$. 
 By the above remarks, the projection onto the first coordinate is a continuous bijection between the subsets $[T_*]$ and $A$. 
\end{remark}


\section{Classes defined from inner models} 

We start this section by proving Theorem \ref{theorem:S1LKappaBaireInSolovay}. This result yields the remaining relations between the classes $S_1^{\LL,\kappa}$, $I^{\kappa}$, $I^\kappa_{cl}$ and $\mathbf{\Sigma}^1_1(\kappa)$ shown in Figure \ref{figure:Results}.

\begin{proof}[Proof of Theorem \ref{theorem:S1LKappaBaireInSolovay}]
 Let $\kappa$ be an uncountable cardinal with $\kappa=\kappa^{{<}\kappa}$, $G$ be $\Coll{\kappa}{{<}\lambda}$-generic over $\VV$ for some inaccessible cardinal $\lambda>\kappa$ and 
 $M$ be an inner model of $\VV[G]$ with $M\subseteq\VV$. We work in $\VV[G]$.

 Let $\varphi(u,v)$ be a $\Sigma_n$-formula, $z\in {}^{\kappa}2$ and $$A ~ = ~ \Set{x\in{}^\kappa\kappa}{M[x,z]\models\varphi(x,z)}.$$
 Suppose that $z_0\in({}^{\kappa}\kappa)^\VV$ codes a bijection between $\kappa$ and ${}^{<\kappa}\kappa$. Let $C$ denote the set of elements of ${}^\kappa\kappa$ that are $\Add{\kappa}{1}$-generic over $M[z,z_0]$ and let $\dot{x}\in M[z,z_0]$ be the canonical 
 $\Add{\kappa}{1}$-name for the generic element of ${}^\kappa\kappa$. Set $$U ~ = ~ \bigcup\Set{N_s}{s\Vdash_{\Add{\kappa}{1}}^{M[z,z_0]} \varphi^{M[\dot{x},\check{z}]}(\dot{x},\check{z})}.$$

 Since we are working in a $\Coll{\kappa}{{<}\lambda}$-generic extension, the set of dense open subsets of $\Add{\kappa}{1}$ contained in $M[z,z_0]$ has cardinality $\kappa$ in $\VV[G]$. Hence $C$ is a comeager subset of ${}^\kappa\kappa$, 
 i.e. it is equal to the intersection of $\kappa$-many dense open subsets of ${}^\kappa\kappa$. 
 We claim that $A_\Delta U\subseteq {}^{\kappa}\kappa\setminus C$. It is easy to see that $C\cap U\subseteq A$. 
 Suppose that  $x\in A\cap C$. Then $M[x,z]\vDash \varphi(x,z)$ and this implies that $$x\restriction\alpha ~ \Vdash^{M[z,z_0]}_{\Add{\kappa}{1}} ~ \varphi^{M[\dot{x},\check{z}]}(\dot{x},\check{z})$$ for some $\alpha<\kappa$. 
 We can conclude that $x$ is an element of $U$ in this case. 
\end{proof}

We present an interesting consequence of Theorem \ref{theorem:S1LKappaBaireInSolovay} and Corollary \ref{corollary: separating I-kappa from S-1}. A different example of such a set was given in \cite{SchlichtThompson}.

\begin{proposition} 
 Suppose that $\VV=\LL[G]$, where $G$ is $\Coll{\kappa}{{<}\lambda}$-generic over $\LL$ with $\lambda>\kappa$ inaccessible in $\LL$. Then the class $S_1^{\LL,\kappa}$ contains a proper $\mathbf{\Sigma}^1_1$-set which is not $\mathbf{\Sigma}^1_1$-complete. 
\end{proposition}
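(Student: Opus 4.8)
The plan is to combine two facts: that every set in $S_1^{\LL,\kappa}$ has the $\kappa$-Baire property, and that no $\mathbf{\Sigma}^1_1$-complete set has this property. Together these show that no element of $S_1^{\LL,\kappa}$ is $\mathbf{\Sigma}^1_1$-complete, so it will only remain to exhibit a \emph{properly} $\mathbf{\Sigma}^1_1$ member of $S_1^{\LL,\kappa}$, i.e. one that is $\mathbf{\Sigma}^1_1$ but not $\mathbf{\Delta}^1_1$; by the above such a set is automatically not $\mathbf{\Sigma}^1_1$-complete. The first fact is an immediate application of Theorem \ref{theorem:S1LKappaBaireInSolovay}: since $\VV=\LL[G]$ is itself the $\Coll{\kappa}{{<}\lambda}$-generic extension of $\LL$, taking $M=\LL$ (an inner model of $\VV$ contained in $\LL$) yields that every set in $S_n^{\LL,\kappa}$, and in particular every set in $S_1^{\LL,\kappa}$, has the $\kappa$-Baire property in $\VV$.

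For the second fact I would first prove the auxiliary lemma that continuous functions $\map{f}{{}^\kappa\kappa}{{}^\kappa\kappa}$ pull back sets with the $\kappa$-Baire property to sets with the $\kappa$-Baire property. This follows from a $\kappa$-algebra argument: the collection $\calC=\Set{B\subseteq{}^\kappa\kappa}{f^{-1}[B]\text{ has the }\kappa\text{-Baire property}}$ is closed under complements and $\kappa$-unions, it contains every open set (whose preimage is open) and hence every closed set (whose preimage is closed and therefore has the $\kappa$-Baire property); since $\calC$ then contains every closed nowhere dense set and hence every $\kappa$-meager set, and every set with the $\kappa$-Baire property is a symmetric difference of an open set with a $\kappa$-meager set, we get that $\calC$ contains all such sets. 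Now let $U$ be a non-empty $\mathbf{\Sigma}^1_1$-subset of ${}^\kappa\kappa$ without the $\kappa$-Baire property, as produced by Theorem \ref{non-separable I-kappa sets}: each of the sets $A_\gamma$ there lies in $I^\kappa\subseteq\mathbf{\Sigma}^1_1(\kappa)$ and cannot have the $\kappa$-Baire property, since otherwise it would separate itself from some $A_\delta$. If $B$ were a $\mathbf{\Sigma}^1_1$-complete set with the $\kappa$-Baire property, then $U=f^{-1}[B]$ for some continuous $f$, and the lemma would force $U$ to have the $\kappa$-Baire property, a contradiction.

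For the witness I would use the $\LL$-relativised version of the $\mathbf{\Sigma}^1_1$-complete set of Example \ref{example:trees}. Writing $T_x$ for the subtree of ${}^{{<}\kappa}\kappa$ coded by $x$ via the fixed bijection $\map{f}{{}^{{<}\kappa}\kappa}{\kappa}$ of that example, set
$$ A ~ = ~ \Set{x\in{}^\kappa\kappa}{\LL[x]\models\exists b\,(b\text{ is a cofinal branch through }T_x)}. $$
The displayed condition is a $\Sigma_1$-statement in the parameter $x$ (the bijection $f$ being $\LL$-definable), so $A$ belongs to $S_1^{\LL,\kappa}$; consequently $A$ has the $\kappa$-Baire property, is a $\mathbf{\Sigma}^1_1$-subset, and is not $\mathbf{\Sigma}^1_1$-complete. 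Moreover $A$ is neither empty nor all of ${}^\kappa\kappa$, and $A\subseteq T_\kappa$, the inclusion being proper because $T_\kappa$ lacks the $\kappa$-Baire property while $A$ has it.

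The main obstacle is to show that $A$ is not $\mathbf{\Delta}^1_1$, that is, that its complement is not $\mathbf{\Sigma}^1_1$; equivalently, that $\mathbf{\Delta}^1_1\subsetneq S_1^{\LL,\kappa}$ holds in this model. Here the $\kappa$-Baire property is of no help, since a $\mathbf{\Delta}^1_1$-set and our set $A$ both have it, so a genuine argument is required. I expect this to follow from a Solovay-style genericity argument in the spirit of the proof of Theorem \ref{Theorem:ClubFilterNotInKappaSolovay}: assuming $A^c=p[E]$ for a subtree $E$ of ${}^{{<}\kappa}\kappa\times{}^{{<}\kappa}\kappa$, one captures $E$ in an intermediate model $\LL[G\restriction\beta]$ with $\beta<\lambda$ (using that $E$ has size $\kappa$ and the $\kappa^+$-chain condition of the collapse), and then exhibits a real $x$ coding a tree $T_x$ that possesses no cofinal branch in $\LL[x]$ but acquires one in $\VV$, so that $x\in A$ yet $x\in p[E]=A^c$. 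The $\Sigma_1$-elementarity $(\HH{\kappa^+}^{\LL[G\restriction\beta]},\in)\prec_{\Sigma_1}(\HH{\kappa^+}^{\VV},\in)$ already exploited in the proof of Theorem \ref{Theorem:ClubFilterNotInKappaSolovay} should be the engine of the contradiction, and verifying that such trees are added at the relevant stage of the collapse, which is where the inaccessibility of $\lambda$ in $\LL$ enters, is the delicate point.
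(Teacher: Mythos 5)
Your reduction of non-completeness to the $\kappa$-Baire property fails, because the auxiliary lemma it rests on is false: continuous preimages do \emph{not} preserve the $\kappa$-Baire property. The $\kappa$-algebra argument breaks at the step \anf{hence every $\kappa$-meager set}: a $\kappa$-meager set is covered by $\kappa$-many closed nowhere dense sets, and while each preimage $f^{-1}[N_\alpha]$ is closed, it need not be nowhere dense (consider a constant map, or a retraction onto a closed nowhere dense set), so preimages of $\kappa$-meager sets need not be $\kappa$-meager and the decomposition $A_\Delta U\subseteq\bigcup_{\alpha<\kappa}N_\alpha$ does not pull back. Worse, your Fact 2 is refuted outright: let $g$ be a homeomorphism of ${}^\kappa\kappa$ onto a closed nowhere dense subset of ${}^\kappa\kappa$ (e.g. interleaving with zeros). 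If $B$ is $\mathbf{\Sigma}^1_1$-complete, then $g[B]$ is again $\mathbf{\Sigma}^1_1$ and complete, since $A=f^{-1}[B]=(g\circ f)^{-1}\bigl[g[B]\bigr]$ for every $\mathbf{\Sigma}^1_1$-set $A$; but $g[B]$ is $\kappa$-meager and therefore \emph{has} the $\kappa$-Baire property. So the Baire property can never rule out completeness, and your unargued claim that $T_\kappa$ lacks it relies on the same false transfer. The repair is to apply closure under continuous preimages to the class $S_1^{\LL,\kappa}$ itself (coding continuous functions by elements of ${}^\kappa\kappa$, this class \emph{is} closed under preimages): a complete member of $S_1^{\LL,\kappa}$ would put every $\mathbf{\Sigma}^1_1$-subset into $S_1^{\LL,\kappa}$, contradicting Corollary \ref{corollary: separating I-kappa from S-1} --- which is precisely where your correct Fact 1 (Theorem \ref{theorem:S1LKappaBaireInSolovay}) and Theorem \ref{non-separable I-kappa sets} legitimately enter. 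This is the paper's argument.

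The properness half is also a genuine gap, and your sketch is not directionally coherent: for your witness $A$, an $x$ whose tree $T_x$ has no cofinal branch in $\LL[x]$ satisfies $x\in A^{c}$, not $x\in A$, so exhibiting such an $x$ in $p[E]=A^{c}$ yields no contradiction; you would need an $x$ for which $T_x$ \emph{has} a branch in $\LL[x]$ while the putative $\mathbf{\Sigma}^1_1$-definition of $A^{c}$ nevertheless fires, and the intermediate-model capture you describe does not obviously produce one. The paper avoids this entirely with a different witness, $A=\Set{(x,y)\in{}^\kappa\kappa\times{}^\kappa\kappa}{y\in\LL[x]}$, for which properness is a short $\Sigma_1$-reflection: if the complement were defined by a $\Sigma_1$-formula $\varphi$ over $(\HH{\kappa^+},\in)$ with parameter $x_0$, pick $x$ with $x_0\in\LL[x]$ and $(\HH{\kappa^+}^{\LL[x]},\in)\prec_{\Sigma_1}(\HH{\kappa^+},\in)$; since ${}^\kappa\kappa\nsubseteq\LL[x]$, the statement $\exists y\,\varphi(x,y)$ holds and reflects to $\LL[x]$, producing a $z\in\LL[x]$ that the formula asserts is not in $\LL[x]$ --- a contradiction. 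Substituting this witness and the corrected completeness argument into your outline (whose Fact 1 is sound) would give a complete proof; as written, both halves of your proposal have essential gaps.
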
 

\begin{proof} 
 Let $A=\Set{(x,y)\in{}^\kappa\kappa\times{}^\kappa\kappa}{y\in\LL[x]}$. To show that the complement of $A$ is not a $\mathbf{\Sigma}^1_1$-subset in $\LL[G]$, suppose that $x\in A$ if and only if $\varphi(x,x_0)$ holds in $\HH{\kappa^+}$, 
 where $\varphi(u,v)$ is a $\Sigma_1$-formula and $x_0\in\HH{\kappa^+}$. Then there is some $x\in {}^{\kappa}\kappa$ such that $x_0\in\LL[x]$ and $(\HH{\kappa^+}^{\LL[x]},\in)\prec_{\Sigma_1}(\HH{\kappa^+},\in)$. 
 Since ${}^\kappa\kappa\nsubseteq\LL[x]$, it follows that $\exists y\in{}^\kappa\kappa ~ \varphi(x,y)$ holds in $\HH{\kappa^+}$ and there is a $z\in({}^\kappa\kappa)^{\LL[x]}$ such that $\varphi(x,z)$ holds in $\HH{\kappa^+}$. 
 But this implies $z\notin\LL[x]$ by the choice of $\varphi(u,v)$, a contradiction.

 By coding continuous functions $\map{f}{{}^\kappa\kappa}{{}^\kappa\kappa}$ as elements of ${}^\kappa\kappa$, it is easy to see that the class $S_1^{\LL,\kappa}$ is closed under continuous preimages. 
 This shows that the set $A$ is not $\mathbf{\Sigma}^1_1$-complete, because otherwise every $\mathbf{\Sigma}^1_1$-subsets of ${}^\kappa\kappa$ would be contained in the class $S_1^{\LL,\kappa}$, contradicting Corollary \ref{corollary: separating I-kappa from S-1}.  
\end{proof}

By Proposition \ref{proposition: Sigma-1-1 and S-1 are equal in L}, the classes $S_1^{\LL,\kappa}$ and $\mathbf{\Sigma}^1_1(\kappa)$ coincide in $\LL$.  
In contrast, it also consistent that  for a fixed $n<\omega$,  the class $S_n^{\LL,\kappa}$ consists of $\mathbf{\Delta}^1_1$-subsets.

\begin{remark}
 Given $0<n<\omega$, it is possible to modify coding techniques developed in \cite{holylocally} to construct a forcing extension of the ground model $\VV$ in which the class $S_n^{\LL,\kappa}$ consists of $\mathbf{\Delta}^1_1(\kappa)$ subsets of ${}^\kappa\kappa$. 
 In this construction, we fix a universal $\Sigma_n$-formula $\Phi(v_0,v_1)$ and recursively construct a notion of forcing $\PPP$ with the property that the set 
 $$Sat_n^\LL ~ = ~ \Set{(n,x)\in\omega\times({}^\kappa\kappa)^{\VV[G]}}{\Phi(n,x)^{\LL[x]}}$$ is definable over the structure $(\HH{\kappa^+}^{\VV[G]},\in)$ by a $\Sigma_1$-formula with parameters in every $\PPP$-generic extension $\VV[G]$ of $\VV$. 
 This is possible, because the validity of the statement $\Phi(n,x)^{\LL[x]}$ is absolute between $\VV[G]$ and any intermediate extension given by some complete subforcing of $\PPP$. 
 It is easy to see that the $\mathbf{\Sigma}^1_1(\kappa)$-definability of $Sat_{n+1}^\LL$ implies that every subset in $S_n^{\LL,\kappa}$ is $\mathbf{\Delta}^1_1(\kappa)$-definable. 
\end{remark}

Sets contained in the class $S_1^{\LL,\kappa}$ are always $\mathbf{\Sigma}^1_1$-definable. We close this section by showing that for arbitrary inner models $M$, sets in the class $S_1^{M,\kappa}$ are not necessarily definable from ordinals and subsets of $\kappa$.

\begin{proposition}
 Let $\mu>\kappa$ be a cardinal and $G\times H$ be $(\Add{\kappa}{\mu}\times\Add{\kappa}{\mu})$-generic over $\VV$. 
 In $\VV[G,H]$, there is a set $A$ contained in the class $S_1^{\VV[G],\kappa}$ which is not definable from ordinals and subsets of $\kappa$.  
\end{proposition}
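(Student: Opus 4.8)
The plan is to let $A$ be the collection ${}^\kappa\kappa\cap\VV[G]$ of all reals lying in the left factor, and to play the two halves of the statement against the asymmetry between $G$ and $H$. On the one hand, when we test membership in $S_1^{\VV[G],\kappa}$ the inner model $\VV[G]$ is built into the models $M[x,y]=\bigcup_{z\in\VV[G]}\LLOf{x,y,z}$, so these models have full access to $G$; on the other hand, $G$ is a subset of $\mu>\kappa$, and a definition from ordinals and subsets of $\kappa$ inside $\VV[G,H]$ can only involve boundedly much of the product $\Add{\kappa}{\mu}\times\Add{\kappa}{\mu}$ and therefore should not be able to tell the left factor apart from the right one.

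First I would check that $A\in S_1^{\VV[G],\kappa}$. Writing $M=\VV[G]$, the guiding identity is that, for any $x$, one has $x\in M$ if and only if $x\in\LLOf{w}$ for some $w\in M$ (the nontrivial direction uses $\LLOf{w}\subseteq M$ whenever $w\in M$). The issue is to turn the clause ``there is $w\in M$'' into a genuine $\Sigma_1$ assertion over $M[x,y]$, since a naive existential quantifier ranges over all of $M[x,y]$ and cannot recognise which witnesses lie in $M$. I would try to arrange this by choosing the parameter $y$ to code a $\Sigma_1$-presentation of the relevant part of $M$ inside $M[x,y]$, in the spirit of the minimal-well-founded-model coding used in the proof of Theorem~\ref{theorem:SKappa1LinIKappaClosed}, so that the existence of a correct witness $w\in M$ with $x\in\LLOf{w}$ becomes a $\Sigma_1$ condition. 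I expect this to be the main obstacle: ground-model-definability only yields a $\Sigma_2$ definition of $M$ in $M[x,y]$, so genuine work (or a cleverer choice of $A$) is needed to bring the complexity down to $\Sigma_1$.

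The second half, non-definability, is where the real argument lies, and I expect it to go through cleanly. Suppose for contradiction that $A=\{x : \VV[G,H]\models\psi(x,\bar\beta,p)\}$ for a formula $\psi$, ordinals $\bar\beta$ and a subset $p\subseteq\kappa$ (finitely many subsets of $\kappa$ can be amalgamated into one, and the ordinal parameters are fixed by every automorphism). Since $\Add{\kappa}{\mu}\times\Add{\kappa}{\mu}$ has the $\kappa^+$-chain condition (using $\kappa=\kappa^{{<}\kappa}$), a nice name for $p$ is supported on a set $E$ of at most $\kappa$ coordinates; fixing in addition a condition $q$ in the generic that forces the definability equivalence $\forall x\,(x\in A\leftrightarrow\psi(x,\bar\beta,p))$ and enlarging $E$ to contain the support of $q$, I still have $|E|\le\kappa<\mu$. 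Now pick a left coordinate $\alpha$ and a right coordinate $\beta$ outside $E$ and let $\pi$ be the automorphism of the product transposing these two columns and fixing all others; then $\pi$ fixes $q$, the name $\dot p$ for $p$, and all ordinal parameters, and it sends the canonical name $\dot g_\alpha$ for the $\alpha$-th left Cohen real to the canonical name $\dot h_\beta$ for the $\beta$-th right Cohen real.

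The contradiction comes from two robust facts that hold in every generic extension and hence are forced by $\eins$: the left-column real always lies in $\VV[G]$, so $\eins\Vdash\dot g_\alpha\in A$, while by mutual genericity the right-column real is $\Add{\kappa}{1}$-generic over $\VV[G]$ and so never lies in $\VV[G]$, giving $\eins\Vdash\dot h_\beta\notin A$. Combining these with the equivalence forced by $q$ yields $q\Vdash\psi(\dot g_\alpha,\bar\beta,\dot p)$ and $q\Vdash\neg\psi(\dot h_\beta,\bar\beta,\dot p)$; applying the symmetry lemma to the first and using $\pi(q)=q$, $\pi(\dot g_\alpha)=\dot h_\beta$ and that $\pi$ fixes the remaining parameters gives $q\Vdash\psi(\dot h_\beta,\bar\beta,\dot p)$, contradicting the second, since a single condition cannot force a statement and its negation. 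Since this half only uses the two forced facts about $A$, the construction is robust: should the $\Sigma_1$-definability of ${}^\kappa\kappa\cap\VV[G]$ prove unattainable, the fallback is to replace $A$ by any set in $S_1^{\VV[G],\kappa}$ for which these two facts still hold, and the same homogeneity argument then applies verbatim.
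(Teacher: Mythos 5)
Your second half is sound and is essentially the paper's argument: by the $\kappa^+$-chain condition, a nice name for the parameter $p$ and the condition $q$ forcing the definability equivalence have joint support of size at most $\kappa<\mu$, and a transposition of a fresh left coordinate with a fresh right coordinate, combined with the two forced facts $\eins\Vdash\anf{\dot g_\alpha\in A}$ and $\eins\Vdash\anf{\dot h_\beta\notin A}$, yields the contradiction exactly as in the paper (and you correctly avoid applying $\pi$ to any statement mentioning the left factor itself). The genuine gap is the first half, and you flag it yourself: you never produce a set in $S_1^{\VV[G],\kappa}$ satisfying the two facts. Your candidate $A={}^\kappa\kappa\cap\VV[G]$ is not shown to be of the form $\Set{x}{\VV[G][x,y]\models\varphi(x,y)}$ with $\varphi$ a $\Sigma_1$-formula --- ground-model definability only gives a higher-complexity definition, as you note --- and your fallback clause (\anf{replace $A$ by any set in $S_1^{\VV[G],\kappa}$ for which these two facts still hold}) defers precisely the point that carries the proposition.

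The paper resolves this with a different choice of $A$ for which $\Sigma_1$-ness is immediate: $A$ is the set of all $x\in{}^\kappa 2$ such that $s(x)=\Set{\alpha<\kappa}{x(\alpha)=1}$ contains a club in $\VV[G,x]$, and the statement \anf{there exists a club $C$ with $C\subseteq s(x)$} is genuinely $\Sigma_1$ over $\VV[G][x]$. Verifying the two forced facts for this $A$ needs an extra idea that your \anf{applies verbatim} claim glosses over: they \emph{fail} for the raw column reals, since $\Add{\kappa}{\mu}\times\Add{\kappa}{\mu}$ is ${<}\kappa$-closed and hence preserves stationarity of the complement of a Cohen subset of $\kappa$, so $s(g_\alpha)$ never contains a club, even in $\VV[G,H]$, and thus $g_\alpha\notin A$. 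Instead the paper uses the forcing equivalence of $\Add{\kappa}{1}$ with $\Add{\kappa}{1}*\dot{\QQQ}$ (where $\dot{\QQQ}$ shoots a club through the generic subset by initial segments) to re-read the $\delta$-th column as adding a pair $(x_\delta,y_\delta)$ with $y_\delta$ a club contained in $s(x_\delta)$: for a left coordinate $\alpha$ both $x_\alpha$ and $y_\alpha$ lie in $\VV[G]$, so $x_\alpha\in A$, while for a right coordinate $\beta$ mutual genericity makes $x_\beta$ $\Add{\kappa}{1}$-generic over $\VV[G]$, so the complement of $s(x_\beta)$ stays stationary in $\VV[G,x_\beta]$ and $x_\beta\notin A$; the symmetry argument is then run with the derived names $\dot x_\alpha$, $\dot x_\beta$ rather than $\dot g_\alpha$, $\dot h_\beta$. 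So your automorphism machinery is correct, but without the right choice of $A$ and the right witnessing reals, the proof is incomplete at its crux.
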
 

\begin{proof} 
 Let $\mu= X_0\sqcup X_1$ with $\betrag{X_0}=\betrag{X_1}=\mu$. We may assume that $G\times H$ is $(\PPP\times\QQQ)$-generic over $\VV$, where $\PPP=\Add{\kappa}{X_0}$ and $\QQQ=\Add{\kappa}{X_1}$. 
 Given $x\in {}^{\kappa}2$, define $s(x)=\Set{\alpha<\kappa}{s(\alpha)=1}$. Define $A$ to be the set of all $x\in{}^\kappa 2$ such that $s(x)$ contains a club in $\VV[G,x]$.

 Assume, towards a contradiction, that there is a formula $\psi(v_0,v_1,v_2)$ that defines $A$ using parameters $a\in {}^{\kappa}\kappa$ and $\gamma\in\On$. 
 Suppose that $\dot{a}\in\VV$ is a nice $(\PPP\times\QQQ)$-name with $\dot{a}^{G\times H}=a$. Then there is a condition $p\in G\times H$ which forces over $\VV$ that $A$ is defined by $\psi(\cdot,\dot{a},\check{\gamma})$. 
 Let $S=\mathrm{supp}(p)\cup \mathrm{supp}(\dot{a})$. Since $\betrag{S}\leq\kappa$, we can find $\alpha\in X_0\setminus S$, $\beta\in X_1\setminus S$ and an automorphism $\pi$ of $\mathbb{P}\times\mathbb{Q}$ which switches only the coordinates $\alpha$ and $\beta$.

 Let $\dot{\QQQ}$ denote the canonical $\Add{\kappa}{1}$-name for a partial order that shoots a club through the generic subset of $\kappa$ by initial segments. Then $\Add{\kappa}{1}*\dot{\QQQ}$ is forcing equivalent to $\Add{\kappa}{1}$. 
 Given $\delta<\mu$, $\dot{x}_{\delta}$ and $\dot{y}_{\delta}$ be canonical $(\PPP\times\QQQ)$-names such that $\dot{x}_\delta^{\bar{G}\times\bar{H}}$ is the generic subset induced by $g$ and $\dot{y}_\delta^{\bar{G}\times\bar{H}}$ is the 
 corresponding generic club subset whenever $\bar{G}\times\bar{H}$ is $(\PPP\times\QQQ)$-generic over $\VV$, $\bar{G}_\delta$ is the filter on $\Add{\kappa}{1}$ induced by the $\delta$-th component of $\bar{G}$ and $g*h$ is the filter in 
 $\Add{\kappa}{1}*\dot{\QQQ}$ induced by $\bar{G}_\delta$.

 Since $\alpha\in X_0$, we have $\dot{x}_\alpha^{G\times H},\dot{y}_\alpha^{G\times H}\in\VV[G]$ and hence $\dot{x}_\alpha^{G\times H}\in A$. 
 By the homogeneity of $\PPP\times\QQQ$, we have $p\Vdash\psi(\dot{x}_{\alpha},\dot{a},\check{\gamma})$. 
 By applying $\pi$, we get $p\Vdash \psi(\dot{x}_{\beta}, \dot{a},\check{\gamma})$ and $\dot{x}_\beta^{G\times H}$ is an element of $A$. 
 But this yields a contradiction, because $\dot{x}_{\beta}^{G\times H}$ is $\Add{\kappa}{1}$-generic over $\VV[G]$ and hence $\dot{x}_{\beta}^{G\times H}$ does not contain a club subset in $\VV[G,\dot{x}_{\beta}^{G\times H}]$. 
\end{proof}


\section{Trees of higher cardinalities}\label{section:HigherCardTrees}

The following observation shows that the class of projections of ${<}\kappa$-closed trees of height $\kappa$ without end nodes increases if we consider subtrees of ${}^{<\kappa}\kappa\times{}^{<\kappa}(\kappa^+)$, i.e. the class $C^\kappa$ is always a proper subclass of the class $C^{\kappa,\kappa^+}$.

\begin{lemma}\label{lemma:WIsKappaPlusProjection}
 The closed subset $W$ defined by equation \emph{(\ref{equation:DefinitionOfW})} in Section \ref{Section:thin} is the projection of a ${<}\kappa$-closed subtree of ${}^{{<}\kappa}\kappa\times{}^{{<}\kappa}(\kappa^+)$ without end nodes. 
\end{lemma}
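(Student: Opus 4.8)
The plan is to witness the well-foundedness of the order coded by $x$ by an \emph{injective} order-embedding into $\kappa^+$, exploiting the fact that $\kappa^+$ offers strictly more room than the $\kappa$ available to the trees ruled out by Lemma \ref{lemma:WisNotProjectionClosedTree}. Concretely, I would let $T$ be the set of all pairs $(s,t)\in{}^\gamma 2\times{}^\gamma(\kappa^+)$ with $\gamma<\kappa$ such that $t$ is injective and
\begin{equation*}
 s(\goedel{\alpha}{\beta})=1 \quad\Longleftrightarrow\quad t(\alpha)<t(\beta)
\end{equation*}
holds for all $\alpha,\beta<\gamma$ with $\goedel{\alpha}{\beta}<\gamma$. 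Since $\alpha,\beta\leq\goedel{\alpha}{\beta}$, the values $t(\alpha)$ and $t(\beta)$ are defined whenever the bit $s(\goedel{\alpha}{\beta})$ is, so the displayed biconditional is a genuine condition on the node $(s,t)$; together with ${}^\gamma 2\subseteq{}^\gamma\kappa$ this exhibits $T$ as a subtree of ${}^{{<}\kappa}\kappa\times{}^{{<}\kappa}(\kappa^+)$ whose nodes have matching lengths.

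First I would check that $p[T]=W$ for the set $W$ from \emph{(\ref{equation:DefinitionOfW})}. For a cofinal branch $(x,y)\in[T]$, injectivity of $y$ forces, for each pair $\alpha\neq\beta$ below $\kappa$, exactly one of $y(\alpha)<y(\beta)$ and $y(\beta)<y(\alpha)$, so the biconditional\,---\,which applies to every pair once its G\"odel code has been passed\,---\,makes $R_x$ total, irreflexive and antisymmetric; transitivity then follows from the biconditional and the linearity of the ordinal order on $\ran{y}$, while $y$ order-preservingly embeds $(\kappa,R_x)$ into $\kappa^+$ and thereby excludes infinite $R_x$-descending sequences. Hence $(\kappa,R_x)$ is a well-order and $x\in W$. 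Conversely, if $x\in W$, then $\map{y}{\kappa}{\kappa^+}$ defined by $y(\alpha)=\rank{\alpha}{x}$ is injective and satisfies $\alpha<_x\beta\Leftrightarrow\rank{\alpha}{x}<\rank{\beta}{x}$ with $\rank{\alpha}{x}<\kappa^+$, so $(x,y)$ is a cofinal branch through $T$ projecting to $x$.

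Next I would verify that $T$ is ${<}\kappa$-closed and has no end nodes. Closedness is routine: the biconditional only constrains pairs whose G\"odel code lies below the current length, hence it persists under increasing unions of length ${<}\kappa$, and a union of compatible injective functions is again injective. For the absence of end nodes, given $(s,t)$ of length $\gamma$ I would extend it to length $\gamma+1$ by choosing $t(\gamma)\in\kappa^+$ distinct from the at most $\betrag{\gamma}<\kappa\leq\kappa^+$ values already used, and then setting the single new bit $s(\gamma)$ to make the biconditional hold for the unique pair decoded by $\goedel{\alpha_0}{\beta_0}=\gamma$; all previously determined pairs are untouched, so the extension again lies in $T$.

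The step I expect to be the main obstacle\,---\,and the one the whole definition is engineered to avoid\,---\,is guaranteeing that $T$ has no end nodes. A naive tree that enforces totality of $R_s$ by comparing two \emph{independent} bits $s(\goedel{\alpha}{\beta})$ and $s(\goedel{\beta}{\alpha})$ can produce perfectly consistent nodes that are nonetheless not extendible, because once totality forces the second bit it may clash with a rank already chosen for $\alpha$ or $\beta$. Replacing those two bits by a single biconditional against an injective $t$ turns totality, antisymmetry and irreflexivity into automatic consequences of the ordinal order on $\ran{t}$, so extending a node never imposes a contradictory demand: the only real input required is a fresh ordinal below $\kappa^+$. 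It is exactly this surplus of room, unavailable over ${}^{{<}\kappa}\kappa\times{}^{{<}\kappa}\kappa$ by Lemma \ref{lemma:WisNotProjectionClosedTree}, that makes the construction succeed.
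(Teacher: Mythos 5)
Your proof is correct, and its core witness is exactly the paper's: code the well-order by an injective $\map{t}{\gamma}{\kappa^+}$ with $\alpha<_s\beta\Leftrightarrow t(\alpha)<t(\beta)$, so that a cofinal branch yields an order-embedding of $(\kappa,R_x)$ into $\kappa^+$ (hence $x\in W$), while conversely the rank function $\alpha\mapsto\rank{\alpha}{x}$ produces a branch above any $x\in W$. Where you diverge is in how an actual \emph{subtree} is obtained. The paper imposes the full biconditional only at lengths $\gamma$ closed under G\"odel pairing; this yields a ${<}\kappa$-closed \emph{subset} $D$ of ${}^{{<}\kappa}\kappa\times{}^{{<}\kappa}(\kappa^+)$ that is not downward closed, and it then invokes Lemma \ref{projection to projection of tree} (applicable since $(\kappa^+)^{{<}\kappa}=\kappa^+$ when $\kappa=\kappa^{{<}\kappa}$) to convert $D$ into a ${<}\kappa$-closed subtree without end nodes having the same projection. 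You instead localize the coherence condition, requiring the biconditional only for pairs whose G\"odel code lies below the node's length; this makes downward closure automatic, so you obtain the subtree directly and need no transfer lemma. The price is the successor-step analysis you correctly carry out: since $\goedel{\alpha}{\beta}\geq\max(\alpha,\beta)$, no bit of $s$ with code below $\gamma$ constrains the fresh value $t(\gamma)$, and the unique new bit $s(\gamma)$ can always be set to match the ordinal comparison for the pair coded by $\gamma$, including the degenerate cases where a component of that pair equals $\gamma$ itself. The payoff is a self-contained argument avoiding the enumeration-and-extra-coordinate construction hidden inside Lemma \ref{projection to projection of tree}; the paper's route buys a shorter verification by reusing that general lemma, at the cost of working with the non-tree set $D$ and checking only that it is ${<}\kappa$-closed and that every node of $D$ extends to an element of $[D]$.
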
 

\begin{proof} 
 Let $D$ denote the set of all pairs $(s,t)$ in ${}^{\gamma}2\times {}^{\gamma}(\kappa^+)$ such that $\gamma<\kappa$ is closed under G\"odel pairing and $\map{t}{\gamma}{\kappa^+}$ is an injection such that for all $\alpha,\beta<\gamma$, $\alpha<_s \beta$ 
 if and only if $t(\alpha)< t(\beta)$, where the relation $<_s:=R_s$ is defined as in Section \ref{Section:thin}. Then $D$ is a  ${<}\kappa$-closed subset of ${}^{{<}\kappa}\kappa\times{}^{{<}\kappa}(\kappa^+)$.

\begin{claim*}
 Every element of $D$ is extended by an element of $[D]$. In particular,  $D$ has no end nodes. 
\end{claim*} 

\begin{proof}[Proof of the Claim]
 Suppose that $(s,t)\in D$ with $\length{s}=\gamma$. We extend $s$ to $x\in {}^{\kappa}2$ such that $x(\goedel{\alpha}{\beta})=1$ is equivalent to $\alpha<\beta$ for all $\alpha,\beta<\kappa$ with $\beta\geq\gamma$ and extend $t$ to $y\in {}^{\kappa}(\kappa^+)$ such that $\ran{t}\subseteq y(\gamma)$ and $y\restriction[\gamma,\kappa)$ is strictly increasing. Then $(x,y)$ is an element of $[D]$. 
\end{proof}

In this situation, Lemma \ref{projection to projection of tree} implies that there is a ${<}\kappa$-closed subtree $T$ of ${}^{{<}\kappa}\kappa\times{}^{{<}\kappa}(\kappa^+)$ with $W=p[D]=p[T]$. 
\end{proof}

The following result shows that it is consistent that the classes $C^{\kappa,\mu}$ and $C^{\kappa,\mu}_{cl}$ coincide for some $\mu<2^\kappa$. In particular, there can be a $\mu<2^\kappa$ such that every $\mathbf{\Sigma}^1_1$-subset is an element of $C^{\kappa,\mu}$.

\begin{lemma}\label{lemma:AddForcesMuKappa}
 Let $\kappa$ be an uncountable cardinal with $\kappa=\kappa^{{<}\kappa}$, $\mu=2^\kappa$ and $G$ be $\Add{\kappa}{\theta}$-generic over $\VV$ for some cardinal $\theta$. In $\VV[G]$, every closed subset of ${}^\kappa\mu$ is equal to a continuous image of ${}^\kappa\mu$.  
\end{lemma}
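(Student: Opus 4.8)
The plan is to reduce the assertion to the construction of a single ${<}\kappa$-closed projecting tree and then to build that tree from the generic filter, using the ${<}\kappa$-closure of $\Add{\kappa}{\theta}$ to handle the limit levels. I work throughout in $\VV[G]$, where I fix a nonempty closed set $A\subseteq{}^\kappa\mu$ and a pruned subtree $T\subseteq{}^{{<}\kappa}\mu$ with $A=[T]$ and a cofinal branch through every node. The reduction is as follows: if I can find a ${<}\kappa$-closed subset $D$ of ${}^{{<}\kappa}\mu\times{}^{{<}\kappa}\mu$ without end nodes with $p[D]=A$, then Lemma \ref{projection to projection of tree} converts $D$ into a ${<}\kappa$-closed subtree $T^*$ of ${}^{{<}\kappa}\mu\times{}^{{<}\kappa}\mu$ without end nodes with $p[T^*]=A$; since ${}^\kappa\mu\times{}^\kappa\mu$ is homeomorphic to ${}^\kappa\mu$, Proposition \ref{proposition:ClosedRetract} shows that $[T^*]$ is a retract of ${}^\kappa\mu$, and composing this retraction with the continuous projection onto the first coordinate exhibits $A=p[T^*]$ as a continuous image of ${}^\kappa\mu$. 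So everything comes down to producing $D$.

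It is worth recording first why the naive argument fails and the generic is genuinely needed. Since $\Add{\kappa}{\theta}$ is ${<}\kappa$-closed it adds no new sequences of length ${<}\kappa$, so ${}^{{<}\kappa}\mu$ is computed the same in $\VV$ and in $\VV[G]$, and the hypothesis $\mu=2^\kappa$ (in $\VV$) yields $\betrag{{}^{{<}\kappa}\mu}=\mu^{{<}\kappa}=\mu$ in $\VV[G]$; in particular $T$ has at most $\mu$ nodes, and every proper initial segment of a branch of $T$ already lies in $\VV$. On the other hand, in $\VV[G]$ one has $\betrag{{}^\kappa\mu}=\mu^\kappa=2^\kappa>\mu$, so $A$ may have more than $\mu$ elements and the easy surjection $x\mapsto a_{x(0)}$ (which works whenever $\betrag{A}\le\mu$) is unavailable. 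The large cardinality of $A$ is reconciled with the fact that its weight is at most $\mu$ only by exploiting that its elements are added generically.

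The construction of $D$ will use the second coordinate as a \emph{certificate}. For $x\in A$ I fix a nice $\Add{\kappa}{\theta}$-name $\dot{x}$ with $\dot{x}^G=x$ together with a condition forcing $\dot{x}\in[\check{T}]$, and a descending sequence $\seq{q_\beta}{\beta<\kappa}$ of conditions \emph{lying in $G$} such that $q_\beta$ decides $\dot{x}\restriction\beta$ to be the ground model initial segment $x\restriction\beta$. The set $D$ then consists of those pairs $(s,t)$ for which $t$ codes an initial part of such a labelled descending sequence in $G$ that is compatible with $s$. The decisive point is the behaviour at a limit level $\gamma$: the coded conditions $\seq{q_\beta}{\beta<\gamma}$ form a descending sequence in $G$, and because $\Add{\kappa}{\theta}$ is ${<}\kappa$-closed — indeed $G$ is ${<}\kappa$-directed and the union of fewer than $\kappa$ of its elements is again a lower bound in $G$ — this sequence has a lower bound that decides $\dot{x}\restriction\gamma=x\restriction\gamma$. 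This master-condition step is exactly what guarantees that the union of an increasing sequence of nodes of $D$ is again a node of $D$, i.e.\ that $D$ is ${<}\kappa$-closed; it replaces the ${<}\kappa$-closedness that $T$ itself may lack, whose failure is the obstruction underlying Theorem \ref{theorem:Main1}. Absence of end nodes follows from a density argument, and $p[D]=A$ holds because conditions in $G$ force the first coordinate into $[\check{T}]^G=[T]$ in one direction, while in the other direction each $x\in A$ manifestly admits a witnessing sequence in $G$.

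The main obstacle, and the point at which the hypotheses must be used with care, is the faithful coding of the conditions $q_\beta$ into the alphabet $\mu$: a single condition of $\Add{\kappa}{\theta}$ refers to coordinates below $\theta>\mu$ and cannot be named by one ordinal below $\mu$, since $\theta^{{<}\kappa}>\mu$. To attack this I would first pass to the intermediate model $\VV_1=\VV[G\restriction\mu]$, which (after normalising supports by the homogeneity of $\Add{\kappa}{\theta}$) contains $T$, again satisfies $2^\kappa=\mu$, and over which the restriction of $G$ to the remaining coordinates is still $\Add{\kappa}{\theta}$-generic; over $\VV_1$ the equality $2^\kappa=\mu$ bounds the number of relevant local names by $\mu$, so that the data attached to a fixed branch can in principle be transported into a subforcing of size at most $\mu$ and coded, coordinate by coordinate, into the second coordinate while keeping the approximating set ${<}\kappa$-closed. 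Reconciling the fixed width $\mu$ of the coding tree with the fact that the supports of the witnessing conditions range cofinally in $\theta$ — so that no uniform ``address map'' into $\mu$ is available — is the genuinely delicate part, and is where the interplay of $\mu=(2^\kappa)^\VV$, the ${<}\kappa$-closure, and the homogeneity of $\Add{\kappa}{\theta}$ is essential. Once this bookkeeping is in place, the verification that the resulting $D$ is ${<}\kappa$-closed, has no end nodes, and projects onto $A$ is routine given the lemmas of Section \ref{Section:thin}.
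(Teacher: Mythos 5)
Your overall skeleton matches the paper's proof: reduce the lemma to exhibiting $A$ as the projection of a ${<}\kappa$-closed subtree of ${}^{{<}\kappa}\mu\times{}^{{<}\kappa}\mu$ without end nodes (then apply Proposition \ref{proposition:ClosedRetract}), arrange $T\in\VV$ using the $\kappa^+$-chain condition, use descending sequences of conditions deciding initial segments as certificates in the second coordinate, and use ${<}\kappa$-closure at limit levels. But your proof is incomplete exactly where you flag it: the coding of the certificates into the alphabet $\mu$ is never carried out, and this deferred step is the heart of the lemma, not routine bookkeeping. Moreover, the design choice that creates the difficulty --- requiring the certificate sequence $\seq{q_\beta}{\beta<\kappa}$ to lie in $G$ --- is precisely what blocks the argument that fills the gap. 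The paper never asks the witnessing conditions to belong to $G$. Given a nice name $\sigma$ for an element of $[T]$, it forms \emph{in $\VV$} the tree $T_\sigma$ of pairs $(t,\vec{p}\hspace{1.2pt})$ where $\vec{p}$ is \emph{any} descending sequence in $\Add{\kappa}{X}$, with $X=\supp{\sigma}$, such that $\vec{p}\hspace{1.2pt}(\alpha)\Vdash\anf{\check{t}\restriction(\alpha+1)\subseteq\sigma}$. The inclusion $p[T_\sigma]^{\VV[G]}\subseteq A$ holds regardless: every initial segment of the first coordinate of a branch is forced to be an initial segment of an element of $[\check{T}]$, hence lies in $T$ by absoluteness, so the branch lies in $[T]=A$; only the membership $\sigma^G\in p[T_\sigma]^{\VV[G]}$ is witnessed by conditions from $G$, and that is not built into the definition of the tree. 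Because $T_\sigma$ is a ground-model object that does not mention $G$, an automorphism $\pi$ of $\Add{\kappa}{\theta}$ in $\VV$ with $\supp{\pi(\sigma)}\subseteq\kappa$ yields $p[T_\sigma]^{\VV[G]}=p[T_{\pi(\sigma)}]^{\VV[G]}$, simply by applying $\pi$ along the second coordinates of branches. Your tree $D$ is not invariant under this step: if its definition demands that the coded conditions lie in $G$, then $\pi$ carries branches with certificates in $G$ to branches with certificates in $\pi[G]\neq G$, and the projection equality fails. This is exactly why no uniform address map into $\mu$ materialises in your setup, and passing to the intermediate model $\VV_1=\VV[G\restriction\mu]$ does not repair it, since the supports of your $G$-certificates still range cofinally through the remaining $\theta$-many coordinates.

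Once the $G$-dependence is dropped, the coding problem you call delicate disappears. By the $\kappa^+$-chain condition, every nice name for an element of $[T]$ has support of cardinality at most $\kappa$, so after applying a suitable automorphism one only needs $\Add{\kappa}{\kappa}$-nice names; each tree $T_{\pi(\sigma)}$ lives on the alphabet ${}^{{<}\kappa}\mu\times{}^{{<}\kappa}\Add{\kappa}{\kappa}$ with $\betrag{\Add{\kappa}{\kappa}}=\kappa$, and in $\VV$ there are at most $\mu^\kappa=2^\kappa=\mu$ such names. Taking the union of these $\mu$-many ${<}\kappa$-closed end-node-free trees and coding the pair (name, condition) into $\mu$ produces a single ${<}\kappa$-closed subtree of ${}^{{<}\kappa}\mu\times{}^{{<}\kappa}\mu$ without end nodes projecting onto $A$, after which your reduction via Proposition \ref{proposition:ClosedRetract} applies verbatim. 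So the correct repair is not finer bookkeeping for $G$-certificates but the structural move of making the certificate trees $G$-free, which is what renders the homogeneity of $\Add{\kappa}{\theta}$ usable; as written, your proposal leaves the essential step unproved.
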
 

\begin{proof} 
 We may assume that $\theta>\mu$, because otherwise $\mu=\betrag{{}^\kappa\mu}^{\VV[G]}$ and the statement of the lemma holds trivially. Let $A=[T]^{\VV[G]}$ be a closed subset of ${}^\kappa\mu$ in $\VV[G]$. 
 Since $\Add{\kappa}{\theta}$ satisfies the $\kappa^+$-chain condition and we can identify $T$ with a subset of $\mu$, we may assume that $T$ is an element of $\VV$. By Proposition \ref{proposition:ClosedRetract}, it suffices to show that $A$ 
is equal to the projection of a ${<}\kappa$-closed subtree of ${}^{{<}\kappa}\mu\times{}^{{<}\kappa}\mu$ without end nodes in $\VV[G]$.

Let $\sigma$ be an $\Add{\kappa}{\theta}$-nice name for an element of $[T]$ in $\VV$. Since $\Add{\kappa}{\theta}$ satisfies the $\kappa^+$-chain condition, 
the set $X=\supp{\sigma}$ is a subset of $\theta$ of cardinality at most $\kappa$. Define $T_{\sigma}$ to be the subtree of ${}^{{<}\kappa}\mu\times{}^{{<}\kappa}\Add{\kappa}{X}$ that consists of all pairs 
$(t,\vec{p}\hspace{1.2pt})\in{}^\gamma\kappa\times{}^\gamma\Add{\kappa}{X}$ such that $\gamma<\kappa$ and the following statements hold. 
\begin{itemize}
 \item $\seq{\vec{p}\hspace{1.2pt}(\alpha)}{\alpha<\gamma}$ is a descending sequence of conditions in $\Add{\kappa}{X}$. 
 \item If $\alpha<\gamma$, then $\vec{p}\hspace{1.2pt}(\alpha)\Vdash^V_{\Add{\kappa}{X}}\anf{\check{s}\restriction (\check{\alpha}+1)\subseteq \sigma}$. 
\end{itemize}
Then $[T_\sigma]$ is a ${<}\kappa$-closed tree without end notes. We have $\sigma^G\in p[T_\sigma]^{\VV[G]}$ and $p[T_\sigma]^{\VV[G]}\subseteq A$, because $A$ is closed.

We can find an automorphism $\pi$ of $\Add{\kappa}{\theta}$ in $\VV$ with $\supp{\pi(\sigma)}\subseteq\kappa$. If $(t,\seq{\vec{p}\hspace{1.2pt}(\alpha)}{\alpha<\gamma})$ is an element of $T_\sigma$, then it follows that 
$(t,\seq{\pi(\vec{p}\hspace{1.2pt}(\alpha))}{\alpha<\gamma})$ is an element of $T_{\pi(\sigma)}$. This shows that $p[T_\sigma]^{\VV[G]}=p[T_{\pi(\sigma)}]^{\VV[G]}$.

The above computations show that for every element $x$ of $A$ there is a $\Add{\kappa}{\kappa}$-nice name $\sigma$ 
for an element of $[T]$ in $\VV$ such that $x\in p[T_\sigma]^{\VV[G]}\subseteq A$. Since there are at most $\mu$-many $\Add{\kappa}{\kappa}$-nice names for elements of $[T]$ in $\VV$ and $\Add{\kappa}{\kappa}$ has cardinality $\kappa$ in $\VV$, 
we can conclude that $A$ is equal to the union of $\mu$-many projections of ${<}\kappa$-closed subtrees of ${}^{{<}\kappa}\mu\times{}^{{<}\kappa}\kappa$ without end nodes. 
But this shows that $A$ is equal to the projection of a ${<}\kappa$-closed subtree of ${}^{{<}\kappa}\mu\times{}^{{<}\kappa}\mu$ without end nodes. 
\end{proof}


\section{Perfect embeddings}\label{section:PerfectEmb}

 In this section, we isolate a property of subtrees of ${}^{{<}\kappa}\kappa$ that implies that the corresponding closed subset of ${}^\kappa\kappa$ is not a continuous image of a space of the form ${}^\kappa\mu$. 
 We will use this implication to prove several consistency results mentioned in Subsection \ref{subsection:ContImagesHigherCard}.

 \begin{theorem}\label{theorem:SurjImageWideLevel}
  Let $\kappa$ be an uncountable regular, $U$ be an unbounded subset of $\kappa$ and $T$ be a subtree of ${}^{{<}\kappa}\kappa$. 
 If $\mu$ is a cardinal with $\mu^{{<}\kappa}<\betrag{[T]}$ and $\map{c}{{}^\kappa\mu}{[T]}$ is a continuous surjection, then there is a strictly increasing sequence $\seq{\lambda_n\in U}{n<\omega}$ with least upper bound $\lambda$ and an injection
  \begin{equation*}
    \map{i}{\prod_{n<\omega}\lambda_n}{T(\lambda)}.
  \end{equation*}
 such that 
 \begin{equation*}
    x\restriction n = y\restriction n ~ \Longleftrightarrow ~ i(x)\restriction\lambda_n = i(y)\restriction\lambda_n
 \end{equation*}
 holds for all $x,y\in\prod_{n<\omega}\lambda_n$ and $n<\omega$. 
 \end{theorem}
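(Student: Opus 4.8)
The plan is to build, by an $\omega$-step fusion, a family of basic open sets in the domain ${}^\kappa\mu$ whose $c$-images witness the desired embedding, and to read off the levels $\lambda_n$ as the fusion proceeds. Call a node $q\in{}^{{<}\kappa}\mu$ \emph{large} if $\betrag{c[N_q]}>\mu^{{<}\kappa}$, and for a large $q$ with $c[N_q]\subseteq N_t$ (where $t\in T$) call a node $v\in T$ with $v\supseteq t$ \emph{$q$-large} if $\betrag{\Set{b\in c[N_q]}{v\subseteq b}}>\mu^{{<}\kappa}$. The $q$-large nodes form a subtree $L_q$ of $T$ rooted at $t$. The two facts driving everything are: (a) a \emph{clopen width bound} — since $N_q$ is homeomorphic to ${}^\kappa\mu$ and a partition of ${}^\kappa\mu$ into nonempty open sets has at most $\mu^{{<}\kappa}$ parts (each part contains a distinct basic open set), the map $x\mapsto c(x)\restriction\alpha$ partitions $N_q$ into at most $\mu^{{<}\kappa}$ pieces, so every level of $L_q$ has size $\le\mu^{{<}\kappa}$; and (b) a purely combinatorial \textbf{width lemma}: a tree of height $\kappa$ all of whose levels have size $<\delta$ for some fixed $\delta<\kappa$ has at most $\kappa$ cofinal branches (isolated branches inject into $\bigcup_{\alpha}T(\alpha)$ of size $\le\kappa$ via the least level where a branch becomes unique, and a perfect subtree of width $<\delta$ can have at most $\max(\delta,\kappa)$ branches, since more than $\delta$ pairwise distinct branches would, by regularity of $\kappa$, be separated below a common level, forcing a level of size $>\delta$).

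The key step is the \textbf{splitting lemma}: if $q$ is large with $c[N_q]\subseteq N_t$ and $\delta<\kappa$, then $L_q$ has a level $\beta>\length{t}$ with at least $\delta$ nodes. I would prove this by contradiction: if every level of $L_q$ had size $<\delta$, I would bound $\betrag{c[N_q]}$ by $\mu^{{<}\kappa}$. Branches of $c[N_q]$ that exit $L_q$ do so at a minimal non-$q$-large node; by the clopen bound there are at most $\mu^{{<}\kappa}$ successor exit nodes, and at each of the $\le\kappa$ limit levels the exit nodes correspond to branches of $L_q$ truncated to height ${<}\kappa$, of which there are at most $2^{{<}\kappa}\le\mu^{{<}\kappa}$; since each exit node carries $\le\mu^{{<}\kappa}$ branches, the exiting branches number at most $\mu^{{<}\kappa}$. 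The remaining branches are cofinal branches of $L_q$, of which there are at most $\kappa\le\mu^{{<}\kappa}$ by the width lemma. This contradicts largeness. Having found $\delta$-many pairwise distinct $q$-large nodes $v_j\in T(\beta)$ extending $t$, I extract large conditions $q_j$: the open set $\Set{x\in N_q}{v_j\subseteq c(x)}$ has large image, hence is covered by $\le\mu^{{<}\kappa}$ basic sets one of which, call it $N_{q_j}$, already has large image and satisfies $c[N_{q_j}]\subseteq N_{v_j}$. Finally, since $U$ is unbounded and the same clopen-refinement argument lets me push any large commitment to an arbitrarily high level, I may take $\beta$ to lie in $U$ and raise all the $v_j$ to a common level in $U$.

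With the splitting lemma in hand, the fusion is routine. Starting from $q_\emptyset$ large with $c[N_{q_\emptyset}]\subseteq N_{t_\emptyset}$ for some $t_\emptyset\in T(\lambda_0)$ with $\lambda_0\in U$, at stage $n$ I have large conditions $\Set{q_s}{s\in\prod_{k<n}\lambda_k}$ with pairwise distinct commitments $t_s\in T(\lambda_n)$ and $c[N_{q_s}]\subseteq N_{t_s}$. For each of the ${<}\kappa$-many $s$ I apply the splitting lemma with $\delta=\lambda_n$, take a common level, and choose $\lambda_{n+1}\in U$ above it and above $\lambda_n$; the extensions $q_{s^\frown j}$ and $t_{s^\frown j}\in T(\lambda_{n+1})$ ($j<\lambda_n$) preserve the invariant, with $t_{s^\frown j}\supseteq t_s$ and all pairwise distinct. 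Set $\lambda=\sup_n\lambda_n<\kappa$. For $x\in\prod_{n<\omega}\lambda_n$ the conditions $q_{x\restriction n}$ are nested, so $q_x=\bigcup_n q_{x\restriction n}\in{}^{{<}\kappa}\mu$ and $c[N_{q_x}]\subseteq\bigcap_n N_{t_{x\restriction n}}=N_{i(x)}$ where $i(x)=\bigcup_n t_{x\restriction n}\in{}^\lambda\kappa$; picking any $z\in N_{q_x}$ gives $c(z)\in[T]$ extending $i(x)$, so $i(x)=c(z)\restriction\lambda\in T(\lambda)$. The separation property is then immediate: if $x\restriction n=y\restriction n$ then $t_{x\restriction n}=t_{y\restriction n}$ gives $i(x)\restriction\lambda_n=i(y)\restriction\lambda_n$, while if $x,y$ first differ at coordinate $k<n$ then $t_{x\restriction(k+1)}\neq t_{y\restriction(k+1)}$ already separate $i(x)$ and $i(y)$ below $\lambda_{k+1}\le\lambda_n$; in particular $i$ is injective.

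I expect the main obstacle to be the splitting lemma, and within it the width lemma of step (b): the naive attempt to bound the cofinal branches of $L_q$ is threatened by Kurepa-type phenomena, and the point is that a \emph{uniform} bound $\delta<\kappa$ on all level sizes (rather than merely levels of size ${<}\kappa$) rules these out, so that the continuous surjection forces enough splitting. Everything else — the clopen width bound, the extraction of large conditions, and the fusion and extraction of $i$ — is bookkeeping analogous to the arguments in Lemma \ref{continuous image to projection} and Lemma \ref{projection to projection of tree}.
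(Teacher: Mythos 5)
Your proof is correct, and while it shares the paper's overall fusion architecture (a tree of \emph{large} basic open sets $N_{q_s}$ indexed by finite sequences, pairwise incomparable commitments $t_s\in T(\lambda_n)$ at levels $\lambda_n\in U$, and extraction of $i$ at an $\omega$-limit via a point of $\bigcap_n N_{q_{x\restriction n}}$), the engine producing the splitting is genuinely different. The paper never considers your tree $L_q$ of $q$-large nodes: its second claim obtains, inside a large $N_s$, a sequence $\seq{y_\alpha}{\alpha<\lambda}$ of points all of whose neighborhoods have large image, by a transfinite recursion that at each step removes the preimages of the fewer than $\kappa$ previously chosen image points (leaving an open set with still-large image) and applies its first claim (your basic-open covering count); the images $c(y_\alpha)$ are separated pairwise, regularity of $\kappa$ yields a single separation level $\gamma$, and the paper then chooses $\lambda_{n+1}\in U$ above all the $\gamma_p$ and simply defines $t_{p^\frown\langle\alpha\rangle}=c(y^p_\alpha)\restriction\lambda_{n+1}$, so the commitments automatically sit at a common level in $U$ and no separate \emph{raising} step is needed. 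Your route instead derives a splitting lemma by counting cofinal branches of $L_q$, and your worry about Kurepa-type phenomena is correctly resolved: with a uniform bound $\delta<\kappa$ on level sizes and $\kappa$ regular, any $\delta$ distinct cofinal branches are pairwise separated below a common level (only $\betrag{\delta\times\delta}<\kappa$ pairs), forcing a level of size at least $\delta$; note this gives the stronger bound of fewer than $\max(\delta,\aleph_0)$ branches, so your isolated-branch/perfect-kernel case split is unnecessary. Two steps you only gesture at deserve the explicit pigeonhole: every $q$-large node has a $q$-large extension at each higher level (its more-than-$\mu^{{<}\kappa}$ branches distribute over at most $\mu^{{<}\kappa}$ hit nodes at that level, each non-large one carrying at most $\mu^{{<}\kappa}$ branches), which is what justifies raising the $v_j$ to a common level of $U$ while preserving largeness and distinctness; and your separate count of limit-level exit nodes via $2^{{<}\kappa}$ is redundant, since every exit node is an initial segment of a branch in $c[N_q]$ and your clopen bound already caps all hit nodes across the $\kappa$ levels at $\kappa\cdot\mu^{{<}\kappa}=\mu^{{<}\kappa}$. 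Comparatively, the paper's point-diagonalization is shorter and avoids branch counting altogether, while your analysis of $L_q$ yields more structural information (the largeness tree eventually has levels of size at least $\delta$ for every $\delta<\kappa$, since a wide level stays wide) and your width lemma is a reusable fact of independent interest.
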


\begin{proof}
 We start by proving two claims.

 \begin{claim*}
  If $U$ is an open subset of ${}^\kappa\nu$ with $\betrag{c[U]}>\mu^{{<}\kappa}$, then there is an $x\in U$ with $\betrag{c[N_{x\restriction\alpha}]}>\mu^{{<}\kappa}$ for all $\alpha<\kappa$. 
 \end{claim*}

 \begin{proof}[Proof of the Claim]
  Assume, towards a contradiction, that for every $x\in U$ there is an $\alpha_x<\kappa$ with $\betrag{c[N_{x\restriction\alpha_x}]}\leq\mu^{{<}\kappa}$. 
  Define $A=\Set{x\restriction\alpha_x}{x\in U}\subseteq{}^{{<}\kappa}\mu$. Then 
  \begin{equation*}
   \betrag{c[U]} ~ \leq ~ \betrag{\bigcup_{t\in A} c[N_t]} ~ \leq ~ \sum_{t\in A}\betrag{c[N_t]} ~ \leq ~ \mu^{{<}\kappa}, 
  \end{equation*}
  a contradiction.
 \end{proof}

\begin{claim*}
  Let $\bar{\gamma},\lambda<\kappa$ and $s\in{}^{{<}\kappa}\mu$ with $\betrag{c[N_s]}>\mu^{{<}\kappa}$. 
  Then there is an ordinal $\bar{\gamma}\leq\gamma<\kappa$ and a sequence $\seq{y_\alpha\in N_s}{\alpha<\lambda}$ 
  such that the following statements hold for all $\alpha,\bar{\alpha}<\lambda$.
  \begin{enumerate}
   \item $\betrag{c[N_{y_\alpha\restriction\beta}]}>\mu^{{<}\kappa}$ for all $\beta<\kappa$. 

   \item $c(y_\alpha)\restriction\gamma=c(y_{\bar{\alpha}})\restriction\gamma$ if and only if $\alpha=\bar{\alpha}$.
  \end{enumerate}
 \end{claim*}

\begin{proof}[Proof of the Claim]
  We recursively construct 
  \begin{itemize}
   \item a strictly increasing sequence $\seq{\gamma_\alpha}{\alpha<\lambda}$ of ordinals and

   \item a sequence $\seq{y_\alpha\in N_s}{\alpha<\lambda}$
  \end{itemize}
  such that the following statements hold for all $\alpha<\lambda$.
 \begin{enumerate}
  \item[(a)] $\betrag{c[N_{y_\alpha\restriction\beta}]}>\mu^{{<}\kappa}$ for all $\beta<\kappa$. 

  \item[(b)] If $\bar{\alpha}<\alpha$, then $c(y_\alpha)\restriction\gamma_\alpha\neq c(y_{\bar{\alpha}})\restriction\gamma_\alpha$.
 \end{enumerate}

 Assume that the above sequences are constructed up to $\alpha<\lambda$. We define 
 \begin{equation*}
  U ~ = ~ N_s\setminus c^{-1}[\Set{c(y_{\bar{\alpha}})}{\bar{\alpha}<\alpha}].
 \end{equation*}
 Then $U$ is open and $\betrag{c[U]}>\mu^{{<}\kappa}$. In this situation, the above claim implies that there is a $y_\alpha\in U$ such that the statement (a) holds. 
 Since $c(y_\alpha)\neq c(y_{\bar{\alpha}})$ for all $\bar{\alpha}<\alpha$, there is a $\bar{\gamma}\leq\gamma<\kappa$ with $\gamma\geq\lub_{\bar{\alpha}<\alpha}\gamma_{\bar{\alpha}}$ and $c(y_{\bar{\alpha}})\notin N_{c(y_\alpha)\restriction\gamma_\alpha}$ 
 for all $\bar{\alpha}<\alpha$. This implies that the above statement (b) holds. 

 If we define $\gamma=\sup_{\alpha<\lambda}\gamma_\alpha<\kappa$, then this construction ensures that the statement of the lemma holds.
 \end{proof}

 Now we recursively construct  
  \begin{itemize}
   \item a strictly increasing sequence $\seq{\lambda_n<\kappa}{n<\omega}$ of ordinals, 

   \item a sequence $\seq{s_p\in{}^{{<}\kappa}\mu}{n<\omega,~p\in\prod_{m<n}(\lambda_m+1)}$ of functions, and 

   \item a sequence $\seq{t_p\in T(\lambda_n)}{n<\omega,~p\in\prod_{m<n}(\lambda_m+1)}$ of nodes in $T$
  \end{itemize}
  such that the following statements hold for all $n<\omega$ and $p,q\in\prod_{m<n}(\lambda_m+1)$.
  \begin{enumerate}
   \item $0<\lambda_{n+1}\in U$. 

   \item $\betrag{c[N_{s_p}]}>\mu^{{<}\kappa}$ and $c[N_{s_p}]\subseteq N_{t_p}$.

   \item $N_{t_p}\cap N_{t_q}\neq\emptyset$ if and only if $p=q$. 

   \item If $m<n$, then $s_{p\restriction m}\subsetneq s_p$ and $t_{p\restriction m}=t_p\restriction\lambda_m$.
  \end{enumerate}

  Set $s_\emptyset=t_\emptyset=\emptyset$ and $\lambda_0=0$. Now assume that $n<\omega$ and the sequences $\seq{\lambda_m}{m\leq n}$, $\seq{s_p}{p\in\prod_{m<n}(\lambda_m+1)}$ and $\seq{t_p}{p\in\prod_{m<n}(\lambda_m+1)}$ are already constructed. 
 Pick $p\in\prod_{m<n}(\lambda_m+1)$. Apply the second claim to $\lambda_n$, $\lambda_n+1$ and $s_p$ to obtain an ordinal $\lambda_n\leq\gamma_p<\kappa$ and a sequence $\seq{y_\alpha^p\in N_{s_p}}{\alpha<\lambda_n+1}$ satisfying the properties (i) and (ii) stated in the claim. Define 
  \begin{equation*}
   \lambda_{n+1} ~ = \min(U\setminus\lub\Set{\gamma_p}{p\in\prod_{m<n}(\lambda_m+1)}) ~ < ~ \kappa.
  \end{equation*}
  Given $p\in\prod_{m<n}(\lambda_m+1)$ and $\alpha<\lambda_n+1$, we define 
  \begin{equation*}
   t_{p^\frown\langle\alpha\rangle} ~ = ~ c(y^p_\alpha)\restriction\lambda_{n+1}
  \end{equation*}
  and
  \begin{equation*}
   s_{p^\frown\langle\alpha\rangle} ~ = ~ y^p_\alpha\restriction\min\Set{\beta<\kappa}{c[N_{y^p_\alpha\restriction\beta}]\subseteq N_{t_{p^\frown\langle\alpha\rangle}}}.
  \end{equation*}

Let $\lambda=\sup_{n<\omega}\lambda_n$. Fix a $z\in\prod_{n<\omega}(\lambda_n+1)$. Then we have $s_{z\restriction n}\subseteq s_{z\restriction(n+1)}$ and $t_{z\restriction n}\subseteq t_{z\restriction(n+1)}$ for all $n<\omega$. Hence there is an $x_z\in\bigcap_{n<\omega}N_{s_{z\restriction n}}$ and our construction yields  
 \begin{equation*}
  c(x_z) ~ \in ~ [T] ~ \cap ~ \bigcap_{n<\omega}N_{t_{z\restriction n}}.
 \end{equation*}

 Pick $z_0,z_1\in\prod_{n<\omega}(\lambda_n+1)$ and $n<\omega$.  Assume $z_0\restriction (n+1)\neq z_1\restriction(n+1)$. Our construction ensures $N_{t_{z_0\restriction(n+1)}}\cap N_{t_{z_1\restriction(n+1)}}=\emptyset$ and $c(x_{z_i})\in N_{t_{z_i\restriction(n+1)}}$ for all $i<2$. Hence $c(x_{z_0})\restriction\lambda_{n+1}\neq c(x_{z_1})\restriction\lambda_{n+1}$.  
In the other direction, if $z_0\restriction n=z_1\restriction n$, then $c(x_{z_0})\in N_{t_{z_0\restriction n}}\cap N_{t_{z_1\restriction n}}\neq\emptyset$ and we get $z_0\restriction n=z_1\restriction n$. In particular, we can conclude that the induced map 
 \begin{equation*}
  \Map{i_0}{\prod_{n<\omega}(\lambda_n+1)}{T(\lambda)}{z}{c(x_z)\restriction\lambda}
 \end{equation*}
 is an injection. By embedding the product $\prod_{0<n<\omega}\lambda_n$ into $\prod_{n<\omega}(\lambda_n+1)$,   the statement of the theorem follows. 
\end{proof}

We use Theorem \ref{theorem:SurjImageWideLevel} to prove the following result that will imply Theorem \ref{theorem:ManipulateCardinalCharC} and Theorem \ref{theorem:TreesFromCohenReal}.

\begin{theorem}\label{theorem:TreesFromCoverReals}
 Assume that there is an inner model $M$ of $\ZFC$ such that $\RRR\nsubseteq M$ and 
 $M$ has the $\omega_1$-cover property (see \cite{MR2063629}), i.e. every countable set of ordinals in $\VV$ is covered by a set that is an element of $M$ and countable in $M$. 
 If $\kappa$ is an uncountable regular cardinal, then there is a closed subset $A$ of ${}^\kappa\kappa$ such that $A$ is not a continuous image of ${}^\kappa\mu$ 
 for every cardinal $\mu$ with $\mu^{{<}\kappa}<\betrag{(2^\kappa)^M}^\VV$. 
\end{theorem}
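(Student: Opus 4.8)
The plan is to reduce the statement to Theorem \ref{theorem:SurjImageWideLevel} by constructing a single subtree $T$ of ${}^{{<}\kappa}\kappa$ whose set of branches is large but which admits no \emph{coherent product embedding} into any of its levels. Concretely, fixing a real $r\in\RRR^\VV\setminus M$ (which exists since $\RRR\nsubseteq M$) and viewing it as a map $r\colon\omega\to 2$, I would build $T$ so that two properties hold: (a) $\betrag{[T]}\geq\betrag{(2^\kappa)^M}^\VV$, which I arrange by indexing a family of pairwise distinct branches by the $M$-reals $({}^\kappa 2)^M$ via an $M$-definable coding; and (b) for every $\lambda<\kappa$, every strictly increasing sequence $\seq{\lambda_n}{n<\omega}$ cofinal in $\lambda$, there is no injection $i\colon\prod_{n<\omega}\lambda_n\to T(\lambda)$ with $x\restriction n=y\restriction n\Leftrightarrow i(x)\restriction\lambda_n=i(y)\restriction\lambda_n$. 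Granting such a $T$, if $A=[T]$ were a continuous image of ${}^\kappa\mu$ for some cardinal $\mu$ with $\mu^{{<}\kappa}<\betrag{(2^\kappa)^M}^\VV\leq\betrag{[T]}$, then applying Theorem \ref{theorem:SurjImageWideLevel} with $U=\kappa$ would produce exactly the embedding forbidden by (b) --- note that $\lambda=\sup_n\lambda_n$ automatically has cofinality $\omega$, since $\kappa$ is regular and uncountable --- which is a contradiction. Hence $A$ witnesses the theorem, and the whole problem is reduced to the construction of $T$ with (a) and (b).

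The heart of the matter is thus the verification of (b), and this is where the $\omega_1$-cover property and the assumption $r\notin M$ enter, in a way that is tuned precisely to the \emph{cofinality $\omega$} of the obstruction. The injection produced by Theorem \ref{theorem:SurjImageWideLevel} lives at a level $\lambda$ governed by the \emph{countable} sequence $\seq{\lambda_n}{n<\omega}$; by the $\omega_1$-cover property this sequence is contained in a set $a\in M$ that is countable in $M$, so each $\lambda_n$, and hence $\lambda$, is captured by $M$-available data. Restricting the hypothetical injection to binary choices yields a coherent, splitting family $\seq{t_s}{s\in{}^{{<}\omega}2}$ with $t_s\in T(\lambda_{\betrag{s}})$ and $\bigcup_{n<\omega}t_{z\restriction n}\in T(\lambda)$ for \emph{every} $z\in({}^\omega 2)^\VV$, in particular for reals $z\notin M$. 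I would design the coding defining $T$ at cofinality-$\omega$ levels so that the mere existence of such a splitting family, once its defining countable data has been reflected into $M$ through the cover, lets $M$ reconstruct $r$; this contradicts $r\notin M$ and yields (b). Matching the cofinality-$\omega$ character of the obstruction in Theorem \ref{theorem:SurjImageWideLevel} against the $\omega_1$-cover property for countable sets is exactly what makes this reflection step available, and is why the hypotheses are phrased for countable covers.

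The main obstacle will be the delicate balance between (a) and (b): the $r$-coding that rigidifies $T$ at cofinality-$\omega$ levels must be \emph{sparse} enough that the $\betrag{(2^\kappa)^M}^\VV$ many branches indexed by $({}^\kappa 2)^M$ survive, yet \emph{rigid} enough that no coherent splitting family can exist at any such level. The subtlety is that the naive idea --- forcing each node at a cofinality-$\omega$ level to encode $r$ directly --- is self-defeating, since it would make those levels empty and kill all branches; so $r$ must be encoded in the \emph{splitting pattern} of a coherent family rather than in the values of individual nodes, and the reconstruction of $r$ inside $M$ must use only the cover-captured countable data, not the full labeling $\seq{t_s}{s\in{}^{{<}\omega}2}$, which is in general a new real and need not belong to $M$. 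I expect the bulk of the technical work to consist in making this encoding and the corresponding reflection argument precise, while the application of Theorem \ref{theorem:SurjImageWideLevel} and the cardinality bookkeeping in (a) should be routine.
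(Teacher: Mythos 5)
Your reduction to Theorem \ref{theorem:SurjImageWideLevel} is exactly right, and so is your instinct that the $\omega_1$-cover property is matched to the cofinality-$\omega$ level $\lambda$ produced by that theorem. But the heart of your proposal --- the construction of a tree $T$ satisfying your property (b), with a fixed real $r\notin M$ encoded into the splitting pattern so that $M$ can ``reconstruct $r$'' from the cover-captured data --- is left entirely unimplemented, and the mechanism you sketch does not plausibly close. The coherent family $\seq{t_s}{s\in{}^{{<}\omega}2}$ is produced by the hypothetical continuous surjection, not by you; all you control is which nodes exist at level $\lambda$. If, as your property (a) requires, $T$ is $M$-definable (so that $T(\lambda)\subseteq M$), then \emph{any} countable set of nodes of $T(\lambda)$ is covered by an $M$-countable set, regardless of how you designed $T$ --- the covered set is just an unstructured countable subset of $M$. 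As you yourself observe, the labeling $s\mapsto t_s$ is in general a new real not in $M$, so $M$ has no access to the splitting \emph{pattern}, only to a countable pool of candidate nodes. Nothing in your sketch explains how $M$ extracts $r$ from this pool, and there is no evident way to make the ``mere existence'' of a coherent family inside a covered set carry the information of a single fixed real.

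What actually makes the proof work is a different and stronger extraction step, for which your proposal has no substitute. The paper takes the naive tree $T=({}^{{<}\kappa}2)^M$ (no bespoke coding at all; (a) is then immediate) and, given the embedding $i$ from Theorem \ref{theorem:SurjImageWideLevel}, uses an $M$-cofinal sequence $\seq{\eta_n}{n<\omega}$ in $\lambda$ (available by the cover property) together with an $M$-countable cover $C$ of the countably many restrictions $i(x)\restriction\eta_n$ to define an injection $j$ of a subset of $T(\lambda)$ into $({}^\omega\omega)^M$. The coherence property of $i$ is then exploited twice: first to show that $\ran{i}$ is closed under the relevant limits, hence that $\ran{j\circ i}$ is a \emph{closed} subset of ${}^\omega\omega$ consisting entirely of $M$-reals, and second to show that the tree $S$ of its finite initial segments is \emph{superperfect}. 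The contradiction with $\RRR\nsubseteq M$ then comes from the Veli{\v{c}}kovi{\'c}--Woodin theorem (Theorem \ref{theorem:VelickovicWoodin}): a superperfect tree with $[S]\subseteq M$ forces $\RRR\subseteq M$. Note that merely having continuum-many $M$-reals in $\ran{j\circ i}$ would prove nothing (the inequality $(2^{\aleph_0})^\VV>\betrag{(2^{\aleph_0})^M}$ is not part of the hypotheses); it is precisely the closure and superperfectness, i.e.\ the topological structure imposed by coherence, that a single decoded real cannot replace. So the gap is concrete: your step ``the existence of a splitting family lets $M$ reconstruct $r$'' is a placeholder for what is in fact an appeal to a deep theorem about superperfect sets of reals, and without it (or an equivalent) your property (b) is not verifiable for any tree large enough to satisfy (a).
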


The following result due to Veli{\v{c}}kovi{\'c} and Woodin will be the key ingredient of this proof. Remember that a tree $T$ is \emph{superperfect} if above every node of $T$ there is a node with infinitely many direct successors.

\begin{theorem}[{\cite[Theorem 2]{MR1640916}}]\label{theorem:VelickovicWoodin}
  Let $T$ be a superperfect subtree of ${}^{{<}\omega}\omega$. If $M$ is an inner model of $\ZFC$ with $[T]^\VV\subseteq M$, then $\RRR^\VV\subseteq M$. 
\end{theorem}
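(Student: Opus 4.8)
The plan is to prove the contrapositive. Assuming $\RRR^\VV\nsubseteq M$, I would fix a real $c\in\RRR^\VV\setminus M$ and use it, together with the infinite branching of $T$, to manufacture a branch $b\in[T]^\VV$ from which $c$ can be reconstructed by a procedure available \emph{inside} $M$. Since the hypothesis $[T]^\VV\subseteq M$ gives $b\in M$, and $M\models\ZFC$ is closed under that procedure, this would force $c\in M$, a contradiction. Thus the entire argument reduces to a single coding lemma: from an arbitrary real not in $M$ one can build a branch of $T$ coding it in an $M$-recoverable way. The only structural facts about $T$ that will be used are that it is pruned (every node extends to a branch, since above every node lies a node with infinitely many immediate successors, each of which again extends to a branch) and that the splitting nodes are cofinal.

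To build the branch I would run a fusion along $T$. Taking the splitting nodes (those with infinitely many direct successors) as the skeleton of a recursive construction, I would start from the stem, pass to a splitting node $s_0$, and then recursively, having reached a splitting node $s_i$, select one of its infinitely many immediate successors and climb to the next splitting node $s_{i+1}$, finally setting $b=\bigcup_i s_i$. Superperfectness guarantees that the process never gets stuck and that at each stage there is an unbounded set of admissible integer choices, so there is genuine freedom at every step. The real $c$ is to be read into these choices: at the $i$-th stage the bits of $c$ determine which successor is taken, while the infinite branching is exploited to keep the values chosen at splitting nodes as large and as "self-announcing" as the construction requires.

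The delicate point, which I expect to be the main obstacle, is that the decoding must not refer to $T$ itself, because $M$ need not contain $T$: from $[T]^\VV\subseteq M$ one obtains only the individual branches as elements of $M$, not the tree as a set, and in general $M$ cannot even recognise which finite sequences lie on a branch of $T$. Hence the naive decoding \emph{read off the successor chosen at each splitting node}, which would recover $c$ at once, is unavailable inside $M$, since locating the splitting nodes already presupposes $T$. The resolution must therefore make the code self-delimiting within the raw sequence of values of $b$: the infinite branching is used not only to insert the bits of $c$ but simultaneously to record, in a $T$-free recognisable pattern, the finite amount of local tree data (the initial segments of the successor sets actually used) needed to locate and read those bits, so that one fixed recursive functional applied to $b$ alone returns $c$. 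Making this simultaneous encoding robust against the values that $T$ forces along the branch between consecutive splitting nodes is the combinatorial heart of the matter, and it is precisely here that unboundedness — superperfectness rather than mere perfectness — is indispensable, since it is what supplies, above every node, enough room to dominate or overwrite the forced data and thereby keep the code legible from $b$ alone. This is exactly the point at which the hypothesis genuinely exceeds what a finitely branching perfect tree could achieve: a finitely branching tree can easily have all its branches in $M$ while $\RRR^\VV\nsubseteq M$, so any correct proof must extract its contradiction from the infinite splitting, and the coding lemma above is the mechanism by which it does so.
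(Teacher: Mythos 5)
First, a framing point: the paper does not prove this statement at all --- Theorem~\ref{theorem:VelickovicWoodin} is quoted from Veli{\v{c}}kovi{\'c}--Woodin \cite{MR1640916} and used as a black box in the proof of Theorem~\ref{theorem:TreesFromCoverReals} --- so your attempt has to stand on its own merits. On those merits it is an outline, not a proof, and it stops exactly where the content of the theorem lies. Your reduction (contrapositive; code a real $c\notin M$ into a branch $b\in[T]^\VV\subseteq M$ and recover $c$ inside $M$) is the right general shape, and you correctly identify the crux: $M$ receives only the individual branches, not $T$ (which, being a countable set of finite sequences, is itself essentially a real and a priori outside $M$), so the decoding may not refer to $T$ and in particular cannot locate the splitting nodes. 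But the ``coding lemma'' to which you reduce everything \emph{is} the theorem, and you do not prove it; you only assert that infinite splitting keeps the code ``legible from $b$ alone.''

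Moreover, the mechanism you sketch --- large, ``self-announcing'' values at splitting nodes that ``dominate or overwrite the forced data'' --- fails as stated. At a splitting node you may choose the next value only from whatever infinite successor set $T$ offers: you control a lower bound, never the exact value, its parity, or any pattern. The forced stretch following your choice $v$ is again part of $T$ and may depend on $v$: for instance, $T$ can force the values $v+1,v+2,\dots$ immediately after every splitting node, so every magnitude-based marker (record values, running maxima) is instantly counterfeited, and forced stretches can likewise replay any finite pattern your scheme would treat as a delimiter. Since your decoding functional must be fixed before $T$ is, such adversarial superperfect trees defeat single-branch schemes of the kind you describe; this is why ``unboundedness supplies enough room'' is not an argument. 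A correct proof must exploit channels the tree cannot counterfeit --- e.g.\ the exact divergence level of two branches of $T$, or order comparisons between values you chose yourself --- and, crucially, the fact that $M$ is an inner model of $\ZFC$ containing all ordinals, so that arithmetic and $\Sigma^1_2$ statements with parameters in $M$ are absolute between $M$ and $\VV$: this lets $M$ recover $c$ as the unique solution of an absolute predicate with parameters that are (tuples of) branches, instead of ``running'' a decoding that presupposes $T$-dependent bookkeeping. Your plan never invokes absoluteness and insists on one fixed recursive functional applied to one branch, which is both stronger than needed and defeated by the trees above. The diagnosis of the obstacle is correct, and your observation that finitely branching perfect trees could never suffice is a good sanity check, but the combinatorial heart of the argument is missing.
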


\begin{proof}[Proof of Theorem \ref{theorem:TreesFromCoverReals}]
 Let $M$ be an inner model of $\ZFC$ with $\RRR^\VV\nsubseteq M$ and the property that every countable set of ordinals in $\VV$ is covered by a set that is contained in $M$ and countable in $M$. 
We work in $\VV$. Fix an uncountable regular cardinal $\kappa$ and let $T$ denote the subtree $({}^{{<}\kappa}2)^M$ of ${}^{{<}\kappa}\kappa$. Assume, towards a contradiction, that there is a continuous surjection $\map{c}{{}^\kappa\mu}{[T]}$ for some cardinal $\mu$ with $\mu^{{<}\kappa}<\betrag{(2^\kappa)^M}$. Since $T$ has at least $\betrag{(2^\kappa)^M}$-many cofinal branches, we can use Theorem \ref{theorem:SurjImageWideLevel} to find a strictly increasing sequence $\seq{\lambda_n<\kappa}{n<\omega}$ with limit $\lambda$ and an injection $\map{i}{{}^\omega\omega}{T(\lambda)}$ with 
\begin{equation*}
 x\restriction n = y\restriction n ~ \Longleftrightarrow ~ i(x)\restriction \lambda_n = i(y)\restriction\lambda_n
\end{equation*}
for all $x,y\in{}^\omega\omega$ and $n<\omega$. 
By our assumptions, we have $\cof{\lambda}^M=\omega$ and there is a strictly increasing sequence $\seq{\eta_n}{n<\omega}$ contained in $M$ that is cofinal in $M$. Moreover, with the help of a well-ordering of $T$ in $M$ we can find $C\in M$ such that $C$ is countable in $M$ and the set  $\Set{i(x)\restriction\eta_n}{x\in{}^\omega\omega, ~ n<\omega}$ is contained in $C$. Define $$B ~ = ~ \Set{t\in T(\lambda)}{\forall n<\omega ~ t\restriction\eta_n\in C} ~ \in ~ M.$$ Then $\ran{i}\subseteq B$.

\begin{claim*}
 If $t\in{}^\lambda 2$ such that for every $k<\omega$ there is an $x\in{}^\omega\omega$ satisfying  $t\restriction\eta_k=i(x)\restriction\eta_k$, then $t\in\ran{i}\subseteq T(\lambda)\subseteq M$. 
\end{claim*}

\begin{proof}[Proof of the Claim]
 Fix $n<\omega$. Then there is a $k<\omega$ with $\eta_k\geq\lambda_n$ and our assumption gives us an  $x_n\in{}^\omega\omega$ with $t\restriction\lambda_n=i(x_n)\restriction\lambda_n$. Define $s_n=x_n\restriction n$. Given $x\in{}^\omega\omega$, the properties of $i$ imply that $i(x)\restriction\lambda_n=t\restriction\lambda_n$ if and only if $s_n\subseteq x$. This shows that $x_t=\bigcup_{n<\omega}s_n$ is an element of ${}^\omega\omega$ with $i(x_t)=t$. 
\end{proof}

Fix an injective enumeration $\seq{c_n}{n<\omega}$ of $C$ that is contained in $M$ and let $\map{j}{B}{({}^\omega\omega)^M}$ denote the injection in $M$ defined by $$j(t)(n)=k ~ \Longleftrightarrow ~ t\restriction\eta_n=c_k.$$

\begin{claim*}
 The set $\ran{j\circ i}$ is closed in ${}^\omega\omega$.
\end{claim*}

\begin{proof}[Proof of the Claim]
 Let $z$ be a limit point of $\ran{j\circ i}$ in ${}^\omega\omega$. 
 Fix $k\leq n<\omega$. Then there is an $x\in{}^\omega\omega$ with $(j\circ i)(x)\restriction(n+1)=z\restriction(n+1)$ and $$c_{z(k)} ~ = ~  c_{(j\circ i)(x)(k)} ~ = ~ i(x)\restriction \eta_k ~ \subseteq ~ i(x)\restriction\eta_n ~ = ~ c_{(j\circ i)(x)(n)} ~ = ~ c_{z(n)} ~ \in ~ T(\eta_n).$$ This shows that $t=\bigcup_{n<\omega}c_{z(n)}$ is an element of ${}^\lambda 2$ with the property that for every $n<\omega$ there is an $x_n\in{}^\omega\omega$ with $t\restriction\eta_n=i(x_n)\restriction\eta_n$. By the above claim, there is an $x\in{}^\omega\omega$ with $i(x)=t$.  If $k<\omega$, then $i(x)\restriction\eta_k=t\restriction\eta_k=c_{z(k)}$. This shows that $ z=(j\circ i)(x)\in\ran{j\circ i}$. 
\end{proof}

Let $S=\Set{(j\circ i)(x)\restriction n}{x\in{}^\omega\omega,~ n<\omega}$. Then $[S]=\ran{j\circ i}\subseteq M$.

\begin{claim*}
 $S$ is a superperfect subtree of ${}^{{<}\omega}\omega$.
\end{claim*}

\begin{proof}[Proof of the Theorem]
 Let $x\in{}^\omega\omega$ and $k<\omega$. Then there are $m,n<\omega$ such that $\eta_k\leq\lambda_m<\lambda_{m+1}\leq\eta_n$. Then there is a sequence $\seq{x_p\in{}^\omega\omega}{p<\omega}$ such that $x_p\restriction m = x\restriction m$ and $x_p(m)=p$. Given $p<\omega$, we have $i(x)\restriction\eta_l =  i(x_p)\restriction\eta_l$ for every $l<k$ and therefore $(j\circ i)(x)\restriction k \subseteq (j\circ i)(x_p)\restriction(n+1)$. Since $$(j\circ i)(x_p)\restriction(n+1)\neq(j\circ i)(x_q)\restriction(n+1)$$ for all $p<q<\omega$, this shows that $(j\circ i)(x)\restriction k$ has infinitely many successor at level $n+1$. But this shows that there is a node above $(j\circ i)(x)\restriction k$ in $S$ of length at most $n$ that has infinitely many direct successors. 
\end{proof}

The combination of the last claim and Theorem \ref{theorem:VelickovicWoodin} shows that $\RRR\subseteq M$, a contradiction. 
\end{proof}

We close this section with the proofs of the theorems mentioned above.

\begin{proof}[Proof of Theorem \ref{theorem:TreesFromCohenReal}]
 Assume that $\VV$ is an $\Add{\omega}{1}$-generic extension of some ground model $M$. Then $M$ satisfies the assumptions of Theorem \ref{theorem:TreesFromCoverReals}, because $\Add{\omega}{1}$ satisfies the countable chain condition 
 and adds a new real. Let $\kappa$ be an uncountable regular and $A$ be the closed subset of ${}^\kappa\kappa$ given by Theorem \ref{theorem:TreesFromCoverReals}. If $\mu$ is a cardinal with $\mu^{{<}\kappa}<2^\kappa$, then 
 $\mu^{{<}\kappa}<(2^\kappa)^M$ and $A$ is not equal to a continuous image of ${}^\kappa\mu$. 
\end{proof}

\begin{proof}[Proof of Theorem \ref{theorem:ManipulateCardinalCharC}]
 Fix an uncountable cardinal $\kappa$ with $\kappa=\kappa^{{<}\kappa}$, a cardinal $\mu\geq2^\kappa$ with $\mu=\mu^\kappa$ and a cardinal $\theta\geq\mu$ with $\theta=\theta^\kappa$. 
 Let $G$ be $\Add{\kappa}{\mu}$-generic over $\VV$, $H$ be $\Add{\omega}{1}$-generic over $\VV[G]$ and $K$ be $\Add{\kappa}{\theta}^{\VV[G,H]}$-generic over $\VV[G,H]$. 
 Then $\mu=(2^\kappa)^{\VV[G]}=(2^\kappa)^{\VV[G,H]}$, $\kappa=(\kappa^{{<}\kappa})^{\VV[G,H]}$, $\theta=(2^\kappa)^{\VV[G,H,K]}$ and all models have the same cardinals. 
 
 We work in $\VV[G,H,K]$. Then the inner model $\VV[G]$ satisfies the assumptions of Theorem \ref{theorem:TreesFromCoverReals} and there is a closed subset $A$ of ${}^\kappa\kappa$ such that $A$ 
 is not equal to a continuous image of ${}^\kappa\bar{\mu}$ for any cardinal $\bar{\mu}$ with $\bar{\mu}^{{<}\kappa}<\betrag{(2^\kappa)^{\VV[G]}}=\mu$. Finally, Lemma \ref{lemma:AddForcesMuKappa} 
 implies that every closed subset of ${}^\kappa\mu$ is equal to a continuous image of ${}^\kappa\mu$.  
\end{proof}


\section{Questions}

 We close this paper with questions raised by the above results.

 Proposition \ref{proposition: Sigma-1-1 and S-1 are equal in L} shows that the classes $\mathbf{\Sigma}^1_1(\kappa)$ and $S^{\LL.\kappa}_1$ coincide in models of the form $\LL[x]$ with $x\subseteq\kappa$. 
This induces the question whether there are other models in which these classes are identical. 

\begin{question}
If the classes $\mathbf{\Sigma}^1_1(\kappa)$ and $S_1^{\LL,\kappa}$ coincide, is there an $x\subseteq\kappa$ with ${}^\kappa\kappa\subseteq\LL[x]$?
\end{question}

 By Proposition \ref{proposition: Sigma-1-1 and S-1 are equal in L} and Theorem \ref{Theorem:ClubFilterNotInKappaSolovay}, the statement \anf{\emph{the club filter is a continuous injective image of a closed subset of ${}^\kappa\kappa$}} 
 is not absolute between different models of set theory. 
 In particular, it is consistent that the club filter is not a continuous injective image of the whole space ${}^\kappa\kappa$. Therefore it is natural to ask whether the negation of this statement is also consistent.

\begin{question} 
Is it consistent that the club filter $\mathrm{Club}_{\kappa}$ on $\kappa$ is an element of $I^{\kappa}$? 
\end{question}

 The discussion presented in Subsection \ref{subsection:ContImagesHigherCard} shows that it is consistent that all subsets in $C^{\kappa,\kappa^+}$ can consistently be $\mathbf{\Sigma}^1_1$-definable. 
 Therefore we may ask if these continuous images can consistently be contained in the smaller classes considered in this paper.

\begin{question}
 Is it consistent that the class $C^{\kappa,\kappa^+}$ is contained in the class $I^\kappa_{cl}$?
\end{question}


\bibliographystyle{alpha}
\bibliography{references}

\end{document}